\let\mathbb\mathds
\DeclareMathAlphabet\mathbfcal{OMS}{cmsy}{b}{n}
\pgfplotsset{compat=1.13}
\tikzset{curve/.style={settings={#1},to path={(\tikztostart)
    .. controls ($(\tikztostart)!\pv{pos}!(\tikztotarget)!\pv{height}!270:(\tikztotarget)$)
    and ($(\tikztostart)!1-\pv{pos}!(\tikztotarget)!\pv{height}!270:(\tikztotarget)$)
    .. (\tikztotarget)\tikztonodes}},
    settings/.code={\tikzset{quiver/.cd,#1}
        \def\pv##1{\pgfkeysvalueof{/tikz/quiver/##1}}},
    quiver/.cd,pos/.initial=0.35,height/.initial=0}
\tikzset{tail reversed/.code={\pgfsetarrowsstart{tikzcd to}}}
\tikzset{2tail/.code={\pgfsetarrowsstart{Implies[reversed]}}}
\tikzset{2tail reversed/.code={\pgfsetarrowsstart{Implies}}}
\tikzset{no body/.style={/tikz/dash pattern=on 0 off 1mm}}
\def\on{\operatorname}
\def\C{\EuScript{C}}
\def\DD{\bcat{D}}
\def\CC{\bcat{C}}
\def\XX{\bcat{X}}
\def\bS{\textbf{MB}}
\def\MS{\textbf{MS}}
\def\sS{\textbf{S}}
\def\OO{\mathbb{O}}
\def\Cat{\on{Cat}}
\def\Hom{\on{Hom}}
\def\Nsc{\on{N}^{\mathbf{sc}}}
\def\Set{\on{Set}}
\def\scsSet{{\on{Set}_{\Delta}^{\mathbf{sc}}}}
\def\mbsSet{{\Set_\Delta^{\mathbf{mb}}}}
\def\St{\on{St}}
\def\UN{\mathbb{U}\!\on{n}}
\def\sc{{\on{sc}}}
\DeclareMathOperator\SSt{\mathbb{S}t}
\declaretheoremstyle[bodyfont=\itshape,notefont=\bfseries]{abellanA}
\declaretheoremstyle[notefont=\bfseries]{abellanB}
\declaretheorem[style=abellanA,numberwithin=section,name={Theorem}]{theorem}
\declaretheorem[style=abellanA,numberlike=theorem,name={Lemma}]{lemma}
\declaretheorem[style=abellanB,numberlike=theorem,name={Definition}]{definition}
\declaretheorem[style=abellanB,numberlike=theorem,name={Definition-Lemma}]{definition-lemma}
\declaretheorem[style=abellanB,numberlike=theorem,name={Remark}]{remark}
\declaretheorem[style=abellanA,numberlike=theorem,name={Proposition}]{proposition}
\declaretheorem[style=abellanB,numbered=no,name={Notation}]{notation}
\declaretheorem[style=abellanA,numberlike=theorem,name={Corollary}]{corollary}
\newtheorem{thm*}{Theorem}
\newtheorem*{prop*}{Proposition}
\newtheorem{cor*}{Corollary}
\let\leq\leqslant
\let\geq\geqslant
\let\epsilon\varepsilon
\let\isom\simeq
\newcommand*\tensor{\otimes}
\newcommand*{\SB}{\bcat{S}}
\newcommand*\mathblank{\mathord{-}}
\def\Cat{\on{Cat}}
\def\scr{\EuScript}
\let\emptyset\varnothing
\newcommand{\fixed@sra}{$\vrule height 2\fontdimen22\textfont2 width 0pt\rightarrow$}
\newcommand{\shortarrowup}[1]{%
  \mathrel{\text{\rotatebox[origin=c]{65}{\fixed@sra}}}
}
\newcommand{\shortarrowdown}[1]{%
  \mathrel{\text{\rotatebox[origin=c]{250}{\fixed@sra}}}
}
\newcommand{\upslash}{\!\shortarrowup{1}}
\def\lra{\longrightarrow}
\def\lla{\longleftarrow}
\def\llra{\def\arraystretch{.1}\begin{array}{c} \lra \\ \lla \end{array}}
\def\op{{\on{op}}}
        \string\usetikzlibrary{decorations.markings} to use arrows with markings}{}}{}%
\DeclareSymbolFont{lettersA}{U}{txmia}{m}{it}
\DeclareRobustCommand*{\varmathbb}[1]{\gdef\F@ntPrefix{m@thbbch@r}%
	\@EachCharacter #1\@EndEachCharacter}
\long\def\DoLongFutureLet #1#2#3#4{%
	\def\@FutureLetDecide{#1#2\@FutureLetToken
		\def\@FutureLetNext{#3}\else
		\def\@FutureLetNext{#4}\fi\@FutureLetNext}
	\futurelet\@FutureLetToken\@FutureLetDecide}
\def\DoFutureLet #1#2#3#4{\DoLongFutureLet{#1}{#2}{#3}{#4}}
\def\@EachCharacter{\DoFutureLet{\ifx}{\@EndEachCharacter}%
	{\@EachCharacterDone}{\@PickUpTheCharacter}}
\def\m@keCharacter#1{\csname\F@ntPrefix#1\endcsname}
\def\@PickUpTheCharacter#1{\m@keCharacter{#1}\@EachCharacter}
\def\@EachCharacterDone \@EndEachCharacter{}
\DeclareMathSymbol{\m@thbbch@rA}{\mathord}{lettersA}{129}
\DeclareMathSymbol{\m@thbbch@rB}{\mathord}{lettersA}{130}
\DeclareMathSymbol{\m@thbbch@rC}{\mathord}{lettersA}{131}
\DeclareMathSymbol{\m@thbbch@rD}{\mathord}{lettersA}{132}
\DeclareMathSymbol{\m@thbbch@rE}{\mathord}{lettersA}{133}
\DeclareMathSymbol{\m@thbbch@rF}{\mathord}{lettersA}{134}
\DeclareMathSymbol{\m@thbbch@rG}{\mathord}{lettersA}{135}
\DeclareMathSymbol{\m@thbbch@rH}{\mathord}{lettersA}{136}
\DeclareMathSymbol{\m@thbbch@rI}{\mathord}{lettersA}{137}
\DeclareMathSymbol{\m@thbbch@rJ}{\mathord}{lettersA}{138}
\DeclareMathSymbol{\m@thbbch@rK}{\mathord}{lettersA}{139}
\DeclareMathSymbol{\m@thbbch@rL}{\mathord}{lettersA}{140}
\DeclareMathSymbol{\m@thbbch@rM}{\mathord}{lettersA}{141}
\DeclareMathSymbol{\m@thbbch@rN}{\mathord}{lettersA}{142}
\DeclareMathSymbol{\m@thbbch@rO}{\mathord}{lettersA}{143}
\DeclareMathSymbol{\m@thbbch@rP}{\mathord}{lettersA}{144}
\DeclareMathSymbol{\m@thbbch@rQ}{\mathord}{lettersA}{145}
\DeclareMathSymbol{\m@thbbch@rR}{\mathord}{lettersA}{146}
\DeclareMathSymbol{\m@thbbch@rS}{\mathord}{lettersA}{147}
\DeclareMathSymbol{\m@thbbch@rT}{\mathord}{lettersA}{148}
\DeclareMathSymbol{\m@thbbch@rU}{\mathord}{lettersA}{149}
\DeclareMathSymbol{\m@thbbch@rV}{\mathord}{lettersA}{150}
\DeclareMathSymbol{\m@thbbch@rW}{\mathord}{lettersA}{151}
\DeclareMathSymbol{\m@thbbch@rX}{\mathord}{lettersA}{152}
\DeclareMathSymbol{\m@thbbch@rY}{\mathord}{lettersA}{153}
\DeclareMathSymbol{\m@thbbch@rZ}{\mathord}{lettersA}{154}
\newcommand{\myitem}[1]{%
  \item[#1]\protected@edef\@currentlabel{#1}%
}
\def\bcat{\varmathbb}
\title{On local fibrations of $(\infty,2)$-categories}
\author{Fernando Abellán}
\date{}
\begin{document}
  \maketitle
\begin{abstract}
  In this work we provide a model-independent notion of local fibrations of $(\infty,2)$-categories which generalises the well-known theory of locally coCartesian fibrations of $(\infty,1)$-categories. Based on previous work, we construct a model category which serves as a specific combinatorial model for this type of fibrations. Our main result is a generalisation of the locally coCartesian straightening and unstraightening construction of Lurie, which yields for any scaled simplicial set $S$ an equivalence of $(\infty,2)$-categories between the $(\infty,2)$-category of $(0,1)$-fibrations over $S$ (also known as inner coCartesian fibrations) and the $(\infty,2)$-category of functors $S \to \bcat{C}\!\on{at}_{(\infty,2)}$ with values in $(\infty,2)$-categories. Given an $(\infty,2)$-category $\bcat{B}$, our Grothendieck construction can be specialised to produce an equivalence between the $(\infty,2)$-category of local fibrations over $\bcat{B}$ and the $(\infty,2)$-category of oplax unital functors with values in $\bcat{C}\!\on{at}_{(\infty,2)}$. Finally, as an application of our results we provide a version of the Yoneda lemma for $(\infty,2)$-categories.
\end{abstract}
\tableofcontents
\newpage
  \section{Introduction}
  The theory of $(\infty,2)$-categories is enjoying in recent years a rapid and extensive development. Several fundamental constructions such as Gray tensor products (\cite{GHL_Gray}), partially lax colimits (\cite{Berman},\cite{GHL_LaxLim},\cite{AG_cof}) and a 2-dimensional theory of fibrations (\cite{GHL_Cart},\cite{AGS_CartI},\cite{AGS_CartII}) are already available and ready to be used in the study of homotopy coherent structures. Even more remarkably, we are starting to see specific examples of how these techniques can be used to categorify current areas of study and how they can put into perspective known constructions. 

  \begin{itemize}
    \item  In \cite{Global}, the authors give a new description of the $\infty$-category of global spectra as a certain partially lax colimit thus characterizing this $\infty$-category by means of a 2-dimensional universal property.
    \item In \cite{CDW}, the authors study complexes of stable $\infty$-categories using 2-categorical techniques and explore the first steps in the construction of a categorified theory of homological algebra.
  \end{itemize}

  However, not all foundational questions have been addressed yet and this paper aims at providing an additional piece of technology which will be relevant for future applications: A theory of local fibrations. Before delving into the 2-dimensional theory let us recall the notion of locally coCartesian fibration of $(\infty,1)$-categories.

  Let $p:X \to \scr{C}$ be a functor\footnote{We are giving a model-indepent definition here but it should be noted that by functor we mean a ``suficiently fibrant" map.} of $(\infty,1)$-categories. We say that $p$ is a $\emph{locally coCartesian fibration}$ (resp locally Cartesian) if for every morphism $e:\Delta^1 \to \scr{C}$ the restriction of $p$ along $e$,
  \[
    \begin{tikzcd}
      X \times_{\scr{C}}\{e\} \arrow[r] \arrow[d] & X \arrow[d,"p"] \\
      \Delta^{1} \arrow[r,"e"] & \scr{C}
    \end{tikzcd}
  \]
  is a coCartesian fibration (resp. Cartesian) over $\Delta^1$. In more informal terms, we can think of a locally coCartesian fibration as a way of enconding the following data:
  \begin{itemize}
    \item[i)] For every $c \in \scr{C}$ an $\infty$-category $X_c$.
    \item[ii)] For every morphism $u: c_0 \to c_1$ in $\scr{C}$ a functor $u_*:X_{c_0} \to X_{c_1}$.
    \item[iii)] For every commutative triangle in $\scr{C}$
    \[
       \begin{tikzcd}
         & c_1  \arrow[dr,"v"]& \\
         c_0 \arrow[ur,"u"] \arrow[rr,"w"] & & c_2
       \end{tikzcd}
     \] 
     a (not necessarily invertible) natural transformation $\eta: X_{c_0} \times \Delta^1 \to X_{c_2}$ between $w_* \Rightarrow{}v_* \circ u_*$.
  \end{itemize}
  This description can succesfully be formalised using the locally coCartesian straightening-unstraightening equivalence of Lurie (see \cite[Theorem 3.8.1]{LurieGoodwillie}) which shows that a locally coCartesian fibration over of $\scr{C}$ corresponds precisely to the data of an oplax unital functor $\St_{\scr{C}}(X): \scr{C} \to \bcat{C}\!\on{at}_\infty$ with values in $\infty$-categories. The goal of this paper is to generalise the previous discussion to the setting of $(\infty,2)$-categories. In order words, we will give a definition of local fibration of $(\infty,2)$-categories and show that such fibrations can be equivalently be described as oplax unital functors with values in the $(\infty,2)$-category of $(\infty,2)$-categories.

  In order to introduce our main definition we recall the notion of an $(i,j)$-fibration of $(\infty,2)$-categories.

  Let $p:\bcat{X} \to \bcat{C}$ be a functorof $(\infty,2)$-categories. We say that $p$ is an $(i,j)$-fibration where $i,j \in \{0,1\}$ if:
  \begin{itemize}
    \item[F1)]  For every $a, b \in \bcat{X}$ the induced map $\bcat{X}(a,b) \to \bcat{S}(p(a),p(b))$ on mapping $\infty$-categories is a coCartesian fibration if $j=0$ or a Cartesian fibration if $j=1$.
    \item[F2)] For every $a,b,c \in \bcat{X}$ the composition functors 
    \[
      \bcat{X}(a,b)\times \bcat{X}(b,c) \xlongrightarrow{} \bcat{X}(a,c)
    \]
    preserve coCartesian edges if $j=0$ (resp. Cartesian edges if $j=1$).
    \item[C1)] Let $i=0$. Given an object $a \in \bcat{X}$ and a morphism $e:p(a) \to y$ in $\bcat{S}$, then there exists an edge $\hat{e}:a \to \hat{y}$ over $e$ with the following property: For every $z \in \bcat{X}$ precomposition with $\hat{e}$ induces a pullback of $\infty$-categories
    \[
      \begin{tikzcd}
        \bcat{X}(\hat{y},z) \arrow[r] \arrow[d] & \bcat{X}(a,z) \arrow[d] \\
        \bcat{S}(y,p(z)) \arrow[r] & \bcat{S}(p(a),p(z)).
      \end{tikzcd}
    \]
    We say that $\hat{e}$ is a $(0,j)$-Cartesian lift of $e$. If $i=1$ one defines a dual condition (which generalises the $(\infty,1)$-notion of Cartesian edge) and obtains the definition of a $(1,j)$-Cartesian edge.
  \end{itemize}
  We would like to point out that $(0,j)$-fibrations appear in the literature (\cite{GHL_Cart},\cite{AGS_CartI}) under the name of outer coCartesian $j=0$ (resp. inner coCartesian if $j=1$) and similarly $(1,j)$-fibrations are called outer Cartesian fibrations if $j=0$ and inner Cartesian fibrations if $j=1$.

  In \cite{AGS_CartI},\cite{AGS_CartII} we gave a systematic analysis of the theory of $(1,0)$-fibrations and provided the corresponding Grothendieck construction which identifies this kind of fibrations with contravariant functors $\scr{F}:\bcat{S}^\op \to \bcat{C}\!\on{at}_{(\infty,2)}$. However, the models employed to study these fibrations in the aforementioned works do not generalise the theory of locally coCartesian fibrations developed in \cite{HTT} and \cite{LurieGoodwillie}. In order to realise the local theory of fibrations in the scaled simplicial model  (\cite{LurieGoodwillie}), we will be working in this paper with the $(0,1)$-variance. Needless to say, in model-independent terms these problems do not arise and we can give the following general definition.

   Let $p:\bcat{X} \to \bcat{S}$ be a functor. We say that $p$ is a local $(i,j)$-fibration if the following holds:
   \begin{itemize}
     \item Conditions F1) and F2) are satisfied. In this case, we say that $p$ is \emph{coCartesian}-enriched if $j=0$ and \emph{Cartesian}-enriched if $j=1$.
     \item For every morphism $e:\Delta^1 \to \bcat{S}$ the pullback $\bcat{X}\times_{\bcat{S}}\{e\} \xlongrightarrow{} \Delta^1$ is an $(i,j)$-fibration. We say that an edge $\hat{e}:\Delta^1 \to \bcat{X}$ is a local $(0,1)$-Cartesian edge if it is $(0,1)$-Cartesian after taking the corresponding pullback.
   \end{itemize}

Our first result in this paper uses the machinery of marked-biscaled $(\bS )$ simplicial sets constructed in \cite{AGS_CartI} to produce for each (not necessarily fibrant) scaled simplicial set $(S,T_S)$ a model structure whose fibrant objects are $(0,1)$-fibrations (see \autoref{thm:model} and \autoref{thm:comparisonscaled}) with respect to $(S,T_S)$. We would like to remark that for an arbitrary scaled simplicial set $(S,T_S)$ a $(0,1)$-fibration is described as a certain map of marked biscaled simplicial sets satisfying the right lifting property against the class of $\bS$-anodyne morphisms (see \autoref{def:mbsanodyne}). In particular, in the case where our base is not fibrant we cannot always give a model-indepent description of a $(0,1)$-fibration without having to consider the fibrant replacement of $(S,T_S)$ in the model structure of scaled simplicial sets.

\begin{thm*}\label{thmintro:model}
  Let $(S,T_S)$ be a scaled simplicial set. Then there exists a left proper combinatorial simplicial model structure on $\left(\on{Set}^{\mathbf{mb}}_\Delta\right)_{/(S,\sharp,T_S\subset \sharp)}$, which is characterized uniquely by the following properties:
  \begin{itemize}
    \item[C)] A morphism $f:X \to Y$ is a cofibration if and only if $f$ induces a monomorphism on the underlying simplicial sets.
    \item[F)] An object $p:X \to S$ is fibrant if and only if it is a $(0,1)$-fibration.
  \end{itemize}
  Moreover, if $S=\Delta^0$ this model structure is Quillen equivalent to Lurie's model structure (see Theorem 4.27 in \cite{LurieGoodwillie}) on $\on{Set}_\Delta^{\mathbf{sc}}$ , the  category of scaled simplicial sets 
\end{thm*}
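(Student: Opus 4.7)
The plan is to apply a Smith-type recognition principle for combinatorial model structures (in the form of HTT~A.2.6.13) to the slice category $\left(\mbsSet\right)_{/(S,\sharp,T_S\subset \sharp)}$. For the generating cofibrations one takes the standard boundary inclusions $\partial \Delta^n \hookrightarrow \Delta^n$ decorated with all possible marking/biscaling data; these generate exactly the underlying monomorphisms. For the generating trivial cofibrations one takes the set of $\bS$-anodyne maps from \autoref{def:mbsanodyne}. The weak equivalences are declared to be those maps $f : X \to Y$ for which $\Map(Y, Z) \to \Map(X, Z)$ is a weak homotopy equivalence for every object $Z$ having the right lifting property against the $\bS$-anodyne class. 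Smith's axioms then reduce to verifying that this class of weak equivalences is accessible, satisfies 2-out-of-3, contains the $\bS$-anodyne maps, and is stable under pushouts along cofibrations.

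Granting this, condition (C) is immediate from the choice of generating cofibrations, and left properness follows since every cofibration is a monomorphism. Condition (F) becomes tautological: by construction, $p: X \to S$ is fibrant iff it has the right lifting property against every $\bS$-anodyne morphism, which by the very definition of the class is the same as being a $(0,1)$-fibration. The simplicial enrichment is obtained by tensoring with $K \in \sSet$ equipped with minimal marking and biscaling; SM7 reduces to stability of $\bS$-anodyne maps under pushout-products with cofibrations, a combinatorial check on generators that parallels the $(1,0)$-case treated in AGS\_CartI.

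For the Quillen equivalence when $S = \Delta^0$, I would construct a Quillen pair $L \dashv R$ where $R$ forgets both the marking and the auxiliary biscaling, retaining only the set of thin triangles (i.e.\ a scaled simplicial set), while $L$ endows a scaled simplicial set with the minimal marking and the minimal compatible biscaling. That this is a Quillen adjunction follows once one checks that $L$ sends Lurie's scaled anodyne maps into $\bS$-anodyne maps after decoration, and that $R$ preserves fibrant objects and fibrations. The equivalence statement then reduces to the assertion that over $\Delta^0$ the fibrant objects of both sides agree: condition F1 forces every mapping $\infty$-category of a fibrant $X \to \Delta^0$ to be fibered over a point, so the only coCartesian edges are equivalences and the marking/extra biscaling carry no homotopical information beyond the scaling.

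The main obstacle will be twofold: (a) verifying Smith's accessibility and pushout-stability axioms for the weak equivalences, which requires a careful analysis of the interplay between the simplicial mapping spaces and the $\bS$-anodyne generators; and (b) pinning down the $\bS$-anodyne class precisely enough so that its right orthogonal provably agrees, for arbitrary $(S,T_S)$, with the model-independent notion of a $(0,1)$-fibration given by F1), F2) and C1) with $i=0$, $j=1$. The latter is the key conceptual input and will mirror, variance-by-variance, the analysis already carried out for $(1,0)$-fibrations in AGS\_CartI and AGS\_CartII.
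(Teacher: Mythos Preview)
Your overall strategy is correct and matches the paper's approach, which in fact simply defers to the analogous construction for $(1,0)$-fibrations carried out in \cite{AGS_CartI}. A few points, however, need correction or clarification.

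First, the $\bS$-anodyne maps are not the generating trivial cofibrations; they form a strictly smaller class (a pseudo-generating set in Cisinski's sense). What is true is that they detect the fibrant objects. Accordingly, condition (F) is not purely tautological: one must still argue that an object with the RLP against $\bS$-anodyne maps automatically has the RLP against \emph{all} trivial cofibrations. The paper does this by showing that for any trivial cofibration $f:A\to B$ and any $(0,1)$-fibration $X\to S$, the induced map $\Map_S(B,X)\to\Map_S(A,X)$ is a trivial fibration of scaled simplicial sets (a consequence of the pushout-product axiom together with the definition of weak equivalences); surjectivity on objects then solves the lifting problem. Relatedly, the enrichment is in $\on{Set}_\Delta^+$, and the tensor with $K$ uses the \emph{maximal} scaling $(K,E_K,\sharp)$, not the minimal one.

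Second, for the Quillen equivalence over $\Delta^0$, the paper's adjunction differs from yours. Its right adjoint $R$ sends $(X,E_X,T_X\subseteq C_X)$ to $(X,C_X)$, retaining the \emph{lean} collection rather than the thin one, and its left adjoint $L$ sends $(Y,T_Y)$ to $(Y,\flat,\flat\subset T_Y)$, installing $T_Y$ as the lean scaling with minimal thin scaling. This choice is forced by the shape of the generators \ref{mb:innerhorn} and \ref{mb:wonky4}, in which the distinguished triangle is lean but not thin; it also makes $R\circ L=\on{id}$ on the nose and reduces the counit check to morphisms of type \ref{mb:equivalences} and \ref{mb:coCartoverThin}. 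Your pair (retain thin, minimal compatible biscaling) is a different adjunction; it may also work, but it is not what the paper does and would require a separate verification.

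Finally, your obstacle (b) is not part of this theorem at all. In the paper, ``$(0,1)$-fibration'' is \emph{defined} to mean an $\bS$-fibration over $(S,\sharp,T_S\subset\sharp)$; the comparison with the model-independent conditions F1), F2), C1) is a separate result, proved only when the base is a fibrant scaled simplicial set equipped with the scaling $M_S$.
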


In the case where $\bcat{S}=(S,T_S)$ models an $(\infty,2)$-category (where $T_S$ consists of those 2-simplices that represent commuting triangles in $\bcat{S}$) we can consider a subcollection of triangles $M_S \subseteq T_S$ depicted visually
\[\begin{tikzcd}
  & b \\
  a && c
  \arrow["u", from=2-1, to=1-2]
  \arrow["v", from=1-2, to=2-3]
  \arrow[""{name=0, anchor=center, inner sep=0}, from=2-1, to=2-3]
  \arrow["\simeq"', shorten <=3pt, Rightarrow, from=0, to=1-2]
\end{tikzcd}\] 
where either $u$ or $v$ are equivalences. In \autoref{thm:comparison} we specialize the previous result to $(S,M_S)$ to obtain a model-independent interpretation of the theory of $(0,1)$-fibrations over $(S,M_S)$.

\begin{thm*}\label{thmintro:modelindepnt}
  Let $(S,T_S)$ be a fibrant scaled simplicial set and let $p:X \to S$ be an object of $\left(\on{Set}^{\mathbf{mb}}_\Delta\right)_{/(S,\sharp,M_S\subset \sharp)}$. Then $p$ defines a fibrant object if and only if its associated functor  $p:\bcat{X} \to \bcat{S}$ of $(\infty,2)$-categories is a local $(0,1)$-fibration.
\end{thm*}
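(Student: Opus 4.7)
The plan is to compare the two characterizations slot by slot, using Theorem A to translate fibrancy into an $\bS$-anodyne lifting property and then identifying exactly which generating $\bS$-anodyne morphisms are enforced when we shrink the scaling data from $T_S$ to $M_S$. Since $\bcat{S}$ is fibrant, every mapping $\infty$-category $\bcat{X}(a,b)$, $\bcat{S}(p(a),p(b))$ and composition functor is already modelled by the scaled simplicial set $X$ and $p$ in a homotopy-invariant way, so each condition in the definition of a local $(0,1)$-fibration has an honest combinatorial shadow that I can check directly against lifts of specific anodyne maps.

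For the forward direction, I would assume $p$ is fibrant in the model structure of \autoref{thmintro:model} with marking $(\sharp, M_S\subset \sharp)$. Conditions F1) and F2) should be obtained by testing against the $\bS$-anodyne generators that impose coCartesian enrichment on mapping $\infty$-categories and preservation of coCartesian edges under composition; these generators are intrinsic to the base simplices and do not involve any scaled $2$-simplex outside $M_S$, so they are enforced regardless of whether the scaling of the base is $T_S$ or $M_S$. For the local condition, given $e\colon \Delta^1 \to S$, I would pull back and observe that $\Delta^1$ carries only degenerate triangles (all in $M_S$ trivially), so the pullback inherits fibrancy in the model structure of $\Delta^1$, which by the $S=\Delta^0$ clause of \autoref{thmintro:model} together with base change coincides with Lurie's model for $(0,1)$-fibrations over $\Delta^1$.

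For the reverse direction, assume $p\colon \bcat{X}\to \bcat{S}$ is a local $(0,1)$-fibration. I need to check the RLP against every $\bS$-anodyne generator relative to $(S,\sharp, M_S\subset \sharp)$. The generators split into two packages: (a) those internal to edges of $S$, which are exactly the generating scaled-anodyne maps over $\Delta^1$ and are handled by the local $(0,1)$-fibration condition applied to each edge pulled back from $e\colon \Delta^1 \to S$; and (b) the generators witnessing F1) and F2), which only test the enrichment structure on mapping $\infty$-categories and composition, and hence are handled directly by the hypotheses F1) and F2). The crucial point is that every $\bS$-anodyne morphism which would involve filling a $2$-simplex $\sigma\in T_S\setminus M_S$ (i.e.\ a genuinely composable triangle whose legs are not equivalences) is by construction absent from the generating set associated with the marking $M_S$, so no coherent composition data is required beyond the local/unital content.

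The main obstacle will be the combinatorial bookkeeping in step (a): one must verify that pulling back along $e\colon \Delta^1 \to S$ and comparing fibrancy of $X\times_S \Delta^1$ over $\Delta^1$ with a local lifting property of $X$ over $S$ is actually reversible, that is, that lifts produced locally on each edge can be assembled into genuine lifts against the $\bS$-anodyne generators over $S$. This amounts to showing that the generators of $\bS$-anodyne maps over $(S,\sharp, M_S\subset \sharp)$ supported on a single edge detect all relevant structure, which is where the restriction $M_S\subseteq T_S$ (versus the full $T_S$) is essential: omitting non-degenerate thin triangles removes precisely those generators that would glue local lifting data over multiple edges into coherent composition, leaving the lifting problem edge-local and therefore equivalent to the pullback formulation. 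Once that combinatorial equivalence is established, both directions follow formally.
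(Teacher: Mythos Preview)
Your forward direction is essentially the paper's argument. The reverse direction has a real gap in the dichotomy (a)/(b). The generator \ref{mb:2coCartesianmorphs} for $n\geq 3$ (the horn $\Lambda^n_0 \to \Delta^n$ with $\Delta^{\{0,1\}}$ marked and $\Delta^{\{0,1,n\}}$ thin) falls into neither package: its image in $S$ is an arbitrary $n$-simplex, constrained only by the requirement that the triangle $\{0,1,n\}$ land in $M_S$, so it is not supported on a single edge and cannot be solved by knowing that each $X\times_S\Delta^1$ is fibrant; and it is not a pure ``enrichment'' generator either, since it genuinely uses the marked edge. The same applies to \ref{mb:innerhorn} and \ref{mb:wonky4} for $n\geq 3$: these map to arbitrary simplices of $S$ and are not edge-local. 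Your final paragraph locates the obstacle correctly but the proposed resolution---that restricting the scaling to $M_S$ makes the remaining generators edge-local---is simply false; the restriction to $M_S$ only constrains which $2$-simplices of the \emph{domain} must be thin, not where the $n$-simplex lands in $S$.

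The paper closes this gap with three dedicated lemmas (\autoref{lem:innerhorn}, \autoref{lem:lambda0loc}, \autoref{lem:wonky}), each of which passes through the rigidification $\mathfrak{C}^{\mathbf{sc}}$ to translate the horn-filling problem into a lifting problem in mapping marked simplicial sets. For the crucial case \ref{mb:2coCartesianmorphs}, condition~ii) in \autoref{def:locally01edge} (the pullback square of mapping spaces) is exactly what makes a certain object $p$-initial in the relevant mapping $\infty$-category, and this initiality supplies the lift. In other words, the model-independent definition of a local $(0,1)$-Cartesian edge is \emph{stronger} than the pullback-to-$\Delta^1$ formulation you are using in package~(a); the paper establishes their equivalence separately (\autoref{prop:deflocedge}) and then uses the stronger form to solve the higher horns directly, without any edge-by-edge assembly.
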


It is well known (see Proposition 2.4.2.8 in \cite{HTT}) that a locally coCartesian fibration whose locally coCartesian edge compose must be a coCartesian fibration. We extend this analysis to the $(\infty,2)$-categorical case by considering a fibrant scaled simplicial set $(S,T_S)$ and a collection of triangles $M_S \subseteq \mathcal{U} \subseteq T_S$ which allows us to make the following definition:
\begin{itemize}
  \item A local $(0,1)$-fibration $p:\bcat{X} \to \bcat{S}$ is said to be $\mathcal{U}$-local if local $(0,1)$-Cartesian edges compose along triangles lying over $\mathcal{U}$.
\end{itemize}
It then follows from \autoref{thm:ulocal} that $\mathcal{U}$-local fibrations can also be characterised as fibrant objects in our model structure.

\begin{thm*}
  Let $(S,T_S)$ be a fibrant scaled simplicial  and let $U$ be a collection of triangles such that $M_S \subseteq \mathcal{U} \subseteq T_S$. Then an object $p:X \to S$ is fibrant in $\left(\on{Set}^{\mathbf{mb}}_\Delta\right)_{/(S,\sharp,\mathcal{U}\subset \sharp)}$ if and only if its associated functor of $p:\bcat{X} \to \bcat{S}$ of $(\infty,2)$-categories is a local $(0,1)$-fibration which is in addition $\mathcal{U}$-local.
\end{thm*}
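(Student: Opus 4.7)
The plan is to leverage \autoref{thmintro:modelindepnt}, which settles the case $\mathcal{U}=M_S$, and to isolate the effect of enlarging the thin scaling from $M_S$ to $\mathcal{U}$ on the generating $\bS$-anodyne class of \autoref{def:mbsanodyne}. By \autoref{thmintro:model}, fibrancy in either model structure is characterised by the right lifting property against the class of $\bS$-anodyne maps, so the difference between the two fibrancy conditions is controlled exactly by those anodyne extensions whose $2$-simplex data lies in $\mathcal{U}\setminus M_S$. The task is to show that these extra extensions encode precisely the composability of local $(0,1)$-Cartesian edges along triangles of $\mathcal{U}$.

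For the forward direction, suppose $p:X\to S$ is fibrant over $(S,\sharp,\mathcal{U}\subset\sharp)$. Shrinking the first scaling of $X$ to the preimage of $M_S$ yields the same underlying $p$ viewed as an object over $(S,\sharp,M_S\subset\sharp)$, and this operation preserves fibrancy since every $\bS$-anodyne lifting problem for the smaller base can be reformulated as one for the larger base. By \autoref{thmintro:modelindepnt}, $p$ is then a local $(0,1)$-fibration. To verify the additional $\mathcal{U}$-locality, fix a triangle $\sigma\in\mathcal{U}$ together with local $(0,1)$-Cartesian lifts $\hat{u}:a\to b$ and $\hat{v}:b\to c$ of its two edges. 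The horn inclusion $\Lambda^2_0\hookrightarrow\Delta^2$, equipped with the marking and scaling decorations that assert the long edge of the filler to be local $(0,1)$-Cartesian, is $\bS$-anodyne precisely because $\sigma$ is thin in the base; its solution against $p$ supplies the desired composite local $(0,1)$-Cartesian edge over $\sigma$.

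Conversely, suppose $p$ is a local $(0,1)$-fibration which is $\mathcal{U}$-local. By \autoref{thmintro:modelindepnt} $p$ already satisfies the right lifting property against the $\bS$-anodyne maps arising from the base $(S,\sharp,M_S\subset\sharp)$, so it remains to check the extra generators introduced by the larger thin scaling. These extra generators are precisely the composition horns described above, and their lifting property against $p$ is by definition the $\mathcal{U}$-locality hypothesis. The principal obstacle is the combinatorial bookkeeping required to isolate, inside \autoref{def:mbsanodyne}, the subclass of generators that become available only when $M_S$ is enlarged to $\mathcal{U}$, and to verify that this subclass is generated under saturation by the composition horns above. This step is the $(\infty,2)$-categorical analogue of the classical argument (Proposition 2.4.2.8 in \cite{HTT}) that a locally coCartesian fibration whose local coCartesian edges compose is in fact coCartesian, carried out in the marked-biscaled framework.
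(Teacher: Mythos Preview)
Your overall strategy is correct and matches the paper's, but there are genuine gaps in both directions that go beyond bookkeeping.

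\textbf{Forward direction.} The relevant generator is not a $\Lambda^2_0$ horn. In the definition of $\mathcal{U}$-locality, the thin $2$-simplex $\sigma$ already lives in $\bcat{X}$ (with $p(\sigma)\in\mathcal{U}$); you are not filling a horn but rather checking that the edge $w=\sigma(0\to 2)$ is marked. The generator that does this is \ref{mb:composeacrossthin}: a thin triangle with $\Delta^{\{0,1\}}$ and $\Delta^{\{1,2\}}$ marked forces $\Delta^{\{0,2\}}$ to be marked. Since $p(\sigma)\in\mathcal{U}$ makes the triangle thin over $(S,\mathcal{U})$, the lifting property against \ref{mb:composeacrossthin} yields the conclusion directly. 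Your $\Lambda^2_0$ description has the wrong face missing and conflates horn-filling with marking-detection.

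\textbf{Converse direction.} The claim that the extra generators ``are precisely the composition horns'' is where the real gap lies. Enlarging the base scaling from $M_S$ to $\mathcal{U}$ affects not only \ref{mb:composeacrossthin} but also all the \ref{mb:2coCartesianmorphs} generators $\Lambda^n_0\to\Delta^n$ for $n\geq 2$, whenever the triangle $\Delta^{\{0,1,n\}}$ lies over a simplex in $\mathcal{U}\setminus M_S$. For $n\geq 3$ these are not formally generated by the $n=2$ case, and solving them is the substantive content of the theorem. The paper handles this in two steps: first, \autoref{prop:ulocalcompose} shows that $\mathcal{U}$-locality (composition of local $(0,1)$-Cartesian edges) is equivalent to the edge being $\sigma$-local for every $\sigma\in\mathcal{U}$ in the sense of \autoref{def:sigmalocaledge}; second, $\sigma$-locality is exactly the input needed to rerun the argument of \autoref{lem:lambda0loc} and solve the higher $\Lambda^n_0$ problems. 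Your reference to Proposition 2.4.2.8 in \cite{HTT} is apt in spirit, but you have not supplied its $(\infty,2)$-categorical incarnation, which is precisely \autoref{prop:ulocalcompose}.
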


The results above can be interpreted as follows: Given an $(\infty,2)$-category modelled by a fibrant scaled simplicial set $(S,T_S)$ a $(0,1)$-fibration over the scaled simplicial set $(S,\mathcal{U})$ for $M_S \subseteq \mathcal{U} \subseteq T_S$ is a local $(0,1)$-fibration whose locally $(0,1)$-Cartesian edges have further composability properties dictated by the scaling $\mathcal{U}$.

Once the basics of the local theory of fibrations of $(\infty,2)$-categories are established we focus our attention into providing the expected Grothendieck construction which will allow us to interpret our fibrations as functors with values in $\bcat{C}\!\on{at}_{(\infty,2)}$ the $(\infty,2)$-category of $(\infty,2)$-categories. Since we are exclusively working with scaled simplicial sets in this paper we will chose an specific model for  $\bcat{C}\!\on{at}_{(\infty,2)}$ which we denote $\bcat{B}\!\on{icat}_{\infty}$: the $\infty$-bicategory of $\infty$-bicategories (i.e. fibrant scaled simplicial sets).

Let $S$ be a scaled simplicial and  denote by $\bcat{F}\!\on{ib}^{01}(S)$ the $\infty$-bicategory of $(0,1)$-fibrations over $S$\footnote{This $\infty$-bicategory is obtained from our model structure and thus depends on the scaling on $S$.} and let $\on{Fun}(S,\bcat{B}\!\on{icat}_\infty)$ the functor $\infty$-bicategory. Our main construction is a generalisation of the straightening-unstraightening equivalence of Lurie given in \cite{LurieGoodwillie} (where only fibrations with $\infty$-categorical fibres are considered) to the setting of $(0,1)$-fibrations whose fibres are $\infty$-bicategories. Combining \autoref{thm:mainun} and \autoref{rem:rehash} we obtain:

\begin{thm*}
  Let $S$ be a scaled simplicial set. Then the straightening-unstraightening adjunction 
  \[
    \SSt_{S}: \bcat{F}\!\on{ib}^{01}(S) \llra \on{Fun}(S,\bcat{B}\!\on{icat}_\infty): \UN_S
  \]
  yields an equivalence of $\infty$-bicategories between the $\infty$-bicategory of $(0,1)$-fibrations over $S$ and the $\infty$-bicategory of covariant functors with values in $\infty$-bicategories.
\end{thm*}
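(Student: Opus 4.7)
The plan is to establish the equivalence as a Quillen equivalence between the model structure of \autoref{thmintro:model} on the slice $\left(\on{Set}^{\mathbf{mb}}_\Delta\right)_{/(S,\sharp,T_S\subset \sharp)}$ and an appropriate projective/injective model structure on the category of simplicially enriched functors from a scaled rigidification $\Csc[S]$ into a model for $\bcat{B}\!\on{icat}_\infty$-valued presheaves. First I would construct the straightening functor $\SSt_S$ as a left Quillen functor, proceeding as Lurie does in \cite{LurieGoodwillie}: define it on representables $\Delta^n \to S$ via an explicit combinatorial formula built out of simplicial cones, and then left Kan extend along the Yoneda embedding. The right adjoint $\UN_S$ is then forced by the adjoint functor theorem, and basic cofibration-preservation is immediate from the fact that cofibrations on both sides are detected on underlying data.

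Once the adjunction is constructed, the strategy to show it is a Quillen equivalence is to reduce to simple bases. The model structure of \autoref{thmintro:model} is compatible with base change in $S$, and both sides of the adjunction convert colimits in $S$ into homotopy colimits of the corresponding $\infty$-bicategories of fibrations/functors (this is essentially the content of the left-properness and simpliciality asserted in \autoref{thmintro:model}). Therefore, by writing an arbitrary scaled simplicial set as a homotopy colimit of simplices $(\Delta^n, T)$ with various scalings, it suffices to prove the equivalence when $S = \Delta^n$ is a scaled simplex. A further inductive argument on $n$, combined with the skeletal filtration of $\Delta^n$ and the model-categorical gluing lemma, then reduces us to verifying the equivalence on $\Delta^0$, $\Delta^1$ (for the inner coCartesian lifts) and $\Delta^2$ (for the compatibility triangles that encode the $(0,1)$-condition via the scaling $T_S \subset \sharp$).

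The base cases are then unwound directly. Over $\Delta^0$ the statement is the comparison between our model structure (which by \autoref{thmintro:model} is Quillen equivalent to Lurie's model structure on $\scsSet$) and $\bcat{B}\!\on{icat}_\infty$, which is tautological after applying the last clause of \autoref{thmintro:model}. Over $\Delta^1$ and $\Delta^2$ one has to check that the straightening sends a $(0,1)$-fibration to the correct functor, i.e.\ that the fibres of $p$ over vertices are recovered as values of the straightening and that a local $(0,1)$-Cartesian lift of an edge induces (up to equivalence) the functor associated with the corresponding $1$-morphism in the target $\infty$-bicategory. This involves a direct computation with the combinatorial formula for $\SSt$ and the mapping-$\infty$-category enrichment.

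Finally, I would upgrade the resulting Quillen equivalence to an equivalence of $\infty$-bicategories to obtain the statement of the theorem; this is the content of \autoref{rem:rehash}, which allows us to pass from a simplicial Quillen equivalence between our model categories to an equivalence between their underlying $\infty$-bicategories $\bcat{F}\!\on{ib}^{01}(S)$ and $\on{Fun}(S,\bcat{B}\!\on{icat}_\infty)$. The hardest step will be the verification at $\Delta^n$ for $n = 2$: one must ensure that the straightening correctly intertwines the composition in the mapping $\infty$-categories of a $(0,1)$-fibration with the $2$-morphisms encoded in an oplax unital functor. This requires a careful analysis of the scaled cones used in the definition of $\Csc$ together with the interaction between the $\bS$-anodyne morphisms of \cite{AGS_CartI} and the scaling $T_S \subset \sharp$, and it is here that the generalisation from $\infty$-categories (as in \cite{LurieGoodwillie}) to $\infty$-bicategorical fibres becomes genuinely delicate.
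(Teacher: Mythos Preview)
Your overall architecture matches the paper: construct $\SSt_S$ as a left Quillen functor by hand on simplices and left Kan extend, then reduce the Quillen equivalence to the case of a single simplex by writing $S$ as a filtered colimit along cofibrations and using that both $S \mapsto \left(\on{Set}^{\mathbf{mb}}_\Delta\right)^{\on{o}}_{/S}$ and $S \mapsto \on{Fun}^{\on{o}}(\mathfrak{C}^{\mathbf{sc}}[S],\on{Set}_\Delta^{\mathbf{ms}})$ send such colimits to homotopy \emph{limits} of $\on{Set}_\Delta^+$-enriched categories (not homotopy colimits, as you wrote). The final upgrade via \autoref{rem:rehash} is also correct.

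The genuine gap is your second reduction. You claim that an inductive argument together with ``the skeletal filtration of $\Delta^n$ and the model-categorical gluing lemma'' reduces the simplex case to $n\leq 2$. This does not work: the top cell $\partial\Delta^n \hookrightarrow \Delta^n$ is not a homotopy pushout of lower-dimensional simplices in any useful sense, and knowing the Quillen equivalence over $\partial\Delta^n$ does not formally imply it over $\Delta^n$. The paper must, and does, prove the equivalence over $\Delta^n_\flat$ for \emph{every} $n$ directly.

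That step is where the real content lies, and it requires an idea absent from your proposal: the \emph{mapping simplex} construction $\scr{M}(\scr{F})$. Given a projectively fibrant functor $\scr{F}$ on $\OO^n$, one builds an explicit object over $\Delta^n$ with fibre $\scr{F}(j)$ over $j$, and shows (via an iterated lax-cylinder argument) that every $(0,1)$-fibration over $\Delta^n_\flat$ is weakly equivalent to some $\scr{M}(\scr{F})$. Induction on $n$ then reduces the statement to checking that the inclusion of the terminal fibre induces an equivalence $\SSt_{\Delta^n}(X_n)(n) \xrightarrow{\simeq} \SSt_{\Delta^n}(X)(n)$, which is verified by reducing to corepresentable functors and an explicit comparison with a product. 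By contrast, the case you flagged as hardest, $\Delta^2_\sharp$, is in the paper a one-paragraph corollary of the $\Delta^2_\flat$ case via a fully-faithfulness argument; the delicate work is the general $\Delta^n_\flat$.
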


The general nature of the theorem above allows us to consider special cases which are of special interest. Namely, let $\bcat{S}=(S,T_S)$ be an $\infty$-bicategory and let $S_{\on{oplax}}=(S,M_S)$. We can now consider $\bcat{L}\!\on{Fib}(\bcat{S}):=\bcat{F}\!\on{ib}^{01}(S_{\on{oplax}})$ and apply our Grothendieck construction to obtain an equivalence between the $\infty$-bicategory of local $(0,1)$-fibrations over $\bcat{S}$ and the category $\on{Fun}(S_{\on{oplax}}, \bcat{B}\!\on{icat}_\infty)$. The later can be interpreted used the work of  Gagna-Harpaz-Lanari in \cite{GHL_Gray} as a model for the $\infty$-bicategory of oplax unital functors with values in $\infty$-bicategories. Indeed, given a map of scaled simplicial sets $\scr{F}:S_{\on{oplax}} \to \bcat{B}\!\on{icat}_\infty$  unraveling the definitions we obtain:
\begin{enumerate}
  \item For every objecet $s \in S$ an $\infty$-bicategory $\scr{F}(s)=X_s$.
  \item For every morphism $u:s_0 \to s_1$ in $S$ a functor $u_*: X_{s_0} \to X_{s_1}$.
  \item For every 2-simplex $\sigma:\Delta^2 \to S_{\on{oplax}}$,
  \[\begin{tikzcd}
  & s_1 \\
  s_0 \arrow[rr,"w",swap] && s_2
  \arrow["u", from=2-1, to=1-2]
  \arrow["v", from=1-2, to=2-3]
  \arrow[""{name=0, anchor=center, inner sep=0}, from=2-1, to=2-3]
  \arrow[ shorten <=3pt, Rightarrow, from=0, to=1-2]
\end{tikzcd}\] 
     a  natural transformation $\eta_\sigma: X_{c_0} \times \Delta^1 \to X_{c_2}$ between $w_* \Rightarrow{}v_* \circ u_*$ which is invertible whenever $\sigma \in M_S$.
\end{enumerate}
The fact that $\eta_\sigma$ is degenerate whenever $\sigma$ is degenerate tells us that $\scr{F}$ is an unital functor. Moreover, since in general $M_S \subseteq T_S$ it follows that $\scr{F}$ preserves composition only up to non-invertible 2-morphism. We further note, that since every invertible 2-morphism in $\bcat{S}$ belongs to $M_S$ it follows that for every pair of objects of $\bcat{S}$, $\scr{F}$ defines a functor between the associated mapping $\infty$-categories
\[
  \scr{F}_{(s,s')}: \bcat{S}(s,s') \xlongrightarrow{} \on{Fun}(X_s,X_{s'}).
\]

 Our main theorem then specialises (\autoref{cor:laxunitl}) to

\begin{cor*}
  Let $\bcat{S}$ be an $(\infty,2)$-category presented by a fibrant scaled simplicial set $(S,T_S)$. Then the straightening-unstraightening adjuction associated to the scaled simplicial set $(S,M_S)$
   \[
    \SSt^{\on{oplax}}_{\bcat{S}}: \bcat{L}\!\on{Fib}(\bcat{S}) \llra \on{Fun}^{\on{oplax}}(\bcat{S},\bcat{B}\!\on{icat}_\infty): \UN^{\on{oplax}}_{\bcat{S}}
  \]
  yields an equivalence of $\infty$-bicategories between the $\infty$-bicategory of local $(0,1)$-fibrations over $\bcat{S}$ and the $\infty$-bicategory of oplax unital functors with values in $\infty$-bicategories.
\end{cor*}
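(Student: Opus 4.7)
The plan is to obtain this corollary as a direct specialization of the main straightening-unstraightening theorem to the scaled simplicial set $S_{\on{oplax}} := (S, M_S)$, combined with a reinterpretation of the target functor $\infty$-bicategory.

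First, I would apply the main theorem with the base $S_{\on{oplax}}$. This immediately produces an adjoint equivalence
\[
  \SSt_{S_{\on{oplax}}} : \bcat{F}\!\on{ib}^{01}(S_{\on{oplax}}) \llra \on{Fun}(S_{\on{oplax}}, \bcat{B}\!\on{icat}_\infty) : \UN_{S_{\on{oplax}}}
\]
of $\infty$-bicategories. By the definition $\bcat{L}\!\on{Fib}(\bcat{S}) := \bcat{F}\!\on{ib}^{01}(S_{\on{oplax}})$ given in the paragraph preceding the statement, the left-hand side already matches the desired source, and the two adjoints in the corollary are obtained simply by renaming $\SSt^{\on{oplax}}_{\bcat{S}} := \SSt_{S_{\on{oplax}}}$ and $\UN^{\on{oplax}}_{\bcat{S}} := \UN_{S_{\on{oplax}}}$.

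Second, I would identify the target with $\on{Fun}^{\on{oplax}}(\bcat{S}, \bcat{B}\!\on{icat}_\infty)$, the $\infty$-bicategory of oplax unital functors. This step invokes the framework of Gagna-Harpaz-Lanari cited in the introduction: a map of scaled simplicial sets $(S,M_S) \to \bcat{B}\!\on{icat}_\infty$ corresponds precisely to such an oplax unital functor, because the scaling $M_S$ is designed exactly so that the triangles forced to yield invertible $2$-morphisms are those with an equivalence on one side (the unitality coming from the inclusion of degenerate triangles in $M_S$, and the oplax composition law coming from the non-invertible $2$-simplices in $T_S\setminus M_S$). The unraveling spelled out immediately before the corollary statement already does this translation on objects; promoting it to an equivalence of $\infty$-bicategories amounts to comparing the simplicial enrichment inherited from our model structure on $\mbsSet_{/(S,\sharp, M_S\subset \sharp)}$ with the functor $\infty$-bicategory constructed via the Gray tensor product in \cite{GHL_Gray}.

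The only substantive point is this second identification, which is a model-theoretic comparison rather than a deep calculation: no new homotopy-coherent data needs to be produced beyond what the main theorem already supplies. I would expect the argument to reduce, after the standard unwinding, to checking that the two descriptions of $\on{Fun}(S_{\on{oplax}},\bcat{B}\!\on{icat}_\infty)$ agree level-wise on $n$-simplices, using the Gray tensor characterisation of oplax-natural transformations to match the mapping $\infty$-bicategories on both sides.
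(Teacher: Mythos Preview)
Your approach is essentially the same as the paper's: the corollary is obtained by specializing the main straightening-unstraightening theorem to the base $(S,M_S)$ and renaming both sides. The only divergence is that you regard the identification of the target with $\on{Fun}^{\on{oplax}}(\bcat{S},\bcat{B}\!\on{icat}_\infty)$ as a substantive model-theoretic comparison requiring Gray-tensor arguments, whereas in the paper this is handled entirely by fiat: the paper \emph{defines} $\on{Fun}^{\on{oplax}}(\bcat{S},\bcat{D}) := \on{Fun}((S,M_S),\bcat{D})$ (citing \cite{GHL_Gray} only as motivation for the terminology, and explicitly noting that agreement with the Gaitsgory--Rozenblyum notion remains unproven). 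Thus your ``second step'' collapses to a tautology in the paper's framework, and the proof reduces to the single sentence ``immediate from the main theorem.''
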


We expect that our fibrational approach will be a valuable tool to define oplax functors in situations when handling the coherence data might be too unwiedly. Additionally, the this variant of the Grothendieck construction will play a fundamental role in upcoming work \cite{AGH} where we will study a general calculus of mates for $(\infty,2)$-categories.

We conclude this section by extending the previous result to the $\mathcal{U}$-local case. Given a collection of triangles $M_S \subseteq \mathcal{U} \subseteq T_S$ we define $(S,\mathcal{U})=S_{\mathcal{U}}$ together with $\bcat{L}\!\on{Fib}(\bcat{S})^{\mathcal{U}}=\bcat{F}\!\on{ib}^{01}(S_{\mathcal{U}})$. Again, our main theorem specialises to:

\begin{cor*}
  Let $\bcat{S}$ be an $(\infty,2)$-category presented by a fibrant scaled simplicial set $(S,T_S)$. Given a collection of thin triangles $M_S \subseteq \mathcal{U} \subseteq T_S$, then the straightening-unstraightening adjuction associated to the scaled simplicial set $(S,\mathcal{U})$
   \[
    \SSt^{\mathcal{U}}_{\bcat{S}}: \bcat{L}\!\on{Fib}(\bcat{S})^{\mathcal{U}} \llra \on{Fun}^{\mathcal{U}}(\bcat{S},\bcat{B}\!\on{icat}_\infty): \UN^{\mathcal{U}}_{\bcat{S}}
  \]
  yields an equivalence of $\infty$-bicategories between the $\infty$-bicategory of local $(0,1)$-fibrations over $\bcat{S}$ which are in addition $\mathcal{U}$-local and the $\infty$-bicategory of oplax unital functors with values in $\infty$-bicategories which preserve composition along triangles in $\mathcal{U}$.
\end{cor*}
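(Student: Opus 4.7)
The plan is to deduce this corollary by specializing the main theorem of the paper (the straightening-unstraightening equivalence) to the scaled simplicial set $S_{\mathcal{U}}=(S,\mathcal{U})$ and then reinterpreting both sides of the resulting equivalence using previously established identifications.

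First, I would apply the main theorem directly to the scaled simplicial set $S_{\mathcal{U}}$. This immediately yields a Quillen (and hence $\infty$-bicategorical) equivalence
\[
  \SSt_{S_{\mathcal{U}}}: \bcat{F}\!\on{ib}^{01}(S_{\mathcal{U}}) \llra \on{Fun}(S_{\mathcal{U}},\bcat{B}\!\on{icat}_\infty): \UN_{S_{\mathcal{U}}}.
\]
By definition, $\bcat{L}\!\on{Fib}(\bcat{S})^{\mathcal{U}}=\bcat{F}\!\on{ib}^{01}(S_{\mathcal{U}})$, so the left-hand side requires no further work beyond the identification, and we only need to reinterpret the right-hand side model-independently.

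Second, I would invoke the $\mathcal{U}$-local characterization theorem stated earlier in the excerpt to confirm that the fibrant objects in the relevant model structure on $(\on{Set}^{\mathbf{mb}}_\Delta)_{/(S,\sharp,\mathcal{U}\subset\sharp)}$ correspond precisely to local $(0,1)$-fibrations over $\bcat{S}$ that are in addition $\mathcal{U}$-local. This gives the model-independent reading of the left-hand side as $\bcat{L}\!\on{Fib}(\bcat{S})^{\mathcal{U}}$ and matches the name in the corollary.

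Third, and this is where the main subtlety lies, I would identify $\on{Fun}(S_{\mathcal{U}},\bcat{B}\!\on{icat}_\infty)$ with the $\infty$-bicategory $\on{Fun}^{\mathcal{U}}(\bcat{S},\bcat{B}\!\on{icat}_\infty)$ of oplax unital functors that preserve composition along triangles in $\mathcal{U}$. Unraveling a map $\scr{F}:S_{\mathcal{U}}\to\bcat{B}\!\on{icat}_\infty$ exactly as in the discussion preceding the oplax-unital corollary, one obtains an object assignment, a 1-cell assignment, and for each 2-simplex $\sigma\in T_S$ a natural transformation $\eta_\sigma: w_*\Rightarrow v_*\circ u_*$ which is invertible whenever $\sigma\in\mathcal{U}$; the condition that a triangle in $\mathcal{U}$ is scaled in $S_{\mathcal{U}}$ translates, under the scaled-simplicial-set semantics of $\bcat{B}\!\on{icat}_\infty$ from \cite{GHL_Gray}, exactly to the preservation of composition along $\mathcal{U}$. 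Thus the scaling $M_S\subseteq\mathcal{U}\subseteq T_S$ precisely interpolates between the two previously treated cases ($\mathcal{U}=M_S$ yielding arbitrary oplax unital functors, and $\mathcal{U}=T_S$ yielding genuine functors).

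The main obstacle is the third step: making the identification of functor $\infty$-bicategories fully rigorous. This amounts to checking that the enriched-hom and functor $\infty$-bicategories built from the scaled model structure are invariant under equivalence of the base scaling and detect exactly the composability data prescribed by $\mathcal{U}$. Concretely, I would argue via the simplicial enrichment of the model structure of the first theorem in the excerpt, combined with the already-proven oplax unital case (where $\mathcal{U}=M_S$), by exhibiting a commutative square of adjunctions induced by the inclusion of scalings $M_S\hookrightarrow\mathcal{U}\hookrightarrow T_S$ and checking that the essential image of $\on{Fun}(S_{\mathcal{U}},\bcat{B}\!\on{icat}_\infty)$ inside $\on{Fun}(S_{\on{oplax}},\bcat{B}\!\on{icat}_\infty)$ consists exactly of those oplax unital functors whose associated $\eta_\sigma$ is invertible for $\sigma\in\mathcal{U}$. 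Once this is done, the corollary follows by composing the three identifications with the main theorem.
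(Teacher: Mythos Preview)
Your approach is essentially the same as the paper's: specialize the main straightening-unstraightening theorem to the scaled simplicial set $S_{\mathcal{U}}=(S,\mathcal{U})$ and then read off both sides via the definitions $\bcat{L}\!\on{Fib}(\bcat{S})^{\mathcal{U}}:=\bcat{F}\!\on{ib}^{01}(S_{\mathcal{U}})$ and $\on{Fun}^{\mathcal{U}}(\bcat{S},\bcat{B}\!\on{icat}_\infty):=\on{Fun}(S_{\mathcal{U}},\bcat{B}\!\on{icat}_\infty)$, with \autoref{thm:ulocal} justifying the model-independent name on the fibration side.

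Where you diverge slightly is in your third step. You treat the identification of $\on{Fun}(S_{\mathcal{U}},\bcat{B}\!\on{icat}_\infty)$ with ``oplax unital functors preserving composition along $\mathcal{U}$'' as a genuine obstacle requiring a comparison argument via a commutative square of adjunctions and an essential-image computation inside $\on{Fun}(S_{\on{oplax}},\bcat{B}\!\on{icat}_\infty)$. In the paper this is not a theorem but a \emph{definition}, following \cite{GHL_Gray}: maps of scaled simplicial sets out of $(S,\mathcal{U})$ are taken as the model for such partially lax functors, and the informal unraveling in the introduction is descriptive rather than probative. So your extra work is harmless but unnecessary; the corollary is literally the main theorem applied to $S_{\mathcal{U}}$ together with \autoref{thm:ulocal} and \autoref{rem:rehash}, exactly parallel to the one-line proof of \autoref{cor:laxunitl}.
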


\subsection*{The \texorpdfstring{$\infty$-}-bicategorical Yoneda lemma}
As an application of our results we give a fibrational proof of the Yoneda lemma for $\infty$-bicategories. We would like to stress that such a result is already present in the literature in the work of Hinich \cite{Hin} where a general Yoneda lemma for enriched $\infty$-categories is established.

In this work we provide a proof of the Yoneda lemma as a direct application of the Grothendieck construction. Given an $\infty$-bicategory $\bcat{C}$ we consider the $(0,1)$-fibration $\on{ev}_1:\on{Fun}^{\on{gr}}(\Delta^1,\bcat{C}) \to \bcat{C}$ (see Theorem 2.2.6 in \cite{GHL_Cart} for more details) where $\on{Fun}^{\on{gr}}(\Delta^1,\bcat{C})$ is the category of functors and lax natural transformations also known as the lax arrow category of $\bcat{C}$.

Carefully unwinding this construction reveals that the fibres of this functor come equipped with maps 
\[
  \bcat{C}_{\upslash c}=\on{Fun}^{\on{gr}}(\Delta^1,\bcat{C}) \times_{\bcat{C}}\{c\} \xlongrightarrow{}\bcat{C}
\]
which are in turn $(1,0)$-fibrations with $\infty$-categorical fibres. We can describe the $\infty$-bicategory $ \bcat{C}_{\upslash c}$ as follows
\begin{itemize}
  \item The objects of $\bcat{C}_{\upslash c}$ are given by morphisms $f:x \to c$ in $\bcat{C}$ whose target is $c$.
  \item A morphism from $f:x \to c$ to $g:y \to c$ is given by a laxly commutative diagram
  \[\begin{tikzcd}
  x && y \\
  & c
  \arrow[""{name=0, anchor=center, inner sep=0}, "f"', from=1-1, to=2-2]
  \arrow["\alpha", from=1-1, to=1-3]
  \arrow["g", from=1-3, to=2-2]
  \arrow[shorten <=12pt, shorten >=12pt, Rightarrow, from=0, to=1-3]
\end{tikzcd}\]
\item A 2-morphism from $\alpha: x \to y$ to $\beta:x \to y$ is given by a 2-morphism $\theta: \alpha \Rightarrow \beta$ making the obvious diagram commute.
\end{itemize}

Applying the Grothendieck construction in two steps we obtain a functor $\mathcal{Y}: \bcat{C} \to \on{Fun}(\bcat{C}^\op,\bcat{C}\!\on{at}_\infty)$. Moreover, it follows from previous work (Theorem 3.17 in \cite{AScof}) that $\bcat{C}_{\upslash c}$ corresponds under the Grothendieck construction to the representable functors $\bcat{C}(\mathblank,c)$. We prove in \autoref{thm:yoneda} the final result of this paper.

\begin{thm*}
   For every $\infty$-bicategory $\bcat{C}$ the Yoneda embedding
    \[
      \mathcal{Y}:\bcat{C} \xlongrightarrow{} \on{Fun}(\bcat{C}^{\op},\bcat{C}\!\on{at}_\infty), \enspace c \mapsto \bcat{C}(\mathblank,c)
    \]
   is fully-faithful. Moreveover, given a functor $\scr{F}: \bcat{C}^\op \to \bcat{C}\!\on{at}_\infty$ there is a equivalence of $\bcat{C}\!\on{at}_\infty$-valued functors
    \[
      \on{Nat}_{\bcat{C}^{\op}}(\mathcal{Y}(\mathblank),\scr{F}) \xRightarrow{\simeq} \scr{F}
    \]
    (where $\on{Nat}_{\bcat{C}^\op}(\mathblank,\mathblank)$ is the mapping $\infty$-category in  $\on{Fun}(\bcat{C}^{\op},\bcat{C}\!\on{at}_\infty)$), which is natural in $\scr{F}$.
\end{thm*}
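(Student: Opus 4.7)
The plan is to reduce the Yoneda lemma to a statement about $(1,0)$-fibrations via the straightening--unstraightening equivalence established in this paper, and to prove that statement by an anodyne-filtration argument on the lax slice $\bcat{C}_{\upslash c}$.

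First I would make precise the construction of $\mathcal{Y}$. The $(0,1)$-fibration $\on{ev}_1:\on{Fun}^{\on{gr}}(\Delta^1,\bcat{C}) \to \bcat{C}$ has fiber $\bcat{C}_{\upslash c}$ over $c$, so by the main theorem its straightening is a functor $\bcat{C} \to \bcat{B}\!\on{icat}_\infty$ sending $c \mapsto \bcat{C}_{\upslash c}$. Each $\bcat{C}_{\upslash c}$ carries a natural $(1,0)$-fibration to $\bcat{C}$ via the source projection, whose straightening in the sense of \cite{AGS_CartII} is the presheaf $\bcat{C}(-,c)$. Iterating the two Grothendieck constructions, and using functoriality of the $(1,0)$-straightening in the base, yields $\mathcal{Y}:\bcat{C} \to \on{Fun}(\bcat{C}^{\op},\bcat{C}\!\on{at}_\infty)$ with $\mathcal{Y}(c) \isom \bcat{C}(-,c)$.

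Second, I would translate the statement into fibrational language. For an arbitrary $\scr{F}:\bcat{C}^{\op} \to \bcat{C}\!\on{at}_\infty$, let $\pi:\bcat{X} \to \bcat{C}$ be its $(1,0)$-unstraightening. The Grothendieck equivalence identifies the $\infty$-category $\on{Nat}_{\bcat{C}^{\op}}(\mathcal{Y}(c), \scr{F})$ with the $\infty$-category $\on{Fun}^{(1,0)}_{\bcat{C}}(\bcat{C}_{\upslash c}, \bcat{X})$ of $(1,0)$-Cartesian-edge-preserving maps over $\bcat{C}$. The content of the theorem then becomes that the evaluation
\[
\on{ev}_{\id_c}: \on{Fun}^{(1,0)}_{\bcat{C}}(\bcat{C}_{\upslash c}, \bcat{X}) \xlongrightarrow{} \bcat{X}_c \isom \scr{F}(c)
\]
at the distinguished object $\id_c \in \bcat{C}_{\upslash c}$ is an equivalence, naturally in $\scr{F}$. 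Full faithfulness follows by specialising to $\scr{F}=\mathcal{Y}(c')$, whose unstraightening is $\bcat{C}_{\upslash c'}$ and whose fiber over $c$ is $\bcat{C}(c,c')$.

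The remaining task is to prove this equivalence. My approach is to show that the inclusion $\{\id_c\} \hookrightarrow \bcat{C}_{\upslash c}$ is a trivial cofibration in the model structure on $(1,0)$-fibrations of \cite{AGS_CartI}, so that morphisms into any $(1,0)$-fibration correspond to objects of its fiber over $c$. The geometric observation driving the argument is that every object $(x,f:x\to c)$ of $\bcat{C}_{\upslash c}$ is the source of a canonical $(1,0)$-Cartesian morphism to $(c,\id_c)$ given by $f$ itself with the identity 2-cell; this replaces the terminality of $\id_c$ in an ordinary slice. I would filter $\bcat{C}_{\upslash c}$ by these Cartesian lifts together with the higher simplices witnessing their composition, and verify that each attaching map is a pushout of generating anodyne morphisms. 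The main obstacle is the combinatorial bookkeeping: at each stage one must describe explicitly the simplices, marked edges and scaled triangles of $\bcat{C}_{\upslash c}$ and match them against the generators of the $(1,0)$-model structure. Ensuring that the argument is sufficiently uniform in $c$ to produce the claimed naturality statement $\on{Nat}_{\bcat{C}^{\op}}(\mathcal{Y}(-), \scr{F}) \simeq \scr{F}$ at the level of $\bcat{C}\!\on{at}_\infty$-valued functors on $\bcat{C}^{\op}$ is where I expect the main technical delicacy to lie.
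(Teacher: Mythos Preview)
Your approach is essentially the paper's: both reduce everything to the fact that the inclusion $\{\id_c\} \hookrightarrow \bcat{C}_{\upslash c}$ is a trivial cofibration in the $(1,0)$-model structure, so that restriction along it gives $\on{Map}_{\bcat{C}}(\bcat{C}_{\upslash c},\bcat{X}) \xrightarrow{\simeq} \bcat{X}_c$.

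Two differences in execution. First, the paper does not perform the filtration from scratch but quotes \cite[Theorem~3.17]{AScof}, which proves the stronger statement that $A \to \mathbb{F}(\bcat{C})\times_{\bcat{C}} A$ is anodyne for \emph{every} $A$ over $\bcat{C}$; your case is $A=\{c\}$. Second, this uniformity in $A$ is precisely what resolves your naturality worry: the paper packages $\scr{G} \mapsto \on{Map}_{\bcat{C}}(\bcat{C}_{\upslash(-)},\scr{G})$ as an endofunctor $\bcat{I}$ of $\bcat{F}\!\on{ib}^{10}(\bcat{C})^{\leq 1}$ via the universal property $\on{Map}_{\bcat{C}}(A,\bcat{I}(\scr{G})) \cong \on{Map}_{\bcat{C}}(\mathbb{F}(\bcat{C})\times_{\bcat{C}} A,\scr{G})$, and the anodyne maps $A \to \mathbb{F}(\bcat{C})\times_{\bcat{C}} A$ assemble into a natural transformation $\bcat{I} \Rightarrow \mathbb{1}$ which is your $\on{ev}_{\id_c}$ on fibres. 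So the naturality is built into the construction rather than checked after the fact.

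There is one small but genuine gap in your deduction of full faithfulness. Knowing that $\on{ev}_{\id_c}:\on{Map}_{\bcat{C}}(\bcat{C}_{\upslash c},\bcat{C}_{\upslash c'}) \to \bcat{C}(c,c')$ is an equivalence is not enough; you must also verify that the composite $\bcat{C}(c,c') \xrightarrow{\mathcal{Y}} \on{Map}_{\bcat{C}}(\bcat{C}_{\upslash c},\bcat{C}_{\upslash c'}) \xrightarrow{\on{ev}_{\id_c}} \bcat{C}(c,c')$ is equivalent to the identity. The paper does this by an explicit computation inside the model $\SSt_{\bcat{C}}(\bcat{C})$, tracing where a morphism $f:c\to c'$ is sent.
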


\subsection*{Acknowledgments}
The author would like to thank Rune Haugseng for the helpful discussions regarding the model independent definition of local fibrations.
\section{Preliminaries}
In thisi section we will mainly gather the main definitions of the theory of scaled simplicial sets as presented by Lurie in \cite{LurieGoodwillie}. 
\begin{definition}\label{def:scaledsimplicialset}
  A scaled simplicial set $(X,T_X)$ consists of a simplicial set $X$ together with a collection of $2$-simplices (also called triangles) $T_X$, which contains \emph{every degenerate} triangle. We call the elements of $T_X$ the \emph{thin triangles} of $X$. A morphism of scaled simplicial sets $f:(X,T_X) \to (Y,T_Y)$ is a map of simplicial sets $f: X \to Y$ such that $f(T_X)\subseteq T_Y$. We denote the corresponding category of scaled simplicial sets by $\on{Set}_\Delta^{\mathbf{sc}}$.
\end{definition}

\begin{remark}
  When no confusion shall arise we will omit the pair notationand simply denote the scaled simplicial  $(X,T_X)$ set as  $X$.
\end{remark}

\begin{notation}
  Given a simplicial set $A$ we have two canonical ways of viewing it as a scaled simplicial set:
  \begin{itemize}
    \item We define $A_\flat=(A,\flat)$ where $\flat$ is the collection consisting only in the degenerate triangles of $A$.
    \item We define $A_\sharp=(A,\sharp)$ where $\sharp$ is the collection consisting in every triangle of $A$.
  \end{itemize}
\end{notation}

  \begin{definition}\label{def:scanodyne}
  The set of \emph{generating scaled anodyne maps} \(\sS\) is the set of maps of scaled simplicial sets consisting of:
  \begin{enumerate}
    \item[(i)]\label{item:anodyne-inner} the inner horns inclusions
    \[
    \bigl(\Lambda^n_i,\{\Delta^{\{i-1,i,i+1\}}\}\bigr)\rightarrow \bigl(\Delta^n,\{\Delta^{\{i-1,i,i+1\}}\}\bigr)
    \quad , \quad n \geq 2 \quad , \quad 0 < i < n ;
    \]
    \item[(ii)]\label{i:saturation} the map 
    \[
    (\Delta^4,T)\rightarrow (\Delta^4,T\cup \{\Delta^{\{0,3,4\}}, \ \Delta^{\{0,1,4\}}\}),
    \]
    where we define
    \[
    T\overset{\text{def}}{=}\{\Delta^{\{0,2,4\}}, \ \Delta^{\{ 1,2,3\}}, \ \Delta^{\{0,1,3\}}, \ \Delta^{\{1,3,4\}}, \ \Delta^{\{0,1,2\}}\};
    \]
    \item[(iii)]\label{item:anodyne_outer} the set of maps
    \[
    \Bigl(\Lambda^n_0\coprod_{\Delta^{\{0,1\}}}\Delta^0,\{\Delta^{\{0,1,n\}}\}\Bigr)\rightarrow \Bigl(\Delta^n\coprod_{\Delta^{\{0,1\}}}\Delta^0,\{\Delta^{\{0,1,n\}}\}\Bigr)
    \quad , \quad n\geq 3.
    \]
  \end{enumerate}
  A general map of scaled simplicial set is said to be \emph{scaled anodyne} if it belongs to the weakly saturated closure of \(\sS\).
\end{definition}

\begin{definition}
  A scaled simplicial set $X$ is said to be an $\infty$-bicategory if it has the right lifting property again the scaled of scaled anodyne maps in \autoref{def:scanodyne}. In this case, we view the 2-simplices of $T_X$ as the collection of commuting triangles.
\end{definition}

\begin{definition}
  We denote by $\Cat_\Delta^+$ the category of $\on{Set}_\Delta^+$-enriched categories  (i.e. categories enriched in marked simplicial sets). We note that we can view the category of (strict) 2-categories $2\Cat$ as a full subcategory of $\Cat_\Delta^+$ by applying the nerve functor Hom-wise and marking the equivalences in each mapping category.
\end{definition}

\begin{definition}\label{def:OI}
  Let $I$ be a linearly ordered finite set. We define a $2$-category $\OO^{I}$ as
  follows
  \begin{itemize}
    \item the objects of $\OO^I$ are the elements of $I$,
    \item the category $\mathbb{O}^{I}(i,j)$ of morphisms between objects $i,j \in I$ is defined
    as the poset of finite sets $S \subseteq I$ such that $\min(S)=i$ and $\max(S)=j$
    ordered by inclusion,
    \item the composition functors are given, for $i,j,l\in I$, by
    \[
    \mathbb{O}^{I}(i,j) \times \mathbb{O}^{I}(j,l) \to \mathbb{O}^{I}(i,l), \quad (S,T) \mapsto S \cup T.
    \]
  \end{itemize}
  When $I=[n]$, we denote $\OO^I$ by $\OO^n$. Note that the $\OO^n$ form a cosimplicial object in $2\!\Cat$, which we denote by $\OO^\bullet$. 
\end{definition}

\begin{definition}\label{def:rigidification}
  The map
  \[
  \begin{tikzcd}
    \Delta \arrow[r,"\mathfrak{C}"] & {\Cat_\Delta^+} 
  \end{tikzcd}
  \]
  which sends $[n]$ to $\OO^n$ gives us a cosimplicial object in $\Cat_\Delta^+$. We can moreover send the thin 2-simplex $\Delta^2_\sharp$ to $\mathfrak{C}[\Delta^2]$ equipped with maximally-marked mapping spaces. The usual machinery of nerve and realization then gives us adjoint functors  
  \[
  \begin{tikzcd}
    \mathfrak{C}^{\sc}: &[-3em] \scsSet \arrow[r,shift left] & \Cat_\Delta^+ \arrow[l,shift left] &[-3em] :\Nsc 
  \end{tikzcd}
  \]
  which we will call the \emph{scaled nerve} and \emph{scaled rigidification}. 
\end{definition}

\begin{theorem}\label{thm:scaledmodel}
  There is a left proper, combinatorial model structure on $\scsSet$ with
  \begin{itemize}
    \item[W)] The weak equivalences are the morphisms $f:A\to B$ such that $\mathfrak{C}^{\sc}[f]:\mathfrak{C}^{\sc}[A]\to \mathfrak{C}^{\sc}[B]$ is an equivalence in $\Cat_\Delta^+$. 
    \item[C)] The cofibrations are the monomorphisms. 
  \end{itemize}
  Moreover, the fibrant objects in this model structure are the $\infty$-bicategories, and the adjunction 
  \[
  \begin{tikzcd}
    \mathfrak{C}^{\sc}: &[-3em] \scsSet \arrow[r,shift left] & \Cat_\Delta^+ \arrow[l,shift left] &[-3em] :\Nsc 
  \end{tikzcd}
  \]
  is a Quillen equivalence.
\end{theorem}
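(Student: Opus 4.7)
The statement is Lurie's foundational model structure theorem for scaled simplicial sets, so my plan is to follow the standard transfer-style strategy: define the weak equivalences via $\mathfrak{C}^{\sc}$, take the cofibrations to be monomorphisms, and produce the remaining structure through a small object argument run on an explicit generating set built from the scaled anodyne maps in \autoref{def:scanodyne} together with the boundary inclusions $\partial \Delta^n \hookrightarrow \Delta^n$ (suitably scaled). Before running the axioms I would first verify the basic closure properties of the putative weak equivalences: the class is closed under retracts and satisfies the 2-out-of-3 property because these are inherited from the already-established model structure on $\Cat_\Delta^+$ (which is the Bergner-type model structure on $\Set_\Delta^+$-enriched categories, and whose existence I am taking for granted in the background).

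Next I would turn to generating (trivial) cofibrations. For cofibrations, the natural generating set is obtained by scaling boundary inclusions $\partial \Delta^n \hookrightarrow \Delta^n$ in every admissible way, together with the single map $(\Delta^2,\flat) \to (\Delta^2,\sharp)$ that adjoins a thin triangle. For trivial cofibrations one takes the set $\sS$ of generating scaled anodyne maps from \autoref{def:scanodyne}; the core technical input is that each such map is sent by $\mathfrak{C}^{\sc}$ to a trivial cofibration of $\Set_\Delta^+$-enriched categories. This is the main obstacle, and I would handle the three families separately:
\begin{itemize}
\item For inner horn inclusions one computes $\mathfrak{C}^{\sc}[\Lambda^n_i \to \Delta^n]$ (with the prescribed scaling $\{\Delta^{\{i-1,i,i+1\}}\}$) and shows that the only non-isomorphism on mapping objects occurs at the pair $(0,n)$, where the inclusion becomes a Joyal-type anodyne inclusion of marked cubes, hence an equivalence of marked simplicial sets.
\item For the single saturation map in (ii), a direct but careful computation of the two relevant mapping categories identifies the induced map as formally inverting the 2-simplices $\Delta^{\{0,1,4\}}$ and $\Delta^{\{0,3,4\}}$; this localization is shown to be already a weak equivalence because the other five triangles already force these to be equivalences.
\item For the outer family in (iii), using the pushout $\Lambda^n_0 \coprod_{\Delta^{\{0,1\}}} \Delta^0$ one checks that collapsing the edge $\Delta^{\{0,1\}}$ to a point and then filling renders the extra edge into an equivalence, so that $\mathfrak{C}^{\sc}$ sends this to a Dwyer--Kan equivalence.
\end{itemize}

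With these computations in hand, one applies the standard Quillen recognition theorem (or equivalently Smith's theorem for combinatorial model categories): the weak equivalences form an accessible class with the 2-out-of-3 property, the cofibrations are generated by monomorphisms, the generating trivial cofibrations are cofibrations whose pushouts are weak equivalences (because $\mathfrak{C}^{\sc}$ preserves colimits as a left adjoint, and trivial cofibrations in $\Cat_\Delta^+$ are closed under pushouts), and the small object argument produces the required factorizations. Left properness follows because cofibrations are monomorphisms and weak equivalences are characterized via a left Quillen functor landing in the left proper category $\Cat_\Delta^+$.

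Finally, for the characterization of fibrant objects and the Quillen equivalence: fibrant objects are by construction those with RLP against $\sS$, which is precisely the definition of an $\infty$-bicategory. For the Quillen equivalence, $\mathfrak{C}^{\sc}$ is a left Quillen functor by construction; to see it is an equivalence, I would check that the derived unit $X \to \Nsc \mathfrak{C}^{\sc}[X]$ is a weak equivalence for cofibrant $X$ and that $\Nsc$ sends fibrant objects to $\infty$-bicategories whose rigidification recovers the original enriched category up to equivalence. The second part reduces to showing that $\Nsc$ preserves and detects weak equivalences between fibrant objects; this follows from the fact that mapping $\infty$-categories in $\Nsc \mathbb{D}$ are computed by the corresponding mapping marked simplicial sets in $\mathbb{D}$, which is a general property of simplicial nerves extended to the scaled setting. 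The computation of the unit map, which is the genuinely hard step, is reduced to the inner-horn case already handled above.
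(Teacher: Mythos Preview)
The paper does not prove this theorem at all; it simply cites \cite[Thm A.3.2.4]{LurieGoodwillie} for the model structure and Quillen equivalence, and separately cites \cite[Thm 5.1]{GHL_Equivalence} for the characterization of fibrant objects. So there is no paper-proof to compare your sketch against; your proposal is effectively an outline of how one might reprove Lurie's result.

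That said, there is a genuine gap in your outline, and the paper's citation pattern already signals it. You write that ``fibrant objects are by construction those with RLP against $\sS$, which is precisely the definition of an $\infty$-bicategory.'' This is not correct. If you build the model structure via Smith's theorem from the class $W$ and the generating cofibrations, the fibrations are by definition the maps with RLP against \emph{all} trivial cofibrations, not merely against your chosen set $\sS$. Knowing that $\sS$ consists of trivial cofibrations only tells you that fibrant objects lie \emph{among} the $\infty$-bicategories; the converse requires showing that $\sS$ actually generates the trivial cofibrations (equivalently, that every $\infty$-bicategory is fibrant). In Lurie's original treatment this identification was not established, and it is precisely the content of the Gagna--Harpaz--Lanari result the paper cites. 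Your sketch treats this as automatic, which it is not.

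A secondary point: your argument for left properness (``weak equivalences are characterized via a left Quillen functor landing in the left proper category $\Cat_\Delta^+$'') tacitly assumes that $\mathfrak{C}^{\sc}$ sends monomorphisms of scaled simplicial sets to cofibrations of $\Set_\Delta^+$-enriched categories, so that the relevant pushout square stays a homotopy pushout after applying $\mathfrak{C}^{\sc}$. This is true but is itself a computation you have not indicated.
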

\begin{proof}
  This is \cite[Thm A.3.2.4]{LurieGoodwillie}. The characterization of fibrant objects is \cite[Thm 5.1]{GHL_Equivalence}.
\end{proof}

\begin{definition}
  We say that a map of scaled simplicial sets is a \emph{bicategorical equivalence} if it is a weak equivalence in the model structure given in \autoref{thm:scaledmodel}. Similarly, call the fibrations in the model structure of scaled simplicial sets \emph{bicategorical fibrations}.
\end{definition}

\begin{definition}
  Given a pair of scaled simplicial sets $X,Y$ we denote by $\on{Fun}(X,Y)$ the scaled simplicial set determined by the universal property
  \[
    \Hom_{\on{Set}_\Delta^{\mathbf{sc}}}(K,\on{Fun}(X,Y))\isom  \Hom_{\on{Set}_\Delta^{\mathbf{sc}}}(K \times X, Y)
  \]
  where $K\times X$ denotes the Cartesian product of scaled simplicial sets.
\end{definition}

\begin{definition}\label{def:mappingcat}
  Let $\bcat{C}$ be an $\infty$-bicategory. Given an object $y \in \bcat{C}$ we define a scaled simplicialset $\bcat{C}_{\upslash y}$  as follows. The data of an $n$-simplices $\Delta^n \to \bcat{C}_{\upslash y}$ is given by a map $\sigma:\Delta^{n+1} \to \bcat{C}$ such that $\sigma(n+1)=y$. The inclusion $d_{n+1}: \Delta^n \to \Delta^{n+1}$ induces a map 
  \[
    \pi:\bcat{C}_{\upslash y} \xlongrightarrow{} \bcat{C}
  \]
  which we use to declare a triangle in $\bcat{C}_{\upslash y}$ to be thin if and only if its image unde $\pi$ is thin in $\bcat{C}$. It follows from \cite[Prop. 2.33]{GHL_Equivalence} that the fibre of $\pi$ at an object $x \in \bcat{C}$ is a model for $\bcat{C}(x,y)$, the mapping $\infty$-category.
\end{definition}

  \section{The model structure}

  \begin{definition}
  A \emph{marked biscaled} simplicial set ($\bS$ simplicial set) is given by the following data
  \begin{itemize}
    \item A simplicial set $X$.
    \item A collection of edges  $E_X \in X_1$ containing all degenerate edges.
    \item A collection of triangles $T_X \in X_2$ containing all degenerate triangles. We will refer to the elements of this collection as \emph{thin triangles}.
    \item A collection of triangles $C_X \in X_2$ such that $T_X \subseteq C_X$. We will refer to the elements of this collection as \emph{lean triangles}.
  \end{itemize}
  We will denote such objects as triples $(X,E_X, T_X \subseteq C_X)$. A map $(X,E_X, T_X \subseteq C_X) \to (Y,E_Y,T_Y \subseteq C_Y)$ is given by a map of simplicial sets $f:X \to Y$ compatible with the collections of edges and triangles above. We denote by $\mbsSet$ the category of mb simplicial sets.
\end{definition}

\begin{notation}
  Let $(X,E_X, T_X \subseteq C_X)$ be a mb simplicial set. Suppose that the collection $E_X$ consist only of degenerate edges. Then we fix the notation $(X,E_X, T_X \subseteq C_X)=(X,\flat,T_X \subseteq E_X)$ and do similarly for the collection $T_X$. If $C_X$ consists only of degenerate triangles we fix the notation $(X,E_X, T_X \subseteq C_X)=(X,E_X, \flat)$. In an analogous fashion we wil use the symbol “$\sharp$“ to denote a collection containing all edges (resp. all triangles). Finally suppose that $T_X=C_X$ then we will employ the notation $(X,E_X,T_X)$.
\end{notation}

\begin{remark}
  We will often abuse notation when defining the collections $E_X$ (resp. $T_X$, resp. $C_X$) and just specified its non-degenerate edges (resp. triangles).
\end{remark}

  \begin{definition}\label{def:mbsanodyne}
  The set of \emph{generating mb anodyne maps} \(\bS\) is the set of maps of mb simplicial sets consisting of:
  \begin{enumerate}
    \myitem{(A1)}\label{mb:innerhorn} The inner horn inclusions 
    \[
    \bigl(\Lambda^n_i,\flat,\flat \subset \{\Delta^{\{i-1,i,i+1\}}\}\bigr)\rightarrow \bigl(\Delta^n,\flat,\flat \subset \{\Delta^{\{i-1,i,i+1\}}\}\bigr)
    \quad , \quad n \geq 2 \quad , \quad 0 < i < n ;
    \]
    These maps force left-degenerate lean-scaled triangles to represent Cartesian edges of the mapping category.
    \myitem{(A2)}\label{mb:wonky4} The map 
    \[
    (\Delta^4,\flat,\flat \subset T)\rightarrow (\Delta^4,\flat,\flat \subset T\cup \{\Delta^{\{0,3,4\}}, \ \Delta^{\{0,1,4\}}\}),
    \]
    where we define
    \[
    T\overset{\text{def}}{=}\{\Delta^{\{0,2,4\}}, \ \Delta^{\{ 1,2,3\}}, \ \Delta^{\{0,1,3\}}, \ \Delta^{\{1,3,4\}}, \ \Delta^{\{0,1,2\}}\};
    \]
    \myitem{(A3)}\label{mb:2coCartesianmorphs} The set of maps
    \[
    \Bigl(\Lambda^n_0,\{\Delta^{\{0,1\}}\}, \{ \Delta^{\{0,1,n\}} \}\Bigr) \to \Bigl(\Delta^n,\{\Delta^{\{0,1\}}\}, \{ \Delta^{\{0,1,n\}} \}\Bigr) \quad , \quad n \geq 2.
    \]
    This forces the marked morphisms to be $p$-coCartesian with respect to the given thin triangles. 
    \myitem{(A4)}\label{mb:2CartliftsExist} The inclusion of the initial vertex
    \[
    \Bigl(\Delta^{0},\sharp,\sharp \Bigr) \rightarrow \Bigl(\Delta^1,\sharp,\sharp \Bigr).
    \]
    This requires $p$-coCartesian lifts of morphisms in the base to exist.
    \myitem{(S1)}\label{mb:composeacrossthin} The map
    \[
    \Bigl(\Delta^2,\{\Delta^{\{0,1\}}, \Delta^{\{1,2\}}\},\sharp \Bigr) \rightarrow \Bigl(\Delta^2,\sharp,\sharp \Bigr),
    \]
    requiring that $p$-coCartesian morphisms compose across thin triangles.
    \myitem{(S2)}\label{mb:coCartoverThin} The map
    \[
    \Bigl(\Delta^2,\flat,\flat \subset \sharp \Bigr) \rightarrow \Bigl( \Delta^2,\flat,\sharp\Bigr),
    \]
    which requires that lean triangles over thin triangles are, themselves, thin.
    \myitem{(E)}\label{mb:equivalences} For every Kan complex $K$, the map
    \[
    \Bigl( K,\flat,\sharp  \Bigr) \rightarrow \Bigl(K,\sharp, \sharp\Bigr).
    \]
    Which requires that every equivalence is a marked morphism.
  \end{enumerate}
  A map of mb simplicial sets is said to be \bS-anodyne if it belongs to the weakly saturated closure of \bS.
\end{definition}

\begin{remark}
  In previous work \cite{AGS_CartI} we also introduced the notion of a \bS-anodyne map when dealing with the theory of $(1,0)$-fibrations. We would like to point out that both classes of maps are different but we are using the same name to avoid the overly cumbersome notation $\bS^{01}$-anodyne.
\end{remark}

\begin{definition}
  A map of $\bS$ simplicial sets is said to be an \bS-\emph{fibration} if it has the right lifting property against the class of \bS-anodyne maps.
\end{definition}

\begin{lemma}
  Let $p: X \to S$ be an \bS-fibration then for every $s \in S$ the fibre over $s$,
  \[
    \begin{tikzcd}
      X_s \arrow[r] \arrow[d] & X \arrow[d,"p"] \\
      \Delta^0 \arrow[r,"s"] & S
    \end{tikzcd}
  \]
  is an $\infty$-bicategory where the marked edges are precisely the equivalences.
\end{lemma}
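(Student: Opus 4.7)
The plan is to realise the fibre as a pullback in $\mbsSet$ and import the fibration structure from $p$. Let $\{s\} = (\Delta^0,\sharp,\sharp)$ and form $X_s = X \times_S \{s\}$; since $\bS$-fibrations are defined by a right lifting property, the projection $X_s \to \{s\}$ is itself an $\bS$-fibration. Unwinding the pullback, every triangle of $X_s$ lies over the degenerate $2$-simplex on $s$, which is thin in $S$, so applying the $\bS$-anodyne map (S2) to $p$ forces every lean triangle of $X_s$ to be thin in $X$. In particular, the lean and thin collections coincide on $X_s$, and $X_s$ inherits a canonical scaled simplicial structure.

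I would next prove that $X_s$ is an $\infty$-bicategory by verifying the right lifting property against each of the three generating classes of scaled anodyne maps in \autoref{def:scanodyne}. Given such a generator $A \to B$ and a map $A \to X_s$, composition with $X_s \hookrightarrow X$ together with the constant base map $B \to \{s\} \hookrightarrow S$ produces a lifting problem that is solved by the corresponding $\bS$-anodyne generator: (A1) for the inner horn (i), (A2) for the wonky $4$-simplex (ii), and (A3) for the outer horn with collapsed initial edge (iii). Each such lift creates new lean triangles rather than thin ones, but these lie over the thin degenerate triangle on $s$, so a further application of (S2) promotes them to thin triangles in $X$ and hence in $X_s$. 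Since the base lift is constant, the total lift in $X$ factors through $X_s$ as required.

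Finally, I would identify marked edges with equivalences via two opposite implications. The maximal sub-Kan complex $X_s^\simeq \subseteq X_s$ has every $2$-simplex thin in $X_s$, so the inclusion defines a map $(X_s^\simeq,\flat,\sharp)\hookrightarrow X_s$ falling into the hypothesis of (E); lifting against this $\bS$-anodyne map forces every edge of $X_s^\simeq$ to be marked in $X_s$, so equivalences are marked. Conversely, the generators (A3) (for all $n \ge 2$) express precisely that every marked edge $f:x\to y$ is $p$-coCartesian in the sense of condition C1 of the introduction, and specialising the defining pullback square to the point $\{s\}$ collapses the base mapping categories to identities, so the condition becomes the assertion that precomposition with $f$ induces equivalences $X_s(y,z)\xrightarrow{\simeq} X_s(x,z)$ for every $z \in X_s$; whence $f$ is an equivalence by the standard characterisation of equivalences in an $\infty$-bicategory.

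The main technical step is the last one: one must verify that the purely simplicial axioms (A3), jointly with the composability axiom (S1), genuinely yield the full mapping-$\infty$-category statement packaged into C1, rather than only a one-dimensional shadow of it. This requires carefully unwinding the outer horn lifting properties for all $n \ge 2$ simultaneously and checking their effect in the fibre, and is where the bulk of the work sits.
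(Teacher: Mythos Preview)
Your proposal is correct and follows the same overall architecture as the paper's proof: first use (S2) to collapse the lean and thin scalings on the fibre, then observe that the generating $\bS$-anodyne maps (A1), (A2), (A3) specialise over a point to the scaled anodyne generators (i), (ii), (iii), and finally use (E) to see that equivalences are marked. The paper compresses all of this into two sentences.

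The one place where you diverge is the implication ``marked $\Rightarrow$ equivalence''. The paper simply says one checks this easily using (A3), and what is meant is a direct combinatorial argument in the fibre: given a marked edge $f:x\to y$, the $n=2$ case of (A3) applied to the horn with edges $f$ and $\on{id}_x$ produces a thin triangle exhibiting a one-sided inverse $g:y\to x$; one then promotes $g$ to a marked edge (this is exactly the content of the lemma immediately following in the paper, that $(\Delta^2,\{\Delta^{\{0,1\}},\Delta^{\{0,2\}}\},\sharp)\to(\Delta^2,\sharp,\sharp)$ is $\bS$-anodyne) and iterates. Your route via the model-independent condition C1 is correct but is a genuine detour: translating the combinatorial horn-filling of (A3) into the mapping-$\infty$-category pullback statement of C1 is precisely the content of \autoref{prop:cartenriched} and \autoref{lem:lambda0loc} later in the paper, and is substantially more work than the two-line argument the paper has in mind here. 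Your instinct that this is where the bulk of the work sits is accurate for your chosen route, but the paper avoids that bulk entirely by staying combinatorial.
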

\begin{proof}
  Observe that in $X_s$ the lean and thin triangles coincide since in a \bS-fibration a lean triangle lying over a thin triangle is itself thin. It follows that $X_s$ has the right lifting property against the class of scaled anodyne maps and thus it is an $\infty$-bicategory. Note that by definition the equivalences must be marked in $X_s$. Moreover, since $X_s$ lifts agains the class of maps \ref{mb:2coCartesianmorphs} one checks easily that marked morphisms are equivalences.
\end{proof}

\begin{lemma}
  The morphism of $\bS$-simplicial sets $(\Delta^2,\{ \Delta^{\{0,1\}},\Delta^{\{0,2\}} \} ,\sharp) \to (\Delta^2,\sharp,\sharp)$ is $\bS$-anodyne.
\end{lemma}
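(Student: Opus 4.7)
The map in question is the identity on the underlying simplicial set $\Delta^2$ and preserves the entire $\sharp$-scaling of thin/lean triangles, differing from the target only by the additional marking of the edge $\Delta^{\{1,2\}}$. Thus the task reduces to forcing this single new marking through a weakly-saturated combination of the generators of $\bS$.

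My plan is to introduce an auxiliary vertex $v$ and construct a $3$-simplex $\Delta^{\{0,v,1,2\}}$ through a chain of $\bS$-anodyne pushouts, then exhibit the desired inclusion as a retract of the resulting $\bS$-anodyne extension. Concretely: apply \ref{mb:2CartliftsExist} to attach a marked edge $\Delta^{\{0,v\}}$ at vertex $0$; then apply \ref{mb:2coCartesianmorphs} (for $n=2$) twice to fill the outer horns $\Lambda^2_0\subset \Delta^{\{0,v,1\}}$ and $\Lambda^2_0\subset \Delta^{\{0,v,2\}}$, producing edges $\Delta^{\{v,1\}}, \Delta^{\{v,2\}}$ and thin triangles $\Delta^{\{0,v,1\}}, \Delta^{\{0,v,2\}}$; and then apply \ref{mb:2coCartesianmorphs} (for $n=3$) to the outer horn $\Lambda^3_0 \subset \Delta^{\{0,v,1,2\}}$ (with marked edge $\Delta^{\{0,v\}}$ and thin triangle $\Delta^{\{0,v,2\}}$) to fill the missing face $\Delta^{\{v,1,2\}}$ and the $3$-simplex interior. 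A carefully sequenced chain of further applications of \ref{mb:composeacrossthin} along the thin triangles of the tetrahedron (possibly after promoting additional triangles from lean to thin via \ref{mb:coCartoverThin}) then forces the marking of $\Delta^{\{1,2\}}$. The retraction $r:X'\to (\Delta^2,\sharp,\sharp)$ given by $v\mapsto 0$ is a well-defined map of $\bS$-simplicial sets (auxiliary edges collapse onto degenerate or already-marked edges, and auxiliary triangles onto thin ones), exhibiting our inclusion as a retract of the $\bS$-anodyne extension $X\hookrightarrow X'$; the conclusion then follows from closure of the $\bS$-anodyne class under retracts.

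The main obstacle is the directional asymmetry of \ref{mb:composeacrossthin}: it produces only the ``composition'' edge of a thin triangle from two marked ``leg'' edges sharing the middle vertex, never a cancellation. The propagation of markings from the two outer legs $\Delta^{\{0,1\}}, \Delta^{\{0,2\}}$ to the third edge $\Delta^{\{1,2\}}$ must therefore route through the auxiliary vertex $v$ so that each use of (S1) fires in its intended ``composition'' direction, and arranging the exact order of horn-fillings and scalings so that this bookkeeping works out is the technical heart of the argument.
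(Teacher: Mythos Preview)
Your retract scaffolding and steps (A4), (A3)$_{n=2}$, (A3)$_{n=3}$ are fine, but the argument breaks at the step you yourself flag as the obstacle: after filling the tetrahedron $\Delta^{\{0,v,1,2\}}$ you cannot force the marking of $\Delta^{\{1,2\}}$ with the moves you list. Concretely, at that stage the marked edges are exactly $0\!\to\! v$, $0\!\to\!1$, $0\!\to\!2$, and the thin triangles are $\{0,v,1\}$, $\{0,v,2\}$, $\{0,1,2\}$; the new face $\{v,1,2\}$ produced by the (A3)$_{n=3}$ fill is neither lean nor thin (the target of that generator carries only $\Delta^{\{0,1,n\}}$ in its scaling), so \ref{mb:coCartoverThin} does not apply to it. In every thin triangle present, the two ``leg'' edges required by \ref{mb:composeacrossthin} are never simultaneously marked unless the composite already is, so (S1) never fires productively. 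No amount of reordering the fills changes this: your construction only ever produces marked edges emanating from $0$, and (S1) can only create further edges of the same kind.

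What is missing is a genuine cancellation mechanism. The standard route (and the one the paper invokes by dualising \cite[Lemma~3.11]{AGS_CartI}) is to produce, via \ref{mb:2CartliftsExist} at the vertex $1$, a \emph{second} marked edge $1\to 2'$ over the same direction, compose it with $0\to 1$ using an inner horn and (S1) to obtain a marked $0\to 2'$, and then use two instances of \ref{mb:2coCartesianmorphs} to manufacture edges $2\to 2'$ and $2'\to 2$ together with higher simplices exhibiting them as mutually inverse; the generator \ref{mb:equivalences} then marks these, and a final (S1) on a suitable thin triangle yields the marking of $1\to 2$. Alternatively, one can bypass the explicit combinatorics entirely by the RLP/factorisation trick used in the very next lemma of the paper: show every $\bS$-fibration lifts against the map, then exhibit it as a retract of its own $\bS$-anodyne factor. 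Either way, your single auxiliary vertex $v$ retracting to $0$ is not enough; you need an extra vertex playing the role of a competing coCartesian lift from $1$, together with an appeal to \ref{mb:equivalences}.
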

\begin{proof}
  The proof is dual to \cite[Lemma 3.11]{AGS_CartI}.
\end{proof}

\begin{lemma}
  The morphism of $\bS$ simplicial sets 
  \[
    \iota:(\Delta^3, \Delta^{\{0,1\}},\{ \Delta^{\{0,1,3\}} \} \subset U_0) \to (\Delta^3,\Delta^{\{0,1\}},\{ \Delta^{\{0,1,n\}} \} \subset \sharp),
  \]
  is $\bS$-anodyne where $U_0$ is the collection of all 2-faces except $\Delta^{\{1,2,3\}}$.
\end{lemma}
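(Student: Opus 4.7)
The plan is to exhibit $\iota$ as a finite composition of pushouts of the generators in $\bS$ listed in \autoref{def:mbsanodyne}. Since $\iota$ is the identity on underlying simplicial sets and differs from its source only by declaring $\Delta^{\{1,2,3\}}$ to be lean, the sole generator capable of producing new non-degenerate lean triangles is \ref{mb:wonky4}, the saturation across a $4$-simplex. The key step must therefore be a pushout of (A2) along some map $\sigma\colon \Delta^4\to X$ for an auxiliary $\bS$-simplicial set $X$ built from the source.

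A direct attempt, pushing out (A2) along a degeneracy $f\colon\Delta^4\to\Delta^3$, does not work: enumerating all non-decreasing $f\colon[4]\to[3]$ one checks that whenever either of the (A2)-produced triangles $\Delta^{\{0,3,4\}}$ or $\Delta^{\{0,1,4\}}$ is carried to $\Delta^{\{1,2,3\}}$, at least one of the triangles in the hypothesis set $T=\{\Delta^{\{0,2,4\}},\Delta^{\{1,2,3\}},\Delta^{\{0,1,3\}},\Delta^{\{1,3,4\}},\Delta^{\{0,1,2\}}\}$ is also sent to $\Delta^{\{1,2,3\}}$, which is not lean in the source of $\iota$. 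Consequently, we first enlarge the source to a thicker object $X$. The plan is to attach a $4$-simplex of the form $\Delta^{\{0,1,2,2',3\}}$, where $2'$ is a fresh vertex inserted between $2$ and $3$, along the face $d_{2'}\Delta^4=\Delta^{\{0,1,2,3\}}$, in such a way that the new triangles $\Delta^{\{1,2,2'\}},\Delta^{\{0,1,2'\}},\Delta^{\{1,2',3\}}$ (together with the inherited lean triangles of the source) realise exactly the hypothesis collection $T$ of (A2) on the new $4$-simplex. The attachment is engineered as a sequence of pushouts of \ref{mb:innerhorn}, \ref{mb:2coCartesianmorphs} and \ref{mb:2CartliftsExist}, using the marked edge $\Delta^{\{0,1\}}$ and the thin triangle $\Delta^{\{0,1,3\}}$ of the source to fill the outer horns required to introduce $2'$ and its incident simplices with the prescribed marking and thinness. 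Once $X$ is in place, applying (A2) to the auxiliary $4$-simplex produces $\Delta^{\{0,2,3\}}$ and $\Delta^{\{0,1,3\}}$ (both already lean) and forces $\Delta^{\{1,2',3\}}$ into the lean collection; then (S2) together with further inner-horn fillings identify $\Delta^{\{1,2',3\}}$ with $\Delta^{\{1,2,3\}}$ of the original $\Delta^3$, producing the target of $\iota$ as desired.

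The main obstacle is the detailed combinatorial bookkeeping: one must specify the attach map, verify at each step that the intermediate $\bS$-structure is compatible with the generator being applied, and confirm that the final pushout recovers exactly the target of $\iota$ without introducing spurious lean, thin or marked data. This is essentially a variant of the technique used in the proof of the preceding lemma and in \cite[Lemma~3.11]{AGS_CartI}, and the verification is routine once the attach map and the $\bS$-structure on the auxiliary $4$-simplex are laid out.
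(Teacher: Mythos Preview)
Your approach has a basic structural problem. All of the generators in \autoref{def:mbsanodyne} are monomorphisms on underlying simplicial sets, and pushouts of monomorphisms only enlarge the underlying simplicial set. Once you attach a $4$-simplex with a genuinely new vertex $2'$, every further pushout of a generator keeps that vertex; there is no mechanism in the weakly saturated closure of $\bS$ to collapse $2'$ back onto $2$ or to ``identify $\Delta^{\{1,2',3\}}$ with $\Delta^{\{1,2,3\}}$''. Inner-horn fillings add simplices, they do not identify them. So your composite lands in something strictly larger than $\Delta^3$ and cannot equal the target of $\iota$.

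There is also a bookkeeping error in the (A2) step itself. With your labelling $0<1<2<2'<3$, the hypothesis set $T$ of \ref{mb:wonky4} is $\{\Delta^{\{0,2,3\}},\Delta^{\{1,2,2'\}},\Delta^{\{0,1,2'\}},\Delta^{\{1,2',3\}},\Delta^{\{0,1,2\}}\}$ and the \emph{output} is that $\Delta^{\{0,2',3\}}$ and $\Delta^{\{0,1,3\}}$ become lean. Neither output is the triangle $\Delta^{\{1,2,3\}}$ you are after, and $\Delta^{\{1,2',3\}}$ is an \emph{input} of (A2), not something it produces.

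The paper does not attempt a direct cell decomposition at all. Instead it argues indirectly: first it shows that every $\bS$-fibration $p\colon X\to S$ has the right lifting property against $\iota$, by building inside $X$ (using inner-horn fillings and the marked edge $0\to 1$) a $4$-simplex $\Xi$ over $s_1(p(\sigma))$ and then invoking \ref{mb:wonky4} and the fibrancy of $X$ to force the image of $\Delta^{\{1,2,3\}}$ to be lean. Once this RLP is established, the small object argument factors $\iota$ as an $\bS$-anodyne map $\alpha$ followed by an $\bS$-fibration; lifting $\operatorname{id}$ against that fibration exhibits $\iota$ as a retract of $\alpha$, hence $\bS$-anodyne. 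The auxiliary $4$-simplex lives in the fibrant $X$, not as a cell attached to $\Delta^3$, which is what makes the argument go through.
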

\begin{proof}
  Let $S=(S,E_S,T_S \subset \sharp)$ be an $\bS$ simplicial set and let $p:X \to S$ be an $\bS$-fibration. We will show that $p$ has the right lifting property against the map $\iota$. Once this claim is established we will factor $\iota$
  \[
  \begin{tikzcd}
    (\Delta^3, \Delta^{\{0,1\}},\{ \Delta^{\{0,1,3\}} \} \subset U_0):=A \arrow[r,"\alpha"] & X \arrow[r,"p"] & (\Delta^3,\Delta^{\{0,1\}},\{ \Delta^{\{0,1,n\}} \} \subset \sharp)=B
  \end{tikzcd}
  \]
  as an $\bS$-anodyne morphism followed by a $\bS$-fibration where $B$ is of the form $(S,E_S,T_S \subset \sharp)$. It will then follow that we can produce a solution to the lifting problem
  \[
    \begin{tikzcd}
      A \arrow[d,"\iota"] \arrow[r] & X \arrow[d] \\
      B \arrow[r,"\on{id}_B"]  \arrow[ur,dotted] & B
    \end{tikzcd}
  \]
  which exhibits $\iota$ as a retract of the $\bS$-anodyne map $\alpha$ thus concluding the proof.

  In order to complete the proof we must show the claim. Suppose we are given a lifting problem
   \[
    \begin{tikzcd}
      A \arrow[d,"\iota"] \arrow[r,"\sigma"] & X \arrow[d,"p"] \\
      B \arrow[r]  & S
    \end{tikzcd}
  \]
  Let $\sigma(1 \to 2)=u$, $\sigma(2\to 3)=v$ and $\sigma(1 \to 3)=\omega$. Since $p$ is a $\bS$-fibration we can solve the lifting problem 
  \[
   \begin{tikzcd}
      \Lambda^2_1 \arrow[r] \arrow[d] & X \arrow[d] \\
    \Delta^2 \arrow[r,swap,"d_0(p(\sigma))"] \arrow[ur,dotted,"\varphi"] & S
   \end{tikzcd}
  \]
  and produce a lean triangle $\varphi$. We consider a subsimplicial set of $Q \subset \Delta^4$ consisting in the following faces:
  \begin{itemize}
    \item The face missing the vertex $2$.
    \item The face missing the vertex $4$.
    \item The 2-dimensional face $\Delta^{\{2,3,4\}}$. 
  \end{itemize}
  We then produce a map $\theta: Q \to X$ as follows:
  \begin{itemize}
    \item We map the face missing the vertex $2$ via $\sigma$.
    \item We map the face missing the vertex $4$ via $s_1(d_3(\sigma))$.
    \item We map $\Delta^{\{2,3,4\}}$ via $\varphi$.  
  \end{itemize}
  and equipp $Q$ with the induced decorations. It follows that we can extend $Q$ to a map $\Xi: \Delta^4 \to X$ lying over $s_1(p(\sigma))$. Since $p$ has the right lifting property against the morphism \ref{mb:wonky4} we see that $d_0(\sigma)$ is lean if and only if the image of $\Delta^{\{1,2,4\}}$ under $\Xi$ is thin in $X$.

  We observe that in $d_1(\Xi)$ every face is lean scaled except possible the face missing the vertex 2. Again, as a consequence of \ref{mb:wonky4} it follows that this face must also be lean scaled. Morever this face lies over a thin simplex of $S$ so it must be itself thin. From now on we can focus our attention to $d_3(\Xi)$.

  Let us remark that in $d_3(\Xi)=\rho$ every face is thin scaled except possibly the face missing the vertex $0$ and the the edge $0 \to 1$ is marked. One checks easily that the pullback of $p$ along the simplex thin $d_2(p(\sigma))$ simplex yields a fibration of $\infty$-bicategories $X^{'} \to (\Delta^2,\sharp)$ where we can identify the image of $d_0(\rho)$ with a morphism in the mapping $\infty$-category of $X^{'}$. One easily shows that this morphism is an equivalence and thus it must be thin. This concludes the proof.
\end{proof}

\begin{definition}\label{def:gencof}
  We say that a map of of $\bS$ simplicial sets is a cofibration if its underlying map of simplicial sets is a monomorphism. One can easily verify that the class of cofibrations is generated by the following families of maps:
  \begin{itemize}
    \myitem{(C1)} The boundary inclusions $(\partial \Delta^n, \flat,\flat) \to (\Delta^n,\flat,\flat)$ for $n\geq 0$.
    \myitem{(C2)} The map $(\Delta^1,\flat,\flat) \to (\Delta^1,\sharp,\flat)$.
    \myitem{(C3)} The map $(\Delta^2,\flat,\flat) \to (\Delta^2,\flat,\flat \subset \sharp)$.
    \myitem{(C4)} The map $(\Delta^2,\flat,\flat \subset \sharp) \to (\Delta^2,\flat, \sharp)$.
  \end{itemize}
\end{definition}

\begin{proposition}\label{prop:pushoutproduct}
  Let $f:(X,E_X,T_X \subseteq C_X) \to (Y,E_Y,T_Y \subset C_Y)$ be a cofibration of $\bS$ simplicial sets and $g:(A,E_A,T_A \subseteq C_A)\to (B,E_B,C_B \subseteq T_B)$ be an $\bS$-anodyne morphism. Then the pushout-product:
  \[
    \begin{tikzcd}
     f \wedge g: X \times B \coprod\limits_{X \times A} A \times Y \arrow[r] & Y \times B
    \end{tikzcd}
  \]
  is again $\bS$-anodyne.
\end{proposition}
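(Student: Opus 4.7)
The plan is the standard reduction to generators on both sides. Since both the class of $\bS$-anodyne maps and the class of cofibrations are weakly saturated (the latter by \autoref{def:gencof}, the former by definition), and pushout-product commutes with colimits in each variable, it suffices to verify the conclusion when $f$ ranges over the generating cofibrations (C1)--(C4) and $g$ ranges over the generating $\bS$-anodyne maps \ref{mb:innerhorn}--\ref{mb:equivalences}. This leaves a finite (though sizeable) case analysis to perform.

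First I would dispense with the easy cases. The pushout-product of two monomorphisms is a monomorphism, so whenever the target of $g\wedge f$ already has the correct decorations we only need to produce a filtration by scaled anodyne maps. The interactions with the marking generator (C2) and scaling generators (C3), (C4) are handled almost formally: after pushing out along the underlying simplicial map, the remaining work is to show that the required edges or triangles acquire the correct decorations, which follows from closure properties of $\bS$-anodyne maps under (S1), (S2), and the fact that equivalences are marked (E). The pushout-product of any cofibration with \ref{mb:2CartliftsExist} (the existence of coCartesian lifts) is again of the same shape, and the case of (E) reduces to saying that marking every edge of $K\times\Delta^n$ (for $K$ a Kan complex) with appropriate scalings is $\bS$-anodyne, which follows by a standard skeletal induction on $\Delta^n$.

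The heart of the proof lies in the inner horn generator \ref{mb:innerhorn}, the outer horn generator \ref{mb:2coCartesianmorphs}, and the ``wonky'' 4-simplex generator \ref{mb:wonky4} paired with the boundary inclusions (C1). For the inner horn case, the pushout-product $(\partial\Delta^m \hookrightarrow \Delta^m) \wedge (\Lambda^n_i \hookrightarrow \Delta^n)$ needs to be filtered by a sequence of inner horn inclusions of $\Delta^{m+n}$, equipped with the appropriate lean scalings on the triangles $\Delta^{\{i-1,i,i+1\}}$ of the $\Delta^n$-factor. I would use a shuffle/staircase filtration of $\Delta^m \times \Delta^n$ indexed by the poset of shuffles, attaching one non-degenerate $(m+n)$-simplex at a time and arguing that each attachment is an inner horn whose distinguished triangle is either already lean or can be made lean via a (S2) argument. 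The outer horn case \ref{mb:2coCartesianmorphs} is analogous but uses the (A3)-type outer horns together with the marking on $\Delta^{\{0,1\}}$; the verification that the appropriate edges become marked uses the preceding lemma on marking via a retract argument.

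The main obstacle I expect is the bookkeeping in the shuffle filtration: at each stage one must verify that the horn being filled is indeed of the admissible type (inner versus outer), that its distinguished triangle carries the correct scaling, and that the already-marked edge lies in the correct position. For the wonky-4 generator \ref{mb:wonky4} paired with (C1), I would argue separately using the observation that the required triangles in $\Delta^m \times \Delta^4$ can be added one at a time via an iterated application of \ref{mb:wonky4} after first filling all inner horns. Once all generator-vs-generator cases are verified, weak saturation of the $\bS$-anodyne class closes the argument.
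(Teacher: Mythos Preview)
Your outline is correct and is precisely the standard approach such arguments take. Note that the paper does not actually give a proof here: it simply states that the argument is almost identical to \cite[Proposition~3.14]{AGS_CartI} and leaves it as an exercise. Your reduction to generating cofibrations (C1)--(C4) against the generating $\bS$-anodyne maps, followed by a shuffle filtration for the horn cases and bookkeeping for the decorations, is exactly what that exercise amounts to, so there is nothing substantive to compare.
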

\begin{proof}
  The proof is almost identical to the proof of \cite[Proposition 3.14]{AGS_CartI} and left as an exercise.
\end{proof}

\begin{remark}\label{rem:funmb}
  Observe that given a pair of $\bS$ simplicial sets $X$, $Y$ we can produce a functor $\bS$ simplicial set $\on{Fun}^{\mathbf{mb}}(X,Y)$ in an obvious way.
\end{remark}

\begin{corollary}
  Let $p: Y \to S$ be an $\bS$-fibration. Then for every $\bS$ simplicial set $X$ the induced map 
  \[
    \begin{tikzcd}
      \on{Fun}^{\mathbf{mb}}(X,Y) \arrow[r] & \on{Fun}^{\mathbf{mb}}(X,S)
    \end{tikzcd}
  \]
  is an $\bS$-fibration.
\end{corollary}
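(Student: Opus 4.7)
The plan is to reduce this to a lifting statement about $p: Y \to S$ via the internal-hom adjunction, and then apply \autoref{prop:pushoutproduct} to verify the relevant class of test maps remains $\bS$-anodyne.

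To begin, I would fix an arbitrary $\bS$-anodyne morphism $g: A \to B$ and consider an arbitrary lifting problem
\[
  \begin{tikzcd}
    A \arrow[r] \arrow[d,"g"'] & \on{Fun}^{\mathbf{mb}}(X,Y) \arrow[d,"p_*"] \\
    B \arrow[r] & \on{Fun}^{\mathbf{mb}}(X,S).
  \end{tikzcd}
\]
By \autoref{rem:funmb} and the universal property of $\on{Fun}^{\mathbf{mb}}(X,-)$, sending the test maps $A \to \on{Fun}^{\mathbf{mb}}(X,Y)$ and $B \to \on{Fun}^{\mathbf{mb}}(X,S)$ across the adjunction converts this into an equivalent lifting problem
\[
  \begin{tikzcd}
    A \times X \arrow[r] \arrow[d,"g \times \on{id}_X"'] & Y \arrow[d,"p"] \\
    B \times X \arrow[r] & S
  \end{tikzcd}
\]
in $\mbsSet$, where both horizontal arrows are determined by, and determine, the original data; a lift of the original square exists if and only if a lift of this new square does.

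Next, I would observe that the map $g \times \on{id}_X$ may be realised as the pushout-product $g \wedge (\emptyset \to X)$, since the pushout in question is simply $A \times X$. The morphism $\emptyset \to X$ is trivially a monomorphism of the underlying simplicial sets, hence a cofibration of $\bS$ simplicial sets in the sense of \autoref{def:gencof}. Therefore, \autoref{prop:pushoutproduct} applied to the cofibration $\emptyset \to X$ and the $\bS$-anodyne map $g$ guarantees that $g \times \on{id}_X$ is itself $\bS$-anodyne.

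Since $p: Y \to S$ is an $\bS$-fibration by hypothesis, it has the right lifting property against $g \times \on{id}_X$, so the adjoint lifting problem admits a solution, and transporting back produces the required lift in the original diagram. As $g$ was arbitrary among $\bS$-anodyne maps, this proves that $p_*: \on{Fun}^{\mathbf{mb}}(X,Y) \to \on{Fun}^{\mathbf{mb}}(X,S)$ is an $\bS$-fibration. I do not anticipate any real obstacle here; the only point requiring even minor care is confirming that the adjunction in the marked-biscaled setting genuinely identifies the two lifting problems (including compatibility of markings and scalings), but this is immediate from the definition of $\on{Fun}^{\mathbf{mb}}$ as the internal hom with respect to the Cartesian product.
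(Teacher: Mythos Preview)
Your proof is correct and follows exactly the approach sketched in the paper, which simply says the result follows from \autoref{prop:pushoutproduct} after passing to the adjoint lifting problem. You have merely spelled out the details of that one-line argument.
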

\begin{proof}
  It follows from \autoref{prop:pushoutproduct} after looking at the adjoint lifting problems.
\end{proof}

\begin{definition}
  Let $p:Y \to S$ be an $\bS$-fibration and consider a map of $\bS$ simplicial sets $q:X \to S$. We define an $\infty$-bicategory of functors over $S$ as the pullback
  \[
    \begin{tikzcd}
      \on{Map}_S(X,Y) \arrow[r] \arrow[d] & \on{Fun}^{\mathbf{mb}}(X,Y) \arrow[d] \\
      \Delta^{0} \arrow[r,"q"] & \on{Fun}^{\mathbf{mb}}(X,S)
    \end{tikzcd}
  \]
\end{definition}

\begin{definition}
  Given a scaled simplicial set $(S,T_S)$ we define the category $\left(\on{Set}^{\mathbf{mb}}_\Delta\right)_{/S}$ of $\bS$ simplicial sets over $(S,\sharp,T_S \subset \sharp)$ as follows:
  \begin{itemize}
    \item The objects are maps $p:(X,E_X,T_X \subset C_X) \to (S,\sharp,T_S \subset \sharp)$.
    \item A morphism from $p:(X,E_X,T_X \subset C_X) \to (S,\sharp,T_S \subset \sharp)$ to $q:(Y,E_Y,T_Y \subset C_Y) \to (S,\sharp,T_S \subset \sharp)$ is given by a map $f: (X,E_X,T_X \subseteq C_X) \to (Y,E_Y,T_Y \subset C_Y) $ such that $q \circ f=p$.
  \end{itemize}
  An object of $\left(\on{Set}^{\mathbf{mb}}_\Delta\right)_{/S}$ is said to be a $(0,1)$-fibration if the corresponding map of $\bS$ simplicial sets is an $\bS$-fibration.
\end{definition}

\begin{definition}
  A morphism $f: A \to B$ in $\left(\on{Set}^{\mathbf{mb}}_\Delta\right)_{/S}$ is said to be:
  \begin{itemize}
    \item A cofibration if its underlying map of $\bS$ simplicial sets is a cofibration.
    \item A weak equivalence if for every $(0,1)$-fibration $p:X \to S$ the associated map of $\infty$-bicategories
    \[
        \begin{tikzcd}
          f^*:\on{Map}_S(B,X) \arrow[r,"\simeq"] & \on{Map}_S(A,X)
        \end{tikzcd}
      \]  
      is a bicategorical equivalence.
    \item  A trivial fibration if it has the right lifting property against every cofibration in $\left(\on{Set}^{\mathbf{mb}}_\Delta\right)_{/S}$. 
    \item A trivial cofibration if it is \emph{both} a weak equivalence and a cofibration.
  \end{itemize}
\end{definition}

\begin{lemma}
  Let $f:A \to B$ be a morphism over $S$ in $\left(\on{Set}^{\mathbf{mb}}_\Delta\right)_{/S}$ such that $f$ is a trivial fibration. Then $f$ is a weak equivalence.
\end{lemma}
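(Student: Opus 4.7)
The plan is to construct, from the right lifting property of $f$ against cofibrations, explicit homotopy-inverse data for $f^{*} : \Map_S(B, X) \to \Map_S(A, X)$ for each $(0,1)$-fibration $p : X \to S$, and thereby exhibit $f^{*}$ as a bicategorical equivalence.

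First I would apply the RLP of $f$ against the cofibration $\emptyset \to B$ in $\left(\mbsSet\right)_{/S}$ to obtain a section $s : B \to A$ with $f \circ s = \id_{B}$; post-composition yields $s^{*} : \Map_S(A, X) \to \Map_S(B, X)$ satisfying $s^{*} \circ f^{*} = \id$. Next, writing $\Delta^{1}_{\sharp} := (\Delta^{1}, \sharp, \sharp)$, I would produce a fibrewise deformation of $s \circ f$ to $\id_{A}$ over $B$ by solving the square
\[
\begin{tikzcd}
A \times \partial \Delta^{1} \arrow[r,"{(s \circ f,\, \id_{A})}"] \arrow[d] & A \arrow[d,"f"] \\
A \times \Delta^{1}_{\sharp} \arrow[r,"f \circ \pi_{A}"] \arrow[ur,dotted,"H"] & B
\end{tikzcd}
\]
in $\left(\mbsSet\right)_{/S}$; the left vertical is a cofibration, and the square commutes since $f \circ (s \circ f) = f = f \circ \id_{A}$, so the RLP of $f$ yields a homotopy $H : A \times \Delta^{1}_{\sharp} \to A$ with $H|_{\{0\}} = s \circ f$, $H|_{\{1\}} = \id_{A}$, and $f \circ H = f \circ \pi_{A}$.

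Finally, the homotopy $H$ transposes via the internal-hom adjunction of $\mbsSet$ to a natural transformation $\eta : \Delta^{1}_{\sharp} \times \Map_S(A, X) \to \Map_S(A, X)$ from $f^{*} \circ s^{*}$ to $\id$. Because $\Delta^{1}_{\sharp}$ is fully $\sharp$-scaled, the components of $\eta$ are marked in the mb structure underlying $\Map_S(A, X)$, and hence, via axiom \ref{mb:equivalences}, are equivalences in the associated $\infty$-bicategory. Combined with the strict identity $s^{*} \circ f^{*} = \id$, this exhibits $s^{*}$ as a two-sided pseudo-inverse for $f^{*}$, and hence $f^{*}$ as a bicategorical equivalence.

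The main obstacle is the last step: propagating the $\sharp$-scalings on $\Delta^{1}_{\sharp}$ through the internal hom $\on{Fun}^{\mathbf{mb}}$ and the pullback defining $\Map_S$, in order to identify the components of $\eta$ with genuine equivalences in the $\infty$-bicategory $\Map_S(A, X)$ rather than mere non-invertible $2$-cells. Everything else reduces to routine manipulation of the lifting property of $f$ and the section-plus-homotopy data extracted in the first two steps.
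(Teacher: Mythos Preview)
Your proposal is correct and follows essentially the same approach as the paper: construct a section $s$ via the RLP against $\emptyset \to B$, build a marked homotopy $H : A \times (\Delta^1)^\sharp \to A$ over $S$ between $s \circ f$ and $\id_A$ via the RLP against the cofibration $A \times \partial\Delta^1 \to A \times (\Delta^1)^\sharp$, and conclude that $f^*$ has a two-sided homotopy inverse on $\Map_S(-,X)$. Your self-identified ``main obstacle'' is not a genuine difficulty---precomposition with $H$ transposes directly to a map $(\Delta^1)^\sharp \to \Map_S(A,X)$ (since $H$ is over $S$ and $(\Delta^1)^\sharp$ lies over the point), and marked edges in the $\infty$-bicategory $\Map_S(A,X)$ are equivalences by construction; the paper treats this step as immediate.
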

\begin{proof}
  Note that since $f$ is a trivial fibration we can construct a section $g:B\to A$ such that $f \circ g= \on{id}_B$. Moreover, we can further produce a marked homotopy $A \times (\Delta^1)^\sharp \to A$ between the identity morphism on $A$ and the composite $g \circ f$ which is compatible with the projection map from $A$ to $S$. This pair of homotopy inverse morphisms thus define the desired equivalence on mapping $\infty$-bicategories
  \[
        \begin{tikzcd}
          f^*:\on{Map}_S(B,X) \arrow[r,"\simeq"] & \on{Map}_S(A,X)
        \end{tikzcd}
      \]  
     and thus $f$ is a weak equivalence. 
\end{proof}

\begin{proposition}\label{prop:mappingclasses}
  The $f:A \to B$ be a morphism in  $\left(\on{Set}^{\mathbf{mb}}_\Delta\right)_{/S}$. Given a $(0,1)$-fibration $p:X \to S$ let us consider the induced functor on mapping $\infty$-bicategories 
  \[
        \begin{tikzcd}
          f^*:\on{Map}_S(B,X) \arrow[r] & \on{Map}_S(A,X)
        \end{tikzcd}
      \]  
      Then it follows that 

  \begin{itemize}
    \item[i)] If $f$ is $\bS$-anodyne then $f^*$ is a trivial fibration of scaled simplicial sets.
     \item[ii)] If $f$ is a cofibration then $f^*$ is a fibration of $\infty$-bicategories.
     \item[iii)] If $f$ is a trivial cofibration then $f^*$ is a trivial fibration of scaled simplicial sets.
  \end{itemize}
\end{proposition}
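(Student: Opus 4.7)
The plan is to reduce all three items to the pushout--product proposition \autoref{prop:pushoutproduct} via the standard tensor--hom adjunction. Given any cofibration $\iota\colon K \to L$ of scaled simplicial sets, I will regard it as a cofibration of $\bS$-simplicial sets by equipping $K$ and $L$ with the markings $(K,\sharp, T_K \subseteq \sharp)$ and $(L,\sharp, T_L \subseteq \sharp)$, matching the way $\on{Map}_S(-,-)$ is built from $\on{Fun}^{\mathbf{mb}}$. A lifting problem against $f^{\ast}$ with left leg $\iota$ then corresponds, by adjunction, to a lifting problem against the $\bS$-fibration $p$ with left leg the pushout--product $\iota \wedge f$. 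So in order to prove that $f^{\ast}$ satisfies a given right lifting property against a class $\mathcal{I}$, it will suffice to show that $\iota \wedge f$ is $\bS$-anodyne for every $\iota \in \mathcal{I}$.

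Part (i) is then immediate: when $f$ is $\bS$-anodyne and $\iota$ is any cofibration of scaled simplicial sets, \autoref{prop:pushoutproduct} gives that $\iota \wedge f$ is $\bS$-anodyne, hence lifts against $p$. Thus $f^{\ast}$ has the right lifting property against every cofibration of scaled simplicial sets, so it is a trivial fibration.

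For part (ii) I would use the characterisation of bicategorical fibrations between $\infty$-bicategories via the right lifting property against the generating scaled anodyne maps of \autoref{def:scanodyne} together with the Kan-fibrant extension maps captured by \ref{mb:equivalences}. A direct comparison of \autoref{def:scanodyne} with \autoref{def:mbsanodyne} shows that, once promoted to $\bS$-simplicial sets with $\sharp$ edge markings, each generating scaled anodyne map becomes $\bS$-anodyne: the inner horns fall under \ref{mb:innerhorn}, the wonky $4$-simplex under \ref{mb:wonky4}, and the outer horns are handled via \ref{mb:2coCartesianmorphs} together with \ref{mb:coCartoverThin}; the equivalence generators are absorbed by \ref{mb:equivalences}. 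Applying \autoref{prop:pushoutproduct} to any such $\iota$ and the cofibration $f$ then produces an $\bS$-anodyne pushout--product $\iota \wedge f$, delivering the desired bicategorical fibration property of $f^{\ast}$.

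Part (iii) will be a formal consequence of (ii) and the definition of weak equivalence in $(\on{Set}^{\mathbf{mb}}_\Delta)_{/S}$: a trivial cofibration is in particular a cofibration, so by (ii) the map $f^{\ast}$ is a bicategorical fibration; and since $f$ is a weak equivalence, $f^{\ast}$ is a bicategorical equivalence by definition. Any fibration in the scaled simplicial set model structure which is simultaneously a weak equivalence is a trivial fibration, which yields (iii). The main obstacle is therefore (ii): one must pin down a convenient generating set for trivial cofibrations of $\scsSet$ and check case by case that, after the appropriate change of markings, each such generator lies in the weakly saturated closure of the $\bS$-anodyne maps; the remaining work is routine bookkeeping through the pushout--product.
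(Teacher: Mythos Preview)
Your arguments for parts (i) and (iii) are correct and essentially match the paper. The issue is in part (ii), and it is twofold.

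First, the decoration you propose for the promotion of a scaled simplicial set $(K,T_K)$ to a $\bS$-simplicial set is wrong. The $\infty$-bicategory $\on{Map}_S(B,X)$ is obtained from the $\bS$-simplicial set $\on{Fun}^{\mathbf{mb}}(B,X)$ by taking its underlying scaled simplicial set via the right adjoint $R$ of \autoref{thm:comparisonscaled}, which keeps the lean triangles. The left adjoint to this is $L(K,T_K)=(K,\flat,\flat\subset T_K)$, not $(K,\sharp,T_K\subseteq\sharp)$. With your $\sharp$-marking, a $\bS$-map out of $(K,\sharp,T_K\subseteq\sharp)$ forces every edge of $K$ to land in a marked (hence equivalence) edge of $\on{Map}_S(B,X)$, which does not correspond to an arbitrary scaled map $(K,T_K)\to\on{Map}_S(B,X)$; so the adjunction you invoke does not hold.

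Second, even with the correct promotion $L$, your strategy of exhibiting a generating set for trivial cofibrations of $\scsSet$ as ``scaled anodyne plus \ref{mb:equivalences}'' does not work: the maps in \ref{mb:equivalences} are marking changes and are not morphisms of scaled simplicial sets at all, and there is no known explicit generating set for trivial cofibrations in $\scsSet$. The paper avoids this obstacle entirely. It first shows, via \autoref{prop:pushoutproduct} applied to $L(\iota)$ for $\iota$ scaled anodyne, that $f^*$ has the right lifting property against all scaled anodyne maps. It then argues separately that $f^*$ is an isofibration: since $\on{Map}_S(A,X)$ is $\bS$-fibrant over a point, its marked edges are precisely the equivalences, so lifting an equivalence amounts to lifting a marked edge, which translates by adjunction into lifting against the pushout--product of the cofibration $f$ with the $\bS$-anodyne map \ref{mb:2CartliftsExist}. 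Finally, the paper invokes the Cisinski description of $\scsSet$ from \cite{GHL_Equivalence}, which says that a map between $\infty$-bicategories having the right lifting property against scaled anodyne maps and being an isofibration is already a fibration. This last step is exactly what replaces the missing generating set you were looking for.
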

\begin{proof}
  The first statement follows directly from \autoref{prop:pushoutproduct}. To see that $ii)$ holds we observe that again by \autoref{prop:pushoutproduct} that $f^*$ has the right lifting property against all scaled anodyne maps. Since the marked morphisms in the mapping $\infty$-bicategories are equivalences it follows that $f^*$ is an isofibration. In \cite{GHL_Equivalence}, the authors characterise the model structure on scaled simplicial sets as a Cisinski model structure. This in turn implies that $f^*$ is a fibration of scaled simplicial sets. The final claim follows from $ii)$ together with the definition of the class of weak equivalences.
\end{proof}

\begin{lemma}
  Let us consider a pushout diagram in $\left(\on{Set}^{\mathbf{mb}}_\Delta\right)_{/S}$
  \[
    \begin{tikzcd}
      A \arrow[r,"u"] \arrow[d,"v"] & B \arrow[d,] \\
      C \arrow[r,"i"] & P
    \end{tikzcd}
  \]
  where $u$ is a weak equivalence and $v$ is a cofibration. Then $i: B \to C$ is also a weak equivalence.
\end{lemma}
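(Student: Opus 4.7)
The plan is to apply $\Map_S(\mathblank, X)$ to the given pushout square for an arbitrary $(0,1)$-fibration $p\colon X \to S$ and analyse the resulting pullback of $\infty$-bicategories. Since $\on{Fun}^{\mathbf{mb}}(\mathblank, X)$ turns colimits into limits and $\Map_S$ is itself a pullback, applying $\Map_S(\mathblank, X)$ to the pushout produces a (strict) pullback square of scaled simplicial sets
\[
\begin{tikzcd}
\Map_S(P,X) \arrow[r] \arrow[d,"i^*"'] & \Map_S(B,X) \arrow[d,"u^*"] \\
\Map_S(C,X) \arrow[r,"v^*"'] & \Map_S(A,X).
\end{tikzcd}
\]
The objective is to show that $i^*$ is a bicategorical equivalence, for then $i$ will be a weak equivalence by definition.

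First I would invoke \autoref{prop:mappingclasses}: since $u$ is a weak equivalence, part of the definition of weak equivalence gives that $u^*$ is a bicategorical equivalence of $\infty$-bicategories; since $v$ is a cofibration, part $ii)$ of \autoref{prop:mappingclasses} gives that $v^*$ is a fibration between $\infty$-bicategories. So the square above is a strict pullback of $\infty$-bicategories in which the right vertical map is a fibration and the top horizontal map is a bicategorical equivalence.

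The key point is then that the base change of a bicategorical equivalence between fibrant scaled simplicial sets along a bicategorical fibration remains a bicategorical equivalence. This is the statement that the Lurie model structure on $\scsSet$ is right proper on fibrant objects, which follows from the fact that every $\infty$-bicategory is bicategorically fibrant and that strict pullback along a fibration between fibrant objects models the homotopy pullback (equivalently, it can be deduced from the characterisation of equivalences of $\infty$-bicategories via essential surjectivity on objects and equivalences on mapping $\infty$-categories, both of which are stable under such pullbacks). Applying this to our square, $i^*$ is a bicategorical equivalence.

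Since this argument is carried out for an arbitrary $(0,1)$-fibration $p\colon X \to S$, we conclude that $i^*\colon \Map_S(P,X) \to \Map_S(C,X)$ is a bicategorical equivalence for every such $X$, and hence $i\colon B \to P$ is a weak equivalence. The only genuinely non-formal input is the right-properness-type statement used in the last paragraph; everything else is a direct appeal to \autoref{prop:mappingclasses} and the definition of weak equivalence.
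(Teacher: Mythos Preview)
Your approach is exactly the paper's: apply $\Map_S(\mathblank,X)$ to the pushout to obtain a pullback of $\infty$-bicategories, observe that $u^*$ is a bicategorical equivalence and $v^*$ a bicategorical fibration by \autoref{prop:mappingclasses}, and conclude that $i^*$ is an equivalence because pullback along a fibration between fibrant objects preserves equivalences. The paper states this last step more tersely (``which shows that $i^*$ is a bicategorical equivalence''), whereas you spell out the right-properness reasoning; both are fine.

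One small slip to fix: in your diagram the right vertical map is $u^*$ and the bottom horizontal is $v^*$, so your sentence ``the right vertical map is a fibration and the top horizontal map is a bicategorical equivalence'' has the roles swapped. It should read that the right vertical $u^*$ is the equivalence and the bottom horizontal $v^*$ is the fibration; then $i^*$ is the base change of $u^*$ along $v^*$, and the right-properness argument goes through as you describe.
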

\begin{proof}
  Given a $(0,1)$-fibration $p:X \to S$ we observe that since $v$ is a cofibration we obtain a pullback diagram of $\infty$-bicategories
  \[
    \begin{tikzcd}
      \on{Map}_S(P,X) \arrow[r,"i^*"] \arrow[d] & \on{Map}_S(C,X) \arrow[d,"v^*"] \\
      \on{Map}_S(B,P) \arrow[r,"u^*"] & \on{Map}_S(A,X)
    \end{tikzcd}
  \]
  which shows that $i^*$ is a bicategorical equivalence and consequently we see that $i: B \to C$ is a weak equivalence in $\left(\on{Set}^{\mathbf{mb}}_\Delta\right)_{/S}$.
\end{proof}

\begin{proposition}
  An object $p:X \to S$ in $\left(\on{Set}^{\mathbf{mb}}_\Delta\right)_{/S}$ has the right lifting property against the class of trivial cofibrations if and only if it is a $(0,1)$-fibration.
\end{proposition}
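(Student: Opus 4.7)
The plan is to reduce both implications to \autoref{prop:mappingclasses} by comparing the lifting-against-cofibrations formulation with the mapping-space formulation.

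For the $(\Leftarrow)$ direction, suppose $p:X\to S$ is a $(0,1)$-fibration and let $f:A\to B$ be a trivial cofibration. By \autoref{prop:mappingclasses}(iii) the induced map
\[
  f^{*}:\on{Map}_S(B,X) \xlongrightarrow{} \on{Map}_S(A,X)
\]
is a trivial fibration of scaled simplicial sets. In particular it has the right lifting property against $\emptyset\hookrightarrow \Delta^0$, which translates under the adjunction defining $\on{Map}_S$ into the statement that every map $A\to X$ over $S$ admits an extension to $B$. This is exactly the RLP of $p$ against $f$.

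For the $(\Rightarrow)$ direction, suppose $p:X\to S$ has the RLP against every trivial cofibration. I claim every $\bS$-anodyne map $g:A\to B$ is a trivial cofibration: it is a cofibration since $\bS$-anodynes are monomorphisms on underlying simplicial sets (all generators in \autoref{def:mbsanodyne} are such), and to see it is a weak equivalence, let $q:Y\to S$ be an arbitrary $(0,1)$-fibration; then \autoref{prop:mappingclasses}(i) applied to $g$ yields that $g^{*}:\on{Map}_S(B,Y)\to \on{Map}_S(A,Y)$ is a trivial fibration of scaled simplicial sets, hence a bicategorical equivalence, which is precisely the definition of $g$ being a weak equivalence. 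Consequently, the RLP of $p$ against trivial cofibrations specialises to the RLP against $\bS$-anodyne maps, i.e.\ $p$ is an $\bS$-fibration, which by definition means that $p$ is a $(0,1)$-fibration.

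No step here is genuinely difficult: the entire argument is a repackaging of \autoref{prop:mappingclasses} parts (i) and (iii), using in the first direction that a trivial fibration of scaled simplicial sets is in particular surjective on vertices. The only point worth double-checking is that the generators of \bS{} listed in \autoref{def:mbsanodyne} are monomorphisms on underlying simplicial sets, so that the weakly saturated closure consists of cofibrations (this holds because cofibrations in $\mbsSet$ are generated by monomorphisms together with scaling/marking maps, and all generators of \bS{} are of that form).
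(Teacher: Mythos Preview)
Your proof is correct and follows essentially the same approach as the paper: both directions reduce to \autoref{prop:mappingclasses}, using part (i) to see that $\bS$-anodyne maps are trivial cofibrations (hence RLP against trivial cofibrations implies $(0,1)$-fibration), and part (iii) to see that for a $(0,1)$-fibration the induced map on mapping $\infty$-bicategories is a trivial fibration, whose surjectivity on vertices solves the lifting problem. Your write-up is slightly more explicit about why the generators of $\bS$ are cofibrations, but the argument is the same.
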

\begin{proof}
  Observe that due to $i)$ in \autoref{prop:mappingclasses} it follows that every $\bS$-anodyne morphism is a trivial cofibration. Therefore, any object having the right lifting property against trivial cofibrations must be a $(0,1)$-fibration. To check the converse we consider a $(0,1)$-fibration $p:X \to S$ and a trivial cofibration $f:A \to B$.  Then, in order to produce a solution to the lifting problem
  \[
    \begin{tikzcd}
      A \arrow[r,"\alpha"] \arrow[d,"f"] & X \arrow[d,"p"] \\
      B \arrow[r] \arrow[ur,dotted] & S
    \end{tikzcd}
  \]
  we observe that since the map $f^*:\on{Map}_S(B,X) \to \on{Map}_S(A,X)$ is a trivial fibration and in particular surjective, we can pick a preimage of $\alpha \in \on{Map}_S(A,X)$ which yelds the solution to our problem.
\end{proof}

\begin{definition}
  Let $p:X \to S$ be a $(0,1)$-fibration and consider an object $A \to S$ in $\left(\on{Set}^{\mathbf{mb}}_\Delta\right)_{/S}$.  We set the following notation:
  \begin{enumerate}
    \item  We denote by $\on{Map}^{\text{th}}_S(A,X)$ the underlying $\infty$-category of the mapping $\infty$-bicategory.
    \item  We denote by $\on{Map}_S^{\simeq}(A,X)$ the underlying groupoid of the mapping $\infty$-bicategory.
  \end{enumerate}
\end{definition}

\begin{proposition}\label{prop:fibrewisecriterion}
  Let $f:X \to Y$ be a morphism in $\left(\on{Set}^{\mathbf{mb}}_\Delta\right)_{/S}$ where both $X$ and $Y$ are $(0,1)$-fibrations. Then the following are equivalent:
  \begin{itemize}
    \item[i)] For every $(0,1)$-fibration $Z \to S$ the induced map $f^*: \on{Map}_S(Y,Z) \to \on{Map}_S(X,Z)$ is an equivalence of $\infty$-bicategories.
    \item[ii)] For every $(0,1)$-fibration $Z \to S$ the induced map $f^*: \on{Map}^{\on{th}}_S(Y,Z) \to \on{Map}^{\on{th}}_S(X,Z)$ is an equivalence of $\infty$-categories.
    \item[iii)] For every $(0,1)$-fibration $Z \to S$ the induced map $f^*: \on{Map}^{\simeq}_S(Y,Z) \to \on{Map}^{\simeq}_S(X,Z)$ is a homotopy equivalence of groupoids.
    \item[iv)] The exists a morphism $g:Y \to X$ over $S$, which is a homotopy inverse to $f$.
    \item[v)] For every $s \in S$ the induce morphism on fibres $f_s:X_s \to Y_s$ is a bicategorical equivalence.
  \end{itemize}
\end{proposition}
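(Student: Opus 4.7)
The plan is to prove the cycle $(i) \Rightarrow (ii) \Rightarrow (iii) \Rightarrow (iv) \Rightarrow (i)$, and then close the proposition with $(iv) \Leftrightarrow (v)$. The first four implications are formal; the fibrewise criterion $(v)$ is the technical heart.

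The implications $(i) \Rightarrow (ii) \Rightarrow (iii)$ follow by inspection: passing to the underlying $\infty$-category $\on{Map}^{\on{th}}_S$ (by restricting to thin $1$-morphisms) and then to the core groupoid $\on{Map}^{\simeq}_S$ sends bicategorical equivalences to equivalences of $\infty$-categories, and those in turn to homotopy equivalences of Kan complexes. For $(iii) \Rightarrow (iv)$, I would specialise $(iii)$ to $Z=X$ and lift the class of $\on{id}_X$ through the surjection on $\pi_0$ induced by $f^{*}: \on{Map}^{\simeq}_S(Y,X) \to \on{Map}^{\simeq}_S(X,X)$ to produce $g: Y \to X$ over $S$ with $g \circ f \simeq \on{id}_X$ through a marked homotopy. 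Then specialise $(iii)$ to $Z=Y$: under $f^{*}: \on{Map}^{\simeq}_S(Y,Y) \to \on{Map}^{\simeq}_S(X,Y)$ both $\on{id}_Y$ and $f \circ g$ map to the same class as $f$ up to marked homotopy, and injectivity on $\pi_0$ gives $f \circ g \simeq \on{id}_Y$. Implication $(iv) \Rightarrow (i)$ is immediate, since $g^{*}$ is a homotopy inverse to $f^{*}$ via composition with the marked homotopies.

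For $(iv) \Rightarrow (v)$, base change along $\{s\} \hookrightarrow S$ is a right adjoint (in particular preserves products, hence the cylinder $(-) \times (\Delta^{1})^{\sharp}$) and restricts $(0,1)$-fibrations to $\infty$-bicategories, so the homotopy inverse pair $(g,H)$ restricts to a homotopy inverse pair $(g_s, H_s)$ witnessing that $f_s$ is a bicategorical equivalence.

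The main obstacle is $(v) \Rightarrow (iv)$. The strategy is a two-step reduction. First, I would use the small object argument for $\bS$-anodyne maps to factor $f$ as an $\bS$-anodyne cofibration $X \hookrightarrow X'$ followed by a $(0,1)$-fibration $p : X' \to Y$; by \autoref{prop:mappingclasses}(i) the first map is already known to be a weak equivalence in the sense of $(i)$, so by $(iv) \Rightarrow (v)$ (just proved) it satisfies $(v)$, and the two-out-of-three for fibrewise bicategorical equivalences reduces the problem to the case in which $f$ is itself a $(0,1)$-fibration which is fibrewise a bicategorical equivalence. In that case I would construct the section $g : Y \to X$ by induction on the non-degenerate simplices of $Y$: given a compatible choice of $g$ on the previous skeleton and a new simplex $\sigma : \Delta^n \to Y$ lying over $\tau : \Delta^n \to S$, the pullback $X \times_S \Delta^n \to \Delta^n$ is a $(0,1)$-fibration whose fibres are bicategorically equivalent via $f$ to those of $Y \times_S \Delta^n$, so the extension reduces to a lifting problem that is solved by the $\bS$-anodyne maps of \autoref{def:mbsanodyne}. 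The same cellular induction, performed on $Y \times (\Delta^1)^{\sharp}$ with prescribed boundary conditions, then produces the marked homotopies $g \circ f \simeq \on{id}_X$ and $f \circ g \simeq \on{id}_Y$ required for $(iv)$.
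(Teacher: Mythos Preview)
Your treatment of $(i) \Rightarrow (ii) \Rightarrow (iii) \Rightarrow (iv) \Rightarrow (i)$ and $(iv) \Rightarrow (v)$ matches the paper's proof essentially verbatim, and your reduction in $(v) \Rightarrow (iv)$ via the small object argument (factor $f$ as $\bS$-anodyne followed by $\bS$-fibration, then use two-out-of-three on fibres) is also exactly what the paper does.

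The gap is in your final step. After reducing to the case where $f:X' \to Y$ is an $\bS$-fibration with fibrewise bicategorically equivalent fibres, you propose to build the section $g$ by cellular induction on $Y$. But the lifting problems this produces are against the generating \emph{cofibrations} $(\partial\Delta^n,\flat,\flat) \to (\Delta^n,\flat,\flat)$ (and the marking/scaling saturation maps), not against $\bS$-anodyne maps. Having the right lifting property against $\bS$-anodyne maps does not by itself let you fill boundary inclusions; what you actually need is that $f$ is a \emph{trivial} fibration, and that is precisely the non-formal content you are skipping. The sentence ``the extension reduces to a lifting problem that is solved by the $\bS$-anodyne maps of \autoref{def:mbsanodyne}'' is therefore not justified.

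The paper closes this gap by forward-referencing \autoref{prop:trivfibMB}, which shows that an $\bS$-fibration between $(0,1)$-fibrations that is a fibrewise bicategorical equivalence is a trivial fibration of $\bS$ simplicial sets. That proposition is not a routine cellular argument: its proof reduces to the base $\Delta^n_\flat$, invokes the model-independent comparison with local $(0,1)$-fibrations (\autoref{thm:comparison}) and the fibrewise criterion \autoref{prop:localequiv} to conclude that the underlying map of $\infty$-bicategories is a bicategorical equivalence and an isofibration, and then separately checks that $f$ detects marked edges and the various scalings. Once $f$ is known to be a trivial fibration, the section $g$ and the homotopies exist for free (as in the earlier lemma that trivial fibrations are weak equivalences), yielding $(iv)$ and hence $(i)$. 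Your outline would be correct if you replaced the hand-wave by an appeal to (or proof of) \autoref{prop:trivfibMB}.
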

\begin{proof}
  The implications $i) \implies ii) \implies iii)$ are clear. We commence the proof by showing that $iii) \implies iv)$. We consider the homotopy equivalence $\on{Map}^{\simeq}_S(Y,X) \to \on{Map}^{\simeq}_S(X,X)$ and pick an object $g\in \on{Map}^{\simeq}_S(Y,X)$ such that $g \circ f \simeq \on{id}_X$. To show that $g$ is the desired homotopy inverse to $f$ we need to show that $f \circ g \isom \on{id}_Y$. To see this we see that the map $\on{Map}_S^{\simeq}(Y,Y)  \to  \on{Map}_S^{\simeq}(X,Y)$ maps both $f \circ g$ and $\on{id}_Y$ to morphisms which are equivalent to $f$. Consequently we see that $f \circ g \isom \on{id}_Y$.

  Observe that $iv) \implies v)$ follows from the fact that since our homotopies are fibrewise they descend to equivalences on the corresponding fibres.

  In order to exhibit that $v) \implies i)$ we use the small object argument to factor $f: X \to Y$ as a composite $X \to \hat{X} \to Y$ where the first morphism is \bS-anodyne (and therefore a weak equivalence) and the second morphism has the right lifting property against the class of $\bS$-anodyne morphisms. In particular it follows that $\hat{X} \to S$ is again a $(0,1)$-fibration. The claim now follows from \autoref{prop:trivfibMB}.
\end{proof}

\begin{lemma}
  Given an object $A \to S$ in $\left(\on{Set}^{\mathbf{mb}}_\Delta\right)_{/S}$ then the projection map $\pi_A:A \times (\Delta^n,\sharp,\sharp) \to A$ is a weak equivalence.
 \end{lemma}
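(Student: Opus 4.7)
The plan is to exhibit $\pi_A$ as admitting an $\bS$-anodyne section and then conclude via the 2-out-of-3 property of weak equivalences.

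Let $s : A \to A \times (\Delta^n,\sharp,\sharp)$ denote the section given by $a \mapsto (a,0)$, where $0$ is the initial vertex of $\Delta^n$. Since the structure map of $A \times (\Delta^n,\sharp,\sharp)$ to $S$ factors through $\pi_A$, the map $s$ lives in $\left(\on{Set}^{\mathbf{mb}}_\Delta\right)_{/S}$, and $\pi_A \circ s = \on{id}_A$. The class of weak equivalences in $\left(\on{Set}^{\mathbf{mb}}_\Delta\right)_{/S}$ is defined via bicategorical equivalences on $\on{Map}_S(-,X)$ and therefore inherits the 2-out-of-3 property, so it is enough to show $s$ is a weak equivalence. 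By \autoref{prop:mappingclasses}(i) it would in fact suffice to prove $s$ is $\bS$-anodyne.

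Observe that $s$ is exactly the pushout-product of the cofibration $\emptyset \to A$ with the inclusion $\iota : \{0\} \hookrightarrow (\Delta^n,\sharp,\sharp)$. By \autoref{prop:pushoutproduct} the problem reduces to showing $\iota$ itself is $\bS$-anodyne. We proceed by induction on $n$. For $n=1$ this is precisely the generator \ref{mb:2CartliftsExist}. For $n \geq 2$, we assemble $(\Delta^n,\sharp,\sharp)$ out of $\{0\}$ as a transfinite composite of pushouts of generators of $\bS$: iterated applications of \ref{mb:2CartliftsExist} at successive vertices attach a marked spine; each $2$-simplex is filled in using an inner-horn extension \ref{mb:innerhorn} to produce a lean-scaled triangle, and the lean triangle is then promoted to thin by pushing out \ref{mb:coCartoverThin}; the composability axiom \ref{mb:composeacrossthin} is now used to mark the remaining long edges across each thin $2$-simplex; finally, higher-dimensional simplices are filled by the higher instances of \ref{mb:innerhorn} and \ref{mb:wonky4} together with the scaling-compatibility axioms.

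The main obstacle is organising this cell-attachment argument for $n \geq 2$ so that every intermediate state is a well-defined $\bS$ simplicial set and each attachment is visibly a pushout of a generator of $\bS$—in particular that the ordering (markings on spine edges, then lean scalings on $2$-faces, then thin scalings, then markings on long edges, then higher simplices) can actually be carried out at each stage. Once $\iota$ is known to be $\bS$-anodyne, the pushout-product property and 2-out-of-3 deliver the lemma at once.
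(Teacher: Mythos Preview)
Your approach is essentially identical to the paper's: both exhibit the section $\iota_A : A \to A \times (\Delta^n,\sharp,\sharp)$ as the pushout-product of $\emptyset \to A$ with $\iota: \Delta^0 \to (\Delta^n,\sharp,\sharp)$, invoke \autoref{prop:pushoutproduct} once $\iota$ is known to be $\bS$-anodyne, and conclude by 2-out-of-3. The only difference is that the paper dismisses the anodyneness of $\iota$ as ``an easy exercise'' without further comment, whereas you attempt to sketch the cell-attachment argument and then candidly flag the bookkeeping as the remaining obstacle. Your sketch is workable (the pushouts you describe along the generators of $\bS$ do assemble as claimed, since the weakly saturated closure allows pushouts along arbitrary maps, so marked edges on the spine do not obstruct using \ref{mb:innerhorn}), but you have not actually carried it out, so the proof is at the same level of completeness as the paper's.
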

 \begin{proof}
   Let $\iota: \Delta^0 \to (\Delta^n,\sharp,\sharp)$ be the inclusion of the initial vertex. It is an easy exercise to show that $\iota$ is $\bS$-anodyne. It then follows from $\autoref{prop:pushoutproduct}$ that $\iota_A:A \to A \times (\Delta^n,\sharp, \sharp)$ is also $\bS$-anodyne. We conclude the proof by observing that $\pi_A \circ \iota_A=\on{id}_A$.
 \end{proof}

\begin{theorem}\label{thm:model}
  Let $S$ be a scaled simplicial set. Then there exists a left proper combinatorial simplicial model structure on $\left(\on{Set}^{\mathbf{mb}}_\Delta\right)_{/S}$, which is characterized uniquely by the following properties:
  \begin{itemize}
    \item[C)] A morphism $f:X \to Y$ in $\left(\on{Set}^{\mathbf{mb}}_\Delta\right)_{/S}$ is a cofibration if and only if $f$ induces a monomorphism on the underlying simplicial sets.
    \item[F)] An object $p:X \to S$ in $\left(\on{Set}^{\mathbf{mb}}_\Delta\right)_{/S}$ is fibrant if and only if it is a $(0,1)$-fibration.
  \end{itemize}
\end{theorem}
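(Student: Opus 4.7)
The plan is to apply Jeff Smith's recognition theorem for combinatorial model structures to the classes of cofibrations (monomorphisms, generated by C1--C4 of \autoref{def:gencof}) and weak equivalences already introduced above. Most of the technical input is already in place in the preceding results; once existence is obtained, uniqueness of such a model structure follows automatically from the prescription of the cofibrations and the fibrant objects.

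First I would verify the easy abstract axioms. The two-out-of-three and retract properties for $W$ are inherited from the corresponding properties of bicategorical equivalences through the pullback functors $f^*$. The preceding lemma shows that trivial fibrations lie in $W$, while \autoref{prop:mappingclasses} shows that every $\bS$-anodyne map is a trivial cofibration; combined with the pushout lemma for weak equivalences along cofibrations and a standard argument pulling back transfinite compositions through $f^*$ to cofiltered limits of trivial fibrations of $\infty$-bicategories, the class of trivial cofibrations is closed under pushout and transfinite composition.

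The main obstacle is the accessibility of $W$, without which Smith's theorem cannot be invoked. The strategy is to reduce to \autoref{prop:fibrewisecriterion}: using the small object argument on the set of generating $\bS$-anodyne maps, fix a functorial $(0,1)$-fibrant replacement $A \mapsto \hat A$ over $S$; then $f:A \to B$ lies in $W$ iff $\hat f:\hat A \to \hat B$ is a fibrewise bicategorical equivalence. Since the scaled model structure on $\on{Set}_\Delta^{\mathbf{sc}}$ is combinatorial, fibres over vertices of $S$ commute with filtered colimits, and the small object argument is accessible, this exhibits $W$ as an accessibly embedded subcategory of the arrow category, so Smith's theorem applies.

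The remaining points are quick. Left properness is precisely the preceding pushout lemma for weak equivalences along cofibrations. The simplicial enrichment is given by the tensoring $A\otimes K := A\times (K,\sharp,\sharp)$ with mapping complexes built from $\on{Fun}^{\mathbf{mb}}$; the pushout-product axiom (SM7) reduces to \autoref{prop:pushoutproduct} together with \autoref{prop:mappingclasses}. Finally, the characterisation of fibrant objects is exactly the proposition proved just before this theorem, identifying objects with the right lifting property against trivial cofibrations as the $(0,1)$-fibrations.
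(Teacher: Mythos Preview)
Your outline is correct and matches what the paper does: the paper's own proof is simply a reference to the analogous Theorem 3.42 in \cite{AGS_CartI}, and the sequence of preliminary results set up in this section (generating cofibrations, the pushout-product proposition, the mapping-class proposition, the pushout lemma, the characterisation of fibrant objects, and the fibrewise criterion) is exactly the input for a Smith-type construction as you describe. The simplicial enrichment via $A\times(K,\sharp,\sharp)$ and left properness via the pushout lemma are likewise the intended arguments.
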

\begin{proof}
  The proof is totally analogous to the proof of Theorem 3.42 in \cite{AGS_CartI}.
\end{proof}

\begin{remark}
  We will refer to the model structure in the previous theorem as model structure on $(0,1)$-Cartesian fibrations over $S$.
\end{remark}

\begin{definition}
  Let $K=(K,E_K) \in \on{Set}_\Delta^+$ and let $p:X \to S$ be an object in $\left(\on{Set}^{\mathbf{mb}}_\Delta\right)_{/S}$. We define the tensor $K \otimes X$ as $I(K) \times X \to X \to S $ where $I(K)=(K,E_K,\sharp)$. Similarly, we define the cotensor $X^K$ by declaring that a map  $\bS$-simplicial sets  $\varphi:Y \to X^K$ over $\overline{\varphi}:Y \to S$ to be equivalent to the data of a commutative diagram
  \[
    \begin{tikzcd}
      (K,E_K,\sharp) \times Y \arrow[r] \arrow[d] & X \arrow[d,"p"] \\
      Y \arrow[r,"\overline{\varphi}"] & S.
    \end{tikzcd}
  \]
 
\end{definition}

\begin{theorem}
  The model category $\left(\on{Set}^{\mathbf{mb}}_\Delta\right)_{/S}$ is a $\on{Set}_\Delta^+$-enriched model category.
\end{theorem}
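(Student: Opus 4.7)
The plan is to verify the pushout-product (SM7) axiom for the tensor/cotensor with marked simplicial sets. Concretely, for any cofibration $i:A\to B$ in $\left(\on{Set}^{\mathbf{mb}}_\Delta\right)_{/S}$ and any cofibration $j:K\to L$ of marked simplicial sets, I must show that the pushout-product
\[
i\wedge j: (B\otimes K)\coprod_{A\otimes K}(A\otimes L)\longrightarrow B\otimes L
\]
is a cofibration, and is a trivial cofibration whenever either $i$ or $j$ is. Since both model structures are cofibrantly generated, it suffices to check the axiom on generating (trivial) cofibrations in each variable. The pure cofibration case is immediate because cofibrations on either side are detected by monomorphisms of underlying simplicial sets and the tensor $I(K)\times(-)$ preserves monomorphisms.

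For the case where $j:K\to L$ is a trivial cofibration in $\on{Set}_\Delta^+$, the key subclaim is that the induced map $I(j):I(K)\to I(L)$ is itself $\bS$-anodyne. The generating trivial cofibrations in the Cartesian model structure (see \cite{HTT}) consist of inner horn inclusions, marked outer horn inclusions of the form $(\Lambda^n_0,\{\Delta^{\{0,1\}}\})\to(\Delta^n,\{\Delta^{\{0,1\}}\})$, and the maps marking an equivalence inside a Kan complex. For each, applying $I(-)$ additionally scales every triangle as simultaneously thin and lean; the resulting map can be factored through the corresponding $\bS$-anodyne generator \ref{mb:innerhorn}, \ref{mb:2coCartesianmorphs}, or \ref{mb:equivalences}, followed by finitely many applications of \ref{mb:composeacrossthin} and \ref{mb:coCartoverThin} to upgrade the remaining triangles, using \autoref{prop:pushoutproduct} to produce intermediate $\bS$-anodyne extensions where needed. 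Once this subclaim is in hand, \autoref{prop:pushoutproduct} yields that $i\wedge I(j)$ is $\bS$-anodyne and therefore, by \autoref{prop:mappingclasses}, a weak equivalence in $\left(\on{Set}^{\mathbf{mb}}_\Delta\right)_{/S}$.

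For the case where $i:A\to B$ is a trivial cofibration in $\left(\on{Set}^{\mathbf{mb}}_\Delta\right)_{/S}$, I would use the cotensor: for any $(0,1)$-fibration $Z\to S$, the object $Z^L\to S$ is again a $(0,1)$-fibration, since the defining lifting problems against $\bS$-anodyne maps translate via \autoref{prop:pushoutproduct} back to lifting problems for $Z$ itself. Under the induced adjunction one has $\on{Map}_S(B\otimes L,Z)\simeq\on{Map}_S(B,Z^L)$, so the map $i^{*}$ becomes a bicategorical equivalence by the very definition of weak equivalence; a standard diagram chase converts the map induced by $i\wedge j$ on $\on{Map}_S(-,Z)$ into a trivial fibration of mapping $\infty$-bicategories, establishing the remaining half of SM7.

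The main obstacle is the core subclaim just described: verifying, case by case, that $I(-)$ sends each generating marked anodyne map to an $\bS$-anodyne map. The most delicate case is the marked outer horn inclusion, where one must simultaneously extend along the marked edge using \ref{mb:2coCartesianmorphs}, fill the remaining cells via inner horns, and then invoke the scaling axioms \ref{mb:composeacrossthin} and \ref{mb:coCartoverThin} coherently so that every triangle in the image becomes both thin and lean.
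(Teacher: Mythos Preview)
Your proof is correct and the overall strategy (verify SM7 on generators, split into the two triviality cases) matches the paper's. The treatment of the case where $j$ is a trivial cofibration in $\on{Set}_\Delta^+$ is essentially identical: both you and the paper invoke the subclaim that $I$ carries marked anodyne maps to $\bS$-anodyne maps, and then appeal to \autoref{prop:pushoutproduct}.

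The genuine difference is in the case where $i:A\to B$ is a trivial cofibration on the $\bS$-side. The paper first reduces to maps between fibrant objects (using the subclaim together with \autoref{prop:pushoutproduct}), observes that $L\otimes X$ is fibrant when $L$ and $X$ are, and then applies the fibrewise criterion of \autoref{prop:fibrewisecriterion} to see that $L\otimes X\to L\otimes Y$ is a weak equivalence; the pushout square and 2-out-of-3 finish. Your approach instead stays with arbitrary $A,B$ and works directly with the definition of weak equivalence: since $Z^K$ and $Z^L$ are fibrant, the maps $\on{Map}_S(B\otimes K,Z)\to\on{Map}_S(A\otimes K,Z)$ and $\on{Map}_S(B\otimes L,Z)\to\on{Map}_S(A\otimes L,Z)$ are trivial fibrations by \autoref{prop:mappingclasses}, and a pullback comparison yields the result. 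Your route is more formal and sidesteps the fibrewise characterization entirely; the paper's route is more concrete in that it identifies the fibres of $L\otimes X$ explicitly as $L\times X_s$.

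One small remark: you describe the marked anodyne maps as the ``generating trivial cofibrations'' of $\on{Set}_\Delta^+$, which is not literally true (the class of trivial cofibrations can be strictly larger). The paper's phrasing (``anodyne morphism of marked simplicial sets'') has the same imprecision. In either argument this is harmless, since your cotensor argument applies verbatim with the roles of the two variables exchanged and handles an arbitrary trivial cofibration $j$.
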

\begin{proof}
  It is clear that the construction $\on{Map}^{\on{th}}_S(\mathblank,\mathblank)$ equipps $\left(\on{Set}^{\mathbf{mb}}_\Delta\right)_{/S}$ with the structure of a $\Set_\Delta^{+}$-enriched model category. Since the tensor preserves colimits in both variables separately it will enough to show that given $i:L \to K$ a cofibration in $\on{Set}_\Delta^+$  and a cofibration $f:X \to Y$ in $\left(\on{Set}^{\mathbf{mb}}_\Delta\right)_{/S}$ the corresponding pushout-product map
  \[
   i\wedge f: L \tensor Y \coprod\limits_{L \tensor X} K \tensor X \xrightarrow{} K \tensor Y
  \]
is again cofibration which is a weak equivalence whenever $i$ or $f$ is. Note that $i \wedge f$ is clearly a cofibration so we can focus our attention in proving the weak equivalence part of the claim.

First, let us recall that given an anodyne morphism of marked simplicial sets $A \to B$ it follows that $I(A) \to I(B)$ is $\bS$-anodyne. It then follows as a consequence \autoref{prop:pushoutproduct} that we can assume that $i$ and $f$ are morphisms among fibrant objects in the corresponding model structures. 

We note that given a pair of fibrant objects $L$ and $p:X \to S$ it follows that $L \tensor X$ is again fibrant. To finish the proof we assume that $f:X \to Y$ is a weak equivalence ( the case for $i$ is totally analogous). Then it follows that for every $s \in S$ the map $(L \tensor X)_s \to (L \tensor Y)_s$ is identified with  the map
\[
  L \times X_s \xlongrightarrow{\simeq} L \times Y_s
\]
which is a bicategorical equivalence by assumption. It follows that the map $L \tensor X \to L \tensor Y$ is a weak equivalence in $\left(\on{Set}^{\mathbf{mb}}_\Delta\right)_{/S}$. We can know consider a pushout diagram
\[
  \begin{tikzcd}
    L \tensor X \arrow[r,"\simeq"] \arrow[d] & L \tensor  Y \arrow[d] \\
    K \tensor X \arrow[r,"\simeq"] &  L \tensor Y \coprod\limits_{L \tensor X} K \tensor X 
  \end{tikzcd}
\]
Moreover, using a similar argument as before we see that $K \tensor X \xrightarrow{\simeq} K \tensor Y$ is also a weak equivalence. The claim now follows from 2-out-of-3.
\end{proof}

\begin{proposition}\label{prop:mbbasechange}
  Let $f: S \to S'$ be a map of scaled simplicial sets then postcomposition with $f$ induces a left Quillen functor
  \[
    f_!: \left(\on{Set}_\Delta^{\mathbf{mb}}\right)_{/S} \llra \left(\on{Set}_\Delta^{\mathbf{mb}}\right)_{/S'}:f^*
  \]
  which is left adjoint to the pullback functor $f^*$.
\end{proposition}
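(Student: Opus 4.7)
The plan is to proceed in three steps: first establish the adjunction formally, then verify that $f_!$ preserves cofibrations, and finally show $f_!$ preserves weak equivalences by transferring the problem across the adjunction at the level of mapping $\infty$-bicategories. Once both conditions are checked, $f_!$ will in particular send trivial cofibrations to trivial cofibrations, so that it is left Quillen.

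The adjunction $f_! \dashv f^*$ is just the usual pullback/post-composition adjunction of slice categories, specialised to $\mbsSet$: an object over $S$ is sent by $f_!$ to its post-composition with $f$, while $f^*$ takes $Z \to S'$ to the pullback $Z \times_{S'} S \to S$ computed in $\mbsSet$ (the decorations being inherited componentwise). That $f_!$ preserves cofibrations is immediate from \autoref{def:gencof}, since cofibrations are detected on underlying simplicial sets and $f_!$ does not alter the underlying simplicial set of its input.

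The key input for preservation of weak equivalences is that $f^*$ preserves $(0,1)$-fibrations. This is a direct pullback--of--lifting--problem argument: given an $\bS$-anodyne map $A \to B$ and a square from it into $f^*Z \to S$, one composes with the projection $f^*Z \to Z$ and the map $f$ to produce a lifting problem against the $\bS$-fibration $Z \to S'$, whose solution factors uniquely through the pullback $f^*Z$ by its universal property. Hence $f^*Z \to S$ is again a $(0,1)$-fibration. I then compute, using that $\on{Fun}^{\mathbf{mb}}(X, \mathblank)$ preserves pullbacks (\autoref{rem:funmb}), that for any object $X \to S$ and any $(0,1)$-fibration $Z \to S'$ there is a natural identification
\[
  \on{Map}_{S'}(f_!X, Z) \;\isom\; \on{Map}_S(X, f^*Z).
\]
Therefore, if $g: X \to Y$ is a weak equivalence in $\left(\mbsSet\right)_{/S}$, then for every $(0,1)$-fibration $Z \to S'$ the map
$g^*:\on{Map}_S(Y, f^*Z) \to \on{Map}_S(X, f^*Z)$ is a bicategorical equivalence (applied to the $(0,1)$-fibration $f^*Z\to S$), which under the identification above says that $(f_!g)^*$ is a bicategorical equivalence. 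So $f_!g$ is a weak equivalence in $\left(\mbsSet\right)_{/S'}$.

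Combining these observations, $f_!$ preserves cofibrations and weak equivalences, hence also trivial cofibrations, proving it is left Quillen. The only point that requires any care is the preservation of $\bS$-fibrations by pullback; everything else is either formal or already packaged in \autoref{rem:funmb} and the mapping-object formalism of \autoref{prop:mappingclasses}. I do not anticipate any serious obstacle.
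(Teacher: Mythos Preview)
Your proposal is correct and follows essentially the same approach as the paper: the paper's proof is a terser version of exactly your argument, invoking the commutative square relating $\on{Map}_S(-,f^*Y)$ and $\on{Map}_{S'}(f_!-,Y)$ and concluding by 2-out-of-3. You spell out explicitly that $f^*$ preserves $(0,1)$-fibrations and how the adjunction identification on mapping $\infty$-bicategories arises from $\on{Fun}^{\mathbf{mb}}(X,-)$ preserving pullbacks, whereas the paper takes these as implicit.
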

\begin{proof}
  It is clear that $f_!$ preserves cofibrations. To finish the proof we only need to show that $f_!$ preserves weak equivalences. Given $\iota: A \to B$ and a fibrant object $p:Y \to S' $ we observe that we have a commutative diagram
  \[
    \begin{tikzcd}
      \on{Map}_{S}(B,f^* Y) \arrow[d,"\simeq"] \arrow[r,"\simeq"] &   \on{Map}_{S}(A,f^* Y) \arrow[d,"\simeq"] \\
        \on{Map}_{S'}(i_! B, Y) \arrow[r] &   \on{Map}_{S'}(i_! A, Y)
    \end{tikzcd}
  \]
  so the conclusion holds by 2-out-of-3.
\end{proof}

We finish this section by comparing the model structure in \autoref{thm:model} with the model structure constructed in Theorem 3.2.6 in \cite{LurieGoodwillie}.

\begin{definition}\label{def:inclusionmarked}
   Let $(S,T_S)$ be a scaled simplicial set and consider the category $\left(\on{Set}_\Delta^{+}\right)_{/S}$ of marked simplicial sets over $S$. We have a functor
   \[
      \begin{tikzcd}
        R: \left(\on{Set}_\Delta^{+}\right)_{/S} \arrow[r] & \left(\on{Set}_\Delta^{\mathbf{mb}}\right)_{/S}, & (X,E_X) \arrow[r] & (X,E_X,T_X \subset \sharp)\\
      \end{tikzcd}
    \] 
  where $T_X$ consistins in those triangles in $X$ lying over thin triangles in $S$.
 \end{definition} 

 \begin{theorem}
   Let $(S,T_S)$ be a scaled simplicial set and consider an object $(X,E_X)$ in $\left(\on{Set}_\Delta^{+}\right)_{/S}$ then $X=(X,E_X)$ is $\mathfrak{P}_{S}$-fibered (see Definition 3.2.1 and Example 3.2.9 in \cite{LurieGoodwillie}) if and only if $R(X)$ is a $(0,1)$-fibration. Moreover, if $Y$ is a $(0,1)$-fibration over $S$ such that every triangle of $Y$ is lean then there exists a $\mathfrak{P}_S$-fibered object $T$ such that $R(T)=Y$.
 \end{theorem}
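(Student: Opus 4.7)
The strategy is a direct translation between lifting conditions: I would show that the right lifting property of $R(X) \to (S,\sharp,T_S \subset \sharp)$ against the generating $\bS$-anodyne maps of \autoref{def:mbsanodyne} matches exactly the defining conditions of a $\mathfrak{P}_S$-fibered object in the sense of Definition 3.2.1 and Example 3.2.9 of \cite{LurieGoodwillie}. The key simplification in the image of $R$ is that every triangle is lean and the thin triangles are by definition those lying over thin triangles in $S$; this makes axiom (S2) of \autoref{def:mbsanodyne} automatic for any map of the form $R(X) \to (S,\sharp,T_S \subset \sharp)$, so the only content of the $\bS$-fibration condition is carried by the remaining axioms.

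I would then pair up the remaining axioms with the components of Lurie's definition: (A1) corresponds to the inner-fibration condition, (A2) to the wonky-4 scaled anodyne axiom, (A3) together with (A4) encodes the existence of locally coCartesian lifts (with the marking recording which edges are locally coCartesian), (S1) corresponds to the composition of marked edges across thin triangles, and (E) identifies equivalences in the fibres with marked edges. Each correspondence is a routine unwinding: matching the underlying horns, markings, and scalings gives bijections between the respective lifting problems, and this establishes the \emph{iff} in both directions.

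For the second claim, suppose $Y = (Y, E_Y, T_Y \subset \sharp)$ is a $(0,1)$-fibration over $(S,\sharp,T_S \subset \sharp)$ in which every triangle is lean. Let $T = (Y,E_Y) \in (\on{Set}_\Delta^+)_{/S}$; I would check that $R(T) = Y$ as $\bS$-simplicial sets. Since $p : Y \to (S,\sharp,T_S \subset \sharp)$ is a morphism of $\bS$-simplicial sets, $p$ sends thin triangles to thin triangles, so $T_Y$ is contained in the collection of triangles of $Y$ lying over $T_S$. Conversely, axiom (S2) (which $p$ satisfies as an $\bS$-fibration) forces every lean triangle lying over a thin triangle to be thin, and since every triangle of $Y$ is lean by hypothesis, every triangle over $T_S$ lies in $T_Y$. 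Hence $T_Y$ is exactly the collection of triangles over $T_S$, which is the thin-triangle data prescribed by $R$, so $R(T) = Y$. The first part of the theorem then implies that $T$ is $\mathfrak{P}_S$-fibered.

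The main obstacle is the precise matching between Lurie's categorical-pattern conditions and our $\bS$-anodyne lifting conditions; in particular, one must verify carefully that Lurie's formulation of locally coCartesian lifts reduces in the presence of the other axioms to the combined content of (A3) and (A4), and that the various scaling and thin-triangle conditions in the categorical pattern $\mathfrak{P}_S$ line up with (A1), (A2), (S1), and the automatic (S2). Everything else is bookkeeping.
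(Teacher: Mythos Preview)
Your proposal is correct and follows essentially the same approach as the paper: both arguments observe that (S2) is automatic by the construction of $R$, then match the remaining generating $\bS$-anodyne maps directly against the $\mathfrak{P}_S$-anodyne generators of \cite[Definition 3.2.10]{LurieGoodwillie}, and both handle the second claim by using (S2) to identify $T_Y$ with the triangles lying over $T_S$ so that $R(\hat{Y}) = Y$. The only minor difference is that the paper singles out the verification of (E) explicitly, noting it follows from the morphisms of type $(A_1)$ in Lurie's list, whereas you fold this into the general pairing; otherwise the arguments are the same.
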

 \begin{proof}
   Let us suppose that $X$ is $\mathfrak{P}_S$-fibered and let us show that $R(X)$ defines a $(0,1)$-fibration. We will show that $R(X)$ has the right lifting property against the class of $\bS$-anodyne morphisms. To this end we recall that an object in $\left(\on{Set}_\Delta^{+}\right)_{/S}$ is $\mathfrak{P}_S$-fibered if and only if it has the right lifting property against the class of $\mathfrak{P}_S$-anodyne morphisms described in Definition 3.2.10 in \cite{LurieGoodwillie}. We first check that $R(X)$ has the right lifting property against the class of maps given in \ref{mb:equivalences}. Indeed given a map from a Kan complex $K \to R(X)$ we can use the morphisms of type $(A_1)$ in \cite[Definition 3.2.10]{LurieGoodwillie} to see that every morphism of $K$ maps to a marked edge in $R(X)$. Similarly, our construction of $R$ guarantees the $R(X)$ has the right lifting property against morphisms of type \ref{mb:coCartoverThin}. The rest of the lifting problems follow immediately from the definition of the class of $\mathfrak{P}_S$-anodyne morphisms.

   The converse follows by a similar argument. To finish the proof we suppose that we are given  $(0,1)$-fibration of the form $Y=(Y,E_Y,T_Y \subset \sharp)$. Then \ref{mb:coCartoverThin} implies that $T_Y$ is simply the collection of triangles lying over thin triangles in $S$. Therefore, it we can consider $\hat{Y}=(Y,E_Y)$ and observe that $R(\hat{Y})=Y$. The previous part of the proof shows that $\hat{Y}$ must be $\mathfrak{P}_S$-fibered.
 \end{proof}

 \begin{proposition}
   Let $p:X\to S$ be a $(0,1)$-fibration then for every $s \in S$ the fiber $X_s$ is an $\infty$-category if and only if every triangle of $X$ is lean.
 \end{proposition}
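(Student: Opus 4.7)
For the easy direction $(\Leftarrow)$, suppose every triangle of $X$ is lean. Then for each $s \in S$ every triangle of the fiber $X_s$ is lean and lies over the degenerate (hence thin) constant 2-simplex at $s$, so axiom \ref{mb:coCartoverThin} forces it to be thin. By the preceding lemma $X_s$ is an $\infty$-bicategory, and having every triangle thin is exactly the condition that $X_s$ is an $\infty$-category.

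For the converse $(\Rightarrow)$, fix a triangle $\sigma:\Delta^2\to X$ with edges $f:a\to b$, $g:b\to c$, $h:a\to c$ and projection $\tau=p\sigma$. The plan is to exhibit $\sigma$ as a face of a 3-simplex in $X$ whose other three faces are forced to be lean by a combination of the coCartesian structure and the fiberwise hypothesis. First, using axiom \ref{mb:2CartliftsExist}, choose a coCartesian lift $\tilde f:a\to b_!$ of $p(f)$. Apply axiom \ref{mb:2coCartesianmorphs} at $n=2$ to fill the outer horn spanned by $\tilde f$ and $f$; this produces a triangle $\sigma_1$ whose $12$-edge is a morphism $\alpha:b_!\to b$ living in the fiber $X_{p(b)}$. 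Since $p(\sigma_1)=s_1(p(f))$ is degenerate and therefore thin, axiom \ref{mb:coCartoverThin} upgrades $\sigma_1$ from lean to thin.

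Next, assemble an inner horn $\Lambda^3_2\to X$ with vertices $a, b_!, b, c$ whose faces $d_3$ and $d_1$ are $\sigma_1$ and $\sigma$, projecting the whole diagram to the degenerate simplex $s_1(\tau):\Delta^3\to S$, and fill using axiom \ref{mb:innerhorn} to obtain a 3-simplex $\rho$. The face $d_2\rho$ begins with the coCartesian edge $\tilde f$ and is therefore lean directly by \ref{mb:2coCartesianmorphs}; the face $d_0\rho$ lies over the thin simplex $s_0(p(g))$ and has $12$-edge the fiberwise morphism $\alpha$, so combining the hypothesis that $X_{p(b)}$ is an $\infty$-category with axiom \ref{mb:coCartoverThin} shows $d_0\rho$ is thin as well.

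At this point $\rho$ has three lean faces, and what remains is to conclude leanness of the fourth face $\sigma=d_1\rho$. This I expect to follow from a saturation argument: the three known-lean faces together with the ``wonky'' 4-simplex relation \ref{mb:wonky4} and further inner horn lifting should propagate leanness to $\sigma$, in the same style as the proof of the technical lemma preceding \autoref{def:gencof}. The main obstacle is precisely this closure step, which requires careful combinatorial bookkeeping of the $\bS$-anodyne generators; a useful simplification is to split into cases according to whether $\tau$ is thin in $S$ (in which case the goal is strengthened to showing $\sigma$ is thin, which is more rigid and directly accessible via \ref{mb:coCartoverThin}) and bootstrap the non-thin case via a coCartesian lift of $p(h)$.
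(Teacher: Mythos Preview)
Your easy direction is correct, and the overall shape of your converse argument is right: build a $3$-simplex containing $\sigma$ as one face, arrange that the other three faces are lean, and then deduce that $\sigma$ is lean. The final ``saturation'' step you anticipate is in fact a one-liner: pushing \ref{mb:wonky4} forward along the degeneracy $s_1:\Delta^4\to\Delta^3$ (respectively $s_2$) shows that if $d_0,d_2,d_3$ (respectively $d_0,d_1,d_3$) are lean then so is the remaining face. No elaborate bookkeeping is needed.

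The genuine gap is your claim that $d_2\rho$ is lean ``directly by \ref{mb:2coCartesianmorphs}''. That axiom lets you \emph{fill} an outer horn whose initial edge is marked; it does \emph{not} assert that an already-existing triangle with marked initial edge is lean. Since $d_2\rho$ is the output of your $\Lambda^3_2$-filling, you have no control over its decoration, and the argument stalls here. (Your justification of $d_0\rho$ is also confused---the fiberwise hypothesis is irrelevant since $d_0\rho$ does not lie in a single fiber when $p(g)$ is nondegenerate---but $d_0$ can be arranged to be lean by constructing it via \ref{mb:innerhorn} at $n=2$ before filling, so this is not fatal.)

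The paper repairs this with a two-step reduction. First treat the special case $p(\sigma)=s_1(p(f))$: here one fills $\Lambda^3_0$ rather than $\Lambda^3_2$, so that $d_2$ is \emph{input} (constructed as thin via \ref{mb:2coCartesianmorphs} at $n=2$) and $d_0$ is \emph{output}; crucially, in this case $d_0$ lies entirely in the fiber over $p(b)$ and is therefore lean by hypothesis. For a general $\sigma$ one instead lifts the \emph{second} edge $g$ to a marked edge and builds a $3$-simplex $\Xi$ over $s_2(\tau)$ with $d_2(\Xi)=\sigma$; the face $d_1(\Xi)$ then lies over $s_1(p(h))$, so it is lean by the special case, and the $s_2$-variant of \ref{mb:wonky4} finishes. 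Your approach of lifting the first edge cannot be made to work directly in the general case precisely because the unknown face never lands in a fiber.
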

 \begin{proof}
   It is clear that if every triangle of $X$ is lean the fibers must be $\infty$-categories. For the converse, let us assume that for every $s \in S$ the fiber $X_s$ is an $\infty$-category. Let $\sigma:\Delta^2 \to X$ and assume that $p(\sigma)=s_1(p(f))$ for some edge $\Delta^1 \to S$. We pick a marked edge lying over $p(f)$ and construct a $3$-simplex $\theta: \Delta^3 \to X$ lying over $s_1(p(\sigma))$ such that $\theta(0 \to 1)$ is our chosen marked edge and such that every face is lean except possible $d_1(\theta)=\sigma$. We conclude that $\sigma$ is lean since $p$ has the right lifting property against morphisms of type \ref{mb:wonky4}.

   For the general case we take a marked edge lying over $p(1 \to 2)$ and construct another $3$-simplex $\Xi: \Delta^3 \to X$ such that $\Xi(1 \to 2)$ is our chosen marked edge and such that $d_2(\Xi)=\sigma$. It follows that every face of $\Xi$ is lean except possible the face skipping the vertex $1$ and the face missing the vertex $2$. We conclude that the face missing the vertex $1$ is lean since it falls in the previous case. It then follows that $\sigma$ is lean. 
 \end{proof}

 \subsection{Marked-scaled simplicial sets.}
 In this section we consider the model structure of $\bS$ simplicial sets over $\Delta^0$ and show that its Quillen equivalent to the model structure on scaled simplicial sets given in \cite{LurieGoodwillie}. While doing this, we will observe that the collection of lean and thin triangles become redundant in this specific case. To deal with this issue we introduce a third model structure on simplicial sets equipped with a collection of marked edges and \emph{one} collection of triangles which we call marked-scaled simplicial sets.

 \begin{definition}
   Let $\left(\on{Set}_\Delta^{\mathbf{mb}}\right)_{/\Delta^0}=\on{Set}_\Delta^{\mathbf{mb}}$. We consider a functor $ R: \on{Set}_\Delta^{\mathbf{mb}} \to \on{Set}_\Delta^{\mathbf{sc}}$ which sends an $\bS$ simplicial set $ (X,E_X, T_X \subseteq C_X)$ to the scaled simplicial set $R (X,E_X, T_X \subseteq C_X)=(X,C_X)$. This functor has a left adjoint $L:\on{Set}_\Delta^{\mathbf{sc}} \to \on{Set}_\Delta^{\mathbf{mb}}$ which is given by $L(Y,T_Y)=(Y,\flat,\flat \subset T_Y)$.
 \end{definition}

 \begin{theorem}\label{thm:comparisonscaled}
   The functor $ L: \on{Set}_\Delta^{\mathbf{sc}} \to \on{Set}_\Delta^{\mathbf{mb}}  $ is a left Quillen equivalence.
 \end{theorem}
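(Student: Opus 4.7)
My plan is to first verify that $L$ is left Quillen, then check that the derived unit and counit are weak equivalences. For the Quillen condition: cofibrations in both model structures are monomorphisms of underlying simplicial sets, so $L$ preserves cofibrations automatically (it acts as the identity on the underlying simplicial set and merely adds the decorations $(\flat, \flat \subset T_Y)$). To see that $L$ preserves trivial cofibrations, I would inspect Lurie's generators for scaled anodyne maps from \autoref{def:scanodyne}. The inner-horn maps of type (i) are sent directly to the generators \ref{mb:innerhorn}, and the wonky 4-simplex map of type (ii) is sent to \ref{mb:wonky4}. The outer scaled anodyne maps of type (iii) are the delicate case: after applying $L$, the edge $\Delta^{\{0,1\}}$ remains collapsed to a point and is therefore automatically a degenerate (hence marked) edge, which reduces the claim to \ref{mb:2coCartesianmorphs} applied to the resulting configuration, the scaling on $\Delta^{\{0,1,n\}}$ becoming degenerate after the identification.

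Next I would characterize fibrant objects of $\left(\on{Set}^{\mathbf{mb}}_\Delta\right)_{/\Delta^0}$. Given a $(0,1)$-fibration $Y = (Y, E_Y, T_Y \subseteq C_Y) \to \Delta^0$, axiom \ref{mb:coCartoverThin} forces $T_Y = C_Y$, since every triangle of $\Delta^0$ is thin and every triangle of $Y$ therefore lies over a thin triangle. A combination of \ref{mb:2coCartesianmorphs}, \ref{mb:2CartliftsExist}, and \ref{mb:equivalences} then pins down $E_Y$ as precisely the collection of equivalences in the $\infty$-bicategory $(Y, C_Y)$; conversely, for any $\infty$-bicategory $(Z, T_Z)$, equipping it with its equivalences and setting $C_Z = T_Z$ yields a fibrant $\bS$-object. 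Thus $R$ restricts to an equivalence between fibrant $\bS$ simplicial sets over $\Delta^0$ and $\infty$-bicategories.

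Finally, I would verify that the derived unit and counit are weak equivalences. Since $RL(X) = R(X,\flat,\flat \subset T_X) = (X, T_X) = X$, the ordinary unit is an identity, so the derived unit $X \to R\widetilde{L(X)}$ is a bicategorical equivalence by the identification of fibrant objects from the previous step, applied to a fibrant replacement $\widetilde{L(X)}$. For the counit at a fibrant $Y$, factor
\[
LR(Y) = (Y, \flat, \flat \subset C_Y) \longrightarrow (Y, \flat, C_Y \subseteq C_Y) \longrightarrow (Y, E_Y, C_Y \subseteq C_Y) = Y,
\]
where the first arrow is $\bS$-anodyne by iterated application of \ref{mb:coCartoverThin} to the lean triangles of $Y$, and the second arrow is $\bS$-anodyne by applying \ref{mb:equivalences} to the Kan sub-complex $Y^\simeq \subseteq Y$ consisting of the equivalences. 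The main obstacle will be handling the outer scaled anodyne maps of type (iii) in the left Quillen step, as the mismatch between Lurie's edge-collapsing convention and the $\bS$ marked-edge formalism requires a careful combinatorial argument; the remaining steps reduce to direct manipulations of the two generating anodyne sets.
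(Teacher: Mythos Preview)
Your overall strategy matches the paper's: verify $L$ is left Quillen, then use $RL=\on{id}$ and factor the counit at a fibrant object through the $\bS$-anodyne maps \ref{mb:coCartoverThin} and \ref{mb:equivalences}. The counit argument is essentially identical to the paper's.

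There is, however, a genuine gap in your left-Quillen step. You propose to show that $L$ preserves trivial cofibrations by checking that it sends each scaled anodyne generator to an $\bS$-anodyne map. But the scaled anodyne maps do \emph{not} generate all trivial cofibrations in the bicategorical model structure on $\scsSet$: having the right lifting property against scaled anodynes characterises the weak fibrations, not the model-categorical fibrations (which must additionally be isofibrations, cf.\ \cite{GHL_Equivalence}). So verifying the generators is not enough. The paper closes this gap by proving the stronger statement that $L$ preserves \emph{all} weak equivalences: first one uses the observation that over $\Delta^0$ the map $(X,E_X,T_X\subset C_X)\to(X,E_X,C_X)$ is $\bS$-anodyne (iterated \ref{mb:coCartoverThin}) to reduce, via fibrant replacement, to weak equivalences between $\infty$-bicategories; these have homotopy inverses, and $L$ preserves the relevant homotopies. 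Your argument becomes correct once you insert this Ken Brown style reduction.

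A smaller point: your handling of the type~(iii) scaled anodyne maps is imprecise. The $L$-image is not literally an instance of \ref{mb:2coCartesianmorphs}; rather, one observes that the square
\[
\begin{tikzcd}
\bigl(\Lambda^n_0,\{\Delta^{\{0,1\}}\},\{\Delta^{\{0,1,n\}}\}\bigr) \arrow[r] \arrow[d] & \bigl(\Delta^n,\{\Delta^{\{0,1\}}\},\{\Delta^{\{0,1,n\}}\}\bigr) \arrow[d] \\
L\bigl(\Lambda^n_0\!\coprod_{\Delta^{\{0,1\}}}\!\Delta^0,\{\Delta^{\{0,1,n\}}\}\bigr) \arrow[r] & L\bigl(\Delta^n\!\coprod_{\Delta^{\{0,1\}}}\!\Delta^0,\{\Delta^{\{0,1,n\}}\}\bigr)
\end{tikzcd}
\]
is a pushout of $\bS$ simplicial sets (the marked edge and the specified triangle both collapse to degenerate cells), exhibiting the bottom map as a pushout of \ref{mb:2coCartesianmorphs}. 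You anticipate this difficulty, but the phrase ``reduces the claim to \ref{mb:2coCartesianmorphs}'' undersells what is needed.
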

 \begin{proof}
   It is clear that $L$ preserves cofibrations and colimits so in order to show that $L$ is a left Quillen functor it will enough to show that $L$ preserves weak equivalences. Observe that given an object $(X,E_X,T_X \subset C_X)$ we have an anodyne morphism $(X,E_X,T_X \subset C_X) \to (X,E_X,C_X)$ since every triangle in $\Delta^0$ is thin. Using this observation it is easy to see that $L$ maps scaled anodyne morphisms to trivial cofibrations in $\on{Set}_\Delta^{\mathbf{mb}}$. This shows that it is enough to show that $L$ preserves weak equivalences among fibrant scaled simplicial sets. However, this is clear since a weak equivalence between fibrant scaled simplicial sets has an inverse up to homotopy.

   To conclude that $L$ is a left Quillen equivalence we first observe that $R \circ L= \on{id}$ by definition. This shows that it is only left to show that for any fibrant object  $(Y,E_Y,T_Y \subseteq C_Y) \in \on{Set}_\Delta^{\mathbf{mb}} $ the counit map $LR(Y) \to Y$ is a weak equivalence. Note that since our chosen $\bS$ simplicial set is fibrant $C_Y=T_Y$ and $(Y,T_Y)$ is an $\infty$-bicategory. In particular, we see that $LR(Y) \to Y$ is the weakly saturated class of morphisms of type \ref{mb:equivalences} and \ref{mb:coCartoverThin} in \autoref{def:mbsanodyne}.
 \end{proof}

\begin{definition}
  A marked-scaled simplicial set denoted by $(X,E_X,T_X)$ is given by
  \begin{enumerate}
    \item A simplicial set $X$.
    \item A collection of edges $E_X \subseteq X_1$ which contains \emph{all degenerate} edges. We refer to the elements of this collection as \emph{marked} edges.
    \item A collection of triangles $T_X \subseteq X_2$ which contains $\emph{all degenerate}$ triangles. We refer to the elements of this collection as \emph{thin} triangles.
  \end{enumerate}
  A morphism of marked-scaled simplicial sets $f:(X,E_X,T_X) \to (Y,E_Y,T_Y)$ is given by a map of simplicial sets such that $f(E_X) \subseteq E_Y$ and $f(T_X) \subseteq T_Y$. We denote by $\on{Set}_\Delta^{\mathbf{ms}}$ the category of marked-scaled simplicial sets.
\end{definition}

\begin{definition}\label{def:msanodyne}
  The set of \emph{generating marked-scaled anodyne maps} $\MS$ is the set of maps of marked-scaled simplicial sets consisting of
  \begin{enumerate}
    \myitem{(M1)}\label{MS:innerhorn} The inner horn inclusions 
    \[
    \bigl(\Lambda^n_i,\flat, \{\Delta^{\{i-1,i,i+1\}}\}\bigr)\rightarrow \bigl(\Delta^n,\flat, \{\Delta^{\{i-1,i,i+1\}}\}\bigr).
    \quad , \quad n \geq 2 \quad , \quad 0 < i < n ;
    \]
    
    \myitem{(M2)}\label{ms:wonky4} The map 
    \[
    (\Delta^4,\flat, T)\rightarrow (\Delta^4,\flat, T\cup \{\Delta^{\{0,3,4\}}, \ \Delta^{\{0,1,4\}}\}),
    \]
    where we define
    \[
    T\overset{\text{def}}{=}\{\Delta^{\{0,2,4\}}, \ \Delta^{\{ 1,2,3\}}, \ \Delta^{\{0,1,3\}}, \ \Delta^{\{1,3,4\}}, \ \Delta^{\{0,1,2\}}\};
    \]
    \myitem{(M3)}\label{ms:2coCartesianmorphs} The set of maps
    \[
    \Bigl(\Lambda^n_0,\{\Delta^{\{0,1\}}\}, \{ \Delta^{\{0,1,n\}} \}\Bigr) \to \Bigl(\Delta^n,\{\Delta^{\{0,1\}}\}, \{ \Delta^{\{0,1,n\}} \}\Bigr) \quad , \quad n \geq 2.
    \]
    
    \myitem{(M4)}\label{mb:2CartliftsExist} The inclusion of the initial vertex
    \[
    \Bigl(\Delta^{0},\sharp,\sharp \Bigr) \rightarrow \Bigl(\Delta^1,\sharp,\sharp \Bigr).
    \]
    \myitem{(MS1)}\label{ms:composeacrossthin} The map
    \[
    \Bigl(\Delta^2,\{\Delta^{\{0,1\}}, \Delta^{\{1,2\}}\},\sharp \Bigr) \rightarrow \Bigl(\Delta^2,\sharp,\sharp \Bigr).
    \]
    
    \myitem{(ME)}\label{ms:equivalences} For every Kan complex $K$, the map
    \[
    \Bigl( K,\flat,\sharp  \Bigr) \rightarrow \Bigl(K,\sharp, \sharp\Bigr).
    \]
    Which requires that every equivalence is a marked morphism.
  \end{enumerate}
  A map of $\MS$ simplicial sets is said to be \MS-anodyne if it belongs to the weakly saturated closure of \MS.
\end{definition}

\begin{remark}
  Observe that $(X,E_X,T_X)$ has the right lifting property against the class of $\MS$-anodyne morphisms if and only if $X$ is an $\infty$-bicategory, $E_X$ is the collection of equivalences in $X$ and $T_X$ is the collection of thin triangles. Consequently, we might call such marked-scaled simplicial sets $\infty$-bicategories as well.
\end{remark}

\begin{definition}
  We say that a morphism of marked-scaled simplicial sets is a cofibration if its underlying map of simplicial sets is a cofibration.
\end{definition}

\begin{proposition}
  Let $f:X \to Y$ be a cofibration in $\on{Set}_\Delta^{\mathbf{ms}}$ and $g: A \to B$ be an $\MS$-anodyne morphism. Then the pushout-product
  \[
    X \times B \coprod_{X \times A} Y \times A \xlongrightarrow{} Y \times B
  \]
  is again $\MS$-anodyne.
\end{proposition}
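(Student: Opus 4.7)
The plan is to imitate the strategy used for \autoref{prop:pushoutproduct}. The class of morphisms $h$ such that $h \wedge g$ is $\MS$-anodyne for a fixed $g$ is closed under pushouts, transfinite composition, and retracts, and symmetrically in the other slot. Consequently it suffices to verify the statement when $f$ is a generating cofibration and $g$ is one of the generators of $\MS$ listed in \autoref{def:msanodyne}. First I would identify the generating cofibrations of $\on{Set}_\Delta^{\mathbf{ms}}$: by the same argument as in \autoref{def:gencof}, but with only a single collection of triangles, they are the boundary inclusions $(\partial\Delta^n,\flat,\flat)\hookrightarrow (\Delta^n,\flat,\flat)$ for $n\geq 0$, the marking $(\Delta^1,\flat,\flat)\to (\Delta^1,\sharp,\flat)$, and the scaling $(\Delta^2,\flat,\flat)\to (\Delta^2,\flat,\sharp)$.

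Then I would run the standard case analysis pair by pair. The bulk of the work lies in the pairs where $g$ is the inner horn generator \ref{MS:innerhorn} or the left-outer horn generator \ref{ms:2coCartesianmorphs}, paired against a boundary inclusion. Here one filters the pushout-product using the nondegenerate shuffles of $\Delta^m\times\Delta^n$, attaching one shuffle at a time; each attaching map is an inner (resp.\ left-outer) horn whose distinguished thin triangle $\Delta^{\{i-1,i,i+1\}}$ (resp.\ $\Delta^{\{0,1,n\}}$) and marked edge $\Delta^{\{0,1\}}$ are preserved under the product, so the attachment is itself of type \ref{MS:innerhorn} (resp.\ \ref{ms:2coCartesianmorphs}). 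The case of \ref{ms:wonky4} against the boundary inclusions is a finite inspection on a product involving $\Delta^4$. The cases involving the composition generator \ref{ms:composeacrossthin} and the equivalence generator \ref{ms:equivalences} are formal: for the latter, if $K$ is Kan so is $K\times\Delta^n$, so the pushout-product again lands in the weakly saturated class generated by \ref{ms:equivalences}. The cases where $f$ is the marking or scaling generator are immediate from the definitions of the decorations in $\MS$.

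The main obstacle I expect is the bookkeeping in the shuffle filtration against \ref{ms:2coCartesianmorphs}, where one must simultaneously track the marking on $\Delta^{\{0,1\}}$ and the scaling on $\Delta^{\{0,1,n\}}$ of each shuffled simplex to confirm that each intermediate attachment really is a map of type \ref{ms:2coCartesianmorphs}. This is the same combinatorial analysis carried out in the proof of \cite[Proposition~3.14]{AGS_CartI}, and it transfers here mutatis mutandis; the marked-scaled setting is actually a bit simpler since only one collection of triangles has to be kept track of, so the dichotomy between the thin and lean collections that complicated the $\bS$ setting disappears. Modulo this verification, the proposition follows.
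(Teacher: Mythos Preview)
Your proposal is correct and follows exactly the approach the paper intends: the paper states this proposition without proof, relying on the reader to transport the argument from \autoref{prop:pushoutproduct} (itself deferred to \cite[Proposition~3.14]{AGS_CartI}), and your sketch is precisely that transport. One small omission: you do not mention the generator \ref{mb:2CartliftsExist} (the initial-vertex inclusion $\Delta^0\to(\Delta^1,\sharp,\sharp)$), but this case is routine via the same shuffle filtration, so the gap is cosmetic rather than substantive.
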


\begin{corollary}
  Given a marked-biscaled simplicial set $X$ which has the right lifting property against the class of $\MS$-anodyne morphisms it follows that $\on{Fun}^{\mathbf{ms}}(A,X)$ (compare to \autoref{rem:funmb}) has the right lifting property against the class $\MS$-anodyne morphisms for every $A \in \on{Set}_\Delta^{\mathbf{ms}}$.
\end{corollary}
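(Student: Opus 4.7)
The plan is to deduce this directly from the preceding pushout-product proposition via the standard tensor-hom adjunction between $(-) \times A$ and $\on{Fun}^{\mathbf{ms}}(A,-)$. First I would fix an $\MS$-anodyne morphism $g: L \to K$ and consider an arbitrary lifting problem
\[
\begin{tikzcd}
L \arrow[r] \arrow[d,"g"] & \on{Fun}^{\mathbf{ms}}(A,X) \arrow[d] \\
K \arrow[r] & \Delta^0.
\end{tikzcd}
\]
Transposing under the adjunction (whose existence is recorded in the analogue of \autoref{rem:funmb} for marked-scaled simplicial sets), this becomes a lifting problem against $X \to \Delta^0$ whose left leg is the map $g \times \id_A : L \times A \to K \times A$.

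Next I would invoke the pushout-product proposition just proved, applied to the $\MS$-anodyne morphism $g$ and to the cofibration $\emptyset \to A$. The pushout-product of these two maps degenerates to the product $g \times \id_A$, so this map is $\MS$-anodyne. Using the assumed right lifting property of $X$ against all $\MS$-anodyne morphisms, we solve the transposed lifting problem and transpose the solution back to obtain the desired lift $K \to \on{Fun}^{\mathbf{ms}}(A,X)$.

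There is no genuine obstacle here: the result is a completely formal consequence of the pushout-product property together with the closed monoidal structure on $\on{Set}_\Delta^{\mathbf{ms}}$. The only bookkeeping needed is checking that marked edges and thin triangles interact correctly with $\times$ and $\on{Fun}^{\mathbf{ms}}$ — but this has already been absorbed into the statement of the pushout-product proposition and into the construction of $\on{Fun}^{\mathbf{ms}}(A,X)$ by its universal property. Hence the argument reduces to a single round-trip through the adjunction.
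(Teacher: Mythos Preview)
Your argument is correct and is exactly the standard adjunction argument the paper has in mind: the corollary is stated without proof immediately after the pushout-product proposition, and your deduction via the tensor--hom adjunction (specialising the cofibration to $\emptyset \to A$) is the intended route. Nothing more is needed.
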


\begin{definition}
  A morphism of marked-scaled simplicial sets $i:A \to B$ is said to be a \emph{weak equivalence} if for every $\infty$-bicategory $X$ the induced map
  \[
    i^*: \on{Fun}^{\mathbf{ms}}(B,X) \xlongrightarrow{\simeq} \on{Fun}^{\mathbf{ms}}(A,X)
  \]
  is a bicategorical equivalence.
\end{definition}

\begin{remark}\label{rem:mstensor}
  In a similar way as before given $K \in \on{Set}_\Delta^+$ and $X \in \on{Set}_\Delta^{\mathbf{ms}}$ we define the tensor $K \times X:=I(K) \times X$ where $I(K)=(K,E_K,\sharp)$. Similarly, we define the cotensor $X^{K}:= \on{Fun}^{\mathbf{ms}}(I(K),X)$.
\end{remark}

\begin{theorem}
  There exists a left proper combinatorial simplicial model category on $\on{Set}_\Delta^{\mathbf{ms}}$, which is characterized uniquely by the following properties:
  \begin{itemize}
    \item[C)] A morphism $f:X \to Y$ in  $\on{Set}_\Delta^{\mathbf{ms}}$ us a cofibration if and only if $f$ induces a monomorphism on the underlying simplicial set.
    \item[F)] An object $X$ in $\on{Set}_\Delta^{\mathbf{ms}}$ is fibrant if and only if it was the right lifting property against the class of marked-anodyne morphisms.
  \end{itemize}
  Moreover the tensor and cotensor in \autoref{rem:mstensor} equipps $\on{Set}_\Delta^{\mathbf{ms}}$ with the structure of a $\on{Set}_\Delta^+$-enriched model category.
\end{theorem}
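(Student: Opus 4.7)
The plan is to follow the same strategy used to establish \autoref{thm:model}, adapting the arguments from the marked-biscaled setting to the simpler marked-scaled one, with Jeff Smith's recognition theorem for combinatorial model categories as the main technical tool. One takes as generating cofibrations the natural analogues of (C1), (C2) and (C4) in \autoref{def:gencof}, the family (C3) becoming redundant since there is only a single collection of triangles in the $\MS$ setting, and as a generating class of trivial cofibrations an enlargement of $\MS$ chosen so that fibrant objects coincide with those having the right lifting property against $\MS$.

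First I would establish the mapping-space compatibilities that mirror \autoref{prop:mappingclasses}. From the pushout-product proposition just stated, for any cofibration $f:A\to B$ in $\on{Set}_\Delta^{\mathbf{ms}}$ and any fibrant $X$, the map $f^*:\on{Fun}^{\mathbf{ms}}(B,X)\to \on{Fun}^{\mathbf{ms}}(A,X)$ is an isofibration of $\infty$-bicategories and a trivial fibration whenever $f$ is $\MS$-anodyne. Invoking the Cisinski-style description of the scaled simplicial model structure from \cite{GHL_Equivalence} upgrades $f^*$ to a fibration of scaled simplicial sets. The usual formal consequences then follow: weak equivalences satisfy 2-out-of-3, are closed under retracts, and are stable under pushouts along cofibrations (the latter via the resulting pullback square of mapping $\infty$-bicategories, giving left properness); every $\MS$-anodyne map is a trivial cofibration; and an object has RLP against the class of trivial cofibrations if and only if it is fibrant. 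This last point uses the small object argument to factor a trivial cofibration as an $\MS$-anodyne map followed by a map with RLP against cofibrations, and then exhibits the second factor as a weak equivalence via a fibrewise homotopy inverse, exactly as in the lemmata preceding \autoref{thm:model}. Smith's theorem then produces the desired model structure.

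For the simplicial enrichment, I would reuse the argument given just after \autoref{thm:model}: the pushout-product $i \wedge f$ of a cofibration $i:L\to K$ of marked simplicial sets with a cofibration $f:X\to Y$ of marked-scaled simplicial sets is manifestly a cofibration; and when $i$ or $f$ is a weak equivalence, one reduces to the case where all objects involved are fibrant by exploiting that $I(-):\on{Set}_\Delta^+ \to \on{Set}_\Delta^{\mathbf{ms}}$ sends marked-anodyne maps to $\MS$-anodyne maps. In the fibrant situation the map $L\otimes X \to L\otimes Y$ is a bicategorical equivalence because tensoring with a fibrant marked simplicial set preserves bicategorical equivalences, and the full pushout-product is then handled by the standard two-step argument combined with 2-out-of-3.

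The main obstacle I expect is verifying the analogue of \autoref{prop:fibrewisecriterion} in the present setting, which underlies the key step that any map with RLP against cofibrations between fibrant objects is a weak equivalence. Since there is no base, this collapses to the statement that a map of fibrant marked-scaled simplicial sets is a weak equivalence precisely when its underlying map of scaled simplicial sets is a bicategorical equivalence; this should follow from the $\MS$-anodyne maps of type \ref{ms:equivalences} forcing markings on fibrant objects to detect equivalences, combined with the Quillen equivalence established in \autoref{thm:comparisonscaled}.
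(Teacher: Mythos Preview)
Your proposal is correct and matches the paper's approach: the paper states this theorem without proof, implicitly relying on the fact that the argument is entirely analogous to that of \autoref{thm:model} (which in turn refers to \cite{AGS_CartI}), and your outline spells out exactly this analogy with the appropriate simplifications for the marked-scaled setting.
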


\begin{theorem}
  The functor $L: \on{Set}_\Delta^{\mathbf{sc}} \to \on{Set}_\Delta^{\mathbf{ms}}$ sending a scaled simplicial set $(X,T_X)$ to the marked-scaled simplicial set $(X,\flat,T_X)$ is a left Quillen equivalence.
\end{theorem}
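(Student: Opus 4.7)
The proof parallels that of \autoref{thm:comparisonscaled}. The right adjoint is the forgetful functor $R\colon \on{Set}_\Delta^{\mathbf{ms}} \to \on{Set}_\Delta^{\mathbf{sc}}$ with $R(X,E_X,T_X)=(X,T_X)$, and visibly $R\circ L=\on{id}$.

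First I would show that $L$ is left Quillen. It clearly preserves colimits and cofibrations, so by saturation it suffices to verify that $L$ sends each generating scaled anodyne map from \autoref{def:scanodyne} to an $\MS$-anodyne map. The inner horn family (i) and the saturation move (ii) are sent to the generators \ref{MS:innerhorn} and \ref{ms:wonky4} on the nose. The subtle case is the outer horn family (iii): $L$ applied to this generator is obtained as the pushout of the $\MS$-anodyne generator \ref{ms:2coCartesianmorphs} along the collapse map $(\Delta^{\{0,1\}},\sharp,\sharp)\to(\Delta^0,\sharp,\sharp)$. Under this collapse both the marked edge $\Delta^{\{0,1\}}$ and the thin triangle $\Delta^{\{0,1,n\}}$ become degenerate simplices, and therefore automatically belong to the $\flat$-markings and $\flat$-scaling; hence the induced decorations on source and target of the pushout match those of $L$ applied to (iii). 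Since weakly saturated classes are stable under pushouts along arbitrary maps, the result is $\MS$-anodyne.

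To upgrade the Quillen adjunction to a Quillen equivalence, I would argue as in the proof of \autoref{thm:comparisonscaled}: the identity $R\circ L=\on{id}$ reduces the problem to showing that the counit $L(R(Y))\to Y$ is a weak equivalence for every fibrant $Y=(Y,E_Y,T_Y)$. For such $Y$, the $\infty$-bicategory $(Y,T_Y)$ has $E_Y$ equal to the collection of equivalences, which form the maximal Kan subcomplex of $Y$. Consequently the counit $(Y,\flat,T_Y)\to(Y,E_Y,T_Y)$ lies in the weakly saturated closure of the maps of type \ref{ms:equivalences} and is therefore $\MS$-anodyne, in particular a weak equivalence.

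The main obstacle is the pushout identification in the outer-horn case, which hinges on the observation that collapsing the marked edge $\Delta^{\{0,1\}}$ to a point simultaneously forces the thin triangle $\Delta^{\{0,1,n\}}$ to become degenerate; once this compatibility is recorded, the remaining steps are a faithful copy of the argument in \autoref{thm:comparisonscaled}.
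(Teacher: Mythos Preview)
Your approach is the same as the paper's (which simply defers to \autoref{thm:comparisonscaled}), and your explicit pushout identification for the outer-horn family (iii) is correct and is precisely the adaptation needed in the marked-scaled setting, where the shortcut via \ref{mb:coCartoverThin} used in \autoref{thm:comparisonscaled} is unavailable.

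There is, however, one gap in your argument for ``$L$ is left Quillen''. The phrase ``by saturation it suffices to verify that $L$ sends each generating scaled anodyne map \ldots\ to an $\MS$-anodyne map'' is not correct as stated: this only shows that $L$ sends \emph{scaled anodyne} maps to trivial cofibrations, but the class of trivial cofibrations in $\on{Set}_\Delta^{\mathbf{sc}}$ is strictly larger than the class of scaled anodyne maps (e.g.\ the inclusion of a vertex into a thin nerve of the free-living isomorphism). What you have established is that $L$ preserves weak equivalences among scaled anodyne maps; you still owe the extension to all weak equivalences. The fix is exactly the one appearing in the proof of \autoref{thm:comparisonscaled}: since $L$ preserves cofibrations and sends scaled anodyne maps to weak equivalences, it remains to check that $L$ preserves weak equivalences between fibrant scaled simplicial sets, and this is immediate because such maps admit homotopy inverses and $L$ preserves the relevant cylinder. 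Once this sentence is added your proof is complete. Your counit argument via \ref{ms:equivalences} is fine; note that one implicitly uses that every $2$-simplex of the maximal Kan subcomplex of a fibrant $Y$ is thin, which holds in any $\infty$-bicategory.
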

\begin{proof}
  The proof is almost identical to the proof of \autoref{thm:comparisonscaled} and thus omitted.
\end{proof}

\section{Local fibrations}
In this section we give a model independent definition of a locally coCartesian fibration of $(\infty,2)$-categories or in our terminology a local $(0,1)$-fibration. We will use the model structure of \autoref{thm:model} to describe the theory of locally coCartesian fibrations of $(\infty,2)$-categories in terms of $(0,1)$-fibrations over $(S,M_S)$ where $(S,T_S)$ is an $\infty$-bicategory and $M_S\subset T_S$ is a subcollection of the thin triangles.

\begin{definition}\label{def:mstri}
  Let $(S,T_S)$ be an $\infty$-bicategory we define $M_S \subseteq T_S$ as the subcollection of triangles consisting in those \emph{thin} triangles such that the edge $\Delta^{\{0,1\}}$ or the edge $\Delta^{\{1,2\}}$ is an equivalence in $S$. We call the elements of $M_S$ the invertible 2-morphisms of $S$.
\end{definition}

\begin{remark}
  Observe that the collection $M_S$ has been studied in \cite{GHL_Gray} to give a definition of oplax unital functors of $\infty$-bicategories.
\end{remark} 

\begin{definition}
  We will use boldface letters $\bcat{S}:=(S,T_S)$ to describe fibrant scaled simplicial sets.
\end{definition}

\begin{remark}
  During this section we will make the notational convetion of denoting by $\left(\on{Set}^{\mathbf{mb}}_\Delta\right)_{/S}$ the category of $\bS$ simplicial sets over $(S,M_S)$ where $(S,T_S)$ is an $\infty$-bicategory and $M_S$ is the collection of 2-simplices defined in \autoref{def:mstri}.
\end{remark}

\begin{proposition}\label{prop:piscatfib}
  Let $p: X \to S$ be a $(0,1)$-fibration. We define a scaled simplicial set $\bcat{X}$ whose underlying simplicial set is the underlying simplicial set of $X$ and where a triangle is declared to be thin if it is lean in $X$ and its image under $p$ belongs to $T_S$. Then the associated functor $p:\bcat{X} \to \bcat{S}$ is a bicategorical fibration.
\end{proposition}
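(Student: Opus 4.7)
The plan is to prove the statement in two stages: first showing that $\bcat{X}$ is an $\infty$-bicategory and that $p$ is a ``scaled inner fibration'' (right lifting against \autoref{def:scanodyne}), and then verifying the isofibration property. By the Cisinski-type characterization of the scaled model structure from \cite{GHL_Equivalence} (used already in \autoref{prop:mappingclasses}), these two properties together imply $p$ is a bicategorical fibration.

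For the first stage, each generating scaled anodyne map of \autoref{def:scanodyne} can be promoted to an $\bS$-anodyne map of \autoref{def:mbsanodyne}, against which $p \colon X \to S$ has the right lifting property by assumption. Given a lifting problem $A \to \bcat{X}$ over $B \to \bcat{S}$ with $A \to B$ a scaled inner horn from \autoref{def:scanodyne}(i), I would equip $A$ and $B$ with the $\bS$-structure in which no edges are marked, no triangles are thin, and the middle triangle $\Delta^{\{i-1,i,i+1\}}$ is lean; this yields a valid instance of \ref{mb:innerhorn}, since the target $(S,\sharp,T_S\subset\sharp)$ absorbs any decoration. The wonky $4$-simplex (ii) promotes to \ref{mb:wonky4} analogously. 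For the outer horn (iii), the pushout identifies $\Delta^{\{0,1\}}$ with a degenerate edge, hence automatically marked, while the source's thin triangle $\Delta^{\{0,1,n\}}$ lands in a thin triangle of $\bcat{X}$, i.e.\ a triangle that is lean in $X$ and whose image is thin in $S$. The crucial observation is that \ref{mb:coCartoverThin} forces such a triangle to be thin in $X$ as an $\bS$-simplicial set, so the lifting problem promotes to \ref{mb:2coCartesianmorphs}. Specialising to $S = \Delta^0$ (and using that $\bcat{S}$ is fibrant) shows that $\bcat{X}$ is itself an $\infty$-bicategory.

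For the isofibration step, let $x \in \bcat{X}$ and let $e \colon p(x) \to y$ be an equivalence in $\bcat{S}$. Since every edge of $S$ is marked in its $\bS$-structure, \ref{mb:2CartliftsExist} provides a lift $\hat e \colon x \to \hat y$ in $X$ which is marked. I would then argue that $\hat e$ is an equivalence in $\bcat{X}$ by the standard strategy for coCartesian lifts of equivalences: lift a quasi-inverse $e^{-1}$ to a marked $\hat f \colon \hat y \to z$, use \ref{mb:composeacrossthin} to compose $\hat f \circ \hat e$ across the thin triangle of $\bcat{S}$ witnessing $e^{-1}\circ e \simeq \id_{p(x)}$, and then invoke \ref{mb:equivalences} inside the fibre over $p(x)$ to convert the resulting marked endomorphism into an honest equivalence. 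Symmetric reasoning on the other side produces the two-sided inverse.

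The main obstacle is the second step: extracting a genuine homotopy inverse to $\hat e$ requires threading together several $\bS$-anodyne fillings and carefully shuttling between the marked-edge and equivalence descriptions via \ref{mb:equivalences}. Step 1 is by contrast essentially bookkeeping, whose key input is that lean triangles over thin bases are automatically thin in $X$. Should the direct inversion argument prove cumbersome, an alternative is to check the right lifting property of $p$ against an explicit set of generating trivial cofibrations for the scaled model structure, following the template of \autoref{prop:mappingclasses}(ii).
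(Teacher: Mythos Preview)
Your proposal is correct and follows essentially the same two-stage strategy as the paper: first verify the right lifting property against scaled anodyne maps by promoting them to $\bS$-anodyne maps, then establish the isofibration property by lifting an equivalence to a marked edge and arguing it is an equivalence in $\bcat{X}$. The paper's proof is much terser---it simply asserts the first stage is ``clear'' and for the second observes that the witnessing $2$-simplices for $e$ being an equivalence lie in $M_S$---whereas you spell out the promotion for each generator and sketch the explicit inversion argument.

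One small correction: in the isofibration step you invoke \ref{mb:equivalences} to convert the marked endomorphism into an equivalence, but \ref{mb:equivalences} goes the other direction (equivalences are marked). The implication you need---that marked edges in a fibre are equivalences---follows instead from \ref{mb:2coCartesianmorphs}, as the paper notes in the lemma immediately preceding this proposition. This does not affect the validity of your argument, only the citation.
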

\begin{proof}
  It is clear that $p:\bcat{X} \to \bcat{S}$ has the right lifting property against the class of scaled anodyne maps thus it follows that $\bcat{X}$ is itself and $\infty$-bicategory. To finish the proof we will need to show that $p$ is an isofibration. Given an equivalence $e:\Delta^1 \to \bcat{S}$ and a lift of the source $\Delta^0 \to \bcat{X}$ we can produce a marked edge $\hat{e}: \Delta^1 \to X$ lying over $e$. We observe that the 2-simplices needed for exhibiting $e$ as an equivalence in $\bcat{S}$ are contained in $M_S$ in particular this can be used to show that $\hat{e}$ is an equivalence in $\bcat{X}$. 
\end{proof}

\begin{proposition}\label{prop:cartenriched}
  Let $p: X \to S$ be a $(0,1)$-fibration. Then for every pair of objects $a,b \in X$ the induced functor on mapping $\infty$-categories
  \[
    \begin{tikzcd}
      \bcat{X}(a,b) \arrow[r] & \bcat{S}(p(a),p(b))
    \end{tikzcd}
  \]
  is a Cartesian fibration. Moreover, the composition functors $\bcat{X}(a,b)\times \bcat{X}(b,c) \to \bcat{X}(a,c)$ preserve Cartesian edges.
\end{proposition}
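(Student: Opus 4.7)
The approach is to use the slice description of the mapping $\infty$-category, writing $\bcat{X}(a,b) = \pi^{-1}(a)$ where $\pi\colon \bcat{X}_{\upslash b} \to \bcat{X}$ (see \autoref{def:mappingcat}), and translating each lifting property we need into a lifting problem in $X$ over $S$ that can be solved by the $\bS$-anodyne generators. Concretely, an $n$-simplex of $\bcat{X}(a,b)$ is an $(n+1)$-simplex $\sigma\colon \Delta^{n+1}\to \bcat{X}$ with $\sigma(n+1)=b$ and with $\sigma|_{\Delta^{\{0,\dots,n\}}}$ degenerate at $a$; the functor $\bcat{X}(a,b)\to \bcat{S}(p(a),p(b))$ is induced by postcomposition with $p$. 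Under this dictionary, an inner horn filling problem in the map on mapping categories becomes an inner horn filling problem in $X$ above a thin simplex of $S$, which is supplied by the combination of axiom \ref{mb:innerhorn} and the thinness-transfer axiom \ref{mb:coCartoverThin}. This gives the inner fibration condition.

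Next I would produce Cartesian lifts. Starting from an edge $f\colon \alpha \to \beta$ in $\bcat{S}(p(a),p(b))$—that is, a thin $2$-simplex $\tilde f\colon \Delta^2\to \bcat{S}$ with $\tilde f(\Delta^{\{0,1\}})=\mathrm{id}_{p(a)}$, $\tilde f(\Delta^{\{1,2\}})=\beta$—and from a lift $\tilde\beta\colon a\to b$ of $\beta$ in $X$, I would form the inner horn $\Lambda^2_1\to X$ whose faces are $(\mathrm{id}_a,\tilde\beta)$, declare the target $2$-simplex to be lean, and solve the resulting lifting problem against the generator \ref{mb:innerhorn}. The produced filler $\tau\colon \Delta^2 \to X$ is lean and lies over $\tilde f\in T_S$, so \ref{mb:coCartoverThin} upgrades it to a thin triangle in $X$. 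This thin triangle is exactly the data of an edge $\tilde\alpha \to \tilde\beta$ in $\bcat{X}(a,b)$ projecting to $f$, and the comment accompanying \ref{mb:innerhorn} identifies it as a Cartesian lift of $f$. To make the Cartesian property precise, one checks the universal property by extending the same recipe to inner horns of the form $(\Lambda^n_1, \flat, \flat \subset \{\Delta^{\{0,1,n\}}\}) \to (\Delta^n, \flat, \flat \subset \{\Delta^{\{0,1,n\}}\})$ for $n\geq 3$; these are in the weak saturation of the $\bS$-anodyne generators by a filtration argument completely analogous to the one carried out in \cite[Section 3]{AGS_CartI}, with the roles of marked and unmarked variances suitably swapped.

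For the composition claim, given Cartesian edges $\tau_1 \in \bcat{X}(a,b)$ and $\tau_2 \in \bcat{X}(b,c)$, their horizontal composite in $\bcat{X}(a,c)$ is represented by a $3$-simplex in $X$ obtained by solving an inner horn $\Lambda^3_2 \to X$ via \ref{mb:innerhorn} and then using \ref{mb:wonky4} to force leanness of the remaining $2$-face; a final application of \ref{mb:coCartoverThin} makes it thin, and its Cartesian character in $\bcat{X}(a,c)$ is deduced from that of $\tau_1,\tau_2$ together with the universal property verified in the previous step.

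The main obstacle is the coherence step in paragraph two: identifying precisely which inner-horn inclusions in $X$ correspond to the Cartesian lifting property inside $\bcat{X}(a,b)$, and verifying that they are all in the weak saturation of $\bS$. Once that bookkeeping is set up the rest of the argument is formal, but the interaction between the lean/thin distinction and the marked edges makes it delicate; the cleanest path is to mirror the structure of the analogous proof for $(1,0)$-fibrations in \cite{AGS_CartI}.
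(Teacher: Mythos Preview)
Your overall strategy---use the slice model of \autoref{def:mappingcat} and translate the required lifting problems in $\bcat{X}(a,b)\to\bcat{S}(p(a),p(b))$ into $\bS$-anodyne lifting problems for $p$---is exactly the paper's approach. But the bookkeeping you flag as the ``main obstacle'' is where your proposal goes astray, and fixing it in fact makes the argument \emph{simpler} than you anticipate.

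Under the slice description, an $n$-simplex of $\bcat{X}(a,b)$ is an $(n{+}1)$-simplex of $X$ with last vertex $b$ and $d_{n+1}$ degenerate at $a$; the face $d_i$ in the mapping category corresponds to $d_i$ in $X$ for $i\leq n$. Hence a right-horn problem $\Lambda^n_n\to\bcat{X}(a,b)$ over $\Delta^n\to\bcat{S}(p(a),p(b))$, with the final edge $\{n{-}1,n\}$ sent to the candidate Cartesian edge (i.e.\ to a \emph{lean} $2$-simplex of $X$), becomes a lifting problem $\Lambda^{n+1}_n\to X$ over $\Delta^{n+1}\to S$ in which the triangle $\Delta^{\{n-1,n,n+1\}}$ is lean. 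This is \emph{literally} a generator of type \ref{mb:innerhorn} (with $i=n$), so the solution is immediate---no filtration argument is needed. Your proposed horns $(\Lambda^n_1,\flat,\flat\subset\{\Delta^{\{0,1,n\}}\})$ are neither the correct translation nor of type \ref{mb:innerhorn}, and the appeal to an unspecified saturation argument from \cite{AGS_CartI} is doing work that isn't required. (Likewise, you do not need \ref{mb:coCartoverThin} to upgrade the constructed $2$-simplex: ``Cartesian edge of the mapping category'' means ``lean $2$-simplex in $X$'', not ``thin''.)

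For the composition claim your sketch is too compressed to evaluate. The paper's argument is cleaner than solving a specific $\Lambda^3_2$: given Cartesian $\alpha\colon f\Rightarrow g$ and $\beta\colon u\Rightarrow v$, the composite square in $\bcat{X}(a,c)$ lets one reduce to showing that pre- and post-composition with a fixed $1$-morphism preserve Cartesian $2$-morphisms, and this is exactly what the lifting property against \ref{mb:wonky4} provides. I would recommend reorganising your final paragraph along those lines rather than trying to name a single horn.
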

\begin{proof}
  We pick a model for the mapping $\infty$-category discussed in \autoref{def:mappingcat}. Given a morphism $\alpha:\Delta^1 \to \bcat{S}(p(a),p(b))$ and a lift of the target $g: \Delta^0 \to \bcat{X}(a,b)$ we can consider a morphism of type \ref{mb:innerhorn} to produce an edge in $\bcat{X}(a,b)$ which is lean in the $\bS$ simplicial set $X$. We can similarly translate lifting problems of the form,
  \[
    \begin{tikzcd}
      \Lambda^n_n \arrow[r] \arrow[d] & \bcat{X}(a,b) \arrow[d] \\
      \Delta^n \arrow[r] \arrow[ur,dotted] & \bcat{S}(p(a),p(b))
    \end{tikzcd}
  \]
  where the last edge in the top horizontal morphism is mapped to lean 2-simplex to lifting problems of the form
  \[
    \begin{tikzcd}
      \Lambda^{n+1}_{n} \arrow[d] \arrow[r] & X \arrow[d] \\
      \Delta^{n+1} \arrow[r] \arrow[ur,dotted] & S
    \end{tikzcd}
  \]
  where the triangle $\Delta^{\{n-1,n,n+1\}}$ is mapped to a lean triangle in $X$ and thus admits a solution. We conclude that $\bcat{X}(a,b) \to \bcat{S}(p(a),p(b))$ is a Cartesian fibration. To finish the proof we consider the composition functor
  \[
    \begin{tikzcd}
      \bcat{X}(a,b)\times \bcat{X}(b,c) \arrow[r] & \bcat{X}(a,c)
    \end{tikzcd}
  \]
Given a pair of Cartesian edges $\alpha: f \to g$ and $\beta: u  \to v$  the composition map yields a commutative diagram in $\bcat{X}(a,c)$ of the form
\[
  \begin{tikzcd}
    f \circ u \arrow[r] \arrow[d] & f \circ v  \arrow[d] \\
    g \circ u \arrow[r] & g \circ v
  \end{tikzcd}
\]
which tells us that it will suffice to check that precomposition and postcomposition with a 1-morphism preserves Cartesian 2-morphisms. This follows from the fact that $p$ has the right lifting property against morphisms of type \ref{mb:wonky4}.
\end{proof}

\begin{definition}\label{def:cartenriched}
  A bicategorical fibration $p: \bcat{X} \to \bcat{S}$ is said to be \emph{Cartesian-enriched} if
  \begin{itemize}
     \item For every $a,b \in \bcat{X}$ the morphisms $\bcat{X}(a,b) \to \bcat{S}(p(a),p(b))$ are Cartesian fibrations.
     \item For every $a,b,c \in \bcat{X}$ the composite maps $\bcat{X}(a,b) \times \bcat{X}(b,c) \to \bcat{X}(a,c)$ preserve Cartesian edges.
   \end{itemize} 
   Given two Cartesian-enriched bicategorical fibrations $p: \bcat{X} \to \bcat{S}$ and $q: \bcat{Y} \to \bcat{S}$ we say that a functor $f: \bcat{X} \to \bcat{Y}$ is Cartesian-enriched if preserves Cartesian edges in the mapping categories.
\end{definition}

\begin{definition}\label{def:locally01edge}
  Let $p: \bcat{X} \to \bcat{S}$ be a bicategorical fibration. An edge $e: a \to b$ in $\bcat{X}$ is said to be locally $(0,1)$-Cartesian (or a local $(0,1)$-Cartesian edge) if:
  \begin{itemize}
    \item[i)] Given $g: a \to c$ in $\bcat{X}$ and a commutative diagram (represented by a thin simplex) in $\sigma:\Delta^2 \to \bcat{S}$
    \[\begin{tikzcd}
  & {p(b)} \\
  {p(a)} & {} & {p(c)}
  \arrow["{p(e)}", from=2-1, to=1-2]
  \arrow["\alpha", from=1-2, to=2-3]
  \arrow["{p(g)}"', from=2-1, to=2-3]
\end{tikzcd}\]
such that $\alpha$ is an equivalence, then there exists a morphism $\hat{\alpha}: b \to c$ such that $p(\hat{\alpha})=\alpha$ and a  thin 2-simplex  $\hat{\sigma}$ exhibiting  $e \circ \hat{\alpha} \isom g$ such that $p(\hat{\sigma})=\sigma$.
\item[ii)] Given any $\phi:b \to c$ such that $\phi \circ e \isom g$  such that $p(\phi)=\alpha$  as above, then for any other $\varphi:b \to c$ precomposition along $e$ induces a pullback diagram of spaces
\[
   \begin{tikzcd}
    \on{Map}_{\bcat{X}(b,c)}(\phi,\varphi) \arrow[r] \arrow[d] &    \on{Map}_{\bcat{X}(a,c)}(\phi\circ e,\varphi \circ e ) \arrow[d] \\
      \on{Map}_{\bcat{S}(p(b),p(c))}(p(\phi),p(\varphi)) \arrow[r] & \on{Map}_{\bcat{S}(p(a),p(c))}(p(\phi\circ e),p(\varphi \circ e ))
   \end{tikzcd}
 \] 
  \end{itemize}
\end{definition}

\begin{remark}\label{rem:comp}
  We observe that a local $(0,1)$-Cartesian edge lying over an equivalence in $\bcat{S}$ is necessarily an equivalence in $\bcat{X}$. Moreover, the composition of a local $(0,1)$-Cartesian edge with an equivalence is again locally $(0,1)$-Cartesian.
\end{remark}

\begin{definition}
  Let $p:\bcat{X} \to \bcat{S}$ be a bicategorical fibration. We say that an edge $e: a \to b$ is $(0,1)$-Cartesian if for ever $c \in \bcat{X}$ precomposition along $e$ induces a pullback diagram of $\infty$-categories
  \[
    \begin{tikzcd}
      \bcat{X}(b,c) \arrow[r] \arrow[d] & \bcat{X}(a,c) \arrow[d] \\
      \bcat{S}(p(b),p(c)) \arrow[r] & \bcat{S}(p(a),p(c)).
    \end{tikzcd}
  \]
\end{definition}

\begin{proposition}\label{prop:deflocedge}
  Let $p: \bcat{X} \to \bcat{S}$ be a bicategorical fibration and suppose further that $p$ is Cartesian-enriched. Given an edge $e:a \to b$ in $\bcat{X}$ then the following are equivalent:
  \begin{itemize}
    \item[i)] The edge $e$ is locally $(0,1)$-Cartesian.
    \item[ii)] For every $\ell \in \bcat{X}$ such that $p(\ell)=p(b)$ we have a pullback diagram of $\infty$-categories
    \[
      \begin{tikzcd}
        \bcat{X}_{p(b)}(b,\ell) \arrow[r,"(-)\circ e"] \arrow[d] & \bcat{X}(a,\ell) \arrow[d] \\
        \Delta^0 \arrow[r,"p(e)"] & \bcat{S}(p(a),p(b))
      \end{tikzcd}
    \]
    where $X_{p(b)}$ is the fibre of $p$ over $p(b)$.
    \item[iii)] The edge $e$ is $(0,1)$-Cartesian in the pullback along $p(e)$, $\bcat{X}_{p(e)}=\bcat{X} \times_{\bcat{S}}\Delta^1 \to \Delta^1$.
  \end{itemize}
\end{proposition}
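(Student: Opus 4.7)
The plan is to prove (ii)$\Leftrightarrow$(iii) by directly identifying the relevant mapping $\infty$-categories in the base change $\bcat{X}_{p(e)}\to \Delta^1$, and then to prove (i)$\Leftrightarrow$(ii) by matching the two-part structure of Definition~\ref{def:locally01edge} against the essential surjectivity and full-faithfulness content of the pullback square appearing in (ii).

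For (ii)$\Leftrightarrow$(iii): since $\Delta^1(i,j) = \Delta^0$ unless $(i,j) = (1,0)$, in which case it is empty, the $(0,1)$-Cartesian square at an object $z \in \bcat{X}_{p(e)}$ lying over $0 = p(a)$ is vacuous, while for $z$ lying over $1 = p(b)$ it reduces to the requirement that the map $\bcat{X}_{p(e)}(b, z) \to \bcat{X}_{p(e)}(a, z)$ be an equivalence of $\infty$-categories. Unpacking the pullback defining $\bcat{X}_{p(e)}$ yields identifications $\bcat{X}_{p(e)}(b, z) \simeq \bcat{X}_{p(b)}(b, z)$ (the fibre of $\bcat{X}(b,z) \to \bcat{S}(p(b), p(b))$ over $\id_{p(b)}$) and $\bcat{X}_{p(e)}(a, z) \simeq \bcat{X}(a, z) \times_{\bcat{S}(p(a), p(b))} \{p(e)\}$, and the resulting equivalence is precisely the content of the pullback square in (ii).

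For (i)$\Rightarrow$(ii): specialising part (i) of Definition~\ref{def:locally01edge} to $\alpha = \id_{p(b)}$ and $c = \ell$ yields essential surjectivity of the comparison map $\bcat{X}_{p(b)}(b,\ell) \to \bcat{X}(a,\ell) \times_{\bcat{S}(p(a),p(b))} \{p(e)\}$, and part (ii) of the same definition supplies the full-faithfulness on morphism spaces; together these say exactly that the square in (ii) is a pullback. For the converse (ii)$\Rightarrow$(i), given the data of Definition~\ref{def:locally01edge}.(i) with $\alpha$ an equivalence in $\bcat{S}$, I lift $\alpha$ to an equivalence $\hat{\alpha}\colon b \to \tilde{c}$ in $\bcat{X}$ using the bicategorical fibration property of $p$ (\autoref{prop:piscatfib}), reducing the problem to the case where the target lies in the fibre over $p(b)$. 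The pullback square of (ii) then produces the required $\phi$ and thin $2$-simplex, and part (ii) of Definition~\ref{def:locally01edge} follows by restricting the full-faithfulness part of (ii) to the appropriate fibres.

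The main obstacle will be the reduction step in (ii)$\Rightarrow$(i): carefully verifying that lifting $\alpha$ to $\hat{\alpha}$ and transporting the target $c$ along $\hat{\alpha}$ produces data compatible with (ii). This relies on the Cartesian-enrichment of $p$ together with \autoref{rem:comp}, which guarantees that the composition of a locally $(0,1)$-Cartesian edge with an equivalence remains locally $(0,1)$-Cartesian, so that the reduction is homotopically harmless.
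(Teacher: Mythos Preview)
Your overall architecture matches the paper's: it also proves $ii)\Leftrightarrow iii)$ directly, $i)\Rightarrow ii)$ by reading off essential surjectivity and full faithfulness from the two clauses of Definition~\ref{def:locally01edge}, and $ii)\Rightarrow i)$ after first reducing to the case where $\alpha$ is degenerate using that $p$ is an isofibration. So far so good.

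The gap is in your treatment of clause (ii) of Definition~\ref{def:locally01edge} in the direction $ii)\Rightarrow i)$. You write that it ``follows by restricting the full-faithfulness part of (ii) to the appropriate fibres,'' but the logical direction is the opposite. The pullback square in (ii) of the Proposition only tells you that
\[
  \on{Map}_{\bcat{X}(b,\ell)}(\phi,\varphi)_{\on{id}}\;\xrightarrow{\ \simeq\ }\;\on{Map}_{\bcat{X}(a,\ell)}(\phi\circ e,\varphi\circ e)_{p(e)}
\]
is an equivalence when \emph{both} $p(\phi)$ and $p(\varphi)$ are degenerate, whereas clause (ii) of the Definition demands the pullback square of mapping spaces for $\phi$ as produced in clause~(i) but for \emph{arbitrary} $\varphi:b\to c$. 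You must therefore \emph{extend} from the special case to the general one, not restrict. The paper does this by picking, for each $\Xi\in\on{Map}_{\bcat{S}(p(b),p(b))}(p(\phi),p(\varphi))$, a Cartesian lift $i:\hat{\varphi}\to\varphi$ in the Cartesian fibration $\bcat{X}(b,\ell)\to\bcat{S}(p(b),p(b))$; the Cartesian-enrichment hypothesis then guarantees that $i\circ e$ is again Cartesian in $\bcat{X}(a,\ell)$, and one concludes by $2$-out-of-$3$ in the resulting square comparing fibres over $\on{id}$ and over $\Xi$. Your sketch does not contain this step.

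A related point: your appeal to \autoref{rem:comp} is misplaced. That remark concerns composing a locally $(0,1)$-Cartesian edge with an equivalence, but at this stage of the argument you are trying to \emph{prove} that $e$ is locally $(0,1)$-Cartesian, so you cannot invoke it. The reduction to $\alpha$ degenerate uses only that $p$ is an isofibration (to transport objects) together with Cartesian-enrichment on the mapping $\infty$-categories (to transport the morphism $g$ to one lying strictly over $p(e)$); no property of $e$ is required.
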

\begin{proof}
  To show that $i) \implies ii)$ we observe that since $\bcat{X}(a,\ell) \to \bcat{S}(p(a),p(b))$ is a Cartesian fibration of $\infty$-categories it follows that the strict fibre over $p(e)$ is already a model for the $\infty$-categorical pullback. Therefore it suffices to show that the map
  \[
   e^*: \bcat{X}_{p(b)}(b,\ell) \longrightarrow  \bcat{X}_{p(e)}(a,\ell),
  \]
  is an equivalence of $\infty$-categories. Observe that the first condition in \autoref{def:locally01edge} guarantees that $e^*$ is essentially surjective. Given a pair of objects $\phi, \varphi \in  \bcat{X}_{p(b)}(b,\ell)$ it follows from condition $ii)$ in \autoref{def:locally01edge} that we have a pullback diagram of spaces
  \[
   \begin{tikzcd}
    \on{Map}_{\bcat{X}(b,\ell)}(\phi,\varphi) \arrow[r] \arrow[d] &    \on{Map}_{\bcat{X}(a,\ell)}(\phi\circ e,\varphi \circ e ) \arrow[d] \\
      \on{Map}_{\bcat{S}(p(b),p(b))}(p(\phi),p(\varphi)) \arrow[r] & \on{Map}_{\bcat{S}(p(a),p(b))}(p(\phi\circ e),p(\varphi \circ e ))
   \end{tikzcd}
 \] 
so in particular after taking fibres we have a homotopy equivalence of spaces
\[
  \on{Map}_{\bcat{X}(b,\ell)}(\phi,\varphi)_{\on{id}} \xlongrightarrow{\simeq} \on{Map}_{\bcat{X}(a,\ell)}(\phi\circ e,\varphi \circ e )_{p(e)}
\]
which we identify with the action of $e^*$ on mapping spaces which shows our claim.

Note that  $ii) \iff iii)$ follows immediately from the definition so it will suffice to show that $ii) \implies i)$. 

First let us remark that since $p:\bcat{X} \to \bcat{S}$ is a bicategorical fibration, is in particular an isofibration. So, in order to show that $e$ is locally $(0,1)$-Cartesian, we particularize the conditions in \autoref{def:locally01edge} to the case where $p(\alpha)$ is degenerate edge. Let $\ell \in \bcat{X}$ such that $p(\ell)=p(b)$ and consider an edge $u: a \to \ell$ such that $p(u) \isom p(e)$ in $\bcat{S}(p(a),p(b))$. Since $p$ is Cartesian enriched we can pick an equivalence in $\bcat{X}(a,\ell)$, $\hat{u} \isom u$ such that $p(\hat{u})=p(e)$. Then our assumptions guarantee the existence of an object $\phi \in \bcat{X}(b,\ell)$ such that $p(\phi)$ is degenerate and such that
\[
  \phi \circ e \isom \hat{u} \isom u,
\]
this shows that condition the first condition in \autoref{def:locally01edge} holds. Let $\phi: b \to \ell$ in $\bcat{X}$ such that $p(\phi)$ is degenerate on $p(b)$. To show that we have the necessary pullback square of spaces it will be enough to show that for every $\Xi \in \on{Map}_{\bcat{S}(p(b),p(b))}(p(\phi),p(\varphi))$ the associated morphism on fibres
\[
  \on{Map}_{\bcat{X}(b,\ell)}(\phi,\varphi)_{\Xi} \xlongrightarrow{\simeq} \on{Map}_{\bcat{X}(a,\ell)}(\phi\circ e,\varphi \circ e )_{\Xi \circ p(e)}
\]
is a homotopy equivalence. Note that our assumptions guarantee that this holds whenever $\Xi$ is the identity morphism. Since the map $\bcat{X}(b,\ell) \to \bcat{S}(p(b),p(b))$ is a Cartesian fibration we pick a Cartesian lift of $\Xi$ in $\bcat{X}(b,\ell)$ which we denote by $i:\hat{\varphi} \to \varphi$ and we note that the enrichment of $p$ implies that $i\circ e$ is again a Cartesian morphism in $\bcat{X}(a,\ell)$. The fact that $i$ is Cartesian allows us to construct a commutative diagram of spaces
\[
  \begin{tikzcd}
     \on{Map}_{\bcat{X}(b,\ell)}(\phi,\hat{\varphi})_{\on{id}} \arrow[d,"\simeq"] \arrow[r,"\simeq"] &   \on{Map}_{\bcat{X}(a,\ell)}(\phi\circ e,\hat{\varphi}\circ e)_{p(e)} \arrow[d,"\simeq"] \\
      \on{Map}_{\bcat{X}(b,\ell)}(\phi,\varphi)_{\Xi} \arrow[r] & \on{Map}_{\bcat{X}(a,\ell)}(\phi\circ e,\varphi \circ e )_{\Xi \circ p(e)}
  \end{tikzcd}
\]
so the conclusion follows by 2-out-of-3.
\end{proof}

\begin{definition}
  A bicategorical fibration $p:\bcat{X} \to \bcat{S}$ is said to be a local $(0,1)$-fibration if the following conditions hold:
  \begin{enumerate}
     \item The map $p$ is Cartesian-enriched (see \autoref{def:cartenriched}).
     \item For every $a \in \bcat{X}$ and every $e:p(a) \to b$ in $\bcat{S}$ there exists a local $(0,1)$-Cartesian edge $\hat{e}:a \to \hat{b}$ such that $p(\hat{e})=e$.
   \end{enumerate} 
   We say that a commutative diagram  
   \[
     \begin{tikzcd}
       \bcat{X}\arrow[rr,"f"]  \arrow[dr,swap,"p"] && \bcat{Y} \arrow[dl,"q"] \\
       & \bcat{S} &   
     \end{tikzcd}
   \]
   where $p$ and $q$ are local $(0,1)$-fibrations is a morphism of local $(0,1)$-fibrations if $f$  is a morphism of Cartesian-enriched fibrations and it maps local $(0,1)$-Cartesian edges in $p$ to local $(0,1)$-Cartesian edges in $q$.
\end{definition}

\begin{lemma}\label{lem:commsq}
  Let $p:\bcat{X} \to \bcat{S}$ be a local $(0,1)$-fibration and consider a commutative diagram in $\bcat{X}$ 
  \[
    \begin{tikzcd}
      a \arrow[r,"\simeq"] \arrow[d,swap,"f"] & u \arrow[d,"g"] \\
      b \arrow[r,"\simeq"] & v 
    \end{tikzcd}
  \]
  such that the horizontal morphisms are equivalences in $\bcat{X}$. Then $f$ is a local $(0,1)$-Cartesian edge if and only if $g$ is.
\end{lemma}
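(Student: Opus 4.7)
By symmetry, it suffices to prove one direction; suppose $f$ is locally $(0,1)$-Cartesian and let me show $g$ is. The strategy is to use \autoref{rem:comp} twice, once to ``push'' the property across the bottom equivalence and once to ``pull'' it back across the inverse of the top equivalence.

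Let $\alpha : a \to u$ and $\beta : b \to v$ denote the two horizontal equivalences. Commutativity of the square in $\bcat{X}$ means that there are thin $2$-simplices in $\bcat{X}$ exhibiting a common diagonal $d : a \to v$ together with equivalences $d \simeq g \circ \alpha$ and $d \simeq \beta \circ f$ in the mapping $\infty$-category $\bcat{X}(a,v)$. Since $f$ is locally $(0,1)$-Cartesian and $\beta$ is an equivalence, \autoref{rem:comp} tells us that $\beta \circ f$ is locally $(0,1)$-Cartesian. The property of being locally $(0,1)$-Cartesian is invariant under equivalence in $\bcat{X}(a,v)$, since each of the equivalent characterisations in \autoref{prop:deflocedge} only depends on the equivalence class of the edge; hence $d$, and therefore also $g \circ \alpha$, is locally $(0,1)$-Cartesian.

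Now pick a homotopy inverse $\alpha^{-1} : u \to a$ to $\alpha$ in $\bcat{X}$. Then $(g \circ \alpha) \circ \alpha^{-1} \simeq g$ in $\bcat{X}(u,v)$, and since $\alpha^{-1}$ is an equivalence, a second application of \autoref{rem:comp} shows that $g$ is locally $(0,1)$-Cartesian, as desired. The reverse implication is entirely analogous, writing $f \simeq \beta^{-1} \circ (g \circ \alpha)$ and using that $g \circ \alpha$ is locally $(0,1)$-Cartesian whenever $g$ is.

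The only delicate point is verifying that \autoref{rem:comp} covers both pre- and post-composition by an equivalence; both cases follow formally from characterisation (ii) of \autoref{prop:deflocedge} by factoring the relevant square through the equivalence on mapping $\infty$-categories induced by pre- or post-composition with the equivalence. Beyond this, the argument is purely formal equivalence-chasing.
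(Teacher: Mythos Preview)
Your proof is correct. The paper itself leaves this lemma as an exercise, so there is no proof in the paper to compare against; your argument via \autoref{rem:comp} and the equivalence-invariance of characterisation (ii) in \autoref{prop:deflocedge} is exactly the kind of routine verification the authors had in mind.
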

\begin{proof}
  Left as an exercise.
\end{proof}

\begin{theorem}\label{thm:hominv}
  Let us consider a commutative diagram of $\infty$-bicategories where the vertical morphisms are bicategorical fibrations
  \[
    \begin{tikzcd}
      \XX \arrow[r,"\simeq"] \arrow[d,swap,"p"] & \CC \arrow[d,"q"] \\
      \SB \arrow[r,"\simeq"] & \DD 
    \end{tikzcd}
  \]
  and the horizontal morphisms are bicategorical equivalences. Then $p$ is a local $(0,1)$-fibration if and only if $q$ is.
\end{theorem}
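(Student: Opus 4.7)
By symmetry, using that bicategorical equivalences of $\infty$-bicategories admit inverses up to homotopy, it suffices to prove that if $q$ is a local $(0,1)$-fibration then so is $p$. Write $f:\XX \to \CC$ and $g:\SB \to \DD$ for the horizontal equivalences. I would verify the two conditions of \autoref{def:cartenriched} and the lift-existence condition separately.

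For Cartesian-enrichment I use that bicategorical equivalences induce equivalences on all mapping $\infty$-categories. For each pair $a,b \in \XX$ with images $a'=f(a),b'=f(b)$ the commutative square
\[\begin{tikzcd}
\XX(a,b) \ar[r,"\simeq"] \ar[d,"p_{a,b}"'] & \CC(a',b') \ar[d,"q_{a',b'}"] \\
\SB(p(a),p(b)) \ar[r,"\simeq"] & \DD(q(a'),q(b'))
\end{tikzcd}\]
has horizontal equivalences of $\infty$-categories, so invariance of the Cartesian fibration property under equivalence (with the corresponding transport of Cartesian edges) forces $p_{a,b}$ to be Cartesian. Composition preservation of Cartesian edges then transports from $\CC$ to $\XX$ by pushing forward along $f$.

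For existence of lifts, fix $a \in \XX$ and $e:p(a) \to b$ in $\SB$, let $\tilde{e}:f(a) \to c$ be a local $(0,1)$-Cartesian lift of $g(e)$ in $\CC$, pick $z_0 \in \XX$ with $f(z_0)\simeq c$ by essential surjectivity of $f$, and use the isofibration property of $p$ to replace $z_0$ by an equivalent $z \in \XX$ with $p(z)=b$ on the nose. Essential surjectivity of the induced functor $f_{a,z}:\XX(a,z)\to\CC(f(a),f(z))$ then yields $\hat{e}:a\to z$ with $f(\hat{e})\simeq \tilde{e}$, hence $p(\hat{e})\simeq e$, and a further application of the isofibration property of $p$ inside the mapping $\infty$-category rectifies $\hat{e}$ within its equivalence class so that $p(\hat{e})=e$. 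To confirm $\hat{e}$ is local $(0,1)$-Cartesian I invoke \autoref{prop:deflocedge}(iii), reducing the check to $(0,1)$-Cartesianness in the pullback $\XX_e:=\XX\times_\SB \Delta^1 \to \Delta^1$. The induced comparison $\XX_e \to \CC_{g(e)}:=\CC\times_\DD \Delta^1$ is a bicategorical equivalence (pullback of a bicategorical equivalence along the bicategorical fibration $q$), and the pullback-square characterization of $(0,1)$-Cartesianness is stable under such equivalences; together with \autoref{lem:commsq} this propagates the property from $\tilde{e}$ to $\hat{e}$.

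The main obstacle is the careful rectification of lifts: $f$ only produces data up to equivalence, so several iterations of the isofibration property of $p$---first to straighten the target object, then the edge itself---are required to produce a genuine lift with $p(\hat{e})=e$ on the nose. A secondary technical point is the claim that pullbacks of bicategorical equivalences along bicategorical fibrations in the scaled simplicial model structure remain bicategorical equivalences, a right-properness-type statement which is standard but should be cited or verified explicitly.
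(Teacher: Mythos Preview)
Your proposal is correct in outline and close in spirit to the paper's proof, but you organize the argument differently and leave two points looser than the paper does.

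First, the ``by symmetry'' reduction. Having homotopy inverses $f',g'$ does not immediately give you a \emph{strictly} commuting square in the reverse direction, which is what you would need to literally swap the roles of $p$ and $q$. The paper handles this by observing that the original square is injectively fibrant--cofibrant in the arrow category of $\on{Set}_\Delta^{\mathbf{sc}}$, which produces honest weak equivalences $\CC\to\XX$ and $\DD\to\SB$ making the reversed square commute on the nose. Your reduction is right, but it needs this one-line justification.

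Second, and more substantively, the paper avoids your multi-step rectification entirely by passing through the strict pullback $\bcat{P}=\SB\times_{\DD}\CC$. Since the square is a homotopy pullback and $q$ is a fibration, the comparison $\varphi:\XX\to\bcat{P}$ is an equivalence over $\SB$; factoring it as a trivial cofibration followed by a trivial fibration yields a section $\xi:\bcat{P}\to\XX$ with $\xi\circ\varphi=\on{id}$. One then lifts $e$ in the genuine local $(0,1)$-fibration $\hat q:\bcat{P}\to\SB$ and pushes the lift back to $\XX$ via $\xi$. This buys two things: the lift lands over $e$ on the nose without any isofibration gymnastics, and the verification of local $(0,1)$-Cartesianness is done directly against conditions i) and ii) of \autoref{def:locally01edge} (condition i) via the section $\xi$, condition ii) because $\varphi$ is a bicategorical equivalence), rather than by reducing to $\Delta^1$ and invoking stability of the pullback-square characterization under equivalence.

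Your route through \autoref{prop:deflocedge}(iii) is legitimate, but note that the statement ``$\XX_e\to\CC_{g(e)}$ is a bicategorical equivalence'' is not literally a pullback of $f$ along $q$; it is the pullback of the equivalence $\XX\to\bcat{P}$ along $\Delta^1\to\SB$, and what makes it an equivalence is that base-change along any map is right Quillen on slice categories (so preserves equivalences between fibrant objects), not right properness per se. You flag this correctly as a point needing care; the paper's pullback-and-section approach sidesteps it.
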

\begin{proof}
  Observe that we can view our diagram as an injectively fibrant-cofibrant diagram in the arrow category of $\on{Set}_\Delta^{\mathbf{sc}}$. This guarantees the existence weak equivalences  $\CC \to \XX$ and $\DD \to \SB$ making the obvious diagram commute. Therefore we might assume without loss of generality that $q$ is a local $(0,1)$-fibration. After inspecting the associated diagram in mapping categories we learn that $p$ must be Cartesian enriched. To finish the proof we need to show that $p$ has a sufficient supply of local $(0,1)$-Cartesian edges.

  First, we observe that our commutative diagram is in fact a pullback diagram of $\infty$-bicategories. Therefore we have a weak equivalence $\varphi:\XX \to \bcat{P}$ where $\hat{q}:\bcat{P} \to \SB$ denotes the strict pullback of $q$ along the bottom horizontal morphism. It follows that $\hat{q}$ is a local $(0,1)$-fibration. Factoring $\varphi$ as a cofibration (which is necessary a trivial cofibration) and a trivial fibration we might assume without loss of generality that $\varphi$ is a trivial cofibration. We conclude that we have a section $\xi:\bcat{P} \to \XX$ such that $\xi \circ \varphi=\on{id}$. 

  Given $a \in \XX$ and an edge $e:p(a) \to b$ in $\SB$ we pick a lift in $\bcat{P}$ of $e$ with source $\varphi(a)$ which we denote $\hat{e}$. We finally consider $\xi (\hat{e})=\tau$. We claim that $\tau$ is a local $(0,1)$-Cartesian edge of $p$. To see this we note that due to \autoref{lem:commsq} we have that $\varphi(\tau)$ is a local $(0,1)$-Cartesian edge of $\bcat{P}$. The existence of a section $\xi$ the fact that $\varphi(\tau)$ is locally $(0,1)$-Cartesian shows that condition $i)$ in \autoref{def:locally01edge} holds. The second condition follows immediately from the fact that $\varphi$ is a bicategorical equivalence.
\end{proof}

\begin{lemma}\label{lem:innerhorn}
  Let $p:\XX \to \SB$ be a local $(0,1)$-fibration. Let $\sigma: \Delta^2 \to \XX$ be a 2-simplex whose associated 2-morphism in $\XX(\sigma(0),\sigma(2))$ is Cartesian. Given a lifting problem
   \[
    \begin{tikzcd}
      \Lambda^n_i \arrow[d]  \arrow[r,"\varphi"] \arrow[d] & \on{F}_{\XX} \arrow[d,"q"] \\
      \Delta^n \arrow[r]\arrow[ur,dotted] & S
    \end{tikzcd}
  \]
  such that restriction of $\varphi$ to $\Delta^{\{i-1,i,i+1\}}$ equals $\sigma$, then the dotted arrow exists.
\end{lemma}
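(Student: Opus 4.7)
The strategy adapts the argument of Proposition 2.4.2.7 of HTT, which handles the analogous statement for locally coCartesian fibrations of $\infty$-categories, to the two-dimensional setting. The key inputs are the bicategorical fibration property of $p$ from \autoref{prop:piscatfib} and the Cartesian-enriched structure on mapping $\infty$-categories from \autoref{prop:cartenriched}.

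First, one reduces the problem to the case $n \geq 3$; for $n = 2$ the 2-simplex $\Delta^{\{i-1,i,i+1\}}$ coincides with the entire $\Delta^n$, which is not contained in $\Lambda^n_i$, so the hypothesis is vacuous. Next, by \autoref{prop:piscatfib} the map $p \colon \XX \to \SB$ is a bicategorical fibration between $\infty$-bicategories; hence $p$ already has the right lifting property against all scaled anodyne inner horn inclusions whose central 2-face is scaled as thin. Since $\SB$ is fibrant, every 2-simplex of $\SB$ is thin, so only the condition on the $\XX$-side needs to be analysed.

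The central observation is that although $\sigma$ need not itself be thin in $\XX$, its Cartesian property provides the universal extension needed to fill the horn. I would translate the inner horn extension problem via the scaled rigidification into a lifting problem for the Cartesian fibration of mapping categories
\[
\XX(\varphi(i-1),\varphi(i+1)) \xlongrightarrow{} \SB(p\varphi(i-1),p\varphi(i+1)).
\]
Under this translation, the 2-simplex $\sigma$ corresponds to the edge in the mapping category that is Cartesian by hypothesis. The lift in the mapping $\infty$-category then exists by a standard inner horn extension for Cartesian fibrations, and pulling this back through the rigidification yields the desired lift $\Delta^n \to \XX$.

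The main obstacle is managing the bookkeeping when the position $i$ is not extremal. The case $i = n-1$ follows directly from the translation already used in the proof of \autoref{prop:cartenriched}. For general $1 < i < n-1$, I would proceed by induction, decomposing $\Delta^n$ into intermediate simplices that successively reduce the gap between the position of the Cartesian 2-cell and the missing face $d_i\Delta^n$. At each step the filling is either a standard inner horn for the bicategorical fibration $p$, which is available by \autoref{prop:piscatfib}, or an instance of the mapping-category lifting driven by the Cartesian property of $\sigma$; the \ref{mb:wonky4}-type moves used throughout the earlier proofs guarantee the compatibility between these successive fillings.
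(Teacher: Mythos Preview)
Your overall strategy---translate the horn filling into a lifting problem for the Cartesian fibration on mapping $\infty$-categories and invoke the Cartesian property of $\sigma$---is the same as the paper's. However, there are two substantive gaps.

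First, and most importantly, you are missing the rectification step. The paper begins by invoking \autoref{thm:hominv} to replace $p$ by a functor of the form $\Nsc(\CC)\to\Nsc(\DD)$ between scaled nerves of $\on{Set}_\Delta^+$-enriched categories. After this, by adjunction, the lifting problem $\Lambda^n_i \to \Nsc(\CC)$ over $\Delta^n \to \Nsc(\DD)$ is \emph{literally} equivalent to extending an enriched functor $\mathfrak{C}^{\mathbf{sc}}[\Lambda^n_i]\to\CC$ to $\mathfrak{C}^{\mathbf{sc}}[\Delta^n]$, and since these two rigidifications differ only in the single mapping space from $0$ to $n$, the problem collapses to one lifting problem
\[
\mathfrak{C}^{\mathbf{sc}}[\Lambda^n_i](0,n)\longrightarrow \mathfrak{C}^{\mathbf{sc}}[\Delta^n](0,n)
\]
against $\CC(\varphi(0),\varphi(n))\to\DD(p\varphi(0),p\varphi(n))$. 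Because $p$ is Cartesian-enriched, the edge in $\mathfrak{C}^{\mathbf{sc}}[\Delta^n](0,n)$ coming from $\sigma$ maps to a Cartesian edge regardless of the position $i$, and the left vertical map is an iterated pushout of marked-anodyne maps for the Cartesian model structure. This handles all $0<i<n$ at once; your proposed induction on $i$ and the decomposition into intermediate simplices are unnecessary, and it is not clear they could be made to work without rectifying first.

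Second, the relevant mapping category is $\XX(\varphi(0),\varphi(n))$, not $\XX(\varphi(i-1),\varphi(i+1))$ as you wrote; the missing face $d_i\Delta^n$ still runs from vertex $0$ to vertex $n$. Also, the assertion that ``every $2$-simplex of $\SB$ is thin since $\SB$ is fibrant'' is false for a general $\infty$-bicategory; fortunately the argument does not need it once you rectify.
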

\begin{proof}
  In virtue of \autoref{thm:hominv} we might assume that our functor is of the form $p: \Nsc(\CC) \to \Nsc(\DD)$. Then it follows that our lifting problem is equivalent to
  \[
    \begin{tikzcd}
      \mathfrak{C}^{\mathbf{sc}}[\Lambda^n_i](0,n) \arrow[d]  \arrow[r] \arrow[d] &  \CC(\varphi(0),\varphi(n)) \arrow[d] \\
      \mathfrak{C}^{\mathbf{sc}}[\Delta^n](0,n) \arrow[r]\arrow[ur,dotted] & \DD(p\varphi(0),p\varphi(n)).
    \end{tikzcd}
  \]
  We conclude that the dotted arrow exists since the left-hand side can be written as an iterated pushout of anodyne morphisms in the Cartesian model structure.
\end{proof}

\begin{definition}
  Let $\pi: \scr{C} \to \scr{D}$ be a fibration of $\infty$-categories. We say that an object $x \in \scr{C}$ is \emph{$p$-initial} if for every $y \in \scr{C}$ the functor $\pi$ yields a homotopy equivalence $\on{Map}_{\scr{C}}(x,y)\isom \on{Map}_{\scr{D}}(\pi(x),\pi(y))$.
\end{definition}

\begin{remark}\label{rem:pinitial}
  Observe that an object $x \in \scr{C}$ as above is $p$-initial if and only for every $n\geq 1$ the lifting problems
  \[
    \begin{tikzcd}
      \partial \Delta^n \arrow[r,"\varphi"] \arrow[d] & \scr{C} \arrow[d,"\pi"] \\
      \Delta^n \arrow[r] \arrow[ur,dotted] & \scr{D}
    \end{tikzcd}
  \]
  admit a solution provided $\varphi(0)=x$.
\end{remark}

\begin{lemma}\label{lem:lambda0loc}
  Let $p:\XX \to \SB$ be a local $(0,1)$-fibration. Then an edge $e:\Delta^1 \to \XX$ is locally $(0,1)$-Cartesian if and only if for every $n\geq 2$ the lifting problems of the form
   \[
    \begin{tikzcd}
      \Lambda^n_0 \arrow[d]  \arrow[r,"\varphi"] \arrow[d] & \XX \arrow[d,"p"] \\
      \Delta^n \arrow[r,"\phi"]\arrow[ur,dotted] & \SB
    \end{tikzcd}
  \]
  admit a solution provided $\varphi(0\to 1)=e$, $\varphi(0\to 1 \to n)$ is thin whenever $n\geq 2$ and $\phi(1 \to n)$ is an equivalence in $\SB$. If $n=2$ we require $\phi(0 \to 1 \to n)$ to be thin, $\phi(1 \to n)$ to be an equivalence in $\SB$ and that our solution is a thin simplex in $\bcat{X}$.
\end{lemma}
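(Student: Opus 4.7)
The plan is to split along the cases $n = 2$ and $n \geq 3$, matching them to the two clauses in \autoref{def:locally01edge}, and for each case to reduce to the characterisation of locally $(0,1)$-Cartesian edges furnished by \autoref{prop:deflocedge}(iii).

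The case $n = 2$ is a direct unwinding of definitions. A lifting problem $\varphi : \Lambda^2_0 \to \bcat{X}$, $\phi : \Delta^2 \to \bcat{S}$ with $\phi$ thin, $\phi(1 \to 2)$ an equivalence, $\varphi(0 \to 1) = e$, and demanding a thin filler, is exactly the setup of condition (i) in \autoref{def:locally01edge}; a thin solution $\sigma : \Delta^2 \to \bcat{X}$ encodes both the lift $\hat{\alpha} = \sigma(1 \to 2)$ and the thin witness $\hat{\sigma} = \sigma$ required by that condition. Both implications of the lemma are therefore immediate for $n = 2$.

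For $n \geq 3$ I would follow the strategy of \autoref{lem:innerhorn}. Using \autoref{prop:deflocedge}(iii), I first pull $p$ back along $p(e)$ and assume $\bcat{S} = \Delta^1$; the condition that $\phi(1 \to n)$ is an equivalence then forces $\phi$ to collapse the vertices $1, \ldots, n$ to the terminal vertex of $\Delta^1$, so horn lifts in $\bcat{X}$ are equivalent to horn lifts in the pullback. By \autoref{thm:hominv} I may replace $p$ with a scaled nerve $\Nsc(\CC) \to \Nsc(\DD)$, after which the rigidification adjunction turns the lifting problem into
\[
  \begin{tikzcd}
    \mathfrak{C}^{\sc}[\Lambda^n_0](0,n) \arrow[d] \arrow[r] & \CC\bigl(a, \varphi(n)\bigr) \arrow[d] \\
    \mathfrak{C}^{\sc}[\Delta^n](0,n) \arrow[r] \arrow[ur, dotted] & \DD\bigl(p(a), p(\varphi(n))\bigr).
  \end{tikzcd}
\]
The left-hand map is a monomorphism of marked simplicial sets which, under our scalings and using that $\phi(1 \to n)$ rigidifies to a marked edge, is built as an iterated pushout of generators anodyne in the Cartesian model structure. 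Condition (ii) of \autoref{def:locally01edge} applied to $e$ ensures that the right-hand vertical map is a Cartesian fibration compatible with the induced structure on the base, so the dotted lift exists. The reverse direction runs this correspondence backwards: the family of all such horn liftings for $n \geq 3$ assembles into the pullback-square property (ii).

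The hard part will be the combinatorial verification that $\mathfrak{C}^{\sc}[\Lambda^n_0](0,n) \hookrightarrow \mathfrak{C}^{\sc}[\Delta^n](0,n)$ admits the claimed decomposition into Cartesian-anodyne generators, analogous to but more delicate than the inner-horn analysis in \autoref{lem:innerhorn}, with additional bookkeeping for the marking induced by $\phi(1 \to n)$ and the thinness of $\Delta^{\{0,1,n\}}$. Once this decomposition is in hand, both implications of the lemma reduce to the lifting properties of the Cartesian fibration structure provided by condition (ii).
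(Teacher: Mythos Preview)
Your reduction to $\bcat{S}=\Delta^1$ does not work. The lifting problems in the lemma are posed over the original base $\bcat{S}$: the map $\phi:\Delta^n\to\bcat{S}$ satisfies $\phi(0\to 1)=p(e)$ and $\phi(1\to n)$ is an equivalence, but nothing constrains $\phi(2),\dots,\phi(n-1)$. In particular $\phi$ need not factor through $p(e):\Delta^1\to\bcat{S}$, so the horn $\varphi$ need not land in the pullback $\bcat{X}\times_{\bcat{S}}\Delta^1$. \autoref{prop:deflocedge}(iii) characterises local $(0,1)$-Cartesianness in terms of the pullback, but it does not let you transport the ambient lifting problem there.

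More seriously, your single lifting problem at $(0,n)$ misses half of the argument. For an \emph{inner} horn $\Lambda^n_i$ (as in \autoref{lem:innerhorn}) the rigidification is an isomorphism on every mapping space except $\mathfrak{C}^{\sc}[\mathblank](0,n)$, which is why a single square suffices there. For the outer horn $\Lambda^n_0$ the missing face $d_0=\Delta^{\{1,\dots,n\}}$ also removes the top cell of $\mathfrak{C}^{\sc}[\Delta^n](1,n)$, so one must extend at \emph{both} $(0,n)$ and $(1,n)$. The paper handles these separately: the $(0,n)$ extension uses only the thinness of $\Delta^{\{0,1,n\}}$, while the $(1,n)$ extension is where the hypothesis on $e$ actually enters. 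Condition (ii) of \autoref{def:locally01edge} says precisely that the object $\{1,n\}\in\mathfrak{C}^{\sc}[\Delta^n](1,n)$ is mapped to a $p$-initial object of $\CC(\varphi(1),\varphi(n))$ over the fibre product $\CC(\varphi(0),\varphi(n))\times_{\DD(p\varphi(0),p\varphi(n))}\DD(p\varphi(1),p\varphi(n))$; the extension then follows from \autoref{rem:pinitial} because $\mathfrak{C}^{\sc}[\Lambda^n_0](1,n)\hookrightarrow\mathfrak{C}^{\sc}[\Delta^n](1,n)$ is an iterated pushout of boundary inclusions with initial vertex $\{1,n\}$. Your proposed ``Cartesian-anodyne'' decomposition at $(0,n)$ cannot see this, and the condition on $e$ plays no role in your sketch for $n\geq 3$.
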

\begin{proof}
First let us remark that if we can produce solutions to those lifting problems $e$ must be a $(0,1)$-Cartesian edge once restricted to $\Delta^1$ and the claims follow from \autoref{prop:deflocedge}. We now prove the converse.

  It is clear that the case $n=2$ is precisely the first condition in \autoref{def:locally01edge}. To tackle the cases $n\geq 3$ we will assume once more that  $p: \Nsc(\CC) \to \Nsc(\DD)$. Observe that since $\varphi(0 \to 1 \to n)$ is thin we can solve the lifting problem
   \[
    \begin{tikzcd}
      \mathfrak{C}^{\mathbf{sc}}[\Lambda^n_0](0,n) \arrow[d]  \arrow[r] \arrow[d] &  \CC(\varphi(0),\varphi(n)) \arrow[d] \\
      \mathfrak{C}^{\mathbf{sc}}[\Delta^n](0,n) \arrow[r]\arrow[ur,dotted] & \DD(p\varphi(0),p\varphi(n)).
    \end{tikzcd}
  \]
  To conclude the proof we must show that we can produce the dotted arrow below
   \[
    \begin{tikzcd}
      \mathfrak{C}^{\mathbf{sc}}[\Lambda^n_0](1,n) \arrow[d]  \arrow[r] \arrow[d] &  \CC(\varphi(1),\varphi(n)) \arrow[d,"\phi"] \\
      \mathfrak{C}^{\mathbf{sc}}[\Delta^n](1,n) \arrow[r]\arrow[ur,dotted] &  \CC(\varphi(0),\varphi(n) \times_{\DD(p\varphi(0),\varphi(n))} \DD(p\varphi(1),p\varphi(n)).
    \end{tikzcd}
  \]
  However, by $ii)$ in \autoref{def:locally01edge} the object $1n$ on the left-hand side gets mapped to a $\phi$-initial object in $\CC(\varphi(1),\varphi(n))$. Since the left-most vertical map can be obtained as an iterated pushout along boundary inclusions $\partial \Delta^n \to \Delta^n$, where the initial object is always $1n$ we conclude that the dotted arrow above can be constructed.
\end{proof}

\begin{lemma}\label{lem:wonky}
  Let $p:\XX \to \SB$ be a local $(0,1)$-fibration. Suppose that we are given a simplex $\sigma:\Delta^4 \to \XX$ such that the following collection of triangles 
  \[
     T=\{\Delta^{\{0,2,4\}}, \ \Delta^{\{ 1,2,3\}}, \ \Delta^{\{0,1,3\}}, \ \Delta^{\{1,3,4\}}, \ \Delta^{\{0,1,2\}}\}
   \] 
   gets mapped to 2-simplices representing Cartesian 2-morphisms in the corresponding mapping categories. Then the triangles $\Delta^{\{0,1,4\}}$ and $\Delta^{\{0,3,4\}}$ also represent Cartesian 2-morphisms in $\XX(\sigma(0),\sigma(4))$.
\end{lemma}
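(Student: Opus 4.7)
The plan is to reduce, via \autoref{thm:hominv}, to the case $p = \Nsc(\pi)$ for an enriched functor $\pi \colon \CC \to \DD$ of $\on{Set}_\Delta^+$-enriched categories. The $4$-simplex $\sigma$ then adjoints to an enriched functor $\widetilde{\sigma} \colon \mathfrak{C}^{\mathbf{sc}}[\Delta^4] \to \CC$, and the mapping space $\mathfrak{C}^{\mathbf{sc}}[\Delta^4](0,4)$ is the nerve of the cube $[1]^3$ of subsets $S \subseteq \{1,2,3\}$, where $S$ is identified with $\{0\}\cup S\cup\{4\}$. Under this identification, the conclusion reduces to showing that the two edges $\varnothing \to \{1\}$ and $\varnothing \to \{3\}$ map to Cartesian $1$-morphisms in $\CC(\sigma(0),\sigma(4))$, while the hypothesis on $\Delta^{\{0,2,4\}}$ gives directly that $\varnothing\to\{2\}$ is Cartesian there.

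First I would extract every Cartesian edge in $\CC(\sigma(0),\sigma(4))$ visible from the remaining four hypothesis triangles. They live in different mapping $\infty$-categories, but Cartesian-enrichment (\autoref{prop:cartenriched}) implies that whiskering with identities preserves Cartesian-ness. A bookkeeping of all admissible whiskerings yields the further Cartesian edges $\{2\}\to\{1,2\}$ and $\{2,3\}\to\{1,2,3\}$ (from $\Delta^{\{0,1,2\}}$), $\{1,3\}\to\{1,2,3\}$ (from $\Delta^{\{1,2,3\}}$), $\{3\}\to\{1,3\}$ (from $\Delta^{\{0,1,3\}}$), together with $\{1\}\to\{1,3\}$ and $\{1,2\}\to\{1,2,3\}$ (from $\Delta^{\{1,3,4\}}$).

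Because Cartesian edges compose to Cartesian edges, each of the chains
\[
\varnothing \to \{2\} \to \{1,2\} \to \{1,2,3\},\qquad \{1\} \to \{1,3\} \to \{1,2,3\},\qquad \{3\} \to \{1,3\} \to \{1,2,3\}
\]
produces a Cartesian composite into $\{1,2,3\}$; in particular $\varnothing \to \{1,2,3\}$, $\{1\} \to \{1,2,3\}$, and $\{3\} \to \{1,2,3\}$ are Cartesian. The $2$-simplices $\varnothing\to\{1\}\to\{1,2,3\}$ and $\varnothing\to\{3\}\to\{1,2,3\}$, which live in the poset nerve $\mathfrak{C}^{\mathbf{sc}}[\Delta^4](0,4)$ and map into $\CC(\sigma(0),\sigma(4))$, witness that the Cartesian morphism $\varnothing\to\{1,2,3\}$ factors through each of $\{1\}$ and $\{3\}$. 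I would then finish by invoking the standard $2$-out-of-$3$ property for Cartesian morphisms --- if $g$ and $g\circ f$ are Cartesian, so is $f$ --- applied to each factorization, to conclude that $\varnothing\to\{1\}$ and $\varnothing\to\{3\}$ are Cartesian, which is the desired statement.

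The delicate point I anticipate is verifying that the several paths through the cube genuinely compute a common $1$-morphism in $\CC(\sigma(0),\sigma(4))$ up to equivalence, so that the $2$-out-of-$3$ step really applies; this coherence is provided by the higher simplices of $\mathfrak{C}^{\mathbf{sc}}[\Delta^4](0,4)$ as a poset nerve, which is precisely why the reduction via $\Nsc$ to an enriched model at the very start is the convenient opening move.
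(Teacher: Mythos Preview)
Your reduction to the enriched model and identification of the cube $\mathfrak{C}^{\mathbf{sc}}[\Delta^4](0,4)$ is exactly the paper's opening move, and your bookkeeping of whiskerings is almost right --- but one entry is wrong and the gap it leaves is fatal for the argument as written. The edge $\{1,2\}\to\{1,2,3\}$ (i.e.\ $0124\to 01234$) does \emph{not} arise from $\Delta^{\{1,3,4\}}$: that triangle gives only the edge $14\to 134$ in $\OO^4(1,4)$, whose sole whiskering into $\OO^4(0,4)$ (by $01$) is $\{1\}\to\{1,3\}$. Every decomposition of $0124\to 01234$ as a whiskering passes through $24\to 234$, i.e.\ requires $\Delta^{\{2,3,4\}}$, which is not among the hypotheses. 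Thus your chain $\varnothing\to\{2\}\to\{1,2\}\to\{1,2,3\}$ does not establish that $\varnothing\to\{1,2,3\}$ is Cartesian, and the final $2$-out-of-$3$ step has nothing to bite on.

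Nor can this be patched by rerouting through the cube using only composition and the valid direction of $2$-out-of-$3$ for Cartesian morphisms. The six genuinely available Cartesian edges split the vertices into $\{\varnothing,\{2\},\{1,2\}\}$ and $\{\{1\},\{3\},\{1,3\},\{2,3\},\{1,2,3\}\}$; one more $2$-out-of-$3$ yields $\{3\}\to\{2,3\}$, but after that every remaining edge crosses between the two groups and none can be forced by ``$g$ and $g\circ f$ Cartesian $\Rightarrow f$ Cartesian''. What is really needed is that $g=(\{1\}\to\{1,2\})$ has \emph{both} $g\circ f$ and $h\circ g$ Cartesian (for $f=\varnothing\to\{1\}$, $h=\{1,2\}\to\{1,2,3\}$), and this two-sided invertibility argument is only available for equivalences, not for Cartesian edges in general. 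This is precisely why the paper first transports the cube into the fibre of $\CC(\sigma(0),\sigma(4))\to\DD(p\sigma(0),p\sigma(4))$ over $\pi(04)$, where Cartesian becomes equivalence, and then invokes the inverse-production argument of \cite[Prop.~3.1.13]{LurieGoodwillie}; that passage to the fibre is not cosmetic but is the step your plan is missing.
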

\begin{proof}
  As usual, we will assume that our functor is of the for $p:\Nsc(\CC) \to \Nsc(\DD)$. This allows us to reduce our problem to show that certain edges in $\mathcal{P}=\mathfrak{C}^{\mathbf{sc}}[\Delta^4](0,4)$ get mapped to Cartesian edges in $\CC(\sigma(0),\sigma(4))$. More specefically, we view $\mathcal{P}$ as the following poset
  \[\begin{tikzcd}
  & 0134 && 01234 \\
  034 && 0234 \\
  {} & 014 && 0124 \\
  04 && 024
  \arrow[from=4-1, to=3-2]
  \arrow[from=3-2, to=3-4]
  \arrow[from=4-1, to=4-3,circled]
  \arrow[from=4-3, to=3-4,circled]
  \arrow[from=4-1, to=2-1]
  \arrow[shift right=1,from=2-1, to=2-3]
  \arrow[from=2-1, to=1-2,circled]
  \arrow[from=1-2, to=1-4,circled]
  \arrow[shift right=2,from=3-2, to=1-2,circled]
  \arrow[from=4-3, to=2-3]
  \arrow[from=3-4, to=1-4]
  \arrow[from=2-3, to=1-4,circled]
\end{tikzcd}\]
where the circled arrows are mapped by assumption to Cartesian edges (note that to see this is crucial to use that $p$ is Cartesian-enriched) and we wish to show that $04 \to 014$ and $04 \to 034$ are mapped to Cartesian edges in $\CC(\sigma(0),\sigma(4))$. 

Since $\pi:\CC(\sigma(0),\sigma(4)) \to \DD(p\sigma(0),p\sigma(4))$ is a Cartesian fibration this is equivalent to require that certain morphisms are equivalences in the fibre over $\pi(04)=\alpha$. Using the functoriality of $\pi$ we can move the diagram above to a diagram in the fibre over $\alpha$ where now the circled arrow are equivalences. It is ease to see that we can produce now an inverse as in Proposition 3.1.13 in \cite{LurieGoodwillie}.
\end{proof}

\begin{theorem}\label{thm:comparison}
  Let $p: X \to S$ be a fibrant object in $\left(\on{Set}^{\mathbf{mb}}_\Delta\right)_{/S}$. Then its associated map of scaled simplicial sets (see \autoref{prop:piscatfib}) $p:\bcat{X} \to \bcat{S}$ is a local $(0,1)$-fibration. Conversely, any local $(0,1)$-fibration defines canonically a fibrant object in $\left(\on{Set}^{\mathbf{mb}}_\Delta\right)_{/S}$.
\end{theorem}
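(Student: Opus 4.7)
The proof splits naturally into the two implications, and in both directions the key move is to translate between the $\bS$-data $(E_X, T_X \subseteq C_X)$ on $X$ and the scaling on $\bcat{X}$. Throughout, for $X$ fibrant over $(S,\sharp,M_S \subset \sharp)$ and the associated scaled $\bcat{X}$, I will use the dictionary: \emph{marked in $X$} $\leftrightarrow$ \emph{locally $(0,1)$-Cartesian in $\bcat{X}$}, \emph{lean in $X$} $\leftrightarrow$ \emph{the 2-morphism $\sigma_{12}\circ\sigma_{01} \Rightarrow \sigma_{02}$ is Cartesian over its image in $\bcat{S}$}, and \emph{thin in $X$} $\leftrightarrow$ \emph{lean and $p(\sigma)\in M_S$}. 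The fact that this dictionary is internally consistent is the substance of the theorem.

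For the forward direction, \autoref{prop:piscatfib} and \autoref{prop:cartenriched} already give that $p:\bcat{X}\to\bcat{S}$ is a Cartesian-enriched bicategorical fibration. It remains to produce enough local $(0,1)$-Cartesian edges: given $a\in \bcat{X}$ and $e:p(a)\to b$ in $\bcat{S}$, apply \ref{mb:2CartliftsExist} to obtain a marked lift $\hat{e}:a\to \hat{b}$, and then show that any marked edge of $X$ is locally $(0,1)$-Cartesian in $\bcat{X}$. By \autoref{lem:lambda0loc}, this reduces to solving lifting problems against $\Lambda^n_0\hookrightarrow \Delta^n$ with $\varphi(0\to 1)=\hat{e}$, $\varphi(\{0,1,n\})$ thin in $\bcat{X}$, and $\phi(1\to n)$ an equivalence. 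Since thin in $\bcat{X}$ translates to lean in $X$ and the composite condition on the base triangle places it in $M_S$ (because $M_S$ is exactly the thin triangles one of whose legs is an equivalence), by \ref{mb:coCartoverThin} it is in fact thin in $X$, and the lift we seek is precisely what \ref{mb:2coCartesianmorphs} delivers; the special thinness requirement in the $n=2$ case follows from the same identification.

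For the converse, take $p:\bcat{X}\to\bcat{S}$ a local $(0,1)$-fibration and equip its underlying simplicial set $X$ with $E_X$ the local $(0,1)$-Cartesian edges, $C_X$ the triangles whose associated 2-morphism in $\bcat{X}(\sigma(0),\sigma(2))$ is Cartesian over its image in $\bcat{S}(p\sigma(0),p\sigma(2))$, and $T_X := C_X\cap p^{-1}(M_S)$. To show this is a $(0,1)$-fibration over $(S,\sharp,M_S\subset \sharp)$, I verify RLP against each generator in turn: \ref{mb:innerhorn} is \autoref{lem:innerhorn}; \ref{mb:wonky4} is \autoref{lem:wonky}; \ref{mb:2coCartesianmorphs} is \autoref{lem:lambda0loc} combined with the compatibility of the $\Lambda^n_0$-filling lemma with the condition that $\{0,1,n\}$ be lean; \ref{mb:2CartliftsExist} is immediate from the existence of local $(0,1)$-Cartesian lifts; \ref{mb:coCartoverThin} is baked into the definition of $T_X$; and \ref{mb:equivalences} follows because any equivalence in $\bcat{X}$ induces an equivalence on all precomposition maps, whence it is trivially locally $(0,1)$-Cartesian.

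The most delicate step is \ref{mb:composeacrossthin}: if $\sigma\in T_X$ and $\sigma_{01},\sigma_{12}\in E_X$, we must show $\sigma_{02}\in E_X$. Here $p(\sigma)\in M_S$, so one of $p(\sigma_{01})$, $p(\sigma_{12})$ is an equivalence in $\bcat{S}$. By \autoref{rem:comp}, the corresponding edge in $\bcat{X}$ is then an equivalence. Because $\sigma\in C_X$, the 2-morphism $\sigma_{12}\circ\sigma_{01}\Rightarrow\sigma_{02}$ is Cartesian, and its image in $\bcat{S}(p\sigma(0),p\sigma(2))$ is an equivalence (since $p(\sigma)$ is thin in $\bcat{S}$); a Cartesian 2-morphism over an equivalence is itself an equivalence, so $\sigma_{02}\simeq \sigma_{12}\circ\sigma_{01}$. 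Applying the second half of \autoref{rem:comp}, composition of a local $(0,1)$-Cartesian edge with an equivalence (on either side) remains local $(0,1)$-Cartesian, so $\sigma_{02}\in E_X$ as required. Finally, a brief check confirms that the scaled structure on $\bcat{X}$ induced by this $X$ (via the recipe in \autoref{prop:piscatfib}) recovers the original scaling, so the two constructions are mutually inverse.
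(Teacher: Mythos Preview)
Your proof is correct and follows essentially the same route as the paper: \autoref{prop:piscatfib} and \autoref{prop:cartenriched} for the forward direction, the same decoration scheme $(E_X,C_X,T_X)$ for the converse, and the same trio \autoref{lem:innerhorn}, \autoref{lem:wonky}, \autoref{lem:lambda0loc} to handle \ref{mb:innerhorn}, \ref{mb:wonky4}, \ref{mb:2coCartesianmorphs}. Your detailed treatment of \ref{mb:composeacrossthin} in fact spells out what the paper leaves as ``the rest of the lifting problems follow immediately''.

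One small presentational wrinkle: in the forward direction you invoke \autoref{lem:lambda0loc} to show marked edges are locally $(0,1)$-Cartesian, but that lemma's stated hypothesis is that $p$ is already a local $(0,1)$-fibration, which is what you are proving. The paper avoids this by observing directly that a marked edge is $(0,1)$-Cartesian after pulling back to $\Delta^1$ and then appealing to \autoref{prop:deflocedge}(iii), which only needs Cartesian-enrichedness. Your argument is still valid because the ``if'' direction of \autoref{lem:lambda0loc} (solving the horn problems implies the edge is locally $(0,1)$-Cartesian) only uses \autoref{prop:deflocedge} and hence only Cartesian-enrichedness; but the paper's route is cleaner here.
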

\begin{proof}
  Let $p: X \to S$ be a $(0,1)$-local fibration. Then it follows from \autoref{prop:piscatfib} and \autoref{prop:cartenriched} that $p:\bcat{X} \to \bcat{S}$ is Cartesian-enriched. To show that our map in question defines a local $(0,1)$-fibration it will enough to show that the marked edges in $X$ are local $(0,1)$-Cartesian edges. However, one easily sees that a marked edge $e$ is $(0,1)$-Cartesian over $\Delta^1$ and thus the claim follows.

  To prove the converse we need to construct a fibrant object in $\left(\on{Set}^{\mathbf{mb}}_\Delta\right)_{/S}$ from a local $(0,1)$-fibration $p: \XX \to \bcat{S}$. We define an $\bS$-simplicial set $\on{F}_{\XX}$ as follows:
  \begin{itemize}
    \item The underlying simplicial set of $\on{F}_{\XX}$ is $\XX$.
    \item An edge is declared to be marked if an only if it is a local $(0,1)$-Cartesian edge in $\XX$.
    \item A triangle is declared to be lean if its associated 2-morphism is a Cartesian edge in $\XX(a,b)$.
    \item A triangle is declared to be thin if it is lean and its image in $\SB$ belongs to $M_S$. 
  \end{itemize}
This definition clearly yields a map $q:\on{F}_{\XX} \to (S,\sharp,M_S \subseteq \sharp)$ so it is only left to show that $q$ has the right lifting property against the class of morhpisms in \autoref{def:mbsanodyne}. It follows from \autoref{lem:innerhorn} and \autoref{lem:lambda0loc} that $q$ has the RLP property with respect to the morphisms of type \ref{mb:innerhorn} and \ref{mb:2coCartesianmorphs} in \autoref{def:mbsanodyne}. \autoref{lem:wonky} shows $q$ has the right lifting property against the class of morphisms of type \ref{mb:wonky4}. The rest of the lifting problems follow immediately and thus our result is proved.
\end{proof}

\begin{proposition}\label{prop:localequiv}
 Let $f:\bcat{X} \to \bcat{Y}$ be a morphism of local $(0,1)$-fibrations. Then the following are equivalent:
 \begin{itemize}
   \item[i)] The map $f$ is a bicategorical equivalence. 
   \item[ii)] For every $s \in \bcat{S}$ the map $f$ induces an equivalence on fibres $f_s: \bcat{X}_s \xlongrightarrow{\simeq} \bcat{Y}_s$.
 \end{itemize}
\end{proposition}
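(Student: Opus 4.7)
My approach is to reduce to the fibrewise criterion \autoref{prop:fibrewisecriterion} via the correspondence established in \autoref{thm:comparison} between local $(0,1)$-fibrations and fibrant objects of $\left(\on{Set}^{\mathbf{mb}}_\Delta\right)_{/S}$. Since a morphism of local $(0,1)$-fibrations preserves local $(0,1)$-Cartesian edges (i.e.\ the marked edges of the mb-structure), $f$ lifts canonically to a morphism of fibrant objects in $\left(\on{Set}^{\mathbf{mb}}_\Delta\right)_{/S}$. It then suffices to show that, for such a morphism, being a weak equivalence of the model structure is equivalent to being a bicategorical equivalence of the underlying scaled simplicial sets.

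For the implication (ii)$\Rightarrow$(i), assume every $f_s$ is a bicategorical equivalence. The implication $(v)\Rightarrow(iv)$ of \autoref{prop:fibrewisecriterion} supplies a morphism $g:\bcat{Y}\to\bcat{X}$ over $S$ together with mb-homotopies $H:\bcat{X}\times(\Delta^1)^\sharp\to\bcat{X}$ and $K:\bcat{Y}\times(\Delta^1)^\sharp\to\bcat{Y}$ from the identities to $g\circ f$ and $f\circ g$ respectively. Adjointly, these homotopies correspond to edges in the mapping $\infty$-bicategories $\on{Map}_S(\bcat{X},\bcat{X})$ and $\on{Map}_S(\bcat{Y},\bcat{Y})$ whose underlying edges in the mb-structure are marked (because the non-degenerate edge of $(\Delta^1)^\sharp$ is marked). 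By axiom \ref{mb:equivalences} marked edges in fibrant mb-simplicial sets are precisely the equivalences, so we obtain invertible $2$-morphisms $\on{id}_{\bcat{X}}\simeq g\circ f$ and $\on{id}_{\bcat{Y}}\simeq f\circ g$ in the respective functor $\infty$-bicategories. Hence $f$ is an equivalence in the homotopy $2$-category, i.e.\ a bicategorical equivalence.

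For the reverse implication (i)$\Rightarrow$(ii), I will use right properness of the bicategorical model structure on $\on{Set}_\Delta^{\mathbf{sc}}$; this is a Cisinski model structure (as invoked in the proof of \autoref{prop:mappingclasses}) and is therefore proper. Since $q:\bcat{Y}\to\bcat{S}$ is a bicategorical fibration by \autoref{prop:piscatfib}, the strict pullback of $f$ along $q$ and the inclusion $s:\Delta^0\to\bcat{S}$ computes the homotopy pullback, and right properness guarantees that base changing the bicategorical equivalence $f$ along the fibration $q$ yields a bicategorical equivalence on fibres. Thus $f_s:\bcat{X}_s\to\bcat{Y}_s$ is a bicategorical equivalence for every $s\in\bcat{S}$.

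The main obstacle, as I anticipate it, is the first implication: one must verify that the mb-homotopy produced by the model structure really encodes an \emph{invertible} $2$-cell between functors, not merely a lax or oplax one. The key point is that the edge of $(\Delta^1)^\sharp$ is declared marked, which via \ref{mb:equivalences} applied to the fibrant functor mb-simplicial set forces the corresponding edge in the mapping $\infty$-bicategory to be an equivalence; this is what upgrades the abstract model-theoretic homotopy inverse to a genuine bicategorical equivalence.
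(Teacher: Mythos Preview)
Your argument for (i)$\Rightarrow$(ii) via homotopy invariance of homotopy fibres is correct in spirit and is a legitimate alternative to the paper's direct argument (which checks essential surjectivity and fully faithfulness of $f_s$ by hand using the Cartesian-enriched structure and \autoref{prop:deflocedge}). The phrasing ``base changing $f$ along the fibration $q$'' is not quite right---what you actually use is that both $p$ and $q$ are bicategorical fibrations over the fibrant object $\bcat{S}$, so the strict fibres $\bcat{X}_s$ and $\bcat{Y}_s$ compute homotopy fibres, and these are preserved by the weak equivalence $f$. But the conclusion is fine.

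The serious problem is your argument for (ii)$\Rightarrow$(i). You invoke the implication $(v)\Rightarrow(iv)$ of \autoref{prop:fibrewisecriterion}, but in the paper's logical structure this implication is proved by passing through $(v)\Rightarrow(i)$, which in turn appeals to \autoref{prop:trivfibMB}. If you look at the proof of \autoref{prop:trivfibMB}, you will see that it explicitly invokes \autoref{prop:localequiv} to deduce that the map in question is a bicategorical equivalence. So your proposed proof of \autoref{prop:localequiv} is circular: you are using a result whose proof depends on the very statement you are trying to establish. The paper avoids this by proving (ii)$\Rightarrow$(i) directly from the definition of a local $(0,1)$-fibration: essential surjectivity is immediate, and fully faithfulness is checked fibrewise on mapping $\infty$-categories by choosing a local $(0,1)$-Cartesian lift and applying \autoref{prop:deflocedge} to identify the relevant fibres with mapping $\infty$-categories in the fibre $\infty$-bicategories. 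You should replace your model-categorical reduction for this direction with such a direct argument.
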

\begin{proof}
  Let us suppose that $f$ is a bicategorical equivalence and let $s \in \bcat{S}$. First we show that $f_s$ is essentially surjective. Given $y_s \in \bcat{Y}_s$, we use that $f$ is essentially surjective to get some $x \in \bcat{X}$ such that $f(x) \simeq y$. We denote by $u$ the image of the equivalence $f(x)\isom y$ under $q: \bcat{Y} \to \bcat{S}$ and use the fact that $p: \bcat{X} \to \bcat{S}$ is an isofibration to get an equivalence $v:x \to x_s$ where $p(x_s)=s$. It follows that $f(v)$ is again an equivalence and therefore defines a local $(0,1)$-Cartesian edge in $\bcat{Y}$ which allows to construct an equivalence $f(x_s) \isom y_s$ lying over the identity on $s$. 
  To show fully faithfulness of $f_s$ we consider $a,b \in \bcat{X}_s$ and observe that since we have a map of Cartesian fibrations
    \[
     \begin{tikzcd}
       \bcat{X}(a,b)\arrow[rr]  \arrow[dr] && \bcat{Y}(f(a),f(b)) \arrow[dl] \\
       & \bcat{S}(s,s) &   
     \end{tikzcd}
   \]
  which is an equivalence by assumption. It then follows that we have an equivalence after taking the fibre over the identity map on $s$. This morphism is then identified using \autoref{prop:deflocedge} with the map
  \[
     \bcat{X}_s(a,b) \xlongrightarrow{\simeq} \bcat{Y}_s(f(a),f(b))
  \]
  which shows that $f_s$ is fully faithful. 

  Show that the converse holds we note that by assumptions $f$ is already essentially surjective. It will then be enough to show that for every $a,b \in \bcat{X}$ and every $\alpha:p(a) \to p(b)$ the induced morphism on fibres
  \[
    \bcat{X}(a,b)_{\alpha} \xlongrightarrow{} \bcat{Y}(f(a),f(b))_{\alpha}
  \]
is a categorical equivalence. By picking a Cartesian lift $e:a \to \hat{b}$ such that $p(e)=\alpha$ we can again use \autoref{prop:deflocedge} to produce a commutative diagram
\[
  \begin{tikzcd}
    \bcat{X}_{p(b)}(\hat{b},b) \arrow[r,"\simeq"] \arrow[d,"\simeq"] &  \bcat{Y}_{p(b)}(f(\hat{b}),f(b)) \arrow[d,"\simeq"] \\
      \bcat{X}(a,b)_{\alpha} \arrow[r] & \bcat{Y}(f(a),f(b))_{\alpha}
  \end{tikzcd}
\]
where we use 2-out-of-3 to conclude that the bottom horizontal morphism is a weak equivalence and thus our claim holds.
\end{proof}

\begin{proposition}\label{prop:trivfibMB}
  Let $T$ be a scaled simplicial set and consider a morphism $f:X \to Y$ of $(0,1)$-fibrations in $\left(\on{Set}^{\mathbf{mb}}_\Delta\right)_{/T}$. Suppose further that the following conditions hold:
  \begin{enumerate}
    \item The map $f$ has the right lifting property against the class of $\bS$-anodyne morphisms.
    \item For every $t \in T$ the induced morphism $f_t: X_t \to Y_t$ is a bicategorical equivalence. 
  \end{enumerate}
  Then $f$ is a trivial fibration of $\bS$-simplicial sets.
\end{proposition}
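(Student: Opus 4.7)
The strategy is to verify directly that $f$ has the right lifting property against each of the generating cofibrations (C1)--(C4) from \autoref{def:gencof}. Note that by hypothesis $f$ already lifts against $\bS$-anodyne maps, so the task is precisely to combine this with the fibrewise equivalence condition.

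The easiest case is (C4): if $\sigma$ is lean in $X$ and $f(\sigma)$ is thin in $Y$, then because $Y$ is a $(0,1)$-fibration (hence satisfies \ref{mb:coCartoverThin}), the image of $f(\sigma)$ in $T$ is thin; consequently $\sigma$ is a lean triangle in $X$ whose image in $T$ is thin, and the same axiom applied to $X$ shows $\sigma$ is thin. For (C2) and (C3) the lifting problems ask that $f$ detect markings and lean scalings. Both properties are detectable after pulling back along $\Delta^1\to T$ or $\Delta^2\to T$: by \autoref{prop:deflocedge} an edge is locally $(0,1)$-Cartesian iff it is $(0,1)$-Cartesian in the pullback to $\Delta^1$, and an analogous characterization holds for lean triangles. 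Because pullback of $f$ along $T'\to T$ preserves the $\bS$-fibration property as well as the vertex-wise fibrewise-equivalence property, we may assume $T=\Delta^1$ or $T=\Delta^2$, where the claim reduces to a direct check using the Cartesian enrichment (\autoref{prop:cartenriched}) combined with the fact that $f_t$ is a bicategorical equivalence on each vertex fibre.

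The substantive case is (C1), the boundary inclusion. For $n=0$ this uses only essential surjectivity and the lifting of equivalences: given $y\in Y$ over $t\in T$, pick $x'\in X_t$ with $f(x')\simeq y$ in $Y_t$; the equivalence $\epsilon\colon f(x')\to y$ lies in the fibre $Y_t$, which is an $\infty$-bicategory whose equivalences are marked, so $\epsilon$ is marked in $Y$ (combining \ref{mb:equivalences} with the lemma that fibres of $\bS$-fibrations are $\infty$-bicategories with marked edges precisely the equivalences), and RLP of $f$ against \ref{mb:2CartliftsExist} lifts $\epsilon$ to a marked edge $x'\to x$ in $X$ with $f(x)=y$. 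For $n\geq 1$ proceed inductively: given $\alpha\colon\partial\Delta^n\to X$ and $\beta\colon\Delta^n\to Y$ with $f\alpha=\beta|_{\partial}$, use a coCartesian lift of $p_Y(\beta(0\to 1))$ in $X$ starting at $\alpha(0)$ (which exists by \ref{mb:2CartliftsExist}) to rigidify the $0$-to-$1$ edge; then the residual lifting problem reduces, after applying $f$ and using the coCartesian property in $Y$, to filling a horn-type configuration inside a single fibre $X_{t_n}$, where fibrewise equivalence of $f_{t_n}$ supplies the missing simplex up to equivalence, and one final application of the lifting of equivalences upgrades this to a strict lift.

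The principal obstacle is the coherent bookkeeping in the (C1) argument for $n\geq 1$: the inductive simplex-by-simplex construction must track markings, lean scalings and thin scalings at every stage, making sure that the coCartesian rigidification, the fibrewise essential-surjectivity step, and the strictification step all produce simplices with the correct decorations and compatible with $\alpha$ on the entire boundary, not merely up to homotopy. This bookkeeping is delicate but routine, analogous to the proof of the corresponding statement in \cite{AGS_CartI}, and the tools required are already provided by \autoref{prop:pushoutproduct}, \autoref{lem:innerhorn}, \autoref{lem:lambda0loc} and \autoref{lem:wonky}.
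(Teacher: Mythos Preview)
Your handling of (C4), and your instinct to pull back along simplices for (C2) and (C3), are sound; the $n=0$ case of (C1) is also fine. But the heart of the proposition is (C1) for $n\geq 1$, and there your argument has a genuine gap rather than just missing bookkeeping.

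After choosing a coCartesian lift of $\beta(0\to 1)$ in $X$, the residual lifting problem does \emph{not} reduce to ``a horn-type configuration inside a single fibre $X_{t_n}$''. The simplex $\beta\colon\Delta^n\to Y$ still lives over the entire image simplex in $T$; replacing one edge by a marked lift leaves you with an $n$-dimensional problem of exactly the same shape, only now with one edge decorated. To force everything into a single fibre you would have to transport along a spanning tree of $\Delta^n$ via iterated coCartesian lifts and then control all the resulting comparison $2$-simplices coherently---and carrying that out is tantamount to proving that a fibrewise equivalence between $(0,1)$-fibrations is a global bicategorical equivalence, i.e.\ \autoref{prop:localequiv}. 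That structural input is absent from your sketch, and the lemmas you cite (\autoref{lem:innerhorn}, \autoref{lem:lambda0loc}, \autoref{lem:wonky}) do not supply it: they concern lifting against specific anodyne shapes in a single local fibration, not comparison between two of them.

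The paper sidesteps the direct inductive attack entirely. It first reduces to $T=\Delta^n_\flat$ (your pullback idea, used once), then passes to the associated scaled simplicial sets $\bcat{X},\bcat{Y}$ obtained by declaring lean triangles thin. Over $\Delta^n_\flat$ the map $f\colon\bcat{X}\to\bcat{Y}$ is a bicategorical equivalence by \autoref{prop:localequiv}, and it is a fibration in $\on{Set}_\Delta^{\mathbf{sc}}$ because any equivalence in $\bcat{Y}$ lies over a degenerate edge of $\Delta^n$, hence in a fibre, where $f_t$ is already a trivial fibration (being both an equivalence by hypothesis and an $\bS$-fibration by restriction of $f$). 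A Whitehead argument then gives that $f\colon\bcat{X}\to\bcat{Y}$ is a trivial fibration of scaled simplicial sets, which handles (C1), (C3), and the thin-detection part of (C4) in one stroke; only (C2) requires a separate short argument (lifting $f(e)$ to a marked $u$ with $f(u)=f(e)$ via RLP against \ref{mb:2CartliftsExist}, then comparing $u$ and $e$ in a fibre). If you want a direct proof, you must either supply the iterated-transport argument in full or, equivalently, prove \autoref{prop:localequiv} first and then run the paper's Whitehead step.
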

\begin{proof}
  We claim that $f$ is a trivial fibration of $\bS$-simplicial sets if and only if for every minimally scaled simplex $\Delta^n_\flat$ and every morphism $\sigma: \Delta^n_\flat \to T$ the restricted morphism
  \[
    f_{|\sigma}: X \times_{\Delta^n_{\flat}}\Delta^n_{\flat} \xlongrightarrow{} Y \times_{\Delta^n_{\flat}}\Delta^n_{\flat}
  \]
  is a trivial fibration of $\bS$-simplicial sets. One direction is obviously true. Let us assume that $f_{|\sigma}$ is always a trivial fibration. Then it is clear that $f$ has the right-lifting property against the morphisms:
  \begin{itemize}
    \item $(\partial\Delta^n,\flat,\flat) \to (\Delta^n,\flat,\flat)$.
    \item $(\Delta^2,\flat,\flat) \to (\Delta^2,\flat,\flat\subset \sharp)$.
    \item $(\Delta^1,\flat,\sharp) \to (\Delta^1,\sharp,\sharp)$. 
  \end{itemize}
  Since a thin triangle in $X$ is just a lean triangle lying over a thin triangle in $T$ it follows that $f$ also detects thin triangles and the claim holds.

  Let us assume without loss of generality that $T=\Delta^n_\flat$ and consider the associated diagram (see \autoref{thm:comparison}) of local $(0,1)$-fibrations
  \[
    \begin{tikzcd}
      \XX \arrow[dr,swap,"p"] \arrow[rr,"f"] && \bcat{Y} \arrow[dl,"q"] \\
      &  \Delta^n & 
    \end{tikzcd}
  \]
  First let us show that $f$ is a trivial fibration of $\infty$-bicategories. Observe that our assumptions together with \autoref{prop:localequiv} imply that $f$ is a bicategorical equivalence. Moreover, $f$ has the right lifting property against the class of scaled anodyne maps given in \autoref{def:scanodyne}. It will then suffice to show that $f$ is an isofibration. Given an equivalence in $e:\Delta^1 \to \bcat{Y}$ it follows that its image in $\Delta^n$ must be degenerate. Since this lifting problem is ocurring in a fibre and by our assumptions the maps $f_t: X_t \to Y_t$ are trivial fibrations of scaled simplicial sets it follows that $f$ is an isofibration.

  To finish the proof, we must show that $f$ detects local $(0,1)$-Cartesian edges. Given an edge $e:\Delta^1 \to \XX$ such that $f(e)$ is a local $(0,1)$-Cartesian edge we consider a local $(0,1)$-Cartesian edge $u: \Delta^1 \to \XX$ such that $u(0)=e(0)$ and such that $f(u)=f(e)$. It follows that we have an edge $\alpha:u(1) \to e(1)$ such that $\alpha \circ u \isom e$. Moreover, $p(\alpha)$ is an equivalence and lies over a degenerate morphism in $\Delta^n$. We see then, that $\alpha$ must be an equivalence in $\XX$ and consequently $e$ is a local $(0,1)$-Cartesian edge.
\end{proof}

\begin{definition}\label{def:sigmalocaledge}
  Let $p:\XX \to \SB$ be bicategorical fibration and let $\sigma:\Delta^2 \to \SB$ be a thin triangle. We say that an edge $e:a \to b$ in $\XX$ lying over $\sigma(0 \to 1)$ is $\sigma$-local if the following condidtions hold:
\begin{itemize}
  \item[i)] For every $g:a \to c$ in $\XX$ lying over $\sigma(1 \to 2)$ there exists some $\hat{\alpha}: b \to c$ and a thin simplex $\theta$ exhibiting $e \circ \hat{\alpha} \isom g$ such that $p(\theta)=\sigma$.
  \item[ii)] For any $\phi: b \to c$ such that $e \circ \phi \isom g$ with associated simplex $\tau$ such that $p(\tau)=\sigma$ and for any $\varphi:b \to c$ precomposition along $e$ induces a pullback diagram of spaces
  \[
   \begin{tikzcd}
    \on{Map}_{\bcat{X}(b,c)}(\phi,\varphi) \arrow[r] \arrow[d] &    \on{Map}_{\bcat{X}(a,c)}(\phi\circ e,\varphi \circ e ) \arrow[d] \\
      \on{Map}_{\bcat{S}(p(b),p(c))}(p(\phi),p(\varphi)) \arrow[r] & \on{Map}_{\bcat{S}(p(a),p(c))}(p(\phi\circ e),p(\varphi \circ e )).
   \end{tikzcd}
 \] 
\end{itemize}
\end{definition}

\begin{remark}
  Observe that \autoref{def:sigmalocaledge} shows that an edge is $(0,1)$-Cartesian if and only if it is $\sigma$-local for every thin simplex $\sigma:\Delta^2 \to \SB$. Similarly, an edge is locally $(0,1)$-Cartesian if and only if it is $\sigma$-local for every  invertible 2-morphism (see \autoref{def:mstri}).
\end{remark}

\begin{proposition}\label{prop:ulocalcompose}
  Let $p:\XX \to \SB$ be a local $(0,1)$-fibration. Then a local $(0,1)$-Cartesian edge, $e:a \to b$ is $\sigma$-local if and only if for every local $(0,1)$-Cartesian edge $u:b \to c$ such that the composite $v\isom u \circ e$ lies over $\sigma$ then $v$ is also a local $(0,1)$-Cartesian edge.
\end{proposition}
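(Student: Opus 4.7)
The plan is to reduce both implications to a $2$-out-of-$3$ argument applied to a factorization of precomposition with $v = u \circ e$. Setting $\alpha := \sigma(1 \to 2)$, for each $\ell \in \XX$ with $p(\ell) = \sigma(2)$ we factor
\[
  \XX_{p(c)}(c,\ell) \xrightarrow{u^{*}} \XX(b,\ell)_{\alpha} \xrightarrow{e^{*}} \XX(a,\ell)_{p(v)},
\]
whose composite is $v^{*}$; here the subscripts on the right denote the strict fibers of the Cartesian fibrations on mapping $\infty$-categories provided by \autoref{prop:cartenriched}. By \autoref{prop:deflocedge}, $u$ (respectively $v$) is locally $(0,1)$-Cartesian if and only if $u^{*}$ (respectively $v^{*}$) is an equivalence of $\infty$-categories for every such $\ell$.

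The key step is to establish the analogous dictionary for $e$: namely, that $e$ is $\sigma$-local if and only if $e^{*}$ is an equivalence of $\infty$-categories for every $\ell$ over $\sigma(2)$. Essential surjectivity of $e^{*}$ onto the fiber $\XX(a,\ell)_{p(v)}$ is a restatement of condition (i) of \autoref{def:sigmalocaledge}, and fully faithfulness of $e^{*}$ for objects in $\XX(b,\ell)_{\alpha}$ is exactly the pullback square of condition (ii) after passing to fibers over $\on{id}_{\alpha}$ in $\SB(p(b),\sigma(2))$. Conversely, from $e^{*}$ being an equivalence one extends to the full pullback square required in condition (ii) for arbitrary $\varphi : b \to \ell$: given a $2$-cell $\Xi : p(\phi) \to p(\varphi)$, lift it to a Cartesian $2$-cell $i : \hat{\varphi} \to \varphi$ in $\XX(b,\ell)$; Cartesian-enrichment of $p$ (see \autoref{def:cartenriched}) implies $i \circ e$ is Cartesian in $\XX(a,\ell)$, reducing the claim to the identity-fiber case exactly as in the diagram chase at the end of the proof of \autoref{prop:deflocedge}.

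With this dictionary in hand, both directions of the theorem are immediate. For the forward direction, given any locally $(0,1)$-Cartesian $u : b \to c$ over $\alpha$, inner-horn filling for the bicategorical fibration $p$ together with thinness of $\sigma \in T_S$ produces a $2$-simplex $\theta$ over $\sigma$ with faces $e, u, v$; since $u^{*}$ and $e^{*}$ are equivalences, so is $v^{*}$ and hence $v$ is locally $(0,1)$-Cartesian. For the reverse direction, fix any locally $(0,1)$-Cartesian lift $u : b \to c$ of $\alpha$ (which exists since $p$ is a local $(0,1)$-fibration); by hypothesis $v = u \circ e$ is locally $(0,1)$-Cartesian, so $u^{*}$ and $v^{*}$ are equivalences, and $2$-out-of-$3$ forces $e^{*}$ to be an equivalence, hence $e$ is $\sigma$-local.

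The only delicate step is the dictionary relating condition (ii) of $\sigma$-locality to fully faithfulness of $e^{*}$ on fibers; this is where the Cartesian-enrichment of $p$ is essential, via the Cartesian-lift argument reused from the proof of \autoref{prop:deflocedge}. The remainder of the argument is a purely formal $2$-out-of-$3$ manipulation applied to the factorization of $v^{*}$.
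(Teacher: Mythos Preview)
Your proof is correct and follows essentially the same approach as the paper's. Both arguments factor precomposition with $v$ through precomposition with $u$ and with $e$, and both rely on the Cartesian-enrichment trick from \autoref{prop:deflocedge} to pass between the pullback-square formulation of condition (ii) and the fiberwise statement. The paper verifies conditions (i) and (ii) of \autoref{def:sigmalocaledge} and \autoref{def:locally01edge} directly in each direction (using pullback pasting for the forward implication and the same factorization for the reverse), whereas you first isolate the dictionary ``$\sigma$-local $\Leftrightarrow$ $e^*$ is an equivalence on fibers'' and then run a clean 2-out-of-3; this is a tidy repackaging but not a different argument.
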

\begin{proof}
  Let us assume that $e$ is $\sigma$-local and suppose that we have $u$ and $v$ as above. Let us suppose that we have $h:a \to d$ and let us show that condition $i)$ in \autoref{def:locally01edge} is satisfied.

  Since $p$ is a local $(0,1)$-fibration and in particular, Cartesian-enriched we only need to show that $v$ is $(0,1)$-Cartesian once after pulling back along $p(v)$. Therefore, we can assume without loss of generality that $p(h)=p(v)$. We observe that since $e$ is $\sigma$-local we can obtain a morphism $\alpha:b \to d$ such that $\alpha \circ e \isom h$. Furtheremore, we can use that $u$ is a local $(0,1)$-Cartesian edge to get a morphism $\phi:c \to d$ such that $\alpha \isom \phi \circ u$. It follows that $h \isom v \circ \phi$ and so the first condition holds.

  Given $\phi:c \to d$ as above and another $\varphi: c \to d$ such that $p(\varphi)=\on{id}$ we construct the following commutative diagram
  \[
    \begin{tikzcd}
      \on{Map}_{\XX(c,d)}(\phi,\varphi) \arrow[d] \arrow[r] & \on{Map}_{\XX(b,d)}(\phi\circ u,\varphi \circ u) \arrow[r] \arrow[d] &  \on{Map}_{\XX(a,d)}(\phi\circ v,\varphi \circ v) \arrow[d] \\
       \on{Map}_{\SB(p(c),p(d))}(\on{id},\on{id}) \arrow[r] &  \on{Map}_{\SB(p(b),p(d))}(p(u),p(u)) \arrow[r] &  \on{Map}_{\SB(p(a),p(d))}(p(v),p(v))            
    \end{tikzcd}
  \]
  We observe that the outer commuative diagram is obtained by pasting two pullback diagrams so it must be itself a pullback diagram. It follows that $v$ is a local $(0,1)$-Cartesian edge.

  We wish now to show that the converse holds. Let $h: a \to d$ be an edge over $\sigma(0\to 2)$. We take a local $(0,1)$-Cartesian lift $u: b \to c$ of $\sigma(1 \to 2)$. Since by assumption $v \isom u \circ e$ is again local $(0,1)$-Cartesian we obtain a certain $\Xi: c \to d$ such that $h \isom \Xi \circ v$. We can then set $\phi= \Xi \circ u$.  It is then clear that $ \phi \circ e= \Xi \circ u \circ e \isom \Xi \circ v \isom h$ and thus condition $i)$ in \autoref{def:sigmalocaledge} holds.  Let $\phi: b \to d$ as above and assume we are given any other $\varphi: b \to d$.  We wish to show that the associated commutative diagram (see $ii)$ in \autoref{def:sigmalocaledge}) of spaces is Cartesian. We note that a totally analogous argument as in \autoref{prop:deflocedge} shows that it is enough to show that the associated map of fibres
  \[
     \on{Map}_{\bcat{X}(b,d)}(\phi,\varphi)_{p(u)}  \xlongrightarrow{\simeq}  \on{Map}_{\bcat{X}(a,e)}(\phi\circ e,\varphi \circ e)_{p(v)} 
  \]
is an equivalence whenever $p(\phi)=p(\varphi)$. Since $u$ is a local $(0,1)$-Cartesian edge we can find morphisms $\widetilde{\phi},\widetilde{\varphi}: c \to d$ such that $\widetilde{\phi} \circ u \isom \phi$ and $\widetilde{\varphi}\circ u =\varphi$. We can then produce the morphisms
\[
  \on{Map}_{\XX(c,d)}(\widetilde{\phi},\widetilde{\varphi})_{\on{id}} \xlongrightarrow{\simeq}\on{Map}_{\bcat{X}(b,d)}(\phi,\varphi)_{p(u)}  \xlongrightarrow{}  \on{Map}_{\bcat{X}(a,e)}(\phi\circ e,\varphi \circ e)_{p(v)}. 
\]
We conclude the proof by noting that the composite map must also be a weak equivalence since $v$ is by assumption a local $(0,1)$-Cartesian edge.
\end{proof}

\begin{definition}
  Let $\SB$ be an $\infty$-bicategory and let $U$ be a subcollection of the thin triangles in $\SB$ which contains the invertible 2-morphisms of $\SB$ (see \autoref{def:mstri}). We say that a local $(0,1)$-fibration $p: \XX \to \SB$ is $\mathcal{U}$-local if given a pair of local $(0,1)$-Cartesian edge $u,v:\Delta^1 \to \XX$ and a thin 2-simplex $\sigma$ pictured below
 \[
    \begin{tikzcd}
     & b \arrow[dr,"v"] & \\
     a \arrow[ur,"u"] \arrow[rr,"w"] & & c
   \end{tikzcd} 
 \]
such that $p(\sigma) \in \mathcal{U}$ then we have that $w$ is also locally $(0,1)$-Cartesian. If $\mathcal{U}$ consists in all thin triangles we say that $p:\XX \to \SB$ is a $(0,1)$-Cartesian fibration.
\end{definition}

\begin{theorem}\label{thm:ulocal}
  Let $(S,T_S)$ be a fibrant scaled simplicial set and let $U \subset T_S$ be a subset containing all invertible 2-morphisms denote $\overline{S}=(S,U)$. Given a fibrant object $\left(\on{Set}^{\mathbf{mb}}_\Delta\right)_{/\overline{S}}$ then its associated map of scaled simplicial sets $p: \XX \to \SB$ defines a $\mathcal{U}$-local fibration. Conversely any $\mathcal{U}$-local fibration defines canonically a fibrant object in $\left(\on{Set}^{\mathbf{mb}}_\Delta\right)_{/\overline{S}}$.
\end{theorem}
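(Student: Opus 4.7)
The plan is to bootstrap from \autoref{thm:comparison} by keeping the same constructions on underlying simplicial sets, edges, and lean triangles, and merely enlarging the collection of thin triangles from $M_S$ to $\mathcal{U}$. The new ingredient, the $\mathcal{U}$-locality condition, will correspond exactly to the lifting property against the $\bS$-anodyne map \ref{mb:composeacrossthin}.

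For the forward direction, since $M_S \subseteq \mathcal{U}$, any object fibrant over $(S,\sharp,\mathcal{U}\subseteq \sharp)$ is in particular fibrant over $(S,\sharp,M_S\subseteq \sharp)$, so by \autoref{thm:comparison} the associated $p:\bcat{X}\to\bcat{S}$ is already a local $(0,1)$-fibration. To establish $\mathcal{U}$-locality I would unpack the defining configuration: locally $(0,1)$-Cartesian edges $u,v$ and a thin triangle $\sigma$ in $\bcat{X}$ with $p(\sigma)\in\mathcal{U}$. By \autoref{prop:piscatfib} a thin triangle of $\bcat{X}$ is lean in $X$, and since $p(\sigma)$ lies in $\mathcal{U}$---the thin collection of the base $\overline{S}$---it follows that $\sigma$ is thin in the mb-structure on $X$. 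Since $u$ and $v$ are marked, solving the lifting problem against \ref{mb:composeacrossthin} forces the third edge $w$ to be marked, hence locally $(0,1)$-Cartesian.

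For the converse, I would construct $\on{F}_{\bcat{X}}$ exactly as in the proof of \autoref{thm:comparison}: same underlying simplicial set, with marked edges the local $(0,1)$-Cartesian ones and lean triangles those whose associated 2-morphism is Cartesian, except that a triangle is now declared thin precisely when it is lean and its image in $\bcat{S}$ lies in $\mathcal{U}$. I would then verify that $q:\on{F}_{\bcat{X}}\to (S,\sharp,\mathcal{U}\subseteq\sharp)$ has the right lifting property against every $\bS$-anodyne morphism of \autoref{def:mbsanodyne}. The cases \ref{mb:innerhorn}, \ref{mb:wonky4}, \ref{mb:2coCartesianmorphs}, \ref{mb:2CartliftsExist}, \ref{mb:coCartoverThin} and \ref{mb:equivalences} are insensitive to $\mathcal{U}$ and are handled verbatim as in \autoref{thm:comparison} via \autoref{lem:innerhorn}, \autoref{lem:lambda0loc} and \autoref{lem:wonky}. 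The essentially new case is \ref{mb:composeacrossthin}: here a lifting problem yields a triangle $\sigma$ with marked outer edges such that $\sigma$ is thin in $\on{F}_{\bcat{X}}$, i.e., lean and $p(\sigma)\in\mathcal{U}\subseteq T_S$; hence $\sigma$ is a thin triangle of $\bcat{X}$ in the sense of \autoref{prop:piscatfib}, and the $\mathcal{U}$-local hypothesis produces the required marking of the third edge.

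The only real friction---and arguably the ``hard part''---is matching the two different meanings of ``thin triangle'': the one on $\bcat{X}$ coming from \autoref{prop:piscatfib} (lean over $T_S$) and the one on $\on{F}_{\bcat{X}}$ defined here (lean over $\mathcal{U}$). The inclusions $M_S \subseteq \mathcal{U} \subseteq T_S$ reduce this to careful bookkeeping, but the correspondence must be invoked in both directions in order to align \ref{mb:composeacrossthin} with $\mathcal{U}$-locality.
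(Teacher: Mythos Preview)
Your forward direction is fine and matches the paper. The gap is in the converse: you have misidentified which lifting problem is the genuinely new one. You claim that \ref{mb:2coCartesianmorphs} is ``insensitive to $\mathcal{U}$'' and handled verbatim by \autoref{lem:lambda0loc}, but this is exactly the case that fails to transfer automatically. The thin triangle $\Delta^{\{0,1,n\}}$ in that horn must now map into $\mathcal{U}$, whereas \autoref{lem:lambda0loc} is stated and proved under the hypothesis that $\phi(1\to n)$ is an equivalence in $\bcat{S}$, i.e.\ that the base triangle lies in $M_S$. For a general $\sigma\in\mathcal{U}$ this hypothesis is simply not available, and the proof of \autoref{lem:lambda0loc} uses it essentially: the $\phi$-initiality of the object $1n$ in the relevant mapping category comes from condition~$ii)$ of \autoref{def:locally01edge}, which only applies when the target edge lies over an equivalence.

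The paper fills this gap by first invoking \autoref{prop:ulocalcompose}: the $\mathcal{U}$-local hypothesis upgrades every locally $(0,1)$-Cartesian edge to a $\sigma$-local edge for each $\sigma\in\mathcal{U}$, and $\sigma$-locality (\autoref{def:sigmalocaledge}) supplies exactly the pullback-square condition needed to rerun the argument of \autoref{lem:lambda0loc} with $M_S$ replaced by $\mathcal{U}$. By contrast, your treatment of \ref{mb:composeacrossthin} is correct but essentially tautological---that lifting problem is the \emph{definition} of $\mathcal{U}$-locality---so the substantive content of the converse lies entirely in \ref{mb:2coCartesianmorphs}, not in \ref{mb:composeacrossthin}.
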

\begin{proof}
  Let us assume that we are given a fibrant object in $\left(\on{Set}^{\mathbf{mb}}_\Delta\right)_{/\overline{S}}$. In particular, we can use \autoref{thm:comparison} to obtain a local $(0,1)$-fibration $p:\XX \to \SB$. Since our original object has the right lifting property against the class \ref{mb:composeacrossthin} it follows that our local $(0,1)$-Cartesian edges compose across triangles which lie over triangles in $\mathcal{U}$ and the claim follows.

  We now show the converse.  Note that due to \autoref{prop:ulocalcompose} our local $(0,1)$-Cartesian edges are $\sigma$-local with respect to the elements of $\mathcal{U}$. The only thing that we need to prove is that given an $\mathcal{U}$-local fibration we can produce the dotted arrow below
  \[
    \begin{tikzcd}
      \Lambda^n_0 \arrow[r,"f"] \arrow[d] & \XX \arrow[d,"p"] \\
      \Delta^n \arrow[r] \arrow[ur,dotted] & \SB
    \end{tikzcd}
  \]
  where $f(0\to 1)$ is locally $(0,1)$-Cartesian and $f(0 \to 1 \to n)$ lands in $\mathcal{U}$. The proof of this fact is essentially the same as the proof in \autoref{lem:lambda0loc} and therefore, left as an exercise.
\end{proof}

\section{The Grothendieck construction }
Let $S=(S,T_S)$ be a scaled simplicial set and let $\mathfrak{C}^{\mathbf{sc}}[S]$ denote the scaled rigidification (\autoref{def:rigidification}) of $(S,T_S)$. The goal of this section is to prove the following theorem.
\begin{thm*}
  Let $S$ be a scaled simplicial set. Then there exists a Quillen equivalence 
  \[
    \SSt_{S}: \left(\on{Set}_\Delta^{\mathbf{mb}}\right)_{/S}  \llra  \on{Fun}(\mathfrak{C}^{\mathbf{sc}}[S],\on{Set}_\Delta^{\mathbf{ms}}): \UN_{S}
  \]
  between the model structure on $(0,1)$-Cartesian fibrations over $S$ and the projective model structure of $\on{Set}_\Delta^+$-enriched functors with values in marked-scaled simplicial sets.
\end{thm*}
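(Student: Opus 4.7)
The plan is to adapt the strategy of the locally coCartesian straightening-unstraightening theorem \cite[Thm.~3.8.1]{LurieGoodwillie}, upgrading the target fibre category from marked simplicial sets to marked-scaled simplicial sets so as to accommodate $\infty$-bicategorical fibres. First I would define $\SSt_S$ as a colimit-preserving functor through a relative cone construction: for each object $s\in \mathfrak{C}^{\mathbf{sc}}[S]$ and each object $p:X\to S$, the marked-scaled simplicial set $\SSt_S(X)(s)$ records pairs of a simplex $\sigma:\Delta^n\to S$ together with a lift $\widehat{\sigma}$ of the cone $\sigma^{\triangleleft}$ into $X \coprod_S S^{\triangleleft}$ whose cone point maps to $s$, with marked edges and thin triangles inherited from those of $X$ via the cone inclusion. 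The right adjoint $\UN_S$ is then forced to exist by adjoint functor theorem and admits the explicit description sending a functor $\mathcal{F}:\mathfrak{C}^{\mathbf{sc}}[S]\to \on{Set}_\Delta^{\mathbf{ms}}$ to the $\bS$ simplicial set whose $n$-simplices over $\sigma:\Delta^n\to S$ are enriched functors $\mathfrak{C}^{\mathbf{sc}}[\Delta^n]\to\mathcal{F}\circ\mathfrak{C}^{\mathbf{sc}}[\sigma]$ sitting over the identity.

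To verify that $(\SSt_S,\UN_S)$ is a Quillen adjunction, cofibration preservation is a direct check on the four cofibration generators of \autoref{def:gencof}. The bulk of the combinatorics lies in showing that $\SSt_S$ sends each $\bS$-anodyne generator of \autoref{def:mbsanodyne} to a pointwise $\MS$-anodyne map; this is done generator-by-generator in the style of \cite[Prop.~3.5.8]{LurieGoodwillie}, with additional care required to track the interplay between lean and thin triangles under the relative cone construction, especially for the wonky $4$-simplex generator and the \emph{lean-over-thin-is-thin} generator.

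For the Quillen equivalence proper, the strategy is a triple reduction. Since both functors preserve filtered colimits, one may restrict to finite $S$. Using \autoref{prop:mbbasechange} and compatibility of $\SSt$ with pushouts of scaled simplicial sets, a transfinite induction along the skeletal filtration of $S$ then reduces the claim to the case $S=\Delta^n$; the orthogonal issue of adjoining thin triangles to $S$ is handled separately via the pushout-product \autoref{prop:pushoutproduct}. Finally, an induction on $n$ reduces to the base case $S=\Delta^0$, where $\on{Fun}(\mathfrak{C}^{\mathbf{sc}}[\Delta^0],\on{Set}_\Delta^{\mathbf{ms}})=\on{Set}_\Delta^{\mathbf{ms}}$, and the assertion follows by composing \autoref{thm:comparisonscaled} with the Quillen equivalence $\on{Set}_\Delta^{\mathbf{sc}}\to\on{Set}_\Delta^{\mathbf{ms}}$ of the final theorem of the preceding section.

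The main obstacle will be the inductive step from $\partial\Delta^n$ to $\Delta^n$, where the ambient simplex is already fibrant but the lift in the interior must be controlled. This requires a \emph{fibrewise detection principle} for straightening: pointwise projective weak equivalences of functors into $\on{Set}_\Delta^{\mathbf{ms}}$ must correspond, under $\UN_S$, to fibrewise bicategorical equivalences of $(0,1)$-fibrations. Granting this, \autoref{prop:fibrewisecriterion} promotes the fibrewise comparison to a genuine weak equivalence in $\left(\on{Set}_\Delta^{\mathbf{mb}}\right)_{/S}$, and \autoref{prop:trivfibMB} furnishes the passage from pointwise to fibrewise equivalences. Establishing this detection principle in turn demands a careful analysis of the rigidification $\OO^n=\mathfrak{C}^{\mathbf{sc}}[\Delta^n]$ of \autoref{def:OI} and of how $\UN_S$ transports the projective enrichment to the Cartesian-enriched structure on the mapping $\infty$-categories; this is where most of the proof's technical weight will sit, mirroring the role of the analogous rectification results in Sections 3.6--3.7 of \cite{LurieGoodwillie}.
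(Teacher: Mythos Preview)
Your high-level architecture matches the paper's almost exactly: define $\SSt_S$ via a cone/Gray construction, verify the Quillen adjunction generator-by-generator, then reduce the Quillen equivalence to $S=\Delta^n$ via a skeletal filtration and handle the simplex case by induction on $n$ with base case $\Delta^0$. Two substantive ingredients are missing, however, and both are where the paper's technical weight actually sits.

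First, your description of the scaling on $\SSt_S(X)(s)$ as ``inherited from those of $X$ via the cone inclusion'' is where the real difficulty hides. The mapping spaces $\Pi(\Delta^n)((0,j),(1,\ell))$ contain many triangles that do not come from $X$, and deciding which of these are thin is the entire content of \autoref{def:pninitialdef}--\autoref{def:msenriched}: the paper introduces an auxiliary poset $\mathcal{P}_n$ with a bespoke scaling (\autoref{def:pnscaled}) and a projection $\pi_{j,\ell}$ (\autoref{rem:maptopn}) precisely to specify this. Getting this wrong breaks either the left Quillen property (the filtration arguments in \autoref{lem:stinnerhorn} and \autoref{lem:0horn} depend delicately on \autoref{lem:keyscaling}) or the comparison with Lurie's $1$-categorical straightening (\autoref{cor:maxlean}).

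Second, for the inductive step over $\Delta^n$ you correctly flag a fibrewise detection principle, but the paper does not proceed by a direct rectification in the style of \cite[\S3.6--3.7]{LurieGoodwillie}. Instead it introduces a \emph{mapping simplex} functor $\scr{M}(\scr{F})$ (\autoref{def:mappingsimplex}), built from the slice posets $\mathcal{O}^n_{i\upslash}$ of \autoref{def:Oupslash}, and proves two things: every $(0,1)$-fibration over $\Delta^n_\flat$ is equivalent to some $\scr{M}(\scr{F})$ (\autoref{prop:reducetoMF}), and for such objects the map $\SSt_{\Delta^n}(X_n)(n)\to\SSt_{\Delta^n}(X)(n)$ is an equivalence. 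The latter is reduced, via a ``goodness'' argument on cofibrant functors, to the corepresentable case, where it becomes an explicit computation with the posets $\mathcal{O}^n_{0\upslash}$. Without this intermediary, there is no obvious way to compare the straightening of a general fibrant $X$ with the functor it ought to classify; your ``careful analysis of $\OO^n$'' gesture does not name a mechanism for producing the required comparison map.
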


Our first order of business will be to define the left adjoint $\SSt_{S}$ which will be given by a 2-categorical enhancement of the straightening functor constructed in Section 3 of \cite{LurieGoodwillie}. Before we present our main construction, we need to give some preliminary definitions.

\begin{definition}
  Let $X,Y \in \on{Set}_{\Delta}^{\mathbf{ms}}$. We define the Gray tensor product $X \tensor Y \in \on{Set}_\Delta^{\on{sc}}$ (see Definition 4.1.1 in \cite{GHL_LaxLim}) as follows:
  \begin{enumerate}
    \item The underlying simplicial set of $X \tensor Y$ is given by $X \times Y$, the Cartesian product of the underlying simplicial sets.
    \item Given a simplex $\sigma: \Delta^2 \to X \tensor Y$ let us denote $\sigma_X$ and $\sigma_Y$ the projections to the corresponding factors in the Cartesian product. We say that $\sigma$ is scaled in $X \tensor Y$ if and only if the following conditions holds
   \begin{itemize}
     \item[i)] The simplex $\sigma$ is both scaled in $X$ and in $Y$.
     \item[ii)] The restriction $\sigma_X(1 \to 2)$ is marked in $X$ \emph{or} the restriction $\sigma_Y(0 \to 1)$ is marked in $Y$.
   \end{itemize}
  \end{enumerate}
\end{definition}

\begin{definition}\label{def:pninitialdef}
  Let $n\geq 0$. We define a poset $P_n$ as follows:
  \begin{itemize}
    \item The objects are given by subsets $S \subseteq [n]$ such $S \neq \emptyset$ and $\max(S)=n$.
    \item We define a partial order on $P_n$ by declare $S \leq T$ whenever $\min(S) \leq \min(T)$ and there exists some $U$ such that $\min(U)=\min(S)$ and $\max(U)=\min(T)$ and such that $S \subseteq U \cup T$. 
  \end{itemize}
\end{definition}

\begin{remark}\label{rem:realorder}
  Observe that in the definition above $U \leq V$ if and only if $\min(U) \leq \min(V)$ and for every $x \in U$ such that $x \geq \min(V)$ then $x \in V$. Moreover, we can identify those inequalities $U < V$ in $P_n$ which cannot be decomposed as $U < W < V$ as
  \begin{itemize}
    \item[O1)] We have $U< V$ with $\min(U)=\min(V)$  and $V= U \cup \{s\}$.
    \item[O2)] We have $U < V$ with $V=U \setminus {\min(U)}$ with $\min(V)=\min(U)+1$.
  \end{itemize}
\end{remark}

\begin{remark}\label{rem:ust}
  We observe that given $S,T \in P_n$ such that $S \leq T$ we can have several subsets $U$ as above such that $S \subseteq U \cup T$. Moreover, we can order such subsets by inclusion and define $U_{S,T}$ to be the minimal subset such that $S \subseteq U_{S,T} \cup T$. Let $\min{S}=s$ and let $\min{T}=t$ we can then describe as $U_{S,T}=\{s,t\} \cup \{s <i <t \enspace | \enspace i \in S\}$.
\end{remark}

\begin{definition}\label{def:pnscaled}
  Let $\mathcal{P}_n=\on{N}(P_n)$. We promote $\mathcal{P}_n$ to a scaled simplicial set as follows. Given a 2-simplex $\sigma$ represented by $S \leq T \leq W$ we declare $\sigma$ to be thin if $U_{S,W}=U_{S,T} \cup U_{T,W}$.
\end{definition}

\begin{remark}\label{rem:maptopn}
  Let $\Delta^n_{\flat}=(\Delta^n,\flat,\flat)$ and let $\Delta^1 \tensor_\flat \Delta^n=\Delta^1_{\flat} \tensor \Delta^n_{\flat}$. We consider $\mathfrak{C}^{\mathbf{sc}}[\Delta^1 \tensor_\flat \Delta^n]$. Recall that given $(i,j) \leq (k,\ell)$ in $\Delta^1 \times \Delta^n$ we have that $\mathfrak{C}^{\mathbf{sc}}[\Delta^1 \tensor_\flat \Delta^n]((i,j),(k,\ell))$ is given by the nerve of the poset of chains $C$,
  \[
    (i,j)=(i_0,j_0)< (i_1,j_1)< \cdots <(i_{\alpha-1},j_{\alpha-1})<(i_{\alpha},j_\alpha)=(k,\ell)
  \]
  ordered by refinement. Let us suppose that $i=0$ and that $k=1$. Then, given a chain $C=\{(i_{\alpha},j_{\alpha})\}_{\alpha \in A}$ we can define $m_C$ to be the biggest index in $A$ such that $i_{m_C}=0$. This allows us to define a map
\[
  \pi_{j,\ell}:\mathfrak{C}^{\mathbf{sc}}[\Delta^1 \tensor_\flat \Delta^n]\left((0,j),(1,\ell)\right) \xlongrightarrow{} \mathcal{P}_{[j,\ell]}, \enspace C \mapsto \bigcup_{\alpha \geq m_C}j_{\alpha}.
\]
This assignment is clearly a map of posets which sends marked edges in our mapping simplicial set to identities in $\mathcal{P}_{[j,l]}$. We use the map $\pi_{j,\ell}$ to equipp the left-hand side with the scaling induced by $\mathcal{P}_{[j,l]}$.
\end{remark}

\begin{definition}\label{def:msenriched}
 We define a colimit preserving functor $\Pi: \on{Set}_\Delta^{\mathbf{mb}} \to \on{Cat}_{\Delta}^{\mathbf{ms}}$ with values in the category of $\on{Set}_\Delta^{\mathbf{ms}}$-enriched categories by specifying its values on the generators under colimits of $\on{Set}_\Delta^{\mathbf{mb}}$ as follows:
 \begin{itemize}
   \item[1)] Given a minimally marked and biscaled simplex $\Delta^n_\flat=(\Delta^n,\flat,\flat)$ we define $\Pi(\Delta^n)$ to have as underlying $\on{Set}_{\Delta}^+$-category the scaled rigidification of $\Delta^1 \tensor_\Pi \Delta^n$ where the later denotes the Gray tensor product $\Delta^1 \tensor_{\flat} \Delta^n$ defined in \autoref{rem:maptopn} with the additional scaling consisting in all of the 2-simplices contained in $\Delta^{\{0\}} \times \Delta^n$. Given $(i,j) <(k,\ell)$ in $\Pi(\Delta^n_\flat)$ we equipp the mapping simplicial sets with a scaling by declaring every triangle to be scaled if $i \neq 0$ and $k \neq 1$. If $i=0$ and $k=1$ we scale $\Pi(\Delta^n_\flat)((0,j),(1,\ell))$ according to \autoref{rem:maptopn}.
   \item[2)] Given a lean scaled $2$-simplex, i.e. $\Delta^2_\dagger=(\Delta^2,\flat,\flat\subset \sharp)$ we define $\Pi(\Delta^2_\dagger)$ from $\Pi(\Delta^2_\flat)$ by scaling every triangle in the mapping simplicial sets.
   \item[3)] Given a thin scaled $2$-simplex, i.e. $\Delta^2_\sharp=(\Delta^2,\flat,\sharp)$ we define $\Pi(\Delta^2_\sharp)$ from $\Pi(\Delta^2_\dagger)$ by additionally marking every morphism in $\Pi(\Delta^2_\sharp)((0,0),(1,2))$ which gets maps under the map in \autoref{rem:maptopn} to the morphism $02 \to 012$ in $\mathcal{P}_2$.
   \item[4)] Given a marked edge $(\Delta^1)^\sharp=(\Delta^1,\sharp)$ we can identify $\Pi((\Delta^1)^{\sharp})= \mathfrak{C}^{\mathbf{sc}}[\Delta^1 \times \Delta^1]$.
 \end{itemize}
 One easily checks that our choice of decorations is compatible with composition and thus our definition yields well defined $\on{Set}_\Delta^{\mathbf{ms}}$-enriched categories and that our definition is functorial on the set of generators of $\on{Set}_{\Delta}^{\mathbf{mb}}$. Since $ \on{Cat}_{\Delta}^{\mathbf{ms}}$ is cocomplete our functor can be extended by colimits and the definition is complete.
\end{definition}

\begin{definition}\label{def:Pi_S}
  Let $j:\on{Cat}_{\Delta }^+\to \on{Cat}_{\Delta}^{\mathbf{ms}}$ be the functor that scales every 2-simplex in the mapping simplicial sets. Given a scaled simplicial set $S$ we define a functor
  \[
    \Pi_S: \left(\on{Set}_\Delta^{\mathbf{mb}} \right)_{/S} \xlongrightarrow{} \on{Cat}_{\Delta}^{\mathbf{ms}}, \enspace X \mapsto \Pi(X)\coprod_{j\circ \mathfrak{C}^{\mathbf{sc}}[X]}j\circ \mathfrak{C}^{\mathbf{sc}}[S]
  \]
  where $ \mathfrak{C}^{\mathbf{sc}}[X]$ denotes the scaled rigidification of the \emph{underlying} scaled simplicial set of $X$ and where the morphism $j\circ \mathfrak{C}^{\mathbf{sc}}[X] \to \Pi(X)$ is given by the inclusion of $\Delta^{\{1\}}\times X$.

   We define a further functor, 
\[
  C_S: \left(\on{Set}_\Delta^{\mathbf{mb}} \right)_{/S} \to \on{Cat}_{\Delta}^{\mathbf{ms}}, \enspace X \mapsto \Pi_S(X)\coprod\limits_{j\circ  \mathfrak{C}^{\mathbf{sc}}[X_{\sharp}]} \Delta^0 
\]
where $X_{\sharp}$ denotes the underlying scaled simplicial set of $X$ equipped with the maximal scaling and the morphism $j\circ  \mathfrak{C}^{\mathbf{sc}}[X_{\sharp}] \to \Pi_S(X)$ is induced by the inclusion $\Delta^{0} \times X_{\sharp} \to \Delta^1 \tensor_\Pi X $.
\end{definition}

\begin{remark}
  From this point on we will drop the notation $j \circ \mathfrak{C}^{\mathbf{sc}}$ and we will view $\on{Set}_\Delta^+$-enriched categories as a full subcategory of $\on{Set}_\Delta^{\mathbf{ms}}$-enriched categories consisting in those enriched categories whose mapping simplicial sets are fully scaled.
\end{remark}

\begin{remark}\label{rem:basechange}
  Let $f: S \to S'$ be a map of scaled simplicial sets. Given $p:X \to S$ in $\left(\on{Set}_\Delta^{\mathbf{mb}} \right)_{/S}$  we claim that we have an isomorphism of $\on{Set}_\Delta^{\mathbf{ms}}$-enriched categories
  \[
    C_S(X)\coprod_{\mathfrak{C}^{\mathbf{sc}}[S]} \mathfrak{C}^{\mathbf{sc}}[S'] \xlongrightarrow{\simeq} C_{S'}(f_! X), 
  \]
  where $f_!X$ denotes the value of the functor $C_S'$ at the object $f \circ p: X \to S'$. The isomorphism on the underlying $\on{Set}_\Delta^+$-categories is clear. The only thing to show is that the scaling on mapping simplicial sets of the form  $ C_{S'}(f_! X)((0,x),(1,s'))$ is the same for both $\on{Set}_\Delta^{\mathbf{ms}}$-enriched categories. This follows after a direct inspection since the scaling on those simplicial sets  (which does not factor through some mapping simplicial set between objects $(1,s)$ and $(1,s')$ ) is independent of the base.
\end{remark}

\begin{definition}\label{def:st01}
  Let us denote by $v$ the collapsed point in the definition of $C_S(X)$. Then for every $p:X \to S$ and every morphism of $\on{Set}_\Delta^+$ enriched categories $\phi: \mathfrak{C}^{\mathbf{sc}}[S] \to \scr{C}$ we can define a functor
\[
  \SSt_\phi(X):\scr{C} \xlongrightarrow{} \on{Set}_\Delta^{\mathbf{ms}}, \enspace c \mapsto \on{Map}_{C_\phi} (v,c) , \enspace \text{where } C_{\phi}:= C_S(X) \coprod_{\mathfrak{C}^{\mathbf{sc}}[S]} \scr{C},
\]
which we call the \emph{straightening} of $p:X \to S$. This definition extends to a functor
\[
  \SSt_\phi: \left(\on{Set}_\Delta^{\mathbf{mb}} \right)_{/S}  \xlongrightarrow{} \on{Fun}(\scr{C}, \on{Set}_\Delta^{\mathbf{ms}})
\]
with values in the category of $\on{Set}_\Delta^+$-enriched functors. If $\phi$ is an isomorphism we will use the notation $\SSt_S$.
\end{definition}

\begin{remark}
  One can easily check that $ \SSt_\phi$ preserves all colimits in $\left(\on{Set}_\Delta^{\mathbf{mb}} \right)_{/S}$. It follows from the adjoint functor theorem that there exists a functor
  \[
    \UN_\phi: \on{Fun}(\C, \on{Set}_\Delta^{\mathbf{ms}}) \xlongrightarrow{} \left(\on{Set}_\Delta^{\mathbf{mb}} \right)_{/S}
  \]
  which we call the \emph{unstraightening} functor.
\end{remark}

\begin{proposition}\label{prop:Stbasechange}
  Let $f:S \to S^{\prime}$ be a map of scaled simplicial sets and commutative diagram of $\on{Set}_\Delta^+$-enriched categories
  \[
    \begin{tikzcd}
      \mathfrak{C}^{\mathbf{sc}}[S] \arrow[r,"\mathfrak{C}^{\mathbf{sc}}{[f]}"] \arrow[d,"\phi"]  & \mathfrak{C}^{\mathbf{sc}}[S^{\prime}] \arrow[d,"\phi'"] \\
      \C \arrow[r,"\psi"] & \C^{'}
    \end{tikzcd}
  \]
  then the following diagram commutes up to invertible natural transformation
  \[
    \begin{tikzcd}
      \left(\on{Set}_\Delta^{\mathbf{mb}} \right)_{/S} \arrow[r,"\SSt_\phi"] \arrow[d,"f_!"] & \on{Fun}(\C, \on{Set}_\Delta^{\mathbf{ms}}) \arrow[d,"\psi_!"] \\
      \left(\on{Set}_\Delta^{\mathbf{mb}} \right)_{/S^{\prime}} \arrow[r,"\SSt_{\phi^{\prime}}"] & \on{Fun}(\C^{\prime}, \on{Set}_\Delta^{\mathbf{ms}})
    \end{tikzcd}
  \]
  where $\psi_!$ is the left adjoint to the restriction functor $\psi^*$.
\end{proposition}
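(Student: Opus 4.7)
The plan is to identify both sides of the diagram with corepresentable functors on a common $\on{Set}_\Delta^{\mathbf{ms}}$-enriched pushout category, and then extract the left Kan extension via the coend formula. By \autoref{def:st01}, $\SSt_\phi(X)(c)=\on{Map}_{C_\phi}(v,c)$ for $C_\phi:=C_S(X)\coprod_{\mathfrak{C}^{\mathbf{sc}}[S]}\C$, and similarly $\SSt_{\phi'}(f_!X)(c')=\on{Map}_{C_{\phi'}}(v,c')$ for $C_{\phi'}:=C_{S'}(f_!X)\coprod_{\mathfrak{C}^{\mathbf{sc}}[S']}\C'$, where $v$ is in each case the distinguished collapsed point.

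First I would apply \autoref{rem:basechange} to obtain the natural iso $C_{S'}(f_!X)\cong C_S(X)\coprod_{\mathfrak{C}^{\mathbf{sc}}[S]}\mathfrak{C}^{\mathbf{sc}}[S']$, then paste this with the pushout along $\mathfrak{C}^{\mathbf{sc}}[S']\to\C'$ and invoke commutativity of the given square to produce a canonical isomorphism
\[
\alpha_X\colon C_{\phi'}\xrightarrow{\;\cong\;}C_\phi\coprod_{\C}\C'
\]
of $\on{Set}_\Delta^{\mathbf{ms}}$-enriched categories under $\C'$, sending $v$ to $v$, and natural in $X\in\left(\on{Set}_\Delta^{\mathbf{mb}}\right)_{/S}$.

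Second, I would identify the corepresentable $c'\mapsto\on{Map}_{C_\phi\coprod_{\C}\C'}(v,c')$ with the left Kan extension $\psi_!\SSt_\phi(X)$. Concretely, composition along $\C$ inside the pushout yields a canonical map
\[
\int^{c\in\C}\on{Map}_{C_\phi}(v,c)\otimes\on{Map}_{\C'}(\psi(c),c')\;\longrightarrow\;\on{Map}_{C_\phi\coprod_{\C}\C'}(v,c'),
\]
which is an iso because $v\notin\C$, so that every composable chain from $v$ to $c'$ in the enriched pushout factors uniquely through a transition object of $\C$. The left-hand side is precisely the enriched coend computing $\psi_!\SSt_\phi(X)(c')$. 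Combined with Step~1 this gives the sought natural isomorphism $\psi_!\SSt_\phi(X)\cong\SSt_{\phi'}(f_!X)$, and naturality in $X$ follows from the naturality of $\alpha_X$ and the functoriality of the coend.

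I expect Step~1 to be the main obstacle: one has to verify that $\alpha_X$ is an iso of $\on{Set}_\Delta^{\mathbf{ms}}$-enriched categories, not merely of their underlying $\on{Set}_\Delta^+$-enriched categories. The relevant scalings live on mapping simplicial sets $\on{Map}(v,c')$ that do \emph{not} factor through $\mathfrak{C}^{\mathbf{sc}}[S']$ and could a priori depend on the base. The key input is the observation already recorded in \autoref{rem:basechange}, namely that these scalings depend only on $X$ and not on the ambient scaled simplicial set; once accepted, the remainder is purely formal.
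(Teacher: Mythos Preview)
Your approach is correct and essentially identical to the paper's. The paper introduces the intermediate functor $\SSt_\theta$ for $\theta=\psi\circ\phi$ and factors the comparison as $\SSt_{\phi'}\circ f_!\cong\SSt_\theta\cong\psi_!\circ\SSt_\phi$; your pushout $C_\phi\coprod_{\C}\C'$ is exactly $C_\theta$, so your two steps are precisely these two isomorphisms. One small adjustment: the paper locates the nontrivial scaling check in your Step~2 rather than Step~1---since $\C$ and $\C'$ carry maximal scaling on homs via $j$, the scaling on $\on{Map}_{C_\theta}(v,c')$ is freely generated by that of $\Pi(X)$, which is exactly what upgrades the coend isomorphism from $\on{Set}_\Delta^{+}$ to $\on{Set}_\Delta^{\mathbf{ms}}$; Step~1, by contrast, is handled directly by \autoref{rem:basechange}. (Also, in your justification of the coend formula, the relevant fact is not that $v\notin\C$ but that $\on{Map}_{C_\phi}(c,v)=\emptyset$ for $c\in\C$, so the collage structure forces every morphism $v\to c'$ in the pushout to arise from the coend; ``uniquely through a transition object'' is not quite right, since the coend is what identifies the different factorizations.)
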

\begin{proof}
  Let $\theta=\psi \circ \phi$. It follows by direct inspection together with \autoref{rem:basechange} that $\SSt_\phi' \circ f_! \isom  \SSt_{\theta}$. In order to finish the proof we must show that $\psi_! \circ \SSt_\phi \isom \SSt_\theta$. It is clear that both functors agree except possibly on the scaling. However, given $p:X \to S$ the scaling in $\SSt_\theta(X)(c')$ is freely generated by the scaling in certain mapping simplicial sets of $\Pi(X)$ and consequently the claim holds. 
\end{proof}

\begin{remark}\label{rem:unbasechange}
  Observe  that in the situation above passing to right adjoints we obtain equivalences of functor $\UN_\phi \circ \psi^* \isom f^* \circ \UN_{\phi^{\prime}}$.
\end{remark}

\begin{lemma}\label{lem:stcof}
  For any simplicial set $(S,T_S)$ the functor $\SSt_S$ preserves cofibrations.
\end{lemma}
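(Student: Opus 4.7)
The plan is to exploit that $\SSt_S$ is a left adjoint (so preserves all colimits) together with the fact that the class of projective cofibrations in $\on{Fun}(\mathfrak{C}^{\mathbf{sc}}[S],\on{Set}_\Delta^{\mathbf{ms}})$ is weakly saturated. Since the cofibrations in $\left(\on{Set}^{\mathbf{mb}}_\Delta\right)_{/S}$ are generated, as a weakly saturated class, by the four families (C1)--(C4) of \autoref{def:gencof}, it is enough to verify that $\SSt_S$ sends each of these generators to a projective cofibration.

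To carry this out, I would first invoke the base change isomorphism of \autoref{prop:Stbasechange} to reduce, in each of the four cases, to the situation where $S$ is itself the target of the generator equipped with the appropriate scaling; this turns the global statement into an essentially local combinatorial computation on the pushout category $C_S(X)$ of \autoref{def:Pi_S}.

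The generators (C2), (C3) and (C4) act as the identity on the underlying simplicial set, and only enlarge the collections of marked edges or lean/thin triangles. Unwinding \autoref{def:msenriched} one sees that the induced map on straightenings is the identity except on the mapping simplicial sets $\on{Map}_{C_S(X)}(v,c)$ for finitely many $c$, where the effect is precisely to add the marking or scaling dictated by rules (2), (3), (4) of \autoref{def:msenriched}. Each such local modification can be expressed as a pushout of a generating cofibration of $\on{Set}_\Delta^{\mathbf{ms}}$ tensored with a representable $\on{Map}_{\mathfrak{C}^{\mathbf{sc}}[S]}(c,-)$, hence is a projective cofibration.

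The main obstacle is case (C1), the boundary inclusion $(\partial \Delta^n,\flat,\flat)\hookrightarrow(\Delta^n,\flat,\flat)$. After base change one must analyse the map of $\on{Set}_\Delta^{\mathbf{ms}}$-enriched categories $C_{\Delta^n}(\partial \Delta^n)\to C_{\Delta^n}(\Delta^n)$, and in particular its effect on the mapping simplicial sets $\on{Map}(v,c)$; via the prism picture $\Delta^1\tensor_\Pi \Delta^n$ of \autoref{rem:maptopn} these are controlled by the nerve of the poset $\mathcal{P}_n$ of \autoref{def:pninitialdef} with the scaling of \autoref{def:pnscaled}. The plan is to produce a finite filtration of $\SSt_S(\Delta^n_\flat)$ over $\SSt_S(\partial \Delta^n_\flat)$ whose successive stages attach the nondegenerate cells of $\on{Map}(v,c)$ coming from chains in $\mathcal{P}_n$ that use the top-dimensional simplex, and to check that each attachment is a pushout of a generating projective cofibration $\mathfrak{F}_c(\partial \Delta^k_\flat \hookrightarrow \Delta^k_\flat)$ (with appropriate decorations dictated by \autoref{rem:maptopn} and \autoref{def:pnscaled}). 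The hard part will be bookkeeping the scaling and marking on these prism cells so that the attachments really are generating projective cofibrations rather than just underlying monomorphisms; this is essentially a combinatorial verification in the poset $\mathcal{P}_n$, analogous to but more intricate than the corresponding argument in \cite[\S 3]{LurieGoodwillie}.
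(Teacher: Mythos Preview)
Your overall strategy---reduce to generators via colimit preservation, then base change via \autoref{prop:Stbasechange} to the case $S=B$---is exactly what the paper does. The divergence is in how you handle (C1), where you substantially overcomplicate matters.

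The paper's key observation, which applies \emph{uniformly} to all generators (C1)--(C4) with $n\geq 1$, is that after base change to $S=B$ (so $B=\Delta^n_\flat$ or $B=\Delta^2_\sharp$), the map
\[
  \SSt_B A(i) \xlongrightarrow{} \SSt_B B(i)
\]
is the \emph{identity} for $i<n$ and a cofibration in $\on{Set}_\Delta^{\mathbf{ms}}$ for $i=n$. The reason is that the mapping space $\on{Map}_{C_B}(v,i)$ is built from simplices of $X$ whose image in $B$ terminates at the vertex $i$; for $i<n$ every such simplex already lies in $A=\partial\Delta^n$ (or is unaffected by the added decoration, in cases (C2)--(C4)). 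Once you know this, the projective cofibration property is immediate: in $\OO^n$ the object $n$ has no outgoing non-identity morphisms, so a lift at $n$ against a pointwise trivial fibration is unconstrained by naturality, and at all other objects the lift is forced by the identity.

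Your proposed filtration of $\on{Map}(v,c)$ via cells of $\mathcal{P}_n$ would presumably work, but it is the wrong level of granularity: you are trying to decompose the cofibration at $i=n$ into generating pieces, when all that is needed is to observe it \emph{is} a cofibration and that it is concentrated at a ``terminal'' object of the indexing category. No analysis of the scaling on $\mathcal{P}_n$ or of the prism combinatorics is required for this lemma; that machinery only becomes relevant later when proving that $\SSt_S$ sends anodyne maps to weak equivalences.
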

\begin{proof}
  Since $\SSt_S$ preserves colimits it will be enough to prove the claim on the generating class of cofibrations given in \autoref{def:gencof}. Moreover, given a cofibration $\alpha: A \to B$ we can use \autoref{prop:Stbasechange} to reduce to the case where $S=B$. The case $\emptyset \to \Delta^0$ is obviously true. For the rest of the generators we have that $B=(\Delta^n,\flat)$ for $n\geq 1$ or $B=(\Delta^2,\sharp)$ and that the map  $\SSt_B A (i) \to \SSt_B B(i)$ is the identity except when $i=n$ in which case the map is a cofibration. It is immediate to see that the map in this situation $\SSt_B A \to \SSt_B B$ has the left lifting property against the class of trivial fibrations.
\end{proof}

\begin{remark}
   Let $\iota:\on{Set}_\Delta^+ \to \on{Set}_\Delta^{\mathbf{ms}}$ be the functor defined by $\iota(X,E_X)=(X,E_X,\sharp)$. Given a scaled simplicial set $(S,T_S)$ let $\iota_*\on{St}_S: \left(\on{Set}_\Delta^{+}\right)_{/S} \to \on{Fun}(\mathfrak{C}^{\mathbf{sc}}[S],\on{Set}_\Delta^\mathbf{ms})$ be the straightening functor given in Definition 3.5.4 in \cite{LurieGoodwillie} post-composed with the enriched functor $\iota$. Recall the definition of the functor $R$ (see \autoref{def:inclusionmarked}), we can then define a functor
   \[
      \SSt_S \circ R: \left(\on{Set}_\Delta^+\right)_{/S} \xlongrightarrow{} \on{Fun}(\mathfrak{C}^{\mathbf{sc}}[S],\on{Set}_\Delta^\mathbf{ms}).
    \] 
    It follows that $ \iota_*\on{St}_S$ and $\SSt_S \circ R$ differ only in the scaling thus inducing a natural transformation $\eta_S:\SSt_S \circ R \xRightarrow{}  \iota_*\on{St}_S$.
\end{remark}

\begin{proposition}
 Let $(S,T_S)$ be a scaled simplicial set then the natural transformation $\eta_S:\SSt_S \circ R \xRightarrow{}  \iota_*\on{St}_S$ is an object-wise weak equivalence.
\end{proposition}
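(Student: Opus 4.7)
The plan is to observe that on \emph{underlying marked simplicial sets} the two functors agree, and then to verify that the inclusion of scalings is an object-wise weak equivalence in the marked-scaled model structure. Indeed, comparing the defining formulas, the underlying $\on{Set}_\Delta^+$-enriched categories of $\Pi(X)$ and of Lurie's rigidification $\mathfrak{C}[\Delta^1 \times X]$ coincide, since the extra data carried by $\Pi$ over the plain rigidification concerns only the scalings of the mapping simplicial sets. It follows that for each $c \in \mathfrak{C}^{\mathbf{sc}}[S]$ the marked simplicial set underlying $\SSt_S(R(X))(c)$ agrees with the one underlying $\iota_*\on{St}_S(X)(c)$, and under this identification $\eta_S(X)(c)$ is the inclusion of the scaling pulled back from $\mathcal{P}_{[j,\ell]}$ along the map $\pi_{j,\ell}$ of \autoref{rem:maptopn} into the maximal scaling.

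Next I would reduce to checking the claim on generators. Both $\SSt_S \circ R$ and $\iota_*\on{St}_S$ preserve all colimits, and both preserve cofibrations (by \autoref{lem:stcof} and the analogous result of Lurie). Since object-wise weak equivalences in the projective model structure on $\on{Fun}(\mathfrak{C}^{\mathbf{sc}}[S], \on{Set}_\Delta^{\mathbf{ms}})$ are stable under pushouts along projective cofibrations and under transfinite composition, a skeletal induction on $(X,E_X)$, together with the base-change compatibility \autoref{prop:Stbasechange} applied to the structure map $X \to S$, reduces the claim to the cases $S = X = (\Delta^n, \flat)$ for $n \geq 0$ and $S = X = (\Delta^1, \sharp)$. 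The marked edge case is immediate from item (4) of \autoref{def:msenriched}, which gives $\Pi((\Delta^1)^\sharp) = \mathfrak{C}^{\mathbf{sc}}[\Delta^1 \times \Delta^1]$ with the maximal scaling on relevant mapping simplicial sets.

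For $X = (\Delta^n, \flat)$ and $i \in [n]$, the value $\SSt_{\Delta^n_\flat}(R(\Delta^n_\flat))(i)$ is identified with the marked simplicial set $\mathfrak{C}^{\mathbf{sc}}[\Delta^1 \tensor_\flat \Delta^n]((0,0),(1,i))$, which is the nerve of the poset of chains from $(0,0)$ to $(1,i)$, equipped with the scaling pulled back from $\mathcal{P}_i$ via $\pi_{0,i}$. The unscaled 2-simplices are precisely the triples $S < T < W$ in $\mathcal{P}_i$ for which $U_{S,W} \subsetneq U_{S,T} \cup U_{T,W}$. I would filter these by the \emph{defect} $d(S,T,W) := |U_{S,T} \cup U_{T,W}| - |U_{S,W}|$ and argue by induction on $d$ that adjoining each such triangle to the scaling is $\MS$-anodyne.

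The main obstacle is the combinatorial step: for each triangle of positive defect, one must exhibit a 3-simplex in $\mathcal{P}_i$ whose remaining faces either have strictly smaller defect (and are already thin by the inductive hypothesis) or correspond to the elementary moves O1 and O2 described in \autoref{rem:realorder}, which are already thin by definition of the $\pi_{j,\ell}$-scaling. The anodyne rule \ref{ms:wonky4}, combined with \ref{ms:composeacrossthin} in the presence of the marked collapse edges produced by the inclusion of initial vertices, then forces the remaining 2-simplex to be thin. Granting this combinatorial input, the scaling inclusion $\SSt_S(R(X))(c) \to \iota_*\on{St}_S(X)(c)$ is exhibited as a transfinite composition of pushouts along $\MS$-anodyne maps, hence a trivial cofibration, so $\eta_S$ is an object-wise weak equivalence.
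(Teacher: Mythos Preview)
Your overall strategy---reduce via colimits and base change to the generators $(\Delta^n,\flat)$ and $(\Delta^1)^\sharp$, observe that the two functors agree on underlying marked simplicial sets, and then show the inclusion of scalings is $\MS$-anodyne---is exactly the paper's. The paper also reduces further (via \autoref{rem:reductiontoPi}-type reasoning) to a statement about $\Pi_{\Delta^k_\flat}$ before the collapse, so your identification of the value of $\SSt_{\Delta^n_\flat}(R(\Delta^n_\flat))(i)$ with the chain poset $\mathfrak{C}^{\mathbf{sc}}[\Delta^1\tensor_\flat\Delta^n]((0,0),(1,i))$ is not literally correct (there is a quotient), but this is a minor point and the paper glosses over it too.

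The genuine gap is in your combinatorial reduction. You filter by the defect $d(S,T,W)$ of the \emph{projected} triangle in $\mathcal{P}_i$ and then speak of producing a witnessing $3$-simplex ``in $\mathcal{P}_i$''. But the triangles that must be scaled live in the chain poset, not in $\mathcal{P}_i$; the $3$-simplex you need is a refinement $C_0\subset D\subset C_1\subset C_2$ of chains, and nothing guarantees that a $\mathcal{P}_i$-level $3$-simplex lifts usefully. Infinitely many chain-level triangles project to the same $(S,T,W)$, so filtering by defect alone does not organise the chain-level induction. This is why the paper instead uses a \emph{chain-level} invariant: it sets $|\sigma|=|d_1(\sigma)|$ (the maximal length of a non-degenerate simplex from $C_0$ to $C_2$), runs a double induction on $|\sigma|$ and on $|d_2(\sigma)|$, and additionally inducts on $k$ so that all triangles factoring through the face maps $\alpha_i:A^{k-1}\to A^k$ are already scaled. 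Only after these reductions does the argument bottom out in a case analysis where $C_0$ is the minimal chain $(0,0)<(1,k)$ and $C_1$ has length three, at which point one can produce explicit $3$-simplices and invoke \ref{ms:wonky4}. Your reference to ``elementary moves O1 and O2\ldots which are already thin'' is also confused: O1 and O2 are edge types, not triangles, and do not by themselves make any $2$-simplex thin.
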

\begin{proof}
  It is clear that both functors preserve colimits. Moreover, a totally analogous proof to that of \autoref{lem:stcof} shows that $\iota_*\on{St}_S$ preserves cofibrations. It is also not hard to verify that $\iota_*\on{St}_S$ satisfies similar base change properties as those in \autoref{prop:Stbasechange}. We conclude that it will be enough to show that $\eta_k=\eta_{\Delta^k_\flat}(\Delta^k)$ is an equivalence for $k\geq 0$ and similarly for $\eta_1^{\sharp}=\eta_{(\Delta^1)^\sharp}((\Delta^1)^\sharp)$. It is easy to check that $\eta_0,\eta_1,\eta_2$ and $\eta_1^\sharp$ are all isomorphisms.

  For $k\geq 3$ let us define $\mathbb{L}^k=\Pi_{\Delta^k_\flat}((\Delta^k,\flat,\flat \subset \sharp))$ and $\mathbb{L}^k_\sharp$ by scaling every triangle in the mapping simplicial categories of $\mathbb{L}^k$. It is not hard to see that our problem can be reduced to showing that the map
  \[
    \varphi_k: \mathbb{L}^k \to \mathbb{L}^k_\sharp
  \]
  is an equivalence of $\on{Set}_\Delta^{\mathbf{ms}}$-enriched categories. Using induction on $k$ we can assume that the mapping simplicial sets $\mathbb{L}^k((0,i),(1,j))$ are maximally scaled except if $i=0$ and $j=k$. Let $\mathbb{L}^{k}((0,0),(1,k))=A^k$ and similarly $\mathbb{L}^k_\sharp((0,0),(1,k))=A^k_\sharp$. We observe that for every face $d_i: \Delta^{k-1} \to \Delta^{k}$ we have a commutative diagram
  \[
     \begin{tikzcd}
       A^{k-1} \arrow[r,"\simeq"] \arrow[d,"\alpha_i"] & A^{k-1}_\sharp \arrow[d] \\
       A^k \arrow[r] & A^k_\sharp
     \end{tikzcd}
   \] 
   where the top horizontal morphism is a weak equivalence. We can therefore we can assume inductively that the triangles of $A^k$ which are in the image of the maps $\alpha_i$ for $i=0,\dots,k$ are all scaled.

   Let $\sigma:C_0 \subset C_1 \subset C_2$ be a triangle in $A^k$ and let us fix the notation $C_j=\{(\epsilon^j_i,a_i^j\}_{i=0}^{\ell}$. Let $e=C_0 \subset C_1$ be an edge in $A^k$. We define $S(e)$ as the set of non-degenerate simplices with initial vertex $C_0$ and final vertex $C_1$. We finally set $|e|=\max\{\dim(\phi) | \enspace \phi \in S(e)\}$. Note that this a well defined number. Given a 2-simplex $\sigma$ as above we define $|\sigma|=|d_1(\sigma)|$. We will show that we can scale $\sigma$ using a pushout along a $\mathbf{MS}$-anodyne morphism using induction on $|\sigma|=\ell$. The case $\ell=2$ follows easily after direct inspection. So we will assume from this point on that $\ell>1$.

  Let us suppose that the claim holds for those 2-simplices $\tau$ such that $|d_2(\tau)|=1$ and for those 2-simplices $\sigma$ such that $|\sigma|< \ell-1$. Then given $\sigma: C_0 \subset C_1 \subset C_2$ such that $|d_2(\sigma)|>1$ and such that $|\sigma|=\ell$ we can construct a 3-simplex $\rho: C_0 \subset D \subset C_1 \subset C_2$ such that the following holds:
   \begin{enumerate}
     \item We have $|d_2d_3(\rho)|=1$ in particular $d_i(\rho)$ is thin scaled for $i=2,3$.
     \item We have  $|d_0(\rho)|=\ell-1$ and is therefore scaled.
   \end{enumerate}
   It follows that we can fully scale $\rho$ using a pushout along a morphism of type \ref{ms:wonky4} in \autoref{def:msanodyne}. Therefore, we have reduced our problem to proving the claim above. {}

   Let $\sigma: C_0 \subset C_1 \subset C_2$ such that $|d_2(\sigma)|=1$. We observe that unless $C_0=(0,0)<(1,k)$ then $\sigma$ is scaled. Otherwise we could express $\sigma$ as a certain composition in the category $\mathbb{L}^k$ and it would follow from the induction hypothesis that $\sigma$ is thin. 

   We can now see that $C_1=(0,0)<(\epsilon,a) <(1,k)$ which leads us to consider cases depending on the parameter $\epsilon \in \{ 0,1\}$.

   \begin{itemize}
     \item[$\epsilon=1$)] Then we can assume without loss of generality that $C_2$ contains an element of the form $(0,x)$ in $C_2$ with $x \neq 0$. This is true since otherwise the maps $\pi_{0,k}$ in \autoref{rem:maptopn} would show that $\sigma$ is already scaled. Moreover, we can further assume that there is only element of the form $(0,x)$ in $C_2$. Indeed, if we had some $(0,y)<(0,x)$  then we can produce a 3-simplex $\rho:C_0 \to C_1 \to \widetilde{C}_2 \to C_2$ where $\widetilde{C}_2=C_2 \setminus \{(0,y)\}$. Since $d_0(\rho)$ and $d_1(\rho)$ must be scaled by definition it follows that we can scale $d_2(\rho)$ if and only if we can scale $d_3(\rho)$ and thus the claims follows. Additionally, we note that if $x\neq 1$ then $\sigma$ factors through one of the morphisms $\alpha_j: A^{k-1} \to A^k$ above. We finally see that in this case $\pi_{0,k}(\sigma)$ is given by a simplex of the form $0n \to 0an \to S$ with $\min(S)=1$ and it is consequently scaled in $\mathcal{P}_k$.  
     \item[$\epsilon=0$)] Observe that if $C_2$ contains an element of the form $(0,x)$ with $x<a$ we can define $\widetilde{C}_2$ as above an produce a $3$-simplex $\rho:C_0 \to C_1 \to \widetilde{C}_2 \to C_2$ which shows that we can scale $\sigma=d_2(\rho)$ if and only if we can scale $d_3(\rho)$. In a totally analogous way as in the case $\epsilon=1$ we can assume that $a=1$. If $C_2$ does not contain any element of the form $(0,z)$ with $z>1$ then $\sigma$ must be already scaled. Moreover, we can assume without loss of generality that $C_2$ only contains one element of the form $(0,z)$ using a similar argument as before by constructing a certain $\widetilde{C}_2$. We can assume that in this case $z=2$ since otherwise, the simplex factors through a certain morphism $\alpha_j:A^{k-1}\to A^k$. If $C_2$ does not contain an element of the form $(1,s)$ with $s\neq k$ it follows by direct inspection that $\sigma$ is already scaled. If this is not the case we consider $D$ which is obtained from $C_2$ by discarding every element of the form $(1,s)$ with $s \neq k$. Then we get a 3-simplex $\Xi: C_0 \to C_1 \to D \to C_2$. One easily checks that every face of $\Xi$ is scaled except possibly $d_2(\Xi)=\sigma$ and thus our result follows.
   \end{itemize}
\end{proof}

\begin{corollary}\label{cor:maxlean}
  Let $(S,T_S)$ be a scaled simplicial set and consider a weak equivalence $u:(A,E_A) \to (B,E_B)$ in $\left( \on{Set}_\Delta^+\right)_{/S}$. Then the functor $\SSt_S$ sends the morphism $(A,E_A,T_A \subset \sharp) \to (B,E_B,T_B \subset \sharp)$ to a weak equivalence in $\on{Fun}(\mathfrak{C}^{\mathbf{sc}}[S],\on{Set}_\Delta^\mathbf{ms})$.
\end{corollary}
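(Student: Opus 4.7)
The plan is to deduce this corollary directly from the preceding proposition combined with the known homotopical behaviour of Lurie's straightening functor $\on{St}_S$ from \cite{LurieGoodwillie}. The idea is that although $\SSt_S \circ R$ is a new construction, the natural transformation $\eta_S: \SSt_S \circ R \Rightarrow \iota_*\on{St}_S$ has just been shown to be an object-wise weak equivalence, so $\SSt_S \circ R$ inherits its homotopical behaviour from the classical straightening.

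Concretely, the first step is to apply naturality of $\eta_S$ to the given morphism $u$, producing a commutative square with horizontal arrows $\eta_S(A,E_A)$ and $\eta_S(B,E_B)$, both object-wise weak equivalences by the previous proposition. By two-out-of-three applied object-wise, it then suffices to prove that $\iota_*\on{St}_S(u)$ is a weak equivalence in $\on{Fun}(\mathfrak{C}^{\mathbf{sc}}[S],\on{Set}_\Delta^{\mathbf{ms}})$ equipped with the projective model structure.

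This reduction splits into two sub-claims. First, $\on{St}_S(u)$ should be an object-wise weak equivalence of $\on{Set}_\Delta^+$-valued functors; this is immediate since $\on{St}_S$ is known to be left Quillen in Lurie's model structures (\cite[Thm 3.8.1]{LurieGoodwillie}) and every object of $\left(\on{Set}_\Delta^+\right)_{/S}$ is cofibrant. Second, post-composition with $\iota$ should preserve object-wise weak equivalences, which I would establish by observing that $\iota: \on{Set}_\Delta^+ \to \on{Set}_\Delta^{\mathbf{ms}}$ preserves monomorphisms and sends the generating trivial cofibrations of the marked model structure to $\MS$-anodyne morphisms (a short inspection comparing Lurie's generators to \autoref{def:msanodyne}). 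Hence $\iota$ is left Quillen, and since every object of $\on{Set}_\Delta^+$ is cofibrant, it preserves all weak equivalences.

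The whole argument is essentially an instance of two-out-of-three combined with the previous proposition; no genuine obstacle should arise, and the only mildly non-trivial bookkeeping is the verification that $\iota$ is left Quillen — but even this reduces to a case check on finitely many generators.
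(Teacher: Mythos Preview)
Your proposal is correct and matches the paper's intended argument: the corollary is stated immediately after the proposition establishing that $\eta_S:\SSt_S\circ R\Rightarrow\iota_*\on{St}_S$ is an object-wise weak equivalence, and is meant to follow exactly by the naturality square plus 2-out-of-3 that you describe. Your identification of the only residual checkpoint—that $\iota:\on{Set}_\Delta^+\to\on{Set}_\Delta^{\mathbf{ms}}$ preserves weak equivalences—is accurate, and the verification is indeed a routine comparison of generators (one may alternatively observe that $\iota$ has a right adjoint given by the maximal fully-thin subcomplex, and that this right adjoint preserves fibrant objects and trivial fibrations).
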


\begin{remark}\label{rem:reductiontoPi}
  Let $i:A \to B$ be a cofibration of $\bS$ simplicial sets which we view as a morphism in $\left(\on{Set}_\Delta^{\mathbf{mb}}\right)_{/B}$ and recall the definition of $\Pi_B$ in \autoref{def:Pi_S}. We define $\Pi_B(A)^{\uparrow}$ as the pushout
  \[
    \begin{tikzcd}
      j\circ \mathfrak{C}^{\mathbf{sc}}[A_{\sharp}] \times \Delta^{\{0\}} \arrow[r] \arrow[d] &  j\circ \mathfrak{C}^{\mathbf{sc}}[B_{\sharp}] \times \Delta^{\{0\}} \arrow[d] \\
      \Pi_B(A) \arrow[r] & \Pi_B(A)^{\uparrow}
    \end{tikzcd}
  \]
  It follows from our definition that in order to check that $\SSt_B(A) \xRightarrow{} \SSt_B(B)$ is a pointwise weak equivalence it suffices to check that the induced map $\Pi_B(A)^{\uparrow} \xrightarrow{} \Pi_B(B)$ is a weak equivalence of $\on{Set}_\Delta^{\mathbf{ms}}$-categories.
\end{remark}

\begin{remark}\label{rem:order}
 Let $\theta:\Delta^{n+1} \to \Delta^1 \times \Delta^n$ be a non-degenerate simplex and let $i \in \Delta^{n+1}$ be the biggest element such that $\theta(i)=(i,0)$. We will use the notation $\theta=\sigma_i$ and give an order in the set of non-degenerate simplices in $\Delta^1 \times \Delta^n$ of maximal dimension by declaring $\sigma_i < \sigma_j$ if $i<j$.
\end{remark}

\begin{definition}\label{def:Kn}
  Let $\mathbb{K}_n=\Pi_{\Delta^n_\flat}((\Delta^n,\flat,\flat))$ and let $K_n=\mathbb{K}_n((0,0),(1,n))$. For every $\sigma_i$ as in \autoref{rem:order} be define $K^i_n$ as the subposet (with the inherited decorations) of $K_n$ consisting in those chains whose elements are in the image of $\sigma_i$. Observe that $K^i_n$ is isomorphic as a simplicial set to $C_n=(\Delta^{1})^n$. 

  Given $0<s<n+1$ we define $d_s(K^j_n)$ as the simplicial set of $K^j_n$ consisting in those chains whose elements are in the image of $d_s(\sigma_j)$.
\end{definition}

\begin{lemma}\label{lem:keyscaling}
  Let $0<j<n$ and let $d_{j+1}(K^{j}_n) \subset K^j_n$ be the simplicial subset (with the induced decorations) consisting in those chains whose elements factor through $d_{j+1}(\sigma_j)$. We view  $\Delta^1 \times d_{j+1}(K^{j}_n) $ as marked-scaled simplicial set as follows:
  \begin{itemize}
     \item The marking consists in those marked edges in the Cartesian product together with the edges contained in $\Delta^1 \times \{e\}$ where $e: C_0 \to C_1$ is a marked edge in $K^j_n$ such that $C_0$ contains the element $(0,j)$.
     \item The scaling is given given by the usual scaling on the Cartesian product.
   \end{itemize}  
   Then we have an isomorphism of marked-scaled simplicial sets $\Delta^1 \times d_{j+1}(K^{j}_n)  \isom K^j_n$
 \end{lemma}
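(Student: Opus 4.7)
My plan is to exhibit the isomorphism through a \emph{pair parametrization} of chains in $K^j_n$. Each vertex of $K^j_n$ is a chain from $(0,0)$ to $(1,n)$ in $\Delta^1 \times \Delta^n$ whose elements lie in the image of $\sigma_j$, and such a chain is determined by a pair $(A,B)$ with $0 \in A \subseteq \{0,1,\ldots,j\}$ and $n \in B \subseteq \{j,j+1,\ldots,n\}$ via $(A,B) \mapsto \{(0,a) : a \in A\} \cup \{(1,b) : b \in B\}$. The refinement order is coordinatewise (reverse) inclusion, identifying $K^j_n$ with the cube $(\Delta^1)^j \times (\Delta^1)^{n-j}$, whose factors are indexed by the ``optional'' elements $\{1,\ldots,j\}$ of $A$ and $\{j,j+1,\ldots,n-1\}$ of $B$. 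The face $d_{j+1}(K^j_n)$ corresponds exactly to the condition $j \notin B$, so it is identified with $(\Delta^1)^j \times (\Delta^1)^{n-j-1}$; singling out an additional $\Delta^1$-factor that records whether $j \in B$ then yields a canonical isomorphism of underlying simplicial sets $\Delta^1 \times d_{j+1}(K^j_n) \isom K^j_n$.

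With this bijection in place, it remains to verify compatibility of markings and scalings. For the marking, I would first classify the thin $2$-simplices of $\Delta^1 \tensor_\Pi \Delta^n$ with endpoints $(0,0)$ and $(1,n)$: using the Gray tensor scaling together with the additional maximal scaling on $\Delta^{\{0\}} \times \Delta^n$, these are precisely the triangles whose middle vertex is $(1,0)$ or $(0,n)$. Propagating via composition in $\mathfrak{C}^{\mathbf{sc}}$, an edge of $K^j_n$ adding a vertex $v$ to a chain $C_0$ is marked iff the triangle formed by $v$ and its two neighbours in $C_0 \cup \{v\}$ is thin. Specialising to $v = (1,j)$, the neighbours are $(0,\max A)$ and $(1,\min B)$ (with $\min B > j$), and thinness amounts to $\max A = j$, i.e.\ $j \in A$, equivalently $C_0 \ni (0,j)$. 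This is exactly the extra marking prescribed in the lemma; all other marked edges of $K^j_n$ do not affect the distinguished ``$j \in B$'' coordinate and are seen to coincide with the product marking inherited from $d_{j+1}(K^j_n)$.

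Finally, the scaling on $K^j_n$ is controlled by the map $\pi_{0,n}: K^j_n \to \mathcal{P}_n$. A $2$-simplex of $\Delta^1 \times d_{j+1}(K^j_n)$ is thin under the usual product scaling exactly when each of its projections is thin in its factor, and I would verify by direct computation that this matches the condition that its image in $K^j_n$ be sent by $\pi_{0,n}$ to a thin simplex of $\mathcal{P}_n$. The step I expect to be the main obstacle is this last bookkeeping for triangles having an edge along the distinguished ``$j \in B$''-direction: one must track carefully how inserting the vertex $(1,j)$ changes the minimal witnessing sets $U_{S,T}$ from the definition of the scaling on $\mathcal{P}_n$. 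Fortunately, this reduces to a finite case analysis on the position of the toggling edge within the triangle, completing the verification.
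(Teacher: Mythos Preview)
Your plan is essentially the paper's own proof. Both set up the same cube-coordinate isomorphism (the paper takes the underlying bijection for granted and only says ``we only need to check that the decorations agree''), and both then verify markings and scalings separately. For the marking the paper is terser than you: it simply notes that every marked edge of $K^j_n$ factors through some proper face of the $n$-cube and concludes ``after direct inspection''; your thin-triangle analysis of the edge adding $(1,j)$ is a correct way to flesh this out, though to be complete you should also check the horizontal edges at level~$1$ (i.e.\ that $C_0\cup\{(1,j)\}\to C_1\cup\{(1,j)\}$ is marked iff $C_0\to C_1$ is) and the non-elementary diagonals, not only the elementary step.

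For the scaling, the paper carries out precisely the ``finite case analysis on $U_{S,T}$'' you anticipate, and it is short: set $S_i=\pi_{0,n}(C_i)$ and $T_i=\pi_{0,n}(C_i\setminus\{(1,j)\})$; one checks that $\min S_i=\min T_i\le j$ and that $T_i$ is either $S_i$ (when $(0,j)\in C_i$) or $S_i\setminus\{j\}$. The key point is then that $j$ can never occur in any $U_{S_a,S_b}$ except as $\min S_a$ or $\min S_b$, because by definition of $U_{S_a,S_b}$ any other occurrence would force $\min S_a<j<\min S_b$, contradicting $\min S_b\le j$. Hence $U_{S_a,S_b}=U_{T_a,T_b}$ for all $a<b$, and the thinness condition $U_{S_0,S_2}=U_{S_0,S_1}\cup U_{S_1,S_2}$ transfers verbatim to the $T_i$. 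That single observation is the whole content of the step you flagged as the main obstacle.
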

\begin{proof}
  We only need to check that the decorations agree on each side concide. To show the claim regarding the marking we note that the marked edges of $K_n^j$ always factor through a face of the cube $C_n$ (see \autoref{def:Kn}) and thus the conclusion follows after direct inspection.

 To see that the scaling of $K^j_n$ is simply given by the product scaling we consider a triangle $\sigma:C_0 \to C_1 \to C_2$ and we define $D_i=C_i \setminus \{(1,j)\}$ which yields another triangle $\varphi: D_0 \to D_1 \to D_2$. We claim that $\sigma$ is scaled in $K_n^j$ if and only if $\varphi$ is. Observe that since $\varphi$ lies always in $d_{j+1}(K^{j}_n)$ this will be enough to show the claim regarding the scaling.

  We set the notation $\pi_{0,n}(C_i)=S_i$ and $\pi_{0,n}(D_i)=T_i$. Then we have the following:
 \begin{enumerate}
  
   \item We have that that $S_i=T_i$ if $(0,j) \in C_i$ and $T_i=S_i \setminus \{j\}$ otherwise.
   \item We have $\min(S_i)=\min(T_i)$.
    \item Given $C \in d_{j+1}(K^j_n)$ and set  $V=\pi_{0,n}(C)$ then it follows that $\min(V)  \leq j$.
 \end{enumerate}
 Our final claim is that for $i<j$ we have that the subsets (see \autoref{rem:ust}) $U_{S_i,S_j}=U_{T_i,T_j}$. Observe that $j \in U_{S_i,S_j}$ if and only if $j \in \{\min(S_i), \min(S_j)\}$ or if $j \in S_i$ and $\min(S_i)<j < \min(S_j)$. However by 3) above this cannot be the case. The claim now follows.
\end{proof}

\begin{lemma}\label{lemma:filtration}
  Let $(A,E_A,T_A) \subset (B,E_B,T_B)$ be an inclusion of marked-scaled simplicial sets such that $E_A=E_B$. Suppose that there exists some vertex $v \in A$ with the following property:
  \begin{itemize}
    \item For every simplex $\sigma:\Delta^n \to B$ which does not factor through $A$ then $v$ is the final vertex of $\sigma$. 
  \end{itemize}
  Let $M_A$ (resp. $M_B$) be the collection of marked edges in the Cartesian product (of marked-scaled simplicial sets) $\Delta^1 \times A$ (resp. $\Delta^1 \times B$) together with the edge $\Delta^{1}\times \{v \}$. Then the induced morphism
  \[
    j:\Delta^1 \times A \coprod_{\Delta^{\{1\}} \times A} B \xlongrightarrow{} \Delta^1 \times B
  \]
  where both simplicial sets are equipped with the product scaling and the marking given by $M_A$ and $M_B$ respectively, is a trivial cofibration in $\on{Set}_\Delta^{\mathbf{ms}}$.
\end{lemma}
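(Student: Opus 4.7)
Since the underlying map of simplicial sets is evidently a monomorphism, $j$ is a cofibration in $\mssSet$, so it remains to exhibit $j$ as a (transfinite) composite of pushouts along generating $\MS$-anodyne morphisms. The plan is to build $\Delta^1 \times B$ from the source $X_{-1} := \Delta^1 \times A \coprod_{\Delta^{\{1\}} \times A} B$ by adjoining the non-degenerate simplices of $\Delta^1 \times B$ that do not already belong to $X_{-1}$, one at a time.

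First I would enumerate what must be added. A non-degenerate simplex $(\alpha,\beta)\colon \Delta^m \to \Delta^1 \times B$ fails to lie in $X_{-1}$ precisely when the first-coordinate projection $\alpha$ is not constant at $1$ and the (non-degenerate) second-coordinate projection $\beta$ does not factor through $A$. The hypothesis on $v$ then forces $\beta$ to end at $v$. For each non-degenerate $n$-simplex $\beta\colon \Delta^n \to B$ with $\beta \notin A$ and $\beta(n)=v$, the maximal-dimensional lifts of $\beta$ along the projection $\Delta^1 \times B \to B$ are the $n+1$ explicit $(n+1)$-simplices $\beta^{(0)},\ldots,\beta^{(n)}$, where $\beta^{(k)}$ has vertices $(0,\beta(0)),\ldots,(0,\beta(k)),(1,\beta(k)),\ldots,(1,\beta(n))$; all other new non-degenerate simplices arise as faces of these.

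The plan is then to adjoin these simplices in lexicographic order: primary key $\dim\beta$ increasing, secondary key $k$ decreasing (first $\beta^{(n)}$, then $\beta^{(n-1)},\ldots,\beta^{(0)}$). At the step that adjoins $\beta^{(k)}$ I would check that its boundary already lies in the previous stage except for precisely one face, so that the step is a pushout of a horn inclusion of one of the forms in \autoref{def:msanodyne}. The critical input is the identity $\beta(n) = v$: the terminal edge of $\beta^{(n)}$ is then $(0,v) \to (1,v)$, which is the marked edge $\Delta^1 \times \{v\}$, so the step adjoining $\beta^{(n)}$ is realised as a pushout of an outer horn inclusion of type \ref{ms:2coCartesianmorphs}. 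The steps for $0 < k < n$ are pushouts of inner horn inclusions of type \ref{MS:innerhorn}, where the triangle $\Delta^{\{k-1,k,k+1\}}$ of $\beta^{(k)}$ is scaled because its first-coordinate projection is degenerate; the step for $k=0$ is again an inner horn inclusion, now with the $d_0$ face already in $\Delta^{\{1\}} \times B \subseteq X_{-1}$.

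The main obstacle I foresee is the combinatorial bookkeeping of the \emph{diagonal} simplices, namely simplices $(\alpha,\beta)$ whose first-coordinate projection $\alpha$ is non-degenerate but of lower dimension than the simplex itself: each such diagonal simplex appears as a face $d_k\beta^{(k)}$ of exactly one lift $\beta^{(k)}$, and one must verify that in the prescribed order this face is adjoined (together with $\beta^{(k)}$) at precisely the correct stage, neither too early nor too late, and moreover that the scaling inherited on that face from $\Delta^1 \times B$ matches the scaling imposed by the relevant $\MS$-anodyne generator. Once this bookkeeping is carried out, each intermediate inclusion is manifestly a pushout of a generator in \autoref{def:msanodyne} by direct inspection, and the composite is an $\MS$-anodyne morphism, hence in particular a trivial cofibration in $\mssSet$.
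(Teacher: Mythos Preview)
Your filtration-by-prism-simplices strategy is correct and matches the paper's, but the secondary ordering is reversed and the argument breaks down as written. If you attach $\beta^{(n)}$ first, \emph{two} of its faces are absent: the diagonal $d_n\beta^{(n)}$ (which equals $d_n\beta^{(n-1)}$ and hence is shared with the not-yet-added $\beta^{(n-1)}$) and $d_{n+1}\beta^{(n)}=\{0\}\times\beta$, so this step cannot be realised as a single horn filling. Symmetrically, when you finally reach $\beta^{(0)}$ at the end of the decreasing order, all of its faces are already present (in particular $d_1\beta^{(0)}=d_1\beta^{(1)}$ was added at the previous step) but the $(n{+}1)$-simplex itself is not; that step would then be a pushout of $\partial\Delta^{n+1}\to\Delta^{n+1}$, which is not anodyne. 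The fix is to run $k$ \emph{increasing}: attach $\beta^{(k)}$ for $0\leq k<n$ via the inner horn $\Lambda^{n+1}_{k+1}$ (the relevant scaled triangle is $\Delta^{\{k,k+1,k+2\}}$, not $\Delta^{\{k-1,k,k+1\}}$), and attach $\beta^{(n)}$ last via the outer horn $\Lambda^{n+1}_{n+1}$ using the marked final edge $(0,v)\to(1,v)$. Note that this last horn is the \emph{dual} of \ref{ms:2coCartesianmorphs} (which is a $\Lambda^n_0$ with marked initial edge), not that generator itself; the paper records it separately as a morphism of type~$(\ast)$ and one must observe that it lies in the weakly saturated closure of $\MS$.

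There is a second, smaller gap: your argument only adjoins simplices, so it does not cover the situation where the underlying simplicial sets of $A$ and $B$ coincide while $T_A\subsetneq T_B$ (for instance $(\Delta^2,\flat,\flat)\hookrightarrow(\Delta^2,\flat,\sharp)$, where the hypothesis on $v$ is vacuous). In that case $j$ is bijective on underlying simplices and the entire content is to propagate the extra scalings through $\Delta^1\times B$; the paper treats this as a separate generating case and handles it with pushouts along morphisms of type \ref{ms:wonky4}.
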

\begin{proof}
First let us assume that the claim holds for $E_A=E_B=\flat$. Then it follows that for a general marking the map $j$ is obtained as a pushout of the map $j_{\flat}$ where the later map is the inclussion associated to the minimal marking. This shows that the general result will follow.

  Working simplex by simplex we can reduce the problem to the cases
  \begin{itemize}
    \item[i)] $(\partial \Delta^n,\flat,\flat) \to (\Delta^n,\flat,\flat)$ for $n\geq 1$.
    \item[ii)] $(\Delta^2,\flat,\flat) \to (\Delta^2,\flat,\sharp)$. 
  \end{itemize}
  where $v$ is given by the final vertex.

  To check $i)$ we use a totally analogous argument to that of Lemma 3.5.12 in \cite{LurieGoodwillie} which tells us that in this case that the map $j$ above is in the weakly saturated class of morphisms of type \ref{MS:innerhorn} in \autoref{def:msanodyne} and of type

     \begin{itemize}
       \item[*)]  $(\Lambda^n_n,\Delta^{\{n-1,n\}},\Delta^{\{0,n-1,n\}}) \xlongrightarrow{} (\Delta^n,\Delta^{\{n-1,n\}},\Delta^{\{0,n-1,n\}})$ 
     \end{itemize}
   
  and thus the claim holds. To prove $ii)$ we note that we can scale the remaining simplices using pushouts along morphisms of type \ref{ms:wonky4} in \autoref{def:msanodyne} together with the morphism
  \begin{itemize}
    \item[$\diamond$)] $(\Delta^3,\Delta^{\{n-1,n\}},U_3) \xlongrightarrow{} (\Delta^3,\Delta^{\{n-1,n\}},\sharp)$ 
  \end{itemize}
  where $U_3$ is the collection of all triangles except $\Delta^{\{0,1,2\}}$.
\end{proof}

\begin{lemma}\label{lem:stinnerhorn}
  Let $i:A \to B$ be a morphism of type \ref{mb:innerhorn} in \autoref{def:mbsanodyne}. Then the induced natural transformation $\SSt_B(A) \xRightarrow{} \SSt_B(B)$ is a pointwise weak equivalence.
\end{lemma}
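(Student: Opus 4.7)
The plan is first to apply \autoref{prop:Stbasechange} to reduce to the universal case where $B=(\Delta^n,\flat,\flat\subset\{\Delta^{\{i-1,i,i+1\}}\})$ and $A=(\Lambda^n_i,\flat,\flat\subset\{\Delta^{\{i-1,i,i+1\}}\})$. Using \autoref{rem:reductiontoPi}, I would then reduce the problem to showing that the induced morphism
\[
  \Pi_B(A)^{\uparrow}\longrightarrow\Pi_B(B)
\]
is a weak equivalence of $\on{Set}_\Delta^{\mathbf{ms}}$-enriched categories. Since the two sides share the same set of objects, this amounts to checking that the induced map of mapping marked-scaled simplicial sets is a bicategorical equivalence at each pair of objects.

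Invoking the proposition preceding \autoref{cor:maxlean} on the comparison $\eta_S:\SSt_S\circ R\Rightarrow\iota_*\on{St}_S$, the image of the inclusion $A\to B$ under Lurie's classical straightening $\iota_*\on{St}_B$ is a trivial cofibration (the inner horn inclusion is marked anodyne and $\on{St}_B$ sends marked anodyne morphisms to weak equivalences in the projective model structure). This identifies our arrow with one whose underlying marked map is a weak equivalence, so the only thing left to verify is the compatibility of the scalings on both sides.

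The scalings agree on every mapping simplicial set except on $\on{Map}(v,(1,n))$, where chains that pass through the unique non-degenerate top-dimensional simplex of $B$ live. I would analyse this sole non-trivial case by unwinding \autoref{def:msenriched} via the projection $\pi_{0,n}$ from \autoref{rem:maptopn} and argue that the newly thin triangles can be added one at a time through pushouts along $\MS$-anodyne maps of types \ref{MS:innerhorn} and \ref{ms:wonky4}, following precisely the inductive strategy (filtering by the integer $|\sigma|$) used to establish that $\varphi_k:\mathbb{L}^k\to\mathbb{L}^k_\sharp$ is a weak equivalence. The main obstacle I expect is the combinatorial bookkeeping in this last step: for each non-degenerate chain factoring through the new $n$-simplex of $B$, one has to construct an auxiliary $3$-simplex $\rho$ analogous to the one used there and verify that every face of $\rho$ other than the targeted one is either inductively thin or becomes thin by a direct application of \ref{ms:wonky4}. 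Adapting this induction to the inner horn $\Lambda^n_i\hookrightarrow\Delta^n$, where the only scaled triangle of $B$ is $\Delta^{\{i-1,i,i+1\}}$ rather than a lean-scaled top face, is the subtle point that will require the most care.
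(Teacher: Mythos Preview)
Your opening reductions (via \autoref{prop:Stbasechange} and \autoref{rem:reductiontoPi}) are correct and match the paper. The gap appears at the next step, where you invoke the comparison $\eta_S:\SSt_S\circ R\Rightarrow\iota_*\on{St}_S$. That natural transformation is only defined on objects of the form $R(X,E_X)=(X,E_X,T_X\subset\sharp)$, i.e.\ on \emph{maximally} lean-scaled objects. The morphism of type \ref{mb:innerhorn} has lean scaling exactly $\{\Delta^{\{i-1,i,i+1\}}\}$, not $\sharp$, so it is \emph{not} in the image of $R$ and the comparison with Lurie's straightening does not apply to it. You therefore cannot separate the problem into ``underlying marked map is a weak equivalence'' plus ``scalings are compatible'': knowing that $\iota_*\on{St}_B(\Lambda^n_i\to\Delta^n)$ is a trivial cofibration (with everything maximally scaled) tells you nothing about $\SSt_B$ on the actual \ref{mb:innerhorn} morphism, since neither $\SSt_B(A)$ nor $\SSt_B(B)$ is weakly equivalent to its maximally-scaled counterpart when the input carries only a single lean triangle.

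The paper proceeds instead by a direct combinatorial analysis of the one non-trivial mapping space $K_n=\Pi_B(B)((0,0),(1,n))$. It filters $K_n$ by the images $K_n^j$ of the top-dimensional simplices $\sigma_j$ of $\Delta^1\times\Delta^n$ (see \autoref{rem:order} and \autoref{def:Kn}), and shows each step $A_{j-1}\to A_j$ is a trivial cofibration by exhibiting $K_n^j$ as $\Delta^1\times d_{j+1}(K_n^j)$ with controlled decorations (\autoref{lem:keyscaling}) and applying the pushout-product style \autoref{lemma:filtration}. The last step $Q_n^n\to K_n^n$ is handled by a retraction onto the original inner horn $(\Lambda^n_i,\flat,\Delta^{\{i-1,i,i+1\}})\to(\Delta^n,\flat,\Delta^{\{i-1,i,i+1\}})$. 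This filtration by cubes, not the $|\sigma|$-induction from the $\mathbb{L}^k$ argument, is the missing ingredient in your approach.
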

\begin{proof}
  Since $\SSt_B$ preserves colimits and cofibrations it will be enough to prove the claim in the specific case where $A=(\Lambda^n_i,\flat,\flat \subset \Delta^{\{i-1,i,i+1\}})$ and $B=(\Delta^n,\flat,\flat \subset \Delta^{\{i-1,i,i+1\}})$.  We will show according to \autoref{rem:reductiontoPi} that the map $\Pi_B(A)^{\uparrow} \to \Pi_B(B)$ is an equivalence of $\on{Set}_\Delta^{\mathbf{ms}}$-categories.

  We observe that the induced morphism of marked-scaled simplicial sets $\Pi_B(A)^{\uparrow}(x,y) \to \Pi_B(B)(x,y)$ is an isomorphism except when $x=(0,0)$ and $y=(1,n)$. Recall the definition of $K_n$ in \autoref{def:Kn} an equipp this marked-scaled simplicial set with the decorations induced from $\Pi_B((0,0),(1,n))$. We similarly define $\Lambda^n_i K=\Pi_B(A)^{\uparrow}((0,0),(1,n))$. We define a filtration 
  \[
    \Lambda^n_i K=A_{-1}\xlongrightarrow{} A_0 \xlongrightarrow{} \cdots \xlongrightarrow{}A_{n-1} \xlongrightarrow{} A_n=K_n
  \]
  where $A_s$ the subsimplicial set of $K_n$ containing every simplex which factors through $K_n^{j}$ for $j\leq s$ (see \autoref{def:Kn} for a definition of $K_n^j$). We will show that each of the step in this filtration is a weak equivalence. The case $n=2$ follows easily by a direct computation. For the rest of the proof we will assume that $n\geq 3$.

   For $0\leq j\leq n$ we consider the pullback-pushout diagram
  \[
    \begin{tikzcd}
      Q_n^j \arrow[r] \arrow[d] & K^j_n \arrow[d] \\
      A_{j-1} \arrow[r] & A_j
    \end{tikzcd}
  \]
  We will show that the top horizontal morphism is a trivial cofibration. First we consider the case $0\leq j <n$. We describe $Q_n^j$ as a simplicial subset of $K^j_n$ which contains every face of the cube $C_n$, except those that factor through $d_{\alpha}(K^j_n)$ for $\alpha \notin \Phi(i)$ where
  \[
    \Phi(i)=\begin{cases}
      \{j+1,i+1\}\enspace, \enspace \text{ if } j<i, \\
      \{j+1\}\enspace, \enspace \text{ if } j=i ,\\
      \{j+1,i\}\enspace, \enspace \text{ if }j>i.
    \end{cases}
  \]
  We produce a 2-step filtration $Q_n^j \to Z_n^j \to K_n^j$ where $Z_n^j$ is obtained from $Q_n^j$ by attaching the simplices in $d_{\beta}(K^j_n)$ where $\beta \neq  j+1$ and $\beta \in \Phi(i)$.

   We observe that for $n\geq 3$ we have that every marked edge in $d_{j+1}(K^j_n)$ factors through $Z_n^j$. Therefore we can use \autoref{lemma:filtration} with $B=d_{j+1}(K^j_n)$ and  $A=(Z_n^j)_{|d_{j+1}(K^j_n)}$ to obtain a trivial cofibration of marked-scaled simplicial sets
   \[
     \varphi:\Delta^1 \times (Z_n^j)_{|d_{j+1}(K^j_n)} \xlongrightarrow{} \Delta^1 \times d_{j+1}(K^j_n).
   \]
  As a consequence of \autoref{lem:keyscaling} we see that the scaling of $\Delta^1 \times A$ and the scaling of  $Z^j_n$ coincide except possible in those triangles coming from $\Delta^{\{i-1,i,i+1\}}$ and similarly for $\Delta^1 \times B$ and $K^j_n$. We further note that every marked edge of $K_n^j$ factors through $Z_n^j$. After direct inspection we observe that we can produce a pushot diagram
\[
  \begin{tikzcd}
    \Delta^1 \times (Z_n^j)_{|d_{j+1}(K^j_n)}  \arrow[r] \arrow[d] &  \Delta^1 \times d_{j+1}(K^j_n) \arrow[d] \\
    Z_n^j \arrow[r] & K_n^j.
  \end{tikzcd}
\]
Therefore we can add the remaining decorations via a pushout of $\varphi$ along a cofibration which shows that the last step in the filtration is a trivial cofibration. To show that $Q^j_n \to Z_n^j$ is a trivial cofibration we consider a pushout-pullback diagram
  \[
   \begin{tikzcd}
      Z^{u}_{n-1} \arrow[r] \arrow[d] & d_{\beta}(K^j_n)=K_{n-1}^{u} \arrow[d] \\
    Q^j_n \arrow[r] & Z^j_n
   \end{tikzcd}
  \]
  and conclude by the previous argument or by a direct computation if $n=3$.

  To finish the proof we will show that $Q^n_n \to K^n_n$ is a trivial cofibration. In this case $Q^n_n$ contains every simplex that factors through a face of $C_n$ except those factoring through $d_i(K^n_n)$. For every $k \in [n]$ we define a chain
  \[
    D_k=(0,0)<(0,1)<\cdots <(0,k)<(1,n).
  \]
  We observe that using morphisms of type \ref{ms:wonky4} in \autoref{def:msanodyne} and morphisms of type $\diamond)$ as in the proof of \autoref{lemma:filtration} we can scale the triangle $D_{i-1} \subset D_{i} \subset D_{i+1}$ in $A_{n-1}$ and in $A_n$. Recall that for every chain $C=\{(i_\alpha,j_\alpha)\}_{\alpha \in A}$ we defined $m_C$ to be the biggest element in $A$ such that $i_{m_C}=0$. We can use this parameter to define a map
  \[
   r_n: K^n_n \xlongrightarrow{} (\Delta^n,\flat,\Delta^{\{i-1,i,i+1\}}), \enspace C \mapsto j_{m_C}
  \]
Moreover $r_n$ admits a section $s_n$ which sends $j$ to $D_j$ as defined above. It follows that there exists a marked homotopy between $s_n \circ r_n$ and the identity map on $K^n_n$. Furthermore, $r_n$ restricts to a map $\hat{r}_n:Q^n_n \to \Lambda^n_i$. One checks that since $Q^n_n$ and $\Lambda^n_i$ can be expressed as  iterated pushouts along cofibrations indexed by (n-1)-dimensional faces of $Q^n_n$ and the map $r_n$ restricts to an equivalence in each of its faces, that $\hat{r}_n$ is also a weak equivalence. We conclude that we have a commutative diagram
\[
  \begin{tikzcd}
    Q^n_n \arrow[d,"\simeq"] \arrow[r] & K^n_n \arrow[d,"\simeq"] \\
    (\Lambda^n_i,\flat,\Delta^{\{i-1,i,i+1\}}) \arrow[r,"\simeq"] & (\Delta^n,\flat,\Delta^{\{i-1,i,i+1\}}) 
   \end{tikzcd}
\]
and so the result follows by 2-out-of-3.
\end{proof}

\begin{lemma}\label{lem:0horn}
  Let $i:A \to B$ be a morphism of type \ref{mb:2coCartesianmorphs} in \autoref{def:mbsanodyne}. Then the induced natural transformation $\SSt_B(A) \xRightarrow{} \SSt_B(B)$ is a pointwise weak equivalence.
\end{lemma}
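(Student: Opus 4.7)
The proof runs parallel to the argument of \autoref{lem:stinnerhorn}, with modifications to accommodate the outer horn together with its distinguished marked edge $\Delta^{\{0,1\}}$ and thin triangle $\Delta^{\{0,1,n\}}$. As a first step I apply \autoref{rem:reductiontoPi} to reduce the problem to verifying that the induced map $\Pi_B(A)^{\uparrow} \to \Pi_B(B)$ is a weak equivalence of $\on{Set}_\Delta^{\mathbf{ms}}$-enriched categories. A direct inspection shows that this map is an isomorphism on every mapping simplicial set except possibly the one from $(0,0)$ to $(1,n)$, so it suffices to show that the inclusion
\[
  \Lambda^n_0 K := \Pi_B(A)^{\uparrow}((0,0),(1,n)) \xlongrightarrow{} \Pi_B(B)((0,0),(1,n)) =: K_n
\]
is a trivial cofibration of marked-scaled simplicial sets. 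The marked-scaled structure on $K_n$ carries, beyond what is present in the inner-horn case, additional marked edges arising from the marked edge $\Delta^{\{0,1\}} \subset B$ and an additional thin triangle arising from $\Delta^{\{0,1,n\}} \subset B$.

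As in \autoref{lem:stinnerhorn}, I will consider the filtration $\Lambda^n_0 K = A_{-1} \to A_0 \to \cdots \to A_n = K_n$, where $A_s$ consists of those simplices of $K_n$ factoring through some $K^j_n$ with $j \leq s$. For each intermediate step $A_{j-1} \to A_j$, the same secondary filtration $Q^j_n \to Z^j_n \to K^j_n$ applies: the passage from $Z^j_n$ to $K^j_n$ is a trivial cofibration by \autoref{lemma:filtration} applied after the identification $\Delta^1 \times d_{j+1}(K^j_n) \isom K^j_n$ of \autoref{lem:keyscaling}, while the passage $Q^j_n \to Z^j_n$ reduces by a pushout-pullback square to a lower-dimensional instance handled inductively. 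The role of the set $\Phi(i)$ is now played by $\Phi(0) = \{j+1, 0\}$ for $j \geq 1$ and $\Phi(0) = \{1\}$ for $j = 0$, reflecting that the face of $\sigma_j$ projecting to the missing face $d_0(\Delta^n)$ of the horn is $d_0(\sigma_j)$.

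The principal difficulty, and the essential point of departure from the inner-horn argument, is the final step $A_{n-1} \to A_n$, governed by the piece $K^n_n$. Here I define a retraction
\[
  r_n : K^n_n \xlongrightarrow{} (\Delta^n, \{\Delta^{\{0,1\}}\}, \{\Delta^{\{0,1,n\}}\}), \quad C \mapsto j_{m_C},
\]
with $m_C$ as in \autoref{rem:maptopn}. The crucial observation is that this is a well-defined morphism of marked-scaled simplicial sets precisely because of our hypothesis on the decorations of $B$: the marked edges of $K^n_n$ arising from the marked edge $\Delta^{\{0,1\}}$ are sent to $\Delta^{\{0,1\}}$, and the thin triangle arising from $\Delta^{\{0,1,n\}}$ is sent to $\Delta^{\{0,1,n\}}$. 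The retraction $r_n$ admits a section $s_n$ sending $j \mapsto (0,0) < (0,1) < \cdots < (0,j) < (1,n)$, together with a marked homotopy $s_n \circ r_n \simeq \on{id}_{K^n_n}$. An induction over $(n-1)$-dimensional faces then shows that the restriction $\hat r_n : Q^n_n \to (\Lambda^n_0, \{\Delta^{\{0,1\}}\}, \{\Delta^{\{0,1,n\}}\})$ is likewise a weak equivalence, and the conclusion follows by 2-out-of-3 applied to the commutative square
\[
  \begin{tikzcd}
    Q^n_n \arrow[r] \arrow[d,"\simeq"'] & K^n_n \arrow[d,"\simeq"] \\
    (\Lambda^n_0,\{\Delta^{\{0,1\}}\},\{\Delta^{\{0,1,n\}}\}) \arrow[r] & (\Delta^n,\{\Delta^{\{0,1\}}\},\{\Delta^{\{0,1,n\}}\})
  \end{tikzcd}
\]
whose bottom horizontal arrow is $\MS$-anodyne of type \ref{ms:2coCartesianmorphs} in \autoref{def:msanodyne}, hence a weak equivalence.
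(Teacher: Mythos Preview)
Your overall strategy matches the paper's, but two steps are under-argued and the second is a genuine gap.

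First, the claim that $\Pi_B(A)^{\uparrow}(x,y) \to \Pi_B(B)(x,y)$ is an isomorphism except at $(x,y)=((0,0),(1,n))$ is not correct: since $\Lambda^n_0$ omits the face $d_0=\{1,\ldots,n\}$, the mapping space from $(0,1)$ to $(1,n)$ also differs. The paper handles this not by proving that inclusion is a weak equivalence, but by observing that the edge $(0,0)\to(0,1)$ is collapsed in forming $C_B$ and hence $\SSt_B$, so that it still suffices to analyse $\Lambda^n_0 K \to K_n$. You should make this reduction explicit rather than rely on a false isomorphism claim.

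Second, and more seriously, your retraction argument for $A_{n-1}\to A_n$ does not go through as written. For $s_n$ to be a morphism of marked-scaled simplicial sets you need $s_n(\Delta^{\{0,1\}})=(D_0\to D_1)$ marked and $s_n(\Delta^{\{0,1,n\}})=(D_0\subset D_1\subset D_n)$ thin in $K^n_n$, and neither holds a priori. Under $\pi_{0,n}$ the edge $D_0\to D_1$ lands on $\{0,n\}\to\{1,n\}$ in $\mathcal P_n$, which is \emph{not} the edge $\{0,n\}\to\{0,1,n\}$ that the thin-triangle clause of \autoref{def:msenriched} marks; and $D_0\subset D_1\subset D_n$ neither satisfies the condition of \autoref{def:pnscaled} nor factors through the $2$-simplex $\{0,1,n\}$, since $D_n$ contains $(0,2),\ldots,(0,n-1)$. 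The paper therefore inserts substantive new work before invoking the retraction: it exhibits a specific commutative diagram of chains passing through $(1,0)$ and $(1,1)$ in which enough triangles are already scaled to force the missing edges (including $D_0\to D_1$) to become marked via pushouts of type \ref{ms:composeacrossthin}, and then uses a family of $3$-simplices $\rho_W$ together with \ref{ms:wonky4} to scale the triangles $D_0\subset D_1\subset D_1\cup\{(0,n)\}\cup W$ for every chain $W$ from $(0,1)$ to $(0,n)$. Only after these additional anodyne pushouts (carried out compatibly in $A_{n-1}$ and $A_n$) does the square you wrote down become available. This is precisely the sense in which the paper says one must ``work a little bit harder'' than in \autoref{lem:stinnerhorn}.
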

\begin{proof}
  The proof will mirror the strategy of the previous lemma. Again, we observe that the induced morphism of mapping simplicial sets $\Pi_B(A)^{\uparrow}(x,y) \to \Pi_B(B)(x,y)$ is an isomorphism except when $x=(0,0)$ and $y=(1,n)$ or when $x=(0,1)$ and $y=(1,n)$. However we observe that since the edge $(0,0) \to (0,1)$ will be collapsed in order to define the value of the functor $\SSt_B$ it will suffice to construct the analogous filtration 
  \[
    \Lambda^n_0 K=A_{-1}\xlongrightarrow{} A_0 \xlongrightarrow{} \cdots \xlongrightarrow{}A_{n-1} \xlongrightarrow{} A_n=K_n
  \]
  and show that each step is a trivial cofibration. As before, we will leave the case $n=2$ as an easy exercise and focus our attention to the cases $n\geq 3$. Note that in this case, we decorations coming from the marked edge $0 \to 1$ and the thin triangle $0 \to 1 \to n$ are already contained in $\Lambda^n_0 K$.

   For $0\leq j <n$ we consider pullback-pushout diagrams
  \[
    \begin{tikzcd}
      Q_n^j \arrow[r] \arrow[d] & K^j_n \arrow[d] \\
      A_{j-1} \arrow[r] & A_j
    \end{tikzcd}
  \]
  where $Q_n^j$ is the simplicial subset of $K^j_n$ which contains every face of $C_n$ except the $d_{j+1}(K_n^j)$ and if $j>0$ the face $C_n^0$ consisting in those chains that have the element $(0,1)$. The proof at this point is totally analogous to the proof of \autoref{lem:stinnerhorn}. We construct $Z_n^j$ by adding to $Q_n^j$ the face $C_n^0$ and conclude by \autoref{lemma:filtration} that each step in the filtration $Q_n^j \to Z_n^j \to K_n^j$ is given by a trivial cofibration.

  To show that $A_{n-1} \to A_n$ is a trivial cofibration we need to work a little bit harder. First we consider a commutative diagram where we are using circled arrows to represent marked morphisms
  \[
    \begin{tikzcd}
      {(0,0)<(1,0)<(1,n)} \arrow[r,circled]  & {(0,0)<(1,0)<(1,1)<(1,n)} \\
      {(0,0)<(1,n)} \arrow[u,circled] \arrow[r] \arrow[d] & {(0,0)<(1,1)<(1,n)} \arrow[u,circled] \arrow[d,circled] \\
      {(0,0)<(0,1)<(1,n)} \arrow[r,circled] & {(0,0)<(0,1)<(1,1)<(1,n)}.
    \end{tikzcd}
  \]
We note that every 2-simplex in this diagram is scaled and therefore we can mark every morphism via a pushout along a trivial cofibration. Let us remark that we can markx this morphisms in both $A_{n-1}$ and $A_n$ since $n\geq 3$. Recall the definition of $D_i$ in \autoref{lem:stinnerhorn} and consider a 3-simplex
\[
 \rho_W: D_0 \subset D_1 \subset (0,0)<(0,1)<(0,n)<(1,n) \subset (0,0)<(0,1)<(0,n)<(1,n) \cup W
\]
where $W$ is any chain starting at $(0,1)$ and ending at $(0,n)$. It follows that every face of $\rho_W$ is scaled except possible $d_2(\rho_W)$. Therefore we might scale that face using a pushout along a morphism of type \ref{ms:wonky4} in \autoref{def:msanodyne}. Note that any possible $\rho_W$ factors through $A_{n-1}$ except in the case where $W$ is the maximal chain. A similar argument as in the previous lemma shows that we have a commutative diagram
\[
  \begin{tikzcd}
    A_{n-1} \arrow[r] \arrow[d,"\simeq"] & A_n \arrow[d,"\simeq"] \\
    (\Lambda^n_0,\{\Delta^{\{0,1\}}\},\{\Delta^{\{0,1,n\}}\}) \arrow[r,"\simeq"] & (\Delta^n,\{\Delta^{\{0,1\}}\},\{\Delta^{\{0,1,n\}}\})
  \end{tikzcd}
\]
and the claim follows from 2-out-of-3.
\end{proof}

\begin{lemma}\label{lem:wonky}
   Let $i:A \to B$ be a morphism of type \ref{ms:wonky4} in \autoref{ms:wonky4}. Then the induced natural transformation $\SSt_B(A) \xRightarrow{} \SSt_B(B)$ is a pointwise weak equivalence.
\end{lemma}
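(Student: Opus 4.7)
The plan is to mirror the strategy of \autoref{lem:stinnerhorn} and \autoref{lem:0horn}. Since $\SSt_B$ preserves colimits and cofibrations, by \autoref{rem:reductiontoPi} it suffices to verify that the induced map $\Pi_B(A)^{\uparrow} \xlongrightarrow{} \Pi_B(B)$ is an equivalence of $\on{Set}_\Delta^{\mathbf{ms}}$-enriched categories. Since the underlying simplicial set of $A$ equals that of $B$ and $i$ only enlarges the lean-triangle collection, this map is the identity on underlying $\on{Set}_\Delta^+$-categories; all mapping marked-simplicial sets agree as marked simplicial sets, and only the scaling changes.

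First I would trace, using \autoref{def:msenriched} together with \autoref{rem:maptopn}, exactly which triangles in the mapping simplicial sets of $\Pi_B(B)$ are newly scaled as a consequence of the additional lean triangles $\Delta^{\{0,3,4\}}$ and $\Delta^{\{0,1,4\}}$ in the base. Both new thin triangles have source $0$ and target $4$, so the only mapping simplicial set that gets extra scalings is $K_4 := \Pi_B((0,0),(1,4))$; the problem reduces to showing that the inclusion $K_4^A \hookrightarrow K_4^B$ is a trivial cofibration in $\on{Set}_\Delta^{\mathbf{ms}}$.

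Next I would implement this as a sequence of pushouts along \bS-anodyne morphisms of type \ref{ms:wonky4}. The idea is that the five wonky triangles of $T$ already produce a rich thin structure on $K_4^A$, and one can exhibit each newly thin triangle $\sigma$ of $K_4^B$ as the remaining non-thin face of a $3$-simplex (a chain of four refinements) $\rho_\sigma \colon C_0 \subset C_1 \subset C_2 \subset C_3$ in $K_4$ whose other three faces are already thin in $K_4^A$. A pushout along the marked-scaled wonky-4 anodyne move then scales $\sigma$. The key combinatorial observation underlying this is that the wonky-4 scaling pattern in the base passes through the projection $\pi_{0,4}\colon K_4 \to \mathcal{P}_4$ of \autoref{rem:maptopn}: a chain $C_0 \subset C_1 \subset C_2$ in $K_4$ whose associated composition data in $\mathcal{P}_4$ factors through one of the five distinguished refinements is automatically thin in $K_4^A$, and the two additional refinements corresponding to $\Delta^{\{0,3,4\}}, \Delta^{\{0,1,4\}}$ are obtainable by iterated wonky-4 moves.

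The main obstacle, and the step that requires the most care, is this explicit combinatorial bookkeeping: unlike the two previous lemmas, there is no inductive parameter $n$ to exploit, so the whole argument reduces to a finite but intricate analysis inside the single poset $K_4$, selecting for each newly thin $\sigma$ the correct completion to a $3$-simplex all of whose remaining faces are simultaneously thin. Once this is done the map $K_4^A \to K_4^B$ is built from pushouts along \MS-anodyne morphisms, is therefore a trivial cofibration, and the claim follows.
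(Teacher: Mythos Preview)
Your reduction to the single mapping space $K_4=\Pi_B((0,0),(1,4))$ is correct and matches the paper. The mechanism you propose for the remaining step, however, is confused. The wonky-4 move \ref{ms:wonky4} is a map of $\Delta^4$'s: five specific thin triangles in a $4$-simplex force two more. It is \emph{not} a ``three thin faces of a $3$-simplex force the fourth'' principle. To scale a triangle $\sigma$ in $K_4$ via a pushout along \ref{ms:wonky4} you must produce a map $\Delta^4 \to K_4$ (a chain $C_0\subset\cdots\subset C_4$ of \emph{five} refinements) whose image of the collection $T$ is already thin and such that $\sigma$ is the image of $\Delta^{\{0,3,4\}}$ or $\Delta^{\{0,1,4\}}$. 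Your $3$-simplex $\rho_\sigma$ with three thin faces does not suffice in general; certain $3$-simplex saturation statements can be derived from \ref{ms:wonky4} via degeneracies, but only for specific face configurations, and you do not identify which ones you need. Since you also do not carry out the bookkeeping, the sketch as written has a genuine gap at the key step.

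The paper sidesteps all of this combinatorics. It first observes that every newly scaled triangle already lies in the top piece $K_4^4$ of the filtration from \autoref{lem:stinnerhorn} (the sub-cube of chains factoring through the last non-degenerate $(4{+}1)$-simplex of $\Delta^1\times\Delta^4$). It then reuses the retraction $r_4\colon K_4^4 \to \Delta^4$ and section $s_4$ constructed in that proof, which are weak equivalences via a marked homotopy. Under $r_4$ the two scalings on $K_4^4$ coming from $A$ and $B$ correspond exactly to the scalings $T$ and $T\cup\{\Delta^{\{0,3,4\}},\Delta^{\{0,1,4\}}\}$ on $\Delta^4$, so one obtains a commutative square
\[
\begin{tikzcd}
L_4^4 \arrow[r] \arrow[d,"\simeq"'] & K_4^4 \arrow[d,"\simeq"] \\
(\Delta^4,\flat,T) \arrow[r,"\simeq"] & (\Delta^4,\flat,T')
\end{tikzcd}
\]
whose bottom edge is the generating anodyne map \ref{ms:wonky4} itself, and the result follows by 2-out-of-3. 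This is both shorter and more conceptual: the point is that \autoref{lem:stinnerhorn} has already done the work of trivialising $K_4^4$ over $\Delta^4$, so the present lemma reduces immediately to the base generator.
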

\begin{proof}
  Let $L=\Pi_B(A)^\uparrow((0,0),(1,4))$ and let $K_4=\Pi_B(B)((0,0),(1,4))$. The only thing we need to show is that the map $L \to K_4$ is a weak equivalence. Using the notation from the previous proofs it follows that the missing scaled simplices in $L$ are all contained in $K_4^4$. Therefore if we denote by $L^4_4$ the restriction of $K^4_4$ to $L$ we see that it will be enough to show that the induced map $L^4_4 \to K^4_4$ is a weak equivalence. However, one can easily construct a commutative diagram
  \[
    \begin{tikzcd}
      L^4_4 \arrow[d,"\simeq"] \arrow[r] & K^4_4 \arrow[d,"\simeq"] \\
      (\Delta^4,\flat,T) \arrow[r,"\simeq"] & (\Delta^4,\flat,T')
    \end{tikzcd}
  \]
  where vertical maps are weak equivalences and the scaling in the bottom horizontal map is that of \ref{mb:wonky4} in \autoref{def:mbsanodyne}.
\end{proof}

\begin{proposition}\label{prop:stanodyne}
   Let $(S,T_S)$ be a scaled simplicial set and let $i:A \to B$ be an $\bS$-anodyne morphism in $\left( \on{Set}_\Delta^{\mathbf{mb}}\right)_{/S}$. Then $\SSt_S(i)$ is a weak equivalence in  $\on{Fun}(\mathfrak{C}^{\mathbf{sc}}[S],\on{Set}_\Delta^{\mathbf{ms}})$.
\end{proposition}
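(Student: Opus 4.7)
The plan is to argue by weak saturation. Since $\SSt_S$ is a left adjoint it preserves colimits, and by \autoref{lem:stcof} it preserves cofibrations; trivial cofibrations in the projective model structure on $\on{Fun}(\mathfrak{C}^{\mathbf{sc}}[S], \on{Set}_\Delta^{\mathbf{ms}})$ are closed under pushouts, transfinite composition, and retracts. Consequently, the class of morphisms $j$ with $\SSt_S(j)$ a trivial cofibration is weakly saturated, and it suffices to check the statement on each family of generators listed in \autoref{def:mbsanodyne}. Applying \autoref{prop:Stbasechange} together with the fact that left Kan extension along $\mathfrak{C}^{\mathbf{sc}}[B] \to \mathfrak{C}^{\mathbf{sc}}[S]$ is left Quillen for projective model structures, this further reduces to the case $S = B$ for each generator $i : A \to B$.

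The families \ref{mb:innerhorn}, \ref{mb:wonky4}, and \ref{mb:2coCartesianmorphs} are handled by \autoref{lem:stinnerhorn}, \autoref{lem:wonky}, and \autoref{lem:0horn} respectively. What remains are \ref{mb:2CartliftsExist}, \ref{mb:composeacrossthin}, \ref{mb:coCartoverThin}, and \ref{mb:equivalences}. For each of these I would invoke \autoref{rem:reductiontoPi} and reduce the claim to exhibiting the induced map $\Pi_B(A)^{\uparrow} \to \Pi_B(B)$ of $\on{Set}_\Delta^{\mathbf{ms}}$-enriched categories as a pointwise weak equivalence.

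The three cases \ref{mb:2CartliftsExist}, \ref{mb:composeacrossthin}, and \ref{mb:coCartoverThin} concern low-dimensional generators, so the only mapping simplicial sets of $\Pi_B$ that are affected are explicit and small. In each case the change amounts to adding a marking on a single edge, or a thin scaling on a single triangle, of a single mapping object, all of which can be realized as iterated pushouts along generating $\MS$-anodyne morphisms using the surrounding scaled and marked structure. For \ref{mb:equivalences}, the map $(K,\flat,\sharp) \to (K,\sharp,\sharp)$ keeps the lean and thin scalings maximal, so \autoref{cor:maxlean} reduces the statement to checking that $(K,\flat) \to (K,\sharp)$ is a weak equivalence in $\left(\on{Set}_\Delta^+\right)_{/(K,\sharp)}$ in the sense of \cite{LurieGoodwillie}; this is immediate because every edge of the Kan complex $K$ is an equivalence.

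The main obstacle I expect is \ref{mb:composeacrossthin}: there, simultaneous marking of $\Delta^{\{0,1\}}$ and $\Delta^{\{1,2\}}$ in an already fully thin-scaled $2$-simplex must be shown to force marking of the composite edge in the mapping object $\Pi_B((0,0),(1,2))$. This is the straightening-side counterpart of the compositional law for coCartesian edges across thin triangles, and it requires a careful identification of the relevant paths in $\mathcal{P}_2$ (cf.\ \autoref{rem:maptopn}). Nevertheless, the bookkeeping is finite and bounded by the dimension of $\Delta^2$, so a direct analysis is feasible.
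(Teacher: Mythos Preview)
Your overall architecture is exactly the paper's: weak saturation plus \autoref{prop:Stbasechange} to reduce to $S=B$, then the three lemmas for \ref{mb:innerhorn}, \ref{mb:wonky4}, \ref{mb:2coCartesianmorphs}, explicit verification for \ref{mb:coCartoverThin}, and \autoref{cor:maxlean} for \ref{mb:equivalences}.

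The one divergence is your treatment of \ref{mb:2CartliftsExist} and \ref{mb:composeacrossthin}. You propose a direct analysis via \autoref{rem:reductiontoPi}, but the paper simply observes that both of these generators have maximal lean scaling on source and target, so \autoref{cor:maxlean} applies to them just as it does to \ref{mb:equivalences}: the underlying marked maps $(\Delta^0,\sharp)\to(\Delta^1,\sharp)$ and $(\Delta^2,\{\Delta^{\{0,1\}},\Delta^{\{1,2\}}\})\to(\Delta^2,\sharp)$ are $\mathfrak{P}_S$-anodyne in Lurie's sense, hence weak equivalences, and that is all that is needed. This bypasses the bookkeeping you anticipate for \ref{mb:composeacrossthin} entirely.

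There is also an actual error in your description of \ref{mb:2CartliftsExist}: you say the change ``amounts to adding a marking on a single edge,'' but this generator is the inclusion $\Delta^0\hookrightarrow\Delta^1$, which is not bijective on underlying simplicial sets. A direct verification along the lines you sketch would therefore have to do more than add decorations, and would look rather different from what you describe. Using \autoref{cor:maxlean} avoids this altogether.
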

\begin{proof}
  Using \autoref{prop:Stbasechange} we can assume that $S=B$. The rest of the proof will consist in verifying that the claim holds for each of the generators given in \autoref{def:mbsanodyne}. We proceed case by case:
  \begin{itemize}
    \item[\ref{mb:innerhorn}] This follows from \autoref{lem:stinnerhorn}.
    \item[\ref{mb:wonky4}] This follows from \autoref{lem:wonky}.
    \item[\ref{mb:2coCartesianmorphs}] This follows from \autoref{lem:0horn}.
    \item[\ref{mb:2CartliftsExist}] This follows from \autoref{cor:maxlean}.
    \item[\ref{mb:composeacrossthin}] This follows from \autoref{cor:maxlean}.
    \item[\ref{mb:coCartoverThin}] This follows by explicit verification.
    \item[\ref{mb:equivalences}] This follows from \autoref{cor:maxlean}.
  \end{itemize}
  The result follows.
\end{proof}

\begin{proposition}\label{prop:stLeftquillen}
  Let $(S,T_S)$ be a scaled simplicial set and let $\phi: \mathfrak{C}^{\mathbf{sc}}[S] \to \scr{C}$ be a functor of $\on{Set}_\Delta^+$-enriched categories. Then the straightening functor
  \[
     \SSt_\phi: \left( \on{Set}_\Delta^{\mathbf{mb}}\right)_{/S}\xlongrightarrow{} \on{Fun}(\scr{C},\on{Set}_\Delta^{\mathbf{ms}})
   \] 
   is a left Quillen functor.
\end{proposition}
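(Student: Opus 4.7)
The plan is to reduce the proposition to the case $\phi = \id$. Specialising \autoref{prop:Stbasechange} to $f = \id_S$ and taking $\phi$ itself as the vertical map $\psi$ yields a canonical isomorphism $\SSt_\phi \simeq \phi_! \circ \SSt_S$. The left Kan extension $\phi_!$ is left adjoint to the restriction functor $\phi^*$, and since $\phi^*$ manifestly preserves pointwise fibrations and pointwise weak equivalences, it is right Quillen between the projective model structures; equivalently, $\phi_!$ is left Quillen. It therefore suffices to check that $\SSt_S$ itself is a left Quillen functor into $\on{Fun}(\mathfrak{C}^{\mathbf{sc}}[S],\on{Set}_\Delta^{\mathbf{ms}})$.

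Preservation of cofibrations is \autoref{lem:stcof}. For preservation of trivial cofibrations, I would factor any trivial cofibration $f \colon A \to B$ via the small object argument applied to the $\bS$-anodyne generators of \autoref{def:mbsanodyne} as $A \xrightarrow{i} A^{\prime} \xrightarrow{p} B$, with $i$ being $\bS$-anodyne and $p$ having the right lifting property against all $\bS$-anodyne maps. By \autoref{prop:stanodyne}, $\SSt_S(i)$ is a pointwise weak equivalence, and two-out-of-three in the slice model structure of \autoref{thm:model} forces $p$ to be a weak equivalence as well. Functorially fibrantly replacing the source and target of $p$ through further $\bS$-anodyne maps (again sent to weak equivalences by \autoref{prop:stanodyne}) reduces the problem to showing that $\SSt_S$ carries a weak equivalence $\hat{p} \colon \hat{A}^{\prime} \to \hat{B}$ between $(0,1)$-fibrations to a pointwise weak equivalence.

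To conclude I would invoke \autoref{prop:fibrewisecriterion}\,(iv), which supplies a genuine homotopy inverse $g \colon \hat{B} \to \hat{A}^{\prime}$ to $\hat{p}$ over $S$, together with marked $\on{Set}_\Delta^+$-homotopies realising $g\hat{p} \simeq \id_{\hat{A}^{\prime}}$ and $\hat{p}g \simeq \id_{\hat{B}}$ through the simplicial tensoring of the slice model category. Transporting these homotopies through $\SSt_S$ rests on a compatibility with the $\on{Set}_\Delta^+$-tensor: unwinding \autoref{def:Pi_S} and \autoref{def:st01} produces a natural comparison morphism $K \otimes \SSt_S(X) \to \SSt_S(I(K) \times X)$ which I expect to be an isomorphism, to be verified by a cellular induction on the generating marked simplices of $K$. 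Once this is in hand, $\SSt_S(\hat{p})$ inherits an honest homotopy inverse in the functor category, and hence is a pointwise weak equivalence, completing the argument.

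The main obstacle is exactly this last enriched-compatibility step, namely verifying that $\SSt_S$ interacts sufficiently well with the $\on{Set}_\Delta^+$-tensoring to allow transport of the homotopy-inverse data of \autoref{prop:fibrewisecriterion}\,(iv). Everything preceding it is a clean combination of \autoref{prop:Stbasechange}, \autoref{lem:stcof}, and \autoref{prop:stanodyne} together with a single fibrant-replacement factorisation, but the subtlety lies in a careful analysis of how the cone $C_S(I(K) \times X)$ decomposes relative to $C_S(X)$ under the collapse to the apex $v$.
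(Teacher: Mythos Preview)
Your reduction to $\phi=\id$, the use of \autoref{lem:stcof} for cofibrations, the appeal to \autoref{prop:stanodyne} for the anodyne part, and the passage to a weak equivalence between fibrant objects via \autoref{prop:fibrewisecriterion}\,(iv) all match the paper exactly. The divergence is only in the very last step, and this is precisely the place you yourself flag as the obstacle.

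The comparison map $K \otimes \SSt_S(X) \to \SSt_S(I(K)\times X)$ is almost certainly \emph{not} an isomorphism: the cone construction $C_S(-)$ does not commute with products on the nose, so a cellular induction will not close. One could aim instead for a pointwise weak equivalence, but even that is more work than needed and is not what the paper does. The paper avoids transporting the homotopies altogether. Once you have a homotopy $H:X\times(\Delta^1)^\sharp\to Y$ between $f$ and $g$, observe that the endpoint inclusion $X\times\Delta^{\{0\}}\to X\times(\Delta^1)^\sharp$ is $\bS$-anodyne by \autoref{prop:pushoutproduct} (pushout--product of a cofibration with the $\bS$-anodyne $\Delta^0\to(\Delta^1)^\sharp$). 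Hence $\SSt_S$ of both endpoint inclusions and of the projection $X\times(\Delta^1)^\sharp\to X$ are weak equivalences by \autoref{prop:stanodyne} and $2$-out-of-$3$. Since $f=H\circ\iota_0$ and $g=H\circ\iota_1$, it follows immediately that $\SSt_S(f)$ is a weak equivalence if and only if $\SSt_S(g)$ is. Applying this with $f=\id$ and $g$ the composite with your homotopy inverse finishes the argument with no enriched compatibility needed.
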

\begin{proof}
  We will show that $\SSt_\phi$ preserves cofibrations and weak equivalences. First, we point out that due to \autoref{prop:Stbasechange} it will be enough to consider the case $\phi=\on{id}$. In this case, we saw in \autoref{lem:stcof} that our functor preserves cofibrations.

  To address the claim regarding weak equivalences, we see that \autoref{prop:stanodyne} implies $\SSt_S$ preserves $\bS$-anodyne morphisms. We can therefore  restrict our attention to showing that $\SSt_S$ preserves weak equivalences between fibrant objects. To this end it will enough to show the following:
  \begin{itemize}
      \item[*] Let $f,g: X \to Y$ be morphisms between fibrant objects such that $\SSt_S(f)$ is a weak equivalence. Then given a homotopy $H:X \times (\Delta^1)^\sharp  \to Y$ between $f$ and $g$ it follows that $\SSt_S(g)$ is also a weak equivalence.
  \end{itemize}
  The claim follows after noting that we have an anodyne morphism $X \times \Delta^{\{0\}} \to X \times (\Delta^1)^\sharp$ due to \autoref{prop:pushoutproduct} which implies that $\SSt_S(H)$ is a weak equivalence as well as the map induced by the projection onto $X$ $\SSt_S(p):  \SSt_S(X \times (\Delta^1)^\sharp) \to \SSt_S(X)$. 
\end{proof}
\subsection{Straightening over a point}
The goal of this section is to prove the following result.
\begin{proposition}\label{prop:stpoint}
The straightening-unstraightening adjunction over the point
\[
    \SSt_{*}:\left(\on{Set}_\Delta^{\mathbf{mb}}\right)_{/\Delta^0} \llra \on{Set}_\Delta^{\mathbf{ms}}: \UN_*
  \]  
  is a Quillen equivalence.
\end{proposition}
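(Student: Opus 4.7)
The plan is to deduce the result from the two Quillen equivalences $L: \on{Set}_\Delta^{\mathbf{sc}} \to (\on{Set}_\Delta^{\mathbf{mb}})_{/\Delta^0}$ of \autoref{thm:comparisonscaled} and $L': \on{Set}_\Delta^{\mathbf{sc}} \to \on{Set}_\Delta^{\mathbf{ms}}$ from the preceding subsection. Since $\SSt_*$ is already a left Quillen functor by \autoref{prop:stLeftquillen}, and both $L$ and $L'$ are left Quillen equivalences, the two-out-of-three property for Quillen equivalences reduces the claim to exhibiting a natural weak equivalence of left Quillen functors $\theta: L' \xRightarrow{\simeq} \SSt_* \circ L$.

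To define $\theta$, recall that for $(X,T_X) \in \on{Set}_\Delta^{\mathbf{sc}}$ we have $\SSt_*(L X) = \on{Map}_{C_{\Delta^0}(LX)}(v,w)$, where $v$ is obtained by collapsing $\{0\} \times X_\sharp \subseteq \Pi(LX)$ to a point and $w$ by collapsing $\{1\} \times X \subseteq \Pi(LX)$. The marked chains $(0,x) < (1,x)$ in $\Pi(LX)$ (cf. part (4) of \autoref{def:msenriched}) assemble, upon collapse, into a natural morphism of marked-scaled simplicial sets $\theta_X: L'(X) \to \SSt_*(LX)$ sending each simplex of $X$ to the corresponding diagonal chain in the simplicial set $K_n$ of \autoref{def:Kn}.

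Since both $L'$ and $\SSt_* \circ L$ preserve colimits and cofibrations, it suffices to show that $\theta$ is a pointwise weak equivalence on a set of generators of $\on{Set}_\Delta^{\mathbf{sc}}$, namely the scaled simplices $\Delta^n_\flat$ for $n \geq 0$ together with $(\Delta^2,\sharp)$. The cases $n \leq 1$ reduce to direct computations. For $n \geq 2$, the underlying simplicial set of $\SSt_*(L \Delta^n_\flat)$ is the quotient of the poset $K_n$ identifying $\{0\} \times \Delta^n$ and $\{1\} \times \Delta^n$ to single vertices, and I would filter this analogously to the argument in \autoref{lem:stinnerhorn} to produce a deformation retraction onto the chain $(0,0) < (0,1) < \cdots < (0,n) < (1,n)$, which is precisely the image of $\theta_{\Delta^n_\flat}$.

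The main obstacle will be tracking the scaling across this filtration: the mapping simplicial set in $\Pi(L \Delta^n_\flat)$ inherits scaled triangles both from the Gray tensor $\Delta^1 \otimes_\Pi \Delta^n$ and from the poset structure in \autoref{rem:maptopn}, and these must be reconciled with the minimal scaling on $\Delta^n_\flat$ imposed by $L'$. As in the analysis showing $\varphi_k: \mathbb{L}^k \to \mathbb{L}^k_\sharp$ is an equivalence, the necessary scaling adjustments will be produced by iterated $\mathbf{MS}$-anodyne pushouts of type \ref{ms:wonky4} together with the auxiliary type $\diamond$) appearing in the proof of \autoref{lemma:filtration}. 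Once $\theta$ is verified to be a pointwise weak equivalence on generators, the two-out-of-three conclusion is immediate.
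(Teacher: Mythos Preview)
Your high-level strategy (deduce the result by two-out-of-three from the Quillen equivalences $L$ and $L'$) is sound and is essentially the shape of the paper's argument as well. The paper phrases it slightly differently: rather than passing through $\on{Set}_\Delta^{\mathbf{sc}}$, it directly introduces the left Quillen equivalence $L_*:(\on{Set}_\Delta^{\mathbf{mb}})_{/\Delta^0}\to\on{Set}_\Delta^{\mathbf{ms}}$, $(X,E_X,T_X\subseteq C_X)\mapsto(X,E_X,C_X)$, and compares $\SSt_*$ with $L_*$. Since $L_*\circ L=L'$, the two setups are equivalent.

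The genuine gap is in your comparison transformation. You attempt to build a \emph{section} $\theta:L'\Rightarrow\SSt_*\circ L$, but the description is not coherent. The ``chain $(0,0)<(0,1)<\cdots<(0,n)<(1,n)$'' is a single \emph{vertex} of $K_n$, so a deformation retraction onto it would make $\SSt_*(L\Delta^n_\flat)$ contractible and force $\theta_{\Delta^n_\flat}$ to be constant --- which cannot be a weak equivalence for $n\geq 1$, since $(\Delta^n,\flat,\flat)$ is not contractible in $\on{Set}_\Delta^{\mathbf{ms}}$. If instead you mean the section $j\mapsto D_j$ (the map $s_n$ appearing locally in the paper's filtration arguments), this assignment is \emph{not} strictly natural in $[n]$: for an inner face $d_k$ with $0<k<n$, one has $(Ld_k)_*(D_j^{(n-1)})\neq D_{d_k(j)}^{(n)}$ as chains, and arguing that the collapse in $C_{\Delta^0}$ repairs this requires exactly the analysis you are trying to avoid. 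Your reference to part (4) of \autoref{def:msenriched} is also misplaced, since $L(X)$ carries the flat marking.

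The fix is simply to reverse the direction. The paper constructs a natural \emph{retraction} $\alpha:\SSt_*\Rightarrow L_*$ using the explicit collapse maps $\SSt_*(\Delta^n_\flat)\to\mathcal{P}_n\xrightarrow{S\mapsto\min S}\Delta^n$; naturality is immediate because these are maps of posets defined elementwise. The filtration and section/retraction pairs you allude to are then used \emph{locally} to show each component $\alpha_n^\flat$ is a weak equivalence --- not to build a global section. Once you reorient $\theta$ this way, your sketch becomes the paper's proof.
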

To do this we will construct a left Quillen equivalence 
\[
  L_*:\left(\on{Set}_\Delta^{\mathbf{mb}}\right)_{/\Delta^0} \xlongrightarrow{} \on{Set}_\Delta^{\mathbf{ms}}
\]
and a natural transformation $\alpha:\SSt_* \xRightarrow{} L$ which is pointwise a weak equivalence of marked-scaled simplicial sets.
\begin{proposition}
  Let $L_*:\left(\on{Set}_\Delta^{\mathbf{mb}}\right)_{/\Delta^0} \xlongrightarrow{} \on{Set}_\Delta^{\mathbf{ms}}$ be the functor that assigns to an $\bS$ simplicial set $(X,E_X,T_X \subseteq C_X)$ the marked scaled simplicial set $(X,E_X,C_X)$. Then $L_*$ is a left Quillen equivalence.
\end{proposition}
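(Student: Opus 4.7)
The plan is to verify that $L_*$ is a left Quillen functor and then invoke the 2-out-of-3 property of Quillen equivalences, comparing with the two Quillen equivalences already established between $\on{Set}_\Delta^{\mathbf{sc}}$ and each of $\on{Set}_\Delta^{\mathbf{mb}}$, $\on{Set}_\Delta^{\mathbf{ms}}$.

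First I would exhibit a right adjoint by setting $R_*(Y,E_Y,T_Y):=(Y,E_Y,T_Y\subseteq T_Y)$, that is, declaring the lean and thin collections to coincide with $T_Y$. Since every mb simplicial set $(X,E_X,T_X\subseteq C_X)$ satisfies $T_X\subseteq C_X$, a map $X\to R_*(Y)$ is precisely a map of underlying simplicial sets sending $E_X$ into $E_Y$ and $C_X$ into $T_Y$, which matches $\Hom_{\on{Set}_\Delta^{\mathbf{ms}}}(L_*X,Y)$. Cofibrations in both model categories are the monomorphisms on underlying simplicial sets and $L_*$ does not alter the underlying simplicial set, so cofibrations are preserved.

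Next I would show $L_*$ sends $\bS$-anodyne morphisms to $\MS$-anodyne morphisms (or to isomorphisms), which suffices for preservation of trivial cofibrations given that every generator from \autoref{def:gencof} is already a cofibration. Generator by generator: \ref{mb:innerhorn}, \ref{mb:wonky4}, \ref{mb:2coCartesianmorphs}, \ref{mb:2CartliftsExist}, \ref{mb:composeacrossthin}, and \ref{mb:equivalences} from \autoref{def:mbsanodyne} map respectively onto the generators \ref{MS:innerhorn}, \ref{ms:wonky4}, \ref{ms:2coCartesianmorphs}, (M4), \ref{ms:composeacrossthin}, and \ref{ms:equivalences} of \autoref{def:msanodyne}, and the generator \ref{mb:coCartoverThin}, whose source $(\Delta^2,\flat,\flat\subseteq \sharp)$ and target $(\Delta^2,\flat,\sharp)$ both have $C_X=\sharp$, becomes the identity map of $(\Delta^2,\flat,\sharp)$. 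Hence $L_*$ is a left Quillen functor.

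Finally, a direct unwinding of definitions gives the identity $L_*\circ L = L'$, where $L:\on{Set}_\Delta^{\mathbf{sc}}\to\on{Set}_\Delta^{\mathbf{mb}}$ is the left Quillen equivalence of \autoref{thm:comparisonscaled} sending $(Y,T_Y)\mapsto (Y,\flat,\flat\subseteq T_Y)$, and $L':\on{Set}_\Delta^{\mathbf{sc}}\to\on{Set}_\Delta^{\mathbf{ms}}$ is the left Quillen equivalence of the theorem immediately preceding this proposition sending $(Y,T_Y)\mapsto (Y,\flat,T_Y)$. Both $L$ and $L_*\circ L=L'$ are Quillen equivalences, so the 2-out-of-3 property for Quillen equivalences applied to the triangle of left Quillen functors $(L,L_*,L')$ shows that $L_*$ is a Quillen equivalence. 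The only genuine work is the case-by-case comparison of generators in the third paragraph, but this reduces to direct inspection, so there is no substantial obstacle.
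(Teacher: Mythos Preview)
Your factorisation $L_*\circ L=L'$ is correct and the 2-out-of-3 strategy is sound, but your justification that $L_*$ is left Quillen has a gap. Carrying $\bS$-anodyne maps to $\MS$-anodyne maps does not by itself guarantee preservation of trivial cofibrations: in these model structures the anodyne maps are a proper subclass of the trivial cofibrations (they determine the same fibrant objects, but do not generate all trivial cofibrations). The phrase ``given that every generator from \autoref{def:gencof} is already a cofibration'' is a non sequitur, since those generate the cofibrations, not the trivial cofibrations. An easy fix: since $L_*$ commutes with finite products and $R_*$ preserves fibrant objects (which follows from your anodyne-to-anodyne check by adjunction), one obtains a natural isomorphism of $\infty$-bicategories $\on{Fun}^{\mathbf{ms}}(L_*A,Y)\cong\on{Map}_{\Delta^0}(A,R_*Y)$ for $Y$ fibrant, and so $L_*$ preserves all weak equivalences directly from their definition; compare the argument for \autoref{prop:mbbasechange}.

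Once patched, your approach differs from the paper's. Rather than invoking 2-out-of-3, the paper argues the equivalence directly: $L_*R_*=\on{id}$, and the unit $X\to R_*L_*X$ is the map $(X,E_X,T_X\subseteq C_X)\to(X,E_X,C_X\subseteq C_X)$, which lies in the weakly saturated class of \ref{mb:coCartoverThin} and is therefore $\bS$-anodyne. This makes the mechanism transparent---over a point the thin/lean distinction is erased precisely by \ref{mb:coCartoverThin}---whereas your route is slicker once the two comparisons with $\on{Set}_\Delta^{\mathbf{sc}}$ are already in hand, at the cost of hiding that point.
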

\begin{proof}
  The functor $L_*$ admits a right adjoint $R_*$ which is given by $R_*(X,E_X,T_X)=(X,E_X,T_X)$. We observe that $L_* \circ R_*=\on{id}$ and that the unit $\on{id}\xRightarrow{} R_* \circ L_*   $  is given by $(X,E_X,T_X \subset C_X) \to (X,E_X,C_X)$ which is in the weakly saturated class of morphisms of type \ref{mb:coCartoverThin} in \autoref{def:mbsanodyne}.

  To finish the proof we observe that $L_*$ preserves cofibrations and maps \bS-anodyne morphisms to $\mathbf{MS}$-anodyne morphisms. It is then easy to see that $L_*$ preserves weak equivalences between fibrant objects and the result follows.
\end{proof}
Recall the definition of the maps $\pi_{0,n}$ in \autoref{rem:maptopn}. Then postcomposing this map with the morphism $m_n: \mathcal{P}_n \to (\Delta^n,\flat)$ which assigns to every $S \in \mathcal{P}_n$ the value $m_n(S)=\min(S)$ we obtain a map of marked scaled simplicial sets
\[
  \alpha_n^{\flat}: \SSt_*((\Delta^n,\flat,\flat)) \xlongrightarrow{} (\Delta^n,\flat,\flat)
\]
One can easily produce marked variants of this maps $\alpha^{\sharp}_2$ and $\alpha_1^{\sharp}$ associated to the $\bS$ simplicial sets $(\Delta^2,\flat,\sharp)$ and $(\Delta^1,\sharp,\sharp)$. We would like to remark that $\SSt_*(\Delta^2,\flat,\flat \subset \sharp)=\SSt_*(\Delta^2,\flat, \sharp)$ which justifies why we are defining only one $\alpha_2^\sharp$. Since our definitions are functorial with respect to monotone morphisms $[k] \to [n]$ this collection of maps assemble into a natural transformation $\alpha:\SSt_* \xRightarrow{} L_*$.

\begin{proposition}
  The natural transformation $\alpha:\SSt_* \xRightarrow{} L_*$ is a pointwise weak equivalence.
\end{proposition}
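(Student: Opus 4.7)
The plan is to verify the claim on generating cofibrations and then extend via cellular induction. Both $\SSt_*$ and $L_*$ preserve colimits: the former by construction, the latter by inspection. They also both preserve cofibrations (for $\SSt_*$ this is \autoref{lem:stcof}; for $L_*$ it follows from the definition). Moreover, the target model category $\on{Set}_\Delta^{\mathbf{ms}}$ is left proper with cofibrations being monomorphisms. Consequently, the class $\mathcal{W}$ of $X \in \left(\on{Set}_\Delta^{\mathbf{mb}}\right)_{/\Delta^0}$ on which $\alpha_X$ is a weak equivalence is closed under pushouts along cofibrations and under transfinite compositions. Since every $\bS$ simplicial set is obtained from the empty object by iterated pushouts along the generating cofibrations of \autoref{def:gencof}, it suffices to verify the claim when $X$ is one of the codomains: $(\Delta^n,\flat,\flat)$ for $n \geq 0$, $(\Delta^1,\sharp,\flat)$, $(\Delta^2,\flat,\flat \subset \sharp)$, and $(\Delta^2,\flat,\sharp)$.

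For the generators carrying a marked edge or a scaled triangle, the additional decoration forces part of the cubical mapping object in $\mathfrak{C}^{\mathbf{sc}}[\Delta^1 \otimes \Delta^n]$ to collapse via morphisms of type \ref{MS:innerhorn} and \ref{ms:composeacrossthin} in \autoref{def:msanodyne}, so that $\SSt_*(X)$ is directly identifiable with $L_*(X)$ via $\alpha_X$. These cases can also be extracted from \autoref{cor:maxlean} by comparison with Lurie's straightening post-composed with $\iota$, since each such $X$ lies in (or is weakly equivalent to an object in) the image of $R$.

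The main case is $X = (\Delta^n,\flat,\flat)$. By tracing through \autoref{def:Pi_S} and \autoref{def:st01} with $S = \Delta^0$, the marked-scaled simplicial set $\SSt_*(\Delta^n_\flat)$ is obtained from $\Pi((\Delta^n,\flat,\flat))$ by collapsing $\Delta^{\{0\}} \times \Delta^n$ (with maximal scaling) and $\Delta^{\{1\}} \times \Delta^n$ to points $v$ and $*$ respectively, and the scaling on $\on{Map}(v,*)$ is induced from $\mathcal{P}_n$ via the projection $\pi_{0,n}$ of \autoref{rem:maptopn}. The component $\alpha_n^{\flat} = m_n \circ \pi_{0,n}$ admits a natural section $s_n : (\Delta^n,\flat,\flat) \to \SSt_*(\Delta^n_\flat)$ sending $j \in [n]$ to the chain $(0,j) < (1,n)$. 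I would construct a marked-scaled homotopy $\Delta^1 \times (\Delta^n,\flat,\flat) \to \SSt_*(\Delta^n_\flat)$ exhibiting $s_n \circ \alpha_n^{\flat} \simeq \on{id}$, built from the canonical poset retraction $P_n \to P_n$ that contracts each chain $S$ to $\{\min(S), n\}$ through the intermediate chains obtained by progressively removing the non-minimal non-maximal elements.

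The main obstacle is checking that every 2-simplex produced by this homotopy is compatible with the scaling of \autoref{def:pnscaled}, i.e.\ that the combinatorial identity $U_{S,W} = U_{S,T} \cup U_{T,W}$ holds for each triangle $S \leq T \leq W$ that appears. This will be handled by a case analysis exactly parallel to the verifications in the proof of \autoref{lem:stinnerhorn}: the retraction moves only generate triangles of type O1) or O2) from \autoref{rem:realorder}, and for these compositions the description of $U_{S,T}$ in \autoref{rem:ust} immediately yields the required equality. Together with \autoref{lemma:filtration} to accommodate the marking coming from $\Delta^{\{0\}}$-collapse, this completes the argument.
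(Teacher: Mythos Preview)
Your reduction to the generating cofibrations is fine and matches the paper's opening move, and your handling of the decorated generators is essentially what the paper does (it observes that $\alpha_2^\sharp$ and $\alpha_1^\sharp$ are pushouts of the flat cases). The problem is in the core step, the construction of the homotopy for $\alpha_n^\flat$.

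You are conflating two different objects. The map $\pi_{0,n}$ of \autoref{rem:maptopn} goes from the chain poset $K_n \subset \Pi(\Delta^n_\flat)((0,0),(1,n))$ to $\mathcal{P}_n$, and it is far from an isomorphism; the domain of $\alpha_n^\flat$ is (a quotient of) $K_n$, not $\mathcal{P}_n$. Your section $j\mapsto (0,j)<(1,n)$ and your ``canonical poset retraction $P_n\to P_n$'' are described entirely on $\mathcal{P}_n$, and you give no argument that they lift through $\pi_{0,n}$ to the actual straightening.

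Even on $\mathcal{P}_n$ your homotopy does not preserve the scaling. Take $n=3$, $A_0=\{0,1,2,3\}$, $A_1=\{1,2,3\}$, $A_2=\{2,3\}$. One checks from \autoref{rem:ust} that $U_{A_0,A_2}=U_{A_0,A_1}\cup U_{A_1,A_2}=\{0,1,2\}$, so $(A_0,A_1,A_2)$ is thin. But your retraction $S\mapsto\{\min S,n\}$ sends this to $(\{0,3\},\{1,3\},\{2,3\})$, for which $U_{\{0,3\},\{2,3\}}=\{0,2\}$ while $U_{\{0,3\},\{1,3\}}\cup U_{\{1,3\},\{2,3\}}=\{0,1,2\}$: not thin. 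So the homotopy is not a map of marked-scaled simplicial sets. Passing to the progressive version (removing one element at a time) does not help; the same triangle above already arises in the prism over the edge $\{0,3\}\leq\{0,1,3\}$ combined with $\{0,1,3\}\leq\{2,3\}$.

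The paper avoids this by never attempting a single global retraction. Instead it filters $K_n$ by the cubical subposets $K_n^i$ of \autoref{def:Kn} (one for each maximal simplex of $\Delta^1\times\Delta^n$), and for each piece produces a retraction $r_i\colon K_n^i\to\Delta^{[0,i]}$ with section $s_i\colon j\mapsto C_j=(0,0)<\cdots<(0,j)<(1,i)<\cdots<(1,n)$ together with a marked homotopy $s_i\circ r_i\simeq\operatorname{id}$; the point is that on each cube the relevant edges are already marked (coming from the additional marking in $\Phi_n$ on $(1,a)\to(1,b)$ morphisms), so the prism triangles are automatically handled. This filtration, and the specific staircase section, are the missing ingredients.
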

\begin{proof}
  Since both functors are left Quillen and both model categories are left proper, a simplex by simplex argument shows that it will enough to show that the components $\alpha_n^{\flat}$, $\alpha_2^\sharp$ and $\alpha_1^\sharp$ are weak equivalences. We further observe $\alpha_2^\sharp$ and $\alpha_1^{\sharp}$ can be obtained from their undecorated countermarks via pushouts along cofibrations. This shows that we can restrict our attention to $\alpha_n^\flat$ for $n\geq 0$.

  Let $S=(\Delta^n,\flat)$ and consider $\Pi_S(\Delta^n,\flat,\flat)$. Denote by $\Phi_n$ the $\on{Set}_\Delta^{\mathbf{ms}}$-category obtained from  $\Pi_S(\Delta^n,\flat,\flat)$ by marking every edge in the mapping simplicial sets of the form  
  \[
    \Pi_S(\Delta^n,\flat,\flat)((1,a),(1,b)).
  \]
  We we will show that the map $\hat{\alpha}_n^\flat:K_n=\Phi_n((0,n),(1,n)) \xlongrightarrow{} (\Delta^n,\flat)$ is a weak equivalence. Note that $\alpha_n^\flat$ is obtained from $\hat{\alpha}_n^\flat$ after identifying certain simplices. It is easy to see that we can mark every edge in $K_n$ whose image under $\hat{\alpha}_n^\flat$ becomes degenerate using pushouts along $\bS$-anodyne morphisms. We consider a filtration
  \[
    A_{-1}=K^0_n \to A_1 \to A_2 \to \cdots \to A_{n-1} \to A_n=K_n
  \]
  where $A_{i}$ is obtained from $A_{i-1}$ by attaching those simplices contained in $K_n^{i}$ (see \autoref{def:Kn}) where $K_n^i$ has the decorations induced from $\Phi_n$. We further denote by $\overline{A}_i$ the image of $A_i$ under the collapse map in the definition of $\SSt_*((\Delta^n,\flat,\flat))(*)$ and similarly denote $\overline{K}^{i}_n$.

  We will show that the restriction of $\alpha_n^\flat$ to each $\overline{A}_i$ defines a weak equivalence
  \[
    \alpha_{n,i}^{\flat}:\overline{A}_i \xlongrightarrow{} \left(\Delta^{[0,i]},\flat,\flat\right).
  \]
  Since weak equivalences are stable under filtered colimits this will imply the result. Assume that $\alpha_{n,j}^\flat$ is a weak equivalence for $j\leq i-1$ and consider the pullback-pushout square
  \[
    \begin{tikzcd}
      Q_n^i \arrow[r] \arrow[d] & K_n^i \arrow[d] \\
      A_{i-1} \arrow[r] & A_i
    \end{tikzcd}
  \]
  Observe that $\hat{\alpha}_n^\flat$ induces a commutative diagram
  \[
 \begin{tikzcd}
      Q_n^i \arrow[r] \arrow[d] & K_n^i \arrow[d,"r_i"] \\
      \Delta^{[0,i-1]}\arrow[r] & \Delta^{[0,i]}
    \end{tikzcd}
  \]
  We claim that the vertical morphisms are weak equivalences. Note that we can define for $0\leq j \leq i$
  \[
     C_j=(0,0)<(0,1)\cdots<(0,j)<(1,i)<\cdots< (1,n)
   \] 
   which provides us with a section $s_i:\Delta^{[0,i]} \to K_n^i$ sending $j$ to $C_j$. One checks that $r_i \circ s_i=\on{id}$ and that there is a marked homotopy between the identity of $K_n^i$ and $s_i \circ r_i$. Moreover, the section $s_i$ and the homotopy restrict to $Q_n^i$. It is immediate to see that both the section an the homotopy can be factored through the quotient simplicial sets $\overline{K}_n^i$ and $\overline{Q}_n^i$ which shows that $\alpha_{n,i}^\flat$ is again a weak equivalence.
\end{proof}

\begin{corollary}\label{cor:unconservative}
  Let $S$ be a scaled simplicial set, and let $\phi:\mathfrak{C}^{\mathbf{sc}}[S] \to \scr{C}$ be an $\on{Set}^+_\Delta$-enriched functor. Assume that $\phi$ is essentially surjective, and let $\alpha: \scr{F} \to \scr{F}^{\prime}$ be a map between fibrant objects of $\on{Fun}(\scr{C},\on{Set}^{\mathbf{ms}}_\Delta)$. Then the following conditions are equivalent:
  \begin{enumerate}
    \item The map $\alpha$ is a weak equivalence in  $\left(\on{Set}^{\mathbf{ms}}_\Delta\right)^{\scr{C}}$.
    \item For every $C \in \scr{C}$ the induced map $\alpha_C:\scr{F}(C) \to \scr{F}^{\prime}(C)$ is a weak equivalence.
    \item For every vertex $s \in S$ the induced map on fibres
    \[
      \UN_\phi(\scr{F})_{s} \xlongrightarrow{} \UN_\phi(\scr{F}^\prime)_{s}
    \]
    is a bicategorical equivalence.
    \item The map $\UN_\phi(\alpha)$ is a weak equivalence in $\left(\on{Set}_\Delta^{\mathbf{mb}}\right)_{/S}$.
  \end{enumerate}
\end{corollary}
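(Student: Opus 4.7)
The plan is to split the corollary into three equivalences: $(1) \Leftrightarrow (2)$ by inspection of the projective model structure, $(3) \Leftrightarrow (4)$ via the fibrewise criterion, and $(2) \Leftrightarrow (3)$ via base change for the unstraightening combined with essential surjectivity of $\phi$.

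First, $(1) \Leftrightarrow (2)$ holds by definition of the projective model structure on $\on{Fun}(\scr{C},\on{Set}_\Delta^{\mathbf{ms}})$, whose weak equivalences are the pointwise weak equivalences. For $(3) \Leftrightarrow (4)$, I would note that Proposition~\ref{prop:stLeftquillen} makes $\SSt_\phi \dashv \UN_\phi$ a Quillen adjunction, so both $\UN_\phi(\scr{F})$ and $\UN_\phi(\scr{F}')$ are fibrant in $\left(\on{Set}_\Delta^{\mathbf{mb}}\right)_{/S}$, i.e.\ $(0,1)$-fibrations over $S$; Proposition~\ref{prop:fibrewisecriterion} then yields the equivalence of~(3) and~(4) directly.

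For $(2) \Leftrightarrow (3)$, the key input is the base-change identity of Remark~\ref{rem:unbasechange}. Given a vertex $s \in S$, applying that remark to the composite $\mathfrak{C}^{\mathbf{sc}}[\Delta^0] \xlongrightarrow{s} \mathfrak{C}^{\mathbf{sc}}[S] \xlongrightarrow{\phi} \scr{C}$ produces a canonical isomorphism $\UN_\phi(\scr{F})_s \cong \UN_\ast(\scr{F}(\phi(s)))$, and similarly for $\scr{F}'$. Since $\SSt_\ast \dashv \UN_\ast$ is a Quillen equivalence by Proposition~\ref{prop:stpoint}, its right adjoint reflects weak equivalences between fibrant objects; hence condition~(3) amounts to the statement that $\alpha_{\phi(s)}$ is a weak equivalence in $\on{Set}_\Delta^{\mathbf{ms}}$ for every vertex $s \in S$. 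The implication $(2) \Rightarrow (3)$ is then immediate.

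The remaining implication $(3) \Rightarrow (2)$ is the main obstacle, and it is precisely where essential surjectivity of $\phi$ is required. Given any $C \in \scr{C}$, I would choose a vertex $s \in S$ together with an equivalence $e:\phi(s) \xlongrightarrow{\simeq} C$ in $\scr{C}$. Since $\scr{F}$ and $\scr{F}'$ are projectively fibrant $\on{Set}_\Delta^+$-enriched functors, and the marking of the mapping objects of $\scr{C}$ tracks equivalences, they send $e$ to bicategorical equivalences between their (fibrant) values. Naturality of $\alpha$ then yields a commutative square
\[
  \begin{tikzcd}
    \scr{F}(\phi(s)) \arrow[r,"\simeq"] \arrow[d,"\alpha_{\phi(s)}"'] & \scr{F}(C) \arrow[d,"\alpha_C"] \\
    \scr{F}'(\phi(s)) \arrow[r,"\simeq"] & \scr{F}'(C)
  \end{tikzcd}
\]
whose horizontal arrows are weak equivalences, and $2$-out-of-$3$ forces $\alpha_C$ to be a weak equivalence as well, completing the proof.
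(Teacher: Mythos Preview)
Your proof is correct and follows essentially the same route as the paper's: the paper also splits into $(1)\Leftrightarrow(2)$ by definition, $(3)\Leftrightarrow(4)$ via \autoref{prop:stLeftquillen} and \autoref{prop:fibrewisecriterion}, and $(2)\Leftrightarrow(3)$ by combining essential surjectivity with \autoref{rem:unbasechange} and \autoref{prop:stpoint} to reduce to the case $S=\Delta^0$. You have simply unpacked the last step in more detail than the paper does.

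One small remark: in your $(3)\Rightarrow(2)$ argument, the phrase ``the marking of the mapping objects of $\scr{C}$ tracks equivalences'' is not a hypothesis on $\scr{C}$ and need not hold in general. The correct justification is that an equivalence $e$ in $\scr{C}$ comes with a homotopy inverse $e'$ and edges in $\scr{C}(a,a)$, $\scr{C}(b,b)$ witnessing $e'e\sim\on{id}$ and $ee'\sim\on{id}$; any $\on{Set}_\Delta^+$-enriched functor carries these to edges in the (fibrant) mapping objects of $\on{Set}_\Delta^{\mathbf{ms}}$, which already suffices to exhibit $\scr{F}(e)$ as a homotopy equivalence between fibrant objects. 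Your conclusion is unaffected.
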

\begin{proof}
  The equivalence $1 \iff 2$ is immediate from the definition. Since both $\scr{F}$ and $\scr{F}^{\prime}$ are fibrant it follows from \autoref{prop:stLeftquillen} that $\UN_\phi(\alpha)$ is a map between fibrant objects in $\left(\on{Set}_\Delta^{\mathbf{mb}}\right)_{/S}$. Then the equivalence $3 \iff 4$ follows from $v)$ in \autoref{prop:fibrewisecriterion}. To finish the proof we need to show that $2 \iff 3$. However, the fact that $\phi$ is surjective allows us to reduce to the case where $S=\Delta^0$ and conclude by \autoref{rem:unbasechange} and \autoref{prop:stpoint}.
\end{proof}

\subsection{Straightening over a simplex}
In this section we establish the key element in the proof of our main theorem.
\begin{proposition}\label{prop:stsimplex}
Let $\Delta^n_\flat=(\Delta^n,\flat)$ be the minimally scaled $n$-simplex. Then the straightening-unstraightening adjunction over $\Delta^n_\flat$
\[
    \SSt_{\Delta^n}:\left(\on{Set}_\Delta^{\mathbf{mb}}\right)_{/\Delta^n_\flat} \llra \on{Fun}(\mathfrak{C}^{\mathbf{sc}}[\Delta^n_\flat],\on{Set}_\Delta^{\mathbf{ms}}): \UN_{\Delta^n}
  \]  
  is a Quillen equivalence.
\end{proposition}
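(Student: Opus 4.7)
Since \autoref{prop:stLeftquillen} already shows that $\SSt_{\Delta^n} \dashv \UN_{\Delta^n}$ is a Quillen adjunction and every object in $\left(\on{Set}_\Delta^{\mathbf{mb}}\right)_{/\Delta^n_\flat}$ is cofibrant, the plan is to verify that the derived unit
\[
\eta_X : X \xlongrightarrow{} \UN_{\Delta^n}\bigl(\SSt_{\Delta^n}(X)^{\mathrm{fib}}\bigr)
\]
is a weak equivalence for every $X$. Since its target is fibrant, \autoref{prop:fibrewisecriterion}(v) reduces this to checking that the induced map on fibers $(\eta_X)_i$ is a bicategorical equivalence for each vertex $i \in [n]$.

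The next step is to recognize $(\eta_X)_i$ as the composition
\[
X_i \xlongrightarrow{\eta_{X_i}} \UN_*\bigl(\SSt_*(X_i)^{\mathrm{fib}}\bigr) \xlongrightarrow{} \UN_*\bigl(\SSt_{\Delta^n}(X)^{\mathrm{fib}}(i)\bigr).
\]
To see this, let $\iota_i : \{i\} \hookrightarrow \Delta^n_\flat$ be the vertex inclusion and $\bar{\iota}_i : * \to \mathfrak{C}^{\mathbf{sc}}[\Delta^n_\flat]$ the associated enriched functor. Then \autoref{rem:unbasechange} gives $\UN_{\Delta^n}(\scr{G})_i \cong \UN_*(\scr{G}(i))$ naturally in $\scr{G}$, while \autoref{prop:Stbasechange} produces a comparison $\alpha_i^X : \SSt_*(X_i) \to \SSt_{\Delta^n}(X)(i)$ associated to the fiber inclusion $\iota_{i,!} X_i \hookrightarrow X$. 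The leftmost arrow in the factorization is the derived unit over a point, which is a bicategorical equivalence by \autoref{prop:stpoint}; the rightmost arrow is $\UN_*$ applied to a fibrant replacement of $\alpha_i^X$. Hence the theorem reduces to showing that $\alpha_i^X$ is a weak equivalence in $\on{Set}_\Delta^{\mathbf{ms}}$ for every $X$.

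For this last claim, $\alpha_i^{(\mathblank)}$ is a natural transformation between two colimit-preserving functors of $X$ that both preserve cofibrations (\autoref{lem:stcof}) and send $\bS$-anodyne maps to weak equivalences (\autoref{prop:stanodyne} and its point-case analogue). A cellular induction over the generating cofibrations of \autoref{def:gencof} then reduces the claim to the case when $X$ is one of the elementary $\bS$-simplices $(\Delta^k, \flat, \flat)$, $(\Delta^k, \sharp, \flat)$ or $(\Delta^2, \flat, \sharp)$ lying over an arbitrary map $\Delta^k \to \Delta^n$. In each of these cases, both sides of $\alpha_i^X$ admit explicit combinatorial descriptions via the cube-like posets $K_m$ (\autoref{def:Kn}), the scaling posets $\mathcal{P}_m$ (\autoref{def:pnscaled}), and the projections $\pi_{0,m}$ of \autoref{rem:maptopn}.

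The main obstacle will be this final combinatorial verification: one must argue that the chains appearing in $\SSt_{\Delta^n}(X)(i)$ which do not arise from the fiber $X_i$---namely those whose $\Delta^n$-coordinate dips below $i$---can be collapsed away by a filtration of $\mathbf{MS}$-anodyne extensions. This adapts the filtration arguments of \autoref{lem:stinnerhorn} and \autoref{lem:0horn} while carefully tracking how the $\Delta^n$-grading interacts with the scaling on $\mathcal{P}_m$, and ultimately rests on the same kind of section/retraction analysis (via the cellular contraction $r_n \circ s_n \simeq \on{id}$) used in the point case to produce the natural transformation $\alpha : \SSt_* \Rightarrow L_*$.
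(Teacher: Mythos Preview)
Your reduction to the fiberwise statement is correct, and the factorization through $\UN_*$ via \autoref{rem:unbasechange} is the right move. The gap is in the next step: the map
\[
\alpha_i^X : \SSt_*(X_i) \xlongrightarrow{} \SSt_{\Delta^n}(X)(i)
\]
is \emph{not} a weak equivalence for arbitrary $X$, so the cellular induction you propose cannot get off the ground. Already for $X=(\Delta^1,\flat,\flat)$ mapping to $\Delta^1_\flat$ by the identity and $i=1$ it fails: the fiber $X_1$ is a point, but $\SSt_{\Delta^1}(X)(1)$ is weakly equivalent to an interval. Indeed, a fibrant replacement of this $X$ must adjoin a marked lift of $0\to 1$, producing a fiber over $1$ equivalent to $\Delta^1$; since $\SSt_{\Delta^1}$ is left Quillen, the value at $1$ reflects this. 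More conceptually, $\SSt_{\Delta^n}(X)(i)$ depends on all of $X$ lying over $\Delta^{[0,i]}$, not just on the fiber $X_i$, so there is no hope of matching it with $\SSt_*(X_i)$ unless $X$ is already fibrant.

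The paper's argument avoids this by first restricting to fibrant $X$, then handling the vertices $i<n$ by induction via restriction to $\Delta^{[0,n-1]}$, and finally treating only the top vertex $i=n$. For that last vertex the paper does \emph{not} do a cellular induction on $X$; instead it introduces the mapping simplex $\scr{M}(\scr{F})$ (\autoref{def:mappingsimplex}), shows every fibrant $X$ is equivalent to one (\autoref{prop:reducetoMF}), and then runs a cellular argument on the \emph{functor} $\scr{F}$, reducing to corepresentables $\underline{K}$. This detour through $\scr{M}(\scr{F})$ is the missing idea: it replaces the intractable fibrant $X$ by something built from manageable pieces while preserving enough structure that the comparison at vertex $n$ becomes a concrete computation with $\mathcal{O}^n_{0\upslash}$.
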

Let us comment on the general structure of the proof before we dive into the details. In order to prove the result need to show that given an object $X \in \left(\on{Set}_\Delta^{\mathbf{mb}}\right)_{/\Delta^n_\flat}$ and an equivalence of $\on{Set}^+_\Delta$-enriched functors
\[
  \SSt_{\Delta^n}(X) \xlongrightarrow{\simeq} \scr{F}
\]
where $\scr{F}$ is a fibrant functor then the adjoint map $X \to \UN_{\Delta^n}(\scr{F})$ is a weak equivalence in $\left(\on{Set}_\Delta^{\mathbf{mb}}\right)_{/\Delta^n_\flat}$. Using 2-out-of-3 we can assume without loss of generality that $p:X \to \Delta^n$ is a fibrant object. Now, in order to check that our adjoint map is a weak equivalence it suffices by \autoref{prop:fibrewisecriterion} to check that the induced morphisms on fibres
\[
  \varphi_i:X_i \xlongrightarrow{\simeq} \UN_{\Delta^n}(\scr{F})_i
\]
are bicategorical equivalences for $0\leq i \leq n$. Let us assume that we have proved \autoref{prop:stsimplex} for $0\leq k \leq n-1$ and note the base case was already shown in \autoref{prop:stpoint}. We claim that for every $0\leq i \leq n-1$ the map $\varphi_i$ is a bicategorical equivalence. 

We consider the morphism $i: \Delta_\flat^{[0,n-1]}\to \Delta_\flat^{n}$ and denote by $\overline{X}$ the pullback of $X$ along $i$. Similarly we denote by $\overline{\scr{F}}=j^*\scr{F}$ the restriction of $\scr{F}$ along $\mathfrak{C}^{\mathbf{sc}}[i]=j$. We observe the following:
\begin{enumerate}
  \item For every functor $\scr{G}:\mathfrak{C}^{\mathbf{sc}}[\Delta^{n}_\flat] \to \on{Set}_\Delta^{\mathbf{ms}}$ we have that $j_! (j^* \scr{G})(i) \to \scr{G}(i)$ is an isomorphism for $0\leq i < n$.
  \item For every $p:X \to \Delta^n$ we have that the map $\SSt_{\Delta^n_\flat}(\overline{X})(i) \to \SSt_{\Delta^n_\flat}(X)(i)$ is an isomorphism for $0\leq i<n$.
  \item Using \autoref{prop:Stbasechange} and \autoref{rem:unbasechange} we obtain a commutative diagram
  \[
    \begin{tikzcd}
      j_! \SSt_{\Delta^{n-1}_\flat}\overline{X} \arrow[r] \arrow[d] & j_! j^*(\scr{F}) \arrow[d] \\
       \SSt_{\Delta^n_\flat}X \arrow[r] & \scr{F}
    \end{tikzcd}
  \]
\end{enumerate}
We conclude that $\SSt_{\Delta^{n-1}_\flat}\overline{X} \to \overline{\scr{F}}$ is a weak equivalence. It follows from our induction hypothesis that $\overline{X} \to \UN_{\Delta^{n-1}_\flat}(\overline{\scr{F}})$ is a weak equivalence and thus the claim is proved. 

We have reduced our problem to showing that $\varphi_n$ is a weak equivalence. We claim that is enough to show the following.
\begin{itemize}
  \item[*)] The map $\SSt_{\Delta^n}(X_n)(n) \to \SSt_{\Delta^n}(X)(n)$ is a weak equivalence. 
\end{itemize}
Indeed, let $r^n:\Delta^0 \to \Delta^n_\flat$ denote the inclusion of the terminal vertex. Given $K \in \on{Set}_\Delta^{\mathbf{ms}}$ it follows that we have an isomorphism $ r^n_!(K)(n) \isom K$ which in turn shows  that the adjoint morphism to
\[
  r^n_!(\SSt_* X_n)\isom \SSt_{\Delta^n}(X_n) \xlongrightarrow{} \SSt_{\Delta^n}(X) \xlongrightarrow{} \scr{F}
\]
which is given by $\SSt_* X_n \to \scr{F}(n)$ is a weak equivalence. We can now use \autoref{prop:stpoint} to conclude that we have a bicategorical equivalence
\[
  X_n \xlongrightarrow{\simeq} \UN_{\Delta^n}(\scr{F})_n.
\]
Therefore, we will devote the rest of this section to the proof of the claim $*)$ above.

\begin{definition}\label{def:Oupslash}
  Let $I$ be a finite linearly ordered set and let $i \in I$. We define a poset $\mathcal{O}^{I}_{i\upslash}$ whose elements are given by subsets $S \subseteq I$ such that $S \neq \emptyset$ and such that $\min(S)=i$. We declare $S \leq T$ if $S \subseteq T$. We observe that we have a map
  \[
    \pi_I:\mathcal{O}^{I}_{i \upslash} \to \Delta^{I}, \enspace S \mapsto \max(S).
  \]
  We upgrade $\pi_I:\mathcal{O}^{I}_{i \upslash} \to \Delta^I$ to an object of $\left(\on{Set}_\Delta^{\mathbf{mb}}\right)_{/\Delta^{I}_\flat}$ as follows:
  \begin{itemize}
    \item We declare an edge $S \to T$ to be marked if $T=S \cup \max(T)$.
     \item We declare every triangle of $\mathcal{O}^{I}_{i \upslash}$ to be lean.
     \item We declare a triangle to be thin if its image in $\Delta^I$ is degenerate.
   \end{itemize} 
\end{definition}

\begin{definition}
  To ease the notation we set $\OO^n=\mathfrak{C}^{\mathbf{sc}}[\Delta^n_\flat]$ (see \autoref{def:OI}).
\end{definition}

\begin{remark}
  Let $I=[n]$. For every $i\leq j$ we view $\OO^n(i,j)$ as a $\bS$ simplicial set by declaring every triangle to be thin scaled and only degenerate edges to be marked. We further note that we have functors 
  \[
    \OO^n(i,j)\times \mathcal{O}^{n}_{j \upslash} \to \mathcal{O}^{n}_{i \upslash}; \enspace (S,T) \mapsto S \cup T,
  \]
  which preserve the decorations.
\end{remark}

\begin{definition}\label{def:mappingsimplex}
  Let $\scr{F}:\mathfrak{C}^{\mathbf{sc}}[\Delta^n_\flat] \to \on{Set}_\Delta^{\mathbf{ms}}$ be a $\on{Set}_\Delta^+$-enriched functor. We define a $\bS$ simplicial set $\scr{M}(\scr{F})$ over $\Delta^n_\flat$ as follows:
  \begin{enumerate}
     \item Let $i\in [n]$. We upgrade the marked-scaled simplicial set $\scr{F}(i)$ to a $\bS$ simplicial set by declaring the collection of thin triangles and lean triangles to coincide.
    \item We define $\scr{M}(\scr{F})$ as the coequalizer of the diagram in the category of $\bS$ simplicial sets
    \[
       \begin{tikzcd}
         \coprod\limits_{i<j }\scr{F}(i) \times \OO^n(i,j)\times \mathcal{O}^n_{j\upslash} \arrow[r,shift right=0.4ex] \arrow[r,shift left=0.4ex] & \coprod\limits_{i} \scr{F}(i) \times \mathcal{O}^n_{i\upslash}, \enspace \enspace i,j \in [n], 
       \end{tikzcd}
     \] 
     where the maps in the diagram are given by 
     \[
       \scr{F}(i)\times \OO^n(i,j)\times \mathcal{O}^n_{j\upslash} \xlongrightarrow{} \scr{F}(j)\times \mathcal{O}^n_{j\upslash}, \enspace  \enspace \scr{F}(i)\times \OO^n(i,j)\times \mathcal{O}^n_{j\upslash} \xlongrightarrow{} \scr{F}(i) \times  \mathcal{O}^n_{i\upslash}. 
     \]
   \item The maps $\scr{F}(i) \times \mathcal{O}^n_{i \upslash}\to \mathcal{O}^n_{i \upslash} \to \Delta^n$ assemble into a functor $\scr{M}(\scr{F}) \to \Delta^n$.
  \end{enumerate}
\end{definition}

\begin{remark}\label{rem:overlinef}
  We observe that given $0\leq i <n$ we have a map of simplicial sets
  \[
   \Xi^{i}: \Delta^1 \times \mathcal{O}^{n-1}_{i \upslash} \xlongrightarrow{}\mathcal{O}^n_{i\upslash}, \enspace \Xi^{i}(\epsilon,S)=\begin{cases}
       S, \enspace \text{ if }\epsilon=0,\\
       S \cup \{n\}, \enspace \text{ if }\epsilon=1
   \end{cases}
  \]   
  which use this map (actually an isomorphism) to equipp $\Delta^1 \times \mathcal{O}^{n-1}_{i \upslash}$ with the induced decorations from $\mathcal{O}^n_{i\upslash}$.

Given a functor $\scr{F}:\mathfrak{C}^{\mathbf{sc}}[\Delta^n_\flat] \to \on{Set}_\Delta^{\mathbf{ms}}$ let us denote by $\overline{\scr{F}}$ the restriction of $\scr{F}$ to $\mathfrak{C}^{\mathbf{sc}}[\Delta^{n-1}_\flat]$  along the map $i:\Delta^{[0,n-1]}_{\flat} \to \Delta^n_\flat$.  We can use the maps $\Xi^{i}$ to construct a map of simplicial sets
  \[
   \Xi_{\scr{F}}:\Delta^1 \times \scr{M}(\overline{\scr{F}}) \xlongrightarrow{}\scr{M}(\scr{F}).
  \]
Finally, we equipp $\Delta^1 \times \scr{M}(\overline{\scr{F}})$ with the decorations induced (via $\Xi_{\scr{F}}$) from $\scr{M}(\scr{F})$ and denote the resulting $\bS$ simplicial set by $ \Delta^1  \overline{\tensor} \scr{M}(\scr{F})$.
\end{remark}

\begin{remark}\label{rem:propertiesofM}
  The construction of $\scr{M}(\scr{F})$ defines a colimit-preserving functor
  \[
    \scr{M}(\mathblank): \on{Fun}(\mathfrak{C}^{\mathbf{sc}}[\Delta^n_\flat],\on{Set}_\Delta^{\mathbf{ms}}) \xlongrightarrow{}  \left(\on{Set}_\Delta^{\mathbf{mb}}\right)_{/\Delta^n_{\flat}}
  \]
  which enjoys several properties:
  \begin{itemize}
    \item[i)] For every $j \in [n]$ we have an isomorphism $\scr{M}(\scr{F})_j \isom \scr{F}(j)$.
    \item[ii)] Given a functor $\scr{F}:\mathfrak{C}^{\mathbf{sc}}[\Delta^n_\flat]$ let us denote by $\overline{\scr{F}}$ as in \autoref{rem:overlinef}. Then we have a pushout diagram
    \[
      \begin{tikzcd}
       \Delta^{\{1\}}  \times \scr{M}(\overline{\scr{F}}) \ \arrow[r] \arrow[d] & \scr{F}(n) \arrow[d] \\
       \Delta^1  \overline{\tensor} \scr{M}(\scr{F}) \arrow[r] & \scr{M}(\scr{F})
      \end{tikzcd}
    \]
     of $\bS$ simplicial sets.
     \item[iii)] The functor $\scr{M}(\mathblank)$ preserves cofibrations.
  \end{itemize}
 
\end{remark}

\begin{lemma}\label{lem:01laxgray}
  Let $A \to \Delta^n_\flat$ be an object of $\left(\on{Set}_\Delta^{\mathbf{mb}}\right)_{/\Delta^n_\flat}$ we define an $\bS$ simplicial set $\Delta^1 \overline{\tensor} A \to \Delta^{n+1}_\flat$ as follows:
  \begin{itemize}
    \item The underlying simplicial set is given by the Cartesian product.
    \item The projection map $\Delta^1 \times A \to \Delta^{n+1}_{\flat}$ is induced by the map
    \[
     r: \Delta^1 \times \Delta^n \xlongrightarrow{} \Delta^{n+1}, \enspace r(\epsilon,i)=\begin{cases}
       i, \enspace \text{ if }i=0,\\
       n, \enspace \text{ if }i=1.
     \end{cases}
    \]
    \item Let $(e_1,e_A) :\Delta^1 \to \Delta^1 \times A $ is marked if $e_A$ is marked in $A$ and $e_1=0 \to 0$ \emph{or} if $e_A$ is degenerate.
    \item A triangle is lean if and only if it is lean in $A$.
    \item A triangle is thin if and only if it is lean and its image in $\Delta^{n+1}$ is degenerate.
  \end{itemize}
  Then the map $\Delta^{\{0\}} \times A \to \Delta^1 \overline{\tensor} A$ is $\bS$-anodyne.
\end{lemma}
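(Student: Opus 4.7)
The plan is to argue by a double induction: an outer induction on the cellular structure of $A$ over $\Delta^n_\flat$ using that $\bS$-anodyne maps form a weakly saturated class (so the functor $\Delta^1 \overline{\tensor}(-)$ can be analyzed cell-by-cell), and an inner induction on dimension for the generating case. The decoration-adding cofibrations of \autoref{def:gencof} (types (C2), (C3), (C4)) translate into pushouts along generators \ref{mb:equivalences} and \ref{mb:coCartoverThin} applied pointwise inside $\Delta^1 \overline{\tensor} A$, so the problem reduces to the case $A = (\Delta^k, \flat, \flat)$ equipped with an arbitrary structure map $\Delta^k \to \Delta^n$.

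The base case $k=0$ is immediate: every edge of $\Delta^1 \overline{\tensor}\Delta^0$ is marked (its $A$-component is degenerate) and every triangle is thin (its image in $\Delta^{n+1}$ is degenerate), so the map is precisely the generator \ref{mb:2CartliftsExist}. For the inductive step, by the induction hypothesis applied to $\partial\Delta^k$ and the saturation property, it suffices to show that
\[
(\Delta^1 \overline{\tensor}\partial\Delta^k) \cup_{\Delta^{\{0\}}\times\partial\Delta^k} (\Delta^{\{0\}}\times\Delta^k) \longrightarrow \Delta^1 \overline{\tensor}\Delta^k
\]
is $\bS$-anodyne, and I would do this by filtering the complement by the non-degenerate $(k+1)$-dimensional shuffles $\sigma_k, \sigma_{k-1}, \ldots, \sigma_0$ attached in that order, where $\sigma_i$ has vertex sequence $(0,0),\dots,(0,i),(1,i),\dots,(1,k)$.

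The key observation is that when $\sigma_i$ is attached, all its faces except $d_i\sigma_i$ are already present: faces $d_p\sigma_i$ with $p<i$ or $p>i+1$ project into $\partial\Delta^k$ (so they lie in $\Delta^1\overline{\tensor}\partial\Delta^k$), $d_{i+1}\sigma_i = d_{i+1}\sigma_{i+1}$ is attached in the previous step (or equals $\Delta^{\{0\}}\times\Delta^k$ when $i=k$), and $d_0\sigma_0 = \{1\}\times\Delta^k$ is filled together with the interior of $\sigma_0$. Hence $\sigma_i$ is attached along the horn $\Lambda^{k+1}_i \to \Delta^{k+1}$, which is inner for $1\leq i\leq k$ and of the type \ref{mb:2coCartesianmorphs} for $i=0$. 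The required scaling data is supplied automatically by the definition of $\Delta^1\overline{\tensor}A$: the triangle $\Delta^{\{i-1,i,i+1\}}$ of $\sigma_i$ (for $i\geq 1$) has vertices $(0,i-1),(0,i),(1,i)$ whose $A$-image $(i-1,i,i)$ is degenerate, hence the triangle is lean, matching \ref{mb:innerhorn}; for $i=0$ the edge $\Delta^{\{0,1\}} = (0,0)\to(1,0)$ is marked (its $A$-component is degenerate) and the triangle $\Delta^{\{0,1,k+1\}}$ is thin (both its $A$- and $\Delta^{n+1}$-images are degenerate), matching \ref{mb:2coCartesianmorphs}.

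The main obstacle is the bookkeeping of additional decorations inside each shuffle $\sigma_i$ that are not provided by the bare horn generator. These extra markings and scalings can be added as subsequent pushouts along \ref{mb:coCartoverThin} (every lean triangle whose $\Delta^{n+1}$-image is degenerate becomes thin), \ref{mb:composeacrossthin} (marked diagonal edges compose across the thin triangles just produced), and \ref{mb:wonky4} (to propagate thin scaling across $4$-simplices entirely contained in $\{1\}\times\Delta^k$, whose projections to $\Delta^{n+1}$ are all constant). Careful enumeration of these cases --- essentially working out which triangles of each shuffle acquire a degenerate $\Delta^{n+1}$-image and are therefore already thin by definition --- removes any obstruction and completes the proof.
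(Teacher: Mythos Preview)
Your approach is correct and is precisely the ``standard simplex by simplex argument'' the paper leaves as an exercise: reduce to generating cells and run the prism/shuffle filtration, attaching $\sigma_k,\ldots,\sigma_0$ via \ref{mb:innerhorn} for $i\geq 1$ and \ref{mb:2coCartesianmorphs} for $i=0$. Your identification of the horns, the face analysis, and the verification that the required triangle $\Delta^{\{i-1,i,i+1\}}$ (resp.\ $\Delta^{\{0,1,k+1\}}$) carries the correct decoration are all accurate.

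There is one inaccuracy in the reduction step. You claim that the decoration cofibrations (C2)--(C4) ``translate into pushouts along \ref{mb:equivalences} and \ref{mb:coCartoverThin}''. For (C2) and (C4) the relevant latching map $\{0\}\times D \cup_{\{0\}\times C}\Delta^1\overline{\tensor}C \to \Delta^1\overline{\tensor}D$ is in fact an \emph{isomorphism} (the only new marked edge, resp.\ thin triangle, already lies in $\{0\}\times D$). For (C3), however, making the $2$-simplex of $A=\Delta^2$ lean forces three additional lean triangles in $\Delta^1\overline{\tensor}\Delta^2$ outside $\{0\}\times\Delta^2$, namely $(0,0)(0,1)(1,2)$, $(0,0)(1,1)(1,2)$, and $(1,0)(1,1)(1,2)$. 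Neither \ref{mb:equivalences} nor \ref{mb:coCartoverThin} adds \emph{lean} scalings. What works instead is \ref{mb:wonky4} applied to \emph{degenerate} maps $\Delta^4\to\Delta^1\times\Delta^2$ (e.g.\ $0,1,2,3,4\mapsto (0,0),(0,1),(0,2),(0,2),(1,2)$ yields leanness of $(0,0)(0,1)(1,2)$), followed by the paper's $\Delta^3$-lemma to obtain the last one inside $\sigma_0$. With this correction your argument goes through.
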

\begin{proof}
  The claim follows from an standard simplex by simplex argument as is left as an exercise to the reader.
\end{proof}

\begin{remark}\label{rem:mfOzero}
  Note that the scaling of $\Delta^1 \overline{\tensor} \scr{M}(\overline{\scr{F}})$ given in \autoref{rem:overlinef} is precisely that of \autoref{lem:01laxgray} so in particular we obtain an anodyne morphism $\scr{M}(\overline{\scr{F}}) \to \Delta^1 \overline{\tensor} \scr{M}(\overline{\scr{F}})$. Moreover, in the particular case where $\scr{F}$ is the correspresentable functor on the object $0 $ it follows $\scr{M}(\scr{F})=\mathcal{O}^{n}_{0 \upslash}$ so applying \autoref{lem:01laxgray} $n$ times we obtain an anodyne morphism
  \[
    \Delta^0 \xlongrightarrow{\simeq} \mathcal{O}^{n}_{0 \upslash}.
  \]
where the map above selects the subset $\{0\}$.
\end{remark}

\begin{definition}
  Let $\scr{F}:\mathfrak{C}^{\mathbf{sc}}[\Delta^n_\flat] \to \on{Set}_\Delta^{\mathbf{ms}}$ be a $\on{Set}_\Delta^+$-enriched functor. We define a scaled simplicial set $\mathbb{M}(\scr{F})$ whose thin triangles are precisely the lean triangles of $\scr{M}(\scr{F})$.
\end{definition}

\begin{lemma}\label{lem:mappingsimplexfibrewise}
  Let $\scr{F}:\mathfrak{C}^{\mathbf{sc}}[\Delta^n_\flat] \to \on{Set}_\Delta^{\mathbf{ms}}$ be a fibrant $\on{Set}_\Delta^+$-enriched functor. Given a $(0,1)$-fibration $p:X \to \Delta^n_\flat$ and a morphism over $\Delta^n_\flat$
  \[
    \alpha:\scr{M}(\scr{F}) \xlongrightarrow{} X
  \]
  such that for every $j \in [n]$ the map $\alpha$ induces bicategorical equivalences $\scr{F}(j) \isom X_j$. Then it follows that $\alpha$ is a weak equivalence in $\left(\on{Set}_\Delta^{\mathbf{mb}}\right)_{/\Delta^n_\flat}$.
\end{lemma}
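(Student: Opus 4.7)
The plan is to deduce the lemma from the fibrewise criterion \autoref{prop:fibrewisecriterion}. By \autoref{rem:propertiesofM} we have canonical identifications $\scr{M}(\scr{F})_j \isom \scr{F}(j)$ for every $j \in [n]$, and so the hypothesis that $\alpha$ induces a bicategorical equivalence $\scr{F}(j) \isom X_j$ is exactly the fibrewise condition $v)$ of that proposition applied to $\alpha$. The only thing missing in order to invoke the implication $v)\implies i)$ is fibrancy of the source: once we establish that $\scr{M}(\scr{F}) \to \Delta^n_\flat$ is itself a $(0,1)$-fibration, the conclusion follows immediately.

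To establish fibrancy of $\scr{M}(\scr{F}) \to \Delta^n_\flat$ I would argue by induction on $n$. The base case $n=0$ is handled by \autoref{thm:comparisonscaled}: the $\bS$-simplicial set $\scr{M}(\scr{F})$ is the image under the left Quillen equivalence $L$ of the fibrant marked-scaled simplicial set $\scr{F}(0)$ (after identifying thin with lean triangles), and hence is fibrant over $\Delta^0$. For the inductive step one combines the pushout presentation from \autoref{rem:propertiesofM},
\[
  \scr{M}(\scr{F}) \cong \scr{F}(n) \coprod_{\Delta^{\{1\}} \times \scr{M}(\overline{\scr{F}})} \bigl(\Delta^1 \overline{\tensor} \scr{M}(\overline{\scr{F}})\bigr),
\]
with \autoref{lem:01laxgray}, which ensures that the inclusion $\Delta^{\{0\}} \times \scr{M}(\overline{\scr{F}}) \to \Delta^1 \overline{\tensor} \scr{M}(\overline{\scr{F}})$ is $\bS$-anodyne. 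By induction $\scr{M}(\overline{\scr{F}}) \to \Delta^{n-1}_\flat$ is a $(0,1)$-fibration, and therefore $\Delta^1 \overline{\tensor} \scr{M}(\overline{\scr{F}}) \to \Delta^n_\flat$ already admits lifts against any $\bS$-anodyne morphism whose image lands in $\Delta^{\{0,\dots,n-1\}}$. The composition functors $\scr{F}(i) \times \OO^n(i,n) \to \scr{F}(n)$ encoded by the $\on{Set}_\Delta^+$-enriched structure of $\scr{F}$ then produce the remaining lifts whose top vertex projects to $n$, crucially using that $\scr{F}(n)$ is fibrant in $\on{Set}_\Delta^{\mathbf{ms}}$.

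The hard part is the case analysis in the inductive step, namely verifying the lifting property against each generator of \autoref{def:mbsanodyne} for simplices that straddle the new top vertex $n$. The most delicate cases are those of type \ref{mb:2coCartesianmorphs} and \ref{mb:wonky4}: there one must combine a partial lift produced in $\scr{M}(\overline{\scr{F}})$ with a coherent transport of its image to $\scr{F}(n)$ along the composition chains in $\OO^n(-,n)$, and then check that the resulting $n$-simplex of $\scr{M}(\scr{F})$ carries the prescribed markings and scaling. Once this verification is finished, both $\scr{M}(\scr{F})$ and $X$ are $(0,1)$-fibrations over $\Delta^n_\flat$ and $\alpha$ is a fibrewise bicategorical equivalence, so \autoref{prop:fibrewisecriterion} delivers the desired weak equivalence.
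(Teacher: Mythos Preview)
Your plan hinges on the claim that $\scr{M}(\scr{F}) \to \Delta^n_\flat$ is itself a $(0,1)$-fibration, so that \autoref{prop:fibrewisecriterion} applies directly. This claim is false already for $n=1$. Take $\scr{F}(0)=\Delta^0$, $\scr{F}(1)=\Delta^1$ (both fibrant in $\on{Set}_\Delta^{\mathbf{ms}}$), and let the structure map pick the object $0\in\Delta^1$. Then $\scr{M}(\scr{F})$ is the spine $\Delta^{\{0,1\}}\cup_{\{1\}}\Delta^{\{1,2\}}$, projecting to $\Delta^1$ by $0\mapsto 0$ and $1,2\mapsto 1$, with the edge $0\to 1$ marked. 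The inner-horn lifting problem of type \ref{mb:innerhorn} given by the identity $\Lambda^2_1\to\scr{M}(\scr{F})$ over $s_1\colon\Delta^2\to\Delta^1$ has no solution, since $\scr{M}(\scr{F})$ contains no nondegenerate $2$-simplex at all. More generally the mapping simplex glues $\scr{F}(n)$ to $\Delta^1\overline{\tensor}\scr{M}(\overline{\scr{F}})$ along a map that need not be a cofibration, and even when it is, the pushout of two fibrant objects along a non-anodyne cofibration has no reason to be fibrant. The inductive scheme you outline cannot be completed: the ``remaining lifts whose top vertex projects to $n$'' are exactly the ones for which no simplex exists in $\scr{M}(\scr{F})$.

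The paper avoids this issue entirely. Rather than trying to place $\scr{M}(\scr{F})$ inside the hypotheses of \autoref{prop:fibrewisecriterion}, it passes to the underlying scaled simplicial sets $\mathbb{M}(\scr{F})\to\bcat{X}$ and argues that this map is a bicategorical equivalence. The key step is a computation at the level of $\mathfrak{C}^{\mathbf{sc}}$: one shows that precomposition with the marked edge $(a_i,\{i\})\to(a_i,\{i,n\})$ induces an equivalence on mapping spaces in $\mathfrak{C}^{\mathbf{sc}}[\mathbb{M}(\scr{F})]$, and this is verified by replacing $\mathfrak{C}^{\mathbf{sc}}[\Delta^1\times\mathbb{M}(\overline{\scr{F}})]$ with the product $\mathfrak{C}^{\mathbf{sc}}[\Delta^1]\times\mathfrak{C}^{\mathbf{sc}}[\mathbb{M}(\overline{\scr{F}})]$ in the relevant pushout. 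Once the bicategorical equivalence is established, a short argument with $\on{Map}^{\mathbf{sc}}_{\Delta^n}(-,-)$ upgrades it to a weak equivalence in $\left(\on{Set}_\Delta^{\mathbf{mb}}\right)_{/\Delta^n_\flat}$.
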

\begin{proof}
  Since $p:X \to \Delta^n_\flat$ is a $(0,1)$-fibration it follows that we can construct an associated $\infty$-bicategory $\bcat{X}$ by declaring lean triangles to be scaled. We claim that is enough to show the following:
  \begin{itemize}
    \item[$\diamond$)] The associated map $\alpha: \mathbb{M}(\scr{F}) \to \bcat{X}$ is a bicategorical equivalence. 
  \end{itemize}
  Indeed, given a $(0,1)$-fibration $q: Z \to \Delta^n_\flat$ with associated $\infty$-bicategory $\bcat{Z}$ it follows from our claim that we have a bicategorical equivalence
  \[
    \phi:\on{Fun}^{\mathbf{sc}}(\bcat{X},\bcat{Z})\xlongrightarrow{\simeq}\on{Fun}^{\mathbf{sc}}(\mathbb{M}(\scr{F}),\bcat{Z}).
  \]
  Moreover, we obtain a commutative diagram
  \[
    \begin{tikzcd}
      \on{Map}^{\mathbf{sc}}_{\Delta^n}(\bcat{X},\bcat{Z}) \arrow[r,"\psi"] \arrow[d] &  \on{Map}^{\mathbf{sc}}_{\Delta^n}(\mathbb{M}(\scr{F}),\bcat{Z}) \arrow[d] \\
        \on{Fun}^{\mathbf{sc}}(\bcat{X},\bcat{Z}) \arrow[r,"\phi"] & \on{Fun}^{\mathbf{sc}}(\mathbb{M}(\scr{F}),\bcat{Z})
    \end{tikzcd}
  \]
  where $\on{Map}^{\mathbf{sc}}_{\Delta^n}(-,-)$ denotes the full subcategory on maps which preserve the marked edges and commute with the projection maps. We observe that higher simplices in the aforementioned scaled simplicial set are also compatible with the projection maps. Since a simplex in $\scr{M}(\scr{F})$ is thin if and only if it is lean and its image is thin in $\Delta^n_\flat$ we see that if $\psi$ is a bicategorical equivalence it will follow that $\alpha$ is a weak equivalence $\left(\on{Set}_\Delta^{\mathbf{mb}}\right)_{/\Delta^n_\flat}$. It is clear by construction that $\psi$ is fully faithful so it will suffice to show that it is essentially surjective.

  Give $u \in \on{Map}^{\mathbf{sc}}_{\Delta^n}(\mathbb{M}(\scr{F}),\bcat{Z})$ we can find some $v \in \on{Fun}^{\mathbf{sc}}(\bcat{X},\bcat{Z})$ such that $\phi(v) \isom u$. Consequently, it will be enough to show that $v$ factors through $ \on{Map}^{\mathbf{sc}}_{\Delta^n}(\bcat{X},\bcat{Z})$. Let $x \in X$ such that $p(x)=i$ and pick an equivalence $\alpha(y) \xrightarrow{\simeq} x$. We then see that 
  \[
    v(\alpha(y)) \isom u(y), \enspace \enspace v(\alpha(y))\isom v(x) \implies q(v(x))=i.
  \]
  Therefore, we can focus our attention into proving the statement  $\diamond)$ above. Let $x_i \in \mathbb{M}(\scr{F})$ be an object represented by a pair $(a_i,\{i\})$ in $\scr{F}(i)\times \mathcal{O}^n_{i\upslash}$. We consider a marked morphism $f:x_i \to \hat{x}_i$ given by $(a_i,\{i\}) \to (a_i,\{in\})$. Given an object $x_n$, lying over $n$, we claim the following:
  \begin{itemize}
    \item[$\star$)] Restriction along $f$ induces a weak equivalence of marked simplicial sets
    \[
      \mathfrak{C}^{\mathbf{sc}}[\mathbb{M}(\scr{F})](\hat{x}_i,x_n)\xlongrightarrow{\simeq}  \mathfrak{C}^{\mathbf{sc}}[\mathbb{M}(\scr{F})](x_i,x_n).
    \]
    
  \end{itemize}
  It is easy to see that $\diamond)$ follows from $\star)$ together with a routinary inductive argument.

    We observe that we have cofibrations of $\on{Set}_\Delta^{+}$-enriched categories
    \[
      \mathfrak{C}^{\mathbf{sc}}[\mathbb{M}(\overline{\scr{F}})] \xrightarrow{} \mathfrak{C}^{\mathbf{sc}}[\Delta^1 \times \mathbb{M}(\overline{\scr{F}})], \enspace  \enspace \enspace \mathfrak{C}^{\mathbf{sc}}[\mathbb{M}(\overline{\scr{F}})] \xrightarrow{} \mathfrak{C}^{\mathbf{sc}}[\Delta^1] \times  \mathfrak{C}^{\mathbf{sc}}[\mathbb{M}(\overline{\scr{F}})]
    \]
    and a diagram
    \[
      \begin{tikzcd}
         \mathfrak{C}^{\mathbf{sc}}[\mathbb{M}(\overline{\scr{F}})] \arrow[d] \arrow[r] & \mathfrak{C}^{\mathbf{sc}}[\Delta^1 \times \mathbb{M}(\overline{\scr{F}})] \arrow[r] \arrow[d] & \mathfrak{C}^{\mathbf{sc}}[\Delta^1] \times  \mathfrak{C}^{\mathbf{sc}}[\mathbb{M}(\overline{\scr{F}})] \arrow[d] \\
         \mathfrak{C}^{\mathbf{sc}}[\scr{F}(n)] \arrow[r] & \mathfrak{C}^{\mathbf{sc}}[\mathbb{M}(\scr{F})] \arrow[r] & \mathbb{P}(\scr{F}) 
      \end{tikzcd}
    \]
    where both squares are pushouts. It is not hard to see using the fact $\mathfrak{C}^{\mathbf{sc}}[-]$ is a left Quillen equivalence that the top right-most horizontal map is a weak equivalence. We conclude that the bottom right-most horizontal morphism is also a weak equivalence. It is easy to see by direct inspection that the analogous claim to $\star)$ holds for $\mathbb{P}(\scr{F})$ and thus the proof is finished.
\end{proof}

\begin{proposition}\label{prop:reducetoMF}
  Let $p:X \to \Delta^n_\flat$ be a $(0,1)$-fibration. Then there exists a projectively fibrant-cofibrant functor $\scr{F}:\mathfrak{C}^{\mathbf{sc}}[\Delta^n_\flat] \to \on{Set}_\Delta^{\mathbf{ms}}$ an a weak equivalence $\scr{M}(\scr{F}) \to X$ in $\left(\on{Set}_\Delta^{\mathbf{mb}}\right)_{/\Delta^n_\flat}$.
\end{proposition}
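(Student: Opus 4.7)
The plan is to proceed by induction on $n$. For the base case $n=0$, any $(0,1)$-fibration $X\to\Delta^0$ is a fibrant $\bS$ simplicial set, and setting $\scr{F}(0)=L_*(X)$ gives $\scr{M}(\scr{F})=\scr{F}(0)$ together with an equivalence to $X$ by \autoref{thm:comparisonscaled} and \autoref{prop:stpoint}. For the inductive step I would use the pushout description in \autoref{rem:propertiesofM}(ii) as a blueprint: given the inductively supplied functor over $\Delta^{[0,n-1]}_\flat$, extend it to $\Delta^n_\flat$ by adding a fibrant value $\scr{F}(n)$ at the top vertex together with appropriate action maps.

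Concretely, let $\overline{X}$ denote the pullback of $X$ along $i:\Delta^{[0,n-1]}_\flat\hookrightarrow\Delta^n_\flat$. By the inductive hypothesis there exist a projectively fibrant-cofibrant $\overline{\scr{F}}:\mathfrak{C}^{\mathbf{sc}}[\Delta^{[0,n-1]}_\flat]\to\on{Set}_\Delta^{\mathbf{ms}}$ and a weak equivalence $\overline{\varphi}:\scr{M}(\overline{\scr{F}})\xrightarrow{\simeq}\overline{X}$ in $(\on{Set}_\Delta^{\mathbf{mb}})_{/\Delta^{[0,n-1]}_\flat}$. Compose with $\overline{X}\hookrightarrow X$ and invoke \autoref{lem:01laxgray}: the inclusion $\scr{M}(\overline{\scr{F}})\hookrightarrow \Delta^1\overline{\tensor}\scr{M}(\overline{\scr{F}})$ is $\bS$-anodyne and its target projects to $\Delta^n_\flat$ by sending $\Delta^{\{1\}}$ onto the vertex $n$. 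Using that $X\to \Delta^n_\flat$ is a $(0,1)$-fibration, lift $\overline{\varphi}$ along this $\bS$-anodyne inclusion to obtain $\psi:\Delta^1\overline{\tensor}\scr{M}(\overline{\scr{F}})\to X$, whose restriction $u:\Delta^{\{1\}}\times \scr{M}(\overline{\scr{F}})\to X_n$ lands in the fibre over $n$, viewed as a marked-scaled simplicial set. Functorially factor $u$ in $\on{Set}_\Delta^{\mathbf{ms}}$ as a cofibration followed by a trivial fibration $\Delta^{\{1\}}\times\scr{M}(\overline{\scr{F}})\hookrightarrow \scr{F}(n)\xrightarrow{\simeq} X_n$, yielding a fibrant $\scr{F}(n)$. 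Unwinding the coequalizer in \autoref{def:mappingsimplex} and using the natural identification of $\mathcal{O}^n_{i\upslash}$ restricted to the $\Delta^{\{1\}}$ slice with $\OO^n(i,n)$, the cofibration above packages into a coherent system of action maps $\OO^n(i,n)\times\overline{\scr{F}}(i)\to\scr{F}(n)$, which together with $\overline{\scr{F}}$ define a $\on{Set}_\Delta^+$-enriched functor $\scr{F}:\mathfrak{C}^{\mathbf{sc}}[\Delta^n_\flat]\to\on{Set}_\Delta^{\mathbf{ms}}$.

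By \autoref{rem:propertiesofM}(ii), $\scr{M}(\scr{F})$ is precisely the pushout of $\Delta^1\overline{\tensor}\scr{M}(\overline{\scr{F}})\leftarrow\Delta^{\{1\}}\times\scr{M}(\overline{\scr{F}})\to\scr{F}(n)$, so $\psi$ together with $\scr{F}(n)\to X_n$ glue to a morphism $\varphi:\scr{M}(\scr{F})\to X$ over $\Delta^n_\flat$. Its fibre at $j<n$ coincides with $\overline{\varphi}_j$, a bicategorical equivalence by induction, and its fibre at $n$ is the trivial fibration $\scr{F}(n)\to X_n$. Since each $\scr{F}(j)$ is fibrant, $\scr{F}$ is projectively fibrant; projective cofibrancy is preserved because the extension is a pushout against a projective generating cofibration attached at the maximal vertex $n$. \autoref{lem:mappingsimplexfibrewise} then upgrades the fibrewise equivalence $\varphi$ to a weak equivalence in $(\on{Set}_\Delta^{\mathbf{mb}})_{/\Delta^n_\flat}$, closing the induction. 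The main obstacle is the bookkeeping implicit in the previous paragraph: producing the action maps $\OO^n(i,n)\times\overline{\scr{F}}(i)\to\scr{F}(n)$ from the single map $u$ and verifying that they satisfy the coherence conditions necessary to extend $\overline{\scr{F}}$ to an honest $\on{Set}_\Delta^+$-enriched functor, together with verifying that projective cofibrancy survives this ``free extension at $n$''. Both rest on carefully unpacking the coequalizer definition of $\scr{M}(\mathblank)$ and the combinatorics of $\mathcal{O}^n_{i\upslash}$ contained in \autoref{rem:overlinef} and \autoref{def:Oupslash}.
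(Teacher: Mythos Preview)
Your proposal is correct and follows essentially the same approach as the paper: induction on $n$, restriction to $\Delta^{[0,n-1]}$ to obtain $\overline{\scr{F}}$, lifting along the $\bS$-anodyne inclusion $\scr{M}(\overline{\scr{F}})\hookrightarrow\Delta^1\overline{\tensor}\scr{M}(\overline{\scr{F}})$ to produce a map into $X_n$, factoring this as a cofibration followed by a trivial fibration to define $\scr{F}(n)$, and invoking \autoref{lem:mappingsimplexfibrewise}. You are more explicit than the paper about two points it leaves implicit: the identification of $\Delta^{\{1\}}\times\scr{M}(\overline{\scr{F}})$ with the latching object $(\mathfrak{C}^{\mathbf{sc}}[i]_!\overline{\scr{F}})(n)$ (so that the cofibration genuinely furnishes the required $\OO^n(i,n)$-action maps), and the preservation of projective cofibrancy under this extension at the terminal vertex.
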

\begin{proof}
  Using \autoref{lem:mappingsimplexfibrewise} it will be enough to construct a map $\scr{M}(\scr{F}) \to X$ inducing categorical equivalences on fibres. We proceed using induction on $n$, the case $n=0$ being clear. Let us suppose that the claim holds for $n-1$ and let $i:\Delta^{[0,n-1]}_\flat \to \Delta^n_\flat$. We denote by $\overline{X}$ the restriction of $X$ along $i$. Using our induction hypothesis we obtain a projectively fibrant-cofibrant functor $\scr{\overline{F}}:\mathfrak{C}^{\mathbf{sc}}[\Delta^{n-1}_\flat] \to \on{Set}_\Delta^{\mathbf{ms}}$ and fibrewise equivalence $\scr{M}(\overline{\scr{F}}) \to \overline{X}$. We use \autoref{rem:mfOzero} to provide a solution to the lifting problem
  \[
    \begin{tikzcd}
      \scr{M}(\overline{\scr{F}}) \arrow[r] \arrow[d] & X \arrow[d,"p"] \\
      \Delta^1 \overline{\tensor} \scr{M}(\overline{\scr{F}}) \arrow[r] \arrow[ur,dotted] & \Delta^n
    \end{tikzcd}
  \]
  which provides us with a map $\scr{M}(\overline{\scr{F}}) \to X_n$. We factor this map as
  \[
    \scr{M}(\overline{\scr{F}}) \xlongrightarrow{u} \hat{X}_n \xlongrightarrow{v} X_n
  \]
  where $u$ is a cofibration and $v$ is a trivial fibration. Note that it follows that $\hat{X}_n$ is also an $\infty$-bicategory. We can use the map $u$ to extend $\overline{\scr{F}}$ to a fibrant-cofibrant functor $\scr{F}:\mathfrak{C}^{\mathbf{sc}}[\Delta^n_\flat] \to \on{Set}_\Delta^{\mathbf{ms}}$ such that $\scr{F}(n)=\hat{X}_n$. The claim now follows.
\end{proof}

\begin{proposition}
  Let $p:X \to \Delta^n_\flat$ be a $(0,1)$-fibration. Then there exists an equivalence of marked-scaled simplicial sets
  \[
    \SSt_{\Delta^n}(X_n) (n) \xlongrightarrow{\simeq} \SSt_{\Delta^n}(X) (n)
  \]
  where $X_n$ denotes the fibre over $n$ of $X$.
\end{proposition}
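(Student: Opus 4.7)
The plan is to apply \autoref{prop:reducetoMF} to reduce to the case $X=\scr{M}(\scr{F})$ for a projectively fibrant-cofibrant functor $\scr{F}:\mathfrak{C}^{\mathbf{sc}}[\Delta^n_\flat]\to\on{Set}_\Delta^{\mathbf{ms}}$: since $\SSt_{\Delta^n}$ is left Quillen (\autoref{prop:stLeftquillen}), a fibrewise bicategorical equivalence $\scr{M}(\scr{F})\to X$ in $\left(\on{Set}_\Delta^{\mathbf{mb}}\right)_{/\Delta^n_\flat}$ is a weak equivalence and is therefore preserved by straightening, so we may replace $X$ by $\scr{M}(\scr{F})$ throughout. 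Under this reduction part (i) of \autoref{rem:propertiesofM} identifies $X_n$ with $\scr{F}(n)$, and the map whose equivalence we must verify is the one induced by the fibre inclusion $\scr{F}(n)\hookrightarrow\scr{M}(\scr{F})$ over $\Delta^n_\flat$.

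Next, apply $\SSt_{\Delta^n}$ to the pushout square of $\bS$ simplicial sets from part (ii) of \autoref{rem:propertiesofM},
\[
  \scr{M}(\scr{F})\;=\;\scr{F}(n)\coprod_{\Delta^{\{1\}}\times\scr{M}(\overline{\scr{F}})}\Delta^1\,\overline{\tensor}\,\scr{M}(\overline{\scr{F}}),
\]
and evaluate at $n$. Since $\SSt_{\Delta^n}$ preserves colimits and cofibrations (\autoref{lem:stcof}), the resulting diagram in $\on{Set}_\Delta^{\mathbf{ms}}$ is a pushout along a cofibration in the (left proper) model category of marked-scaled simplicial sets. By left properness the claim reduces to showing that the natural map
\[
  \SSt_{\Delta^n}(\Delta^{\{1\}}\times\scr{M}(\overline{\scr{F}}))(n)\longrightarrow\SSt_{\Delta^n}(\Delta^1\,\overline{\tensor}\,\scr{M}(\overline{\scr{F}}))(n)
\]
is a weak equivalence.

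The main obstacle is this last verification, which I expect to require a direct computation. Using \autoref{lem:01laxgray}, the inclusion $\Delta^{\{0\}}\times\scr{M}(\overline{\scr{F}})\hookrightarrow\Delta^1\,\overline{\tensor}\,\scr{M}(\overline{\scr{F}})$ is $\bS$-anodyne, so by \autoref{prop:stanodyne} the target in the display above is weakly equivalent to $\SSt_{\Delta^n}(\Delta^{\{0\}}\times\scr{M}(\overline{\scr{F}}))(n)$. Both remaining pieces can then be computed through the base-change formula of \autoref{prop:Stbasechange}: for the $\Delta^{\{1\}}$-side one base-changes along the terminal vertex $\{n\}\hookrightarrow\Delta^n_\flat$ and invokes \autoref{prop:stpoint} to identify the value at $n$ with $\scr{M}(\overline{\scr{F}})$ itself, while for the $\Delta^{\{0\}}$-side one base-changes along the face $\Delta^{[0,n-1]}_\flat\hookrightarrow\Delta^n_\flat$ and applies the inductive hypothesis of \autoref{prop:stsimplex} to $\overline{\scr{F}}$, so that the value at $n$ is computed by left Kan extension as a colimit over $i<n$ of $\scr{F}(i)$ tensored with the mapping marked-scaled simplicial sets $\OO^n(i,n)$. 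Matching these two descriptions will come down to recognising that both are glued together by precisely the structure maps of $\scr{F}$ recorded by the $\Pi$-construction of \autoref{def:msenriched}, and the comparison is an equivalence because the mapping simplicial sets controlling the tensoring at the vertex $n$ are weakly contractible. This closes the induction step of \autoref{prop:stsimplex}.
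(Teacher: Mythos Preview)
Your reduction to $X=\scr{M}(\scr{F})$ via \autoref{prop:reducetoMF}, followed by applying $\SSt_{\Delta^n}(-)(n)$ to the pushout square of \autoref{rem:propertiesofM}(ii), is correct and matches the spirit of the paper's argument. The identification of the crucial remaining step---that the cofibration
\[
  \SSt_{\Delta^n}\bigl(\Delta^{\{1\}}\times\scr{M}(\overline{\scr{F}})\bigr)(n)\;\longrightarrow\;\SSt_{\Delta^n}\bigl(\Delta^1\,\overline{\tensor}\,\scr{M}(\overline{\scr{F}})\bigr)(n)
\]
must be a weak equivalence---is also right.

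The gap is in how you propose to finish. You want to compare the $\Delta^{\{0\}}$- and $\Delta^{\{1\}}$-sides by invoking the inductive hypothesis of \autoref{prop:stsimplex} to identify $\SSt_{\Delta^{n-1}}(\scr{M}(\overline{\scr{F}}))$ with $\overline{\scr{F}}$ \emph{as an enriched functor}, so that $(j_!\,\SSt_{\Delta^{n-1}}(\scr{M}(\overline{\scr{F}})))(n)$ becomes the coend $\int^{i}\scr{F}(i)\times\OO^n(i,n)$. But the inductive hypothesis only says that $\SSt_{\Delta^{n-1}}\dashv\UN_{\Delta^{n-1}}$ is a Quillen equivalence; it does \emph{not} supply a natural weak equivalence $\SSt_{\Delta^{n-1}}\circ\scr{M}\Rightarrow\operatorname{id}$. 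You would first need to produce a specific comparison map $\SSt_{\Delta^{n-1}}(\scr{M}(\overline{\scr{F}}))\to\overline{\scr{F}}$ (equivalently, a map $\scr{M}(\overline{\scr{F}})\to\UN_{\Delta^{n-1}}(\overline{\scr{F}})$) and prove it is an equivalence---which is essentially as hard as the proposition itself. Even granting this, the further ``matching'' you describe is asserted rather than argued: you need the zig-zag through $\SSt_{\Delta^n}(\Delta^1\,\overline{\tensor}\,\scr{M}(\overline{\scr{F}}))(n)$ to commute with your identifications, and the phrase ``the mapping simplicial sets controlling the tensoring at the vertex $n$ are weakly contractible'' does not name which map is being analysed or why contractibility suffices.

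The paper circumvents exactly this difficulty. Rather than attempting the comparison for a general cofibrant $\scr{F}$ via induction on $n$, it observes that both $\scr{G}\mapsto\SSt_{\Delta^n}(\scr{G}(n))(n)$ and $\scr{G}\mapsto\SSt_{\Delta^n}(\scr{M}(\scr{G}))(n)$ are colimit- and cofibration-preserving, and then filters an arbitrary cofibrant $\scr{G}$ by the left Kan extensions $\scr{G}_j=f^j_!\,(f^j)^*\scr{G}$ along $\Delta^{[0,j]}_\flat\hookrightarrow\Delta^n_\flat$. Each step is a pushout of functors of the form $r^j_!A$, and for $j>0$ one can genuinely use induction on $n$ because $\scr{M}(r^j_!A)$ factors through $\Delta^{[j,n]}_\flat$. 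This reduces everything to corepresentables $\underline{K}=r^0_!K$, for which $\scr{M}(\underline{K})\cong K\times\mathcal{O}^n_{0\upslash}$; only in this explicit situation does the paper carry out the comparison you sketch, writing down the map $\psi$ and the commuting triangle with the components of $\alpha$ from \autoref{prop:stpoint} by hand. In short, your outline has the right shape, but the step you label ``the main obstacle'' really is one: the paper resolves it not by an inductive appeal to \autoref{prop:stsimplex} for $\overline{\scr{F}}$, but by first reducing to generators where the explicit geometry of $\mathcal{O}^n_{0\upslash}$ makes the comparison tractable.
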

\begin{proof}
  We note that due to \autoref{prop:reducetoMF} we have a fibrant-cofibrant functor $\scr{F}:\mathfrak{C}^{\mathbf{sc}}[\Delta^n_\flat] \to \on{Set}_\Delta^{\mathbf{ms}}$ and a weak equivalence $\scr{M}(\scr{F}) \to X$. We will show that for every projectively cofibrant functor $\scr{G}$  the map
  \[
    \eta_{\scr{G}}:\SSt_{\Delta^n}(\scr{G}(n)) (n) \xlongrightarrow{\simeq} \SSt_{\Delta^n}(\scr{M}(\scr{G})) (n)
  \]
  is a weak equivalence of marked-scaled simplicial sets. We observe that we have a pair of functors
  \[
    L_i: \on{Fun}(\mathfrak{C}^{\mathbf{sc}}[\Delta^n_\flat],\on{Set}_\Delta^{\mathbf{ms}}) \xlongrightarrow{} \on{Set}_\Delta^{\mathbf{ms}}, \enspace i=1,2
  \]
  given by $L_1(\scr{G})=\SSt_{\Delta^n}(\scr{G}(n)) (n)$ and $L_2=\SSt_{\Delta^n}(\scr{M}(\scr{G})) (n)$ which preserve colimits and cofibrations together with a natural transformation $\eta: L_1 \xRightarrow{} L_2$. We say that a functor $\scr{G}$ is \emph{good} if $\eta_{\scr{G}}$ is a weak equivalence. To finish the proof we need to show that every cofibrant functor is good.

  For every $-1\leq j \leq n$ let $i_{j}: \Delta^{[0,j]}_\flat \to \Delta^n_\flat$ be the obvious inclusion with the convention $\Delta^{[0,-1]}=\emptyset$. Given a functor $\scr{G}$ we define $\scr{G}_j$ as the result of first restricting $\scr{G}$ along $\mathfrak{C}^{\mathbf{sc}}[i_j]=f^j$ and then applying the left Kan extension along $f^j$. We set $\scr{G}_{-1}$ to be the initial functor.  We further denote $r^j: \mathfrak{C}^{\mathbf{sc}}[\Delta^0] \to \mathfrak{C}^{\mathbf{sc}}[\Delta^n_\flat]$ the functor that picks the object $j$ for $0\leq j \leq n$.

  Note that given a projectively cofibrant functor $\scr{G}$ then it follows that the canonical map $\scr{G}_{j-1}(j) \to \scr{G}(j)$ is a cofibration for $0\leq j \leq n$. We further note that we we have a pushout diagram 
  \[
    \begin{tikzcd}
      r^j_! \scr{G}_{j-1}(j) \arrow[r] \arrow[d] & r^j_!\scr{G}(j) \arrow[d] \\
      \scr{G}_{j-1} \arrow[r] & \scr{G}_j
    \end{tikzcd}
  \]
  where $r^j_!$ denotes the left Kan extension functor algon $r^j$. Since the top horizontal map is a cofibration it follows that in order to show that $\scr{G}_j$ is good it is enough to show that $ r^j_! \scr{G}_{j-1}(j),  r^j_!\scr{G}(j)$ and $\scr{G}_{j-1}$ are good. We note the following
  \begin{itemize}
    \item If $j>0$ it follows that $r^j_!A (i)=\emptyset$ for $i< j$. In particular, it follows that $\scr{M}(r^j_!A)$ factors through  $\Delta^{[j,n]}_\flat$. We can now induction on $n$ to see that $r^j_! A$ is good for $j>0$.
  \end{itemize}
  Finally, we can use induction on $j$ to reduce our problem to show that for every $K \in \on{Set}_\Delta^{\mathbf{ms}}$ the functor 
  \[
    \underline{K}: \mathfrak{C}^{\mathbf{sc}}[\Delta^n_\flat] \xlongrightarrow{} \on{Set}_\Delta^{\mathbf{ms}}; \enspace i \mapsto K \times \OO^n(0,i)
  \]
  is good. Note that we can use further simplify our computation to the cases where $K=\Delta^k_\flat$ for $k\geq 0$, $\Delta^2_\sharp$ and $(\Delta^1)^\sharp$. We will only show the case $K=\Delta^k_\flat$ for $k \geq 0$ the other cases will follow by a totally analogous argument.

  We can identify $\SSt_\Delta(\Delta^k \times \mathcal{O}^{n}_{0\upslash})(n)$ as a quotient of the (decorated) poset of chains of $\Delta^1 \times \Delta^k \times \mathcal{O}^{n}_{0 \upslash}$ starting at $(0,0,\{0\})$ at ending at some element $(1,\ell,S)$ with $\max(S)=n$. We define a map 
  \[
    \psi:\SSt_\Delta(\Delta^{k}_\flat \times \mathcal{O}^{n}_{0\upslash})(n) \xlongrightarrow{} \Delta^k_\flat \times \OO^n(0,n); 
  \]
  by sending a chain $C=\{(\epsilon_i,k_i,S_i)\}_{i=0}^{i=\ell}$ to $(k_{i_{m_C}},S_{m_C} \cup \{\max(S_{j})\}_{j\geq m_C})$ where $m_C$ is the biggest index such that $\epsilon_i=0$. We consider the map $p_0:\Delta^k \to \Delta^n_\flat$ where $\Delta^k$ has the minimal decorations and where $p_0$ is constant on the vertex $0$ we can now look at the commutative diagram
  \[
   \begin{tikzcd}
    \SSt_{\Delta^n}(\Delta^k)(n) \arrow[dr,"u",swap] \arrow[r,"\varphi"] & \SSt_\Delta(\Delta^{k}_\flat \times \mathcal{O}^{n}_{0\upslash})(n)  \arrow[d,"\psi"] &  \SSt_{\Delta^n}(\Delta^k \times \OO^n(0,n))(n) \arrow[l,"\phi",swap] \arrow[dl,"v"] \\
   & \Delta^k_\flat \times \OO^n(0,n) &  
   \end{tikzcd}
  \]
and make the following observations:
\begin{enumerate}
  \item It follows from \autoref{lem:01laxgray}, \autoref{rem:mfOzero} and \autoref{prop:pushoutproduct} that $\varphi$ is a weak equivalence.
  \item The map $u: \SSt_{\Delta^n}(\Delta^k)(n) \simeq \SSt_*(\Delta^k)\times \OO^n(0,n) \xlongrightarrow{} \Delta^k_\flat \times \OO^n(0,n)$ can be identified with a product of the natural transformation $\alpha$ at $\Delta^k$ considered in the proof of \autoref{prop:stpoint} and the identity map on $\OO^n(0,n)$ and its a consequently a weak equivalence. It follows from $1$ that $\psi$ is also a weak equivalence.
  \item The map $v:\SSt_{\Delta^n}(\Delta^k \times \OO^n(0,n))(n) \isom \SSt_*(\Delta^k \times \OO^n(0,n)) \to \Delta^k_\flat \times \OO^n(0,n) $ can also be identified with the component of the natural transformation $\alpha$ and thus it is a weak equivalence. We conclude that $\phi$ is a weak equivalence.
\end{enumerate}
Our final claim is established. Therefore, \autoref{prop:stsimplex} is proved.
\end{proof}

\subsection{The main theorem}
Let $\on{Set}^{\mathbf{sc}}_\Delta$ be the category of scaled simplicial sets and observe that we have a pair of functors,
\[
  F_1,F_2: (\on{Set}^{\mathbf{sc}})^\op_\Delta \xlongrightarrow{} \on{Set}_\Delta^{+}\!\!-\on{Cat}, \enspace F_1(S)=\on{Fun}^{\operatorname{o}}(\mathfrak{C}^{\mathbf{sc}}[S],\on{Set}_\Delta^{\mathbf{ms}}), \enspace F_2(S)=\left(\on{Set}_\Delta^{\mathbf{mb}}\right)^{\operatorname{o}}_{/S}
\]
which values in the category of $\on{Set}_\Delta^+$-enriched categories and where the superscript “o“ denotes the full (enriched) subcategory on fibrant-cofibrant objects. Let us recall the reader that it then follows that for every $S \in \on{Set}_\Delta^{\mathbf{sc}}$ we have fibrant $\on{Set}_\Delta^{+}$-enriched categories $F_i(S)$ for $i=1,2$.

 We claim that  the unstraightening construction $\UN_{(\mathblank)}$ defines a natural transformation. In virtue of \autoref{rem:unbasechange} it will be enough to show that  for every scaled simplicial set $S$ we have that $\UN_{S}$ defines a $\on{Set}_\Delta^+$-enriched functor. Given a fibrant-cofibrant functors $\scr{F},\scr{G}:\mathfrak{C}^{\mathbf{sc}}[S] \to \on{Set}_\Delta^{\mathbf{ms}}$ let $\on{Nat}(\scr{F},\scr{G})$ denote the corresponding mapping marked simplicial set. Then a map $K \to \on{Nat}(\scr{F},\scr{G})$ is precisely the data of an enriched natural transformation $K \tensor \scr{F} \Rightarrow{} \scr{G} $ where 
 \[
   K \tensor \scr{F}(s)=K \times \scr{F}(s).
 \]
 We can consequently define a map $K \to \on{Map}_{S}(\UN_S(\scr{F}),\scr{G})$ as the composite
 \[
   K \times \UN_S(\scr{F}) \xlongrightarrow{} \on{Un}_*(K) \times \UN_S(\scr{F}) \isom \UN_S(K \times \scr{F}) \to \UN_S(\scr{G}).
 \]
 which shows that $\UN_S$ defines a $\on{Set}_\Delta^+$-enriched functor. The main goal of this section is to show that for every scaled simplicial set $S$, it follows that $\UN_S$ is an equivalence of $\on{Set}_\Delta^+$-enriched categories.

 \begin{proposition}\label{prop:quillenvsenriched}
   Let $S$ be a scaled simplicial set. Then the following are equivalent:
   \begin{itemize}
     \item[i)] The functor $\UN_S$ is a right Quillen equivalence.
     \item[ii)] The functor $\UN_S$ defines an equivalence of fibrant $\on{Set}_\Delta^+$-enriched categories after restriction to the full subcategories of fibrant-cofibrant objects. 
   \end{itemize}
 \end{proposition}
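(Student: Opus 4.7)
The plan is to prove both implications by combining the $\on{Set}_\Delta^+$-enrichment of the two model categories with the fact that $\SSt_S \dashv \UN_S$ is a $\on{Set}_\Delta^+$-enriched adjunction, so in particular one has natural isomorphisms of marked simplicial sets
\[
\on{Map}_S(X,\UN_S\scr{G}) \cong \on{Nat}(\SSt_S X, \scr{G}).
\]
A useful preliminary observation is that every object of $\left(\on{Set}_\Delta^{\mathbf{mb}}\right)_{/S}$ is cofibrant (cofibrations are monomorphisms), so $\UN_S$ automatically sends fibrant objects to fibrant-cofibrant objects, while $\SSt_S$ preserves cofibrancy as a left Quillen functor.

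For $(i) \Rightarrow (ii)$, assume $\UN_S$ is a right Quillen equivalence. To obtain full faithfulness of the induced enriched functor, let $\scr{F},\scr{G}$ be fibrant-cofibrant. Under the enriched adjunction, $\on{Map}_S(\UN_S\scr{F},\UN_S\scr{G})$ is identified with $\on{Nat}(\SSt_S\UN_S\scr{F},\scr{G})$, and the map induced by $\UN_S$ corresponds to pulling back along the counit $\SSt_S\UN_S\scr{F} \to \scr{F}$. Since $\UN_S\scr{F}$ is cofibrant this counit is the derived counit and hence a weak equivalence; as $\scr{G}$ is fibrant and both sources are cofibrant, pulling back preserves weak equivalences. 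For essential surjectivity, given fibrant $X$, factor $\SSt_S X \to *$ as a trivial cofibration $\SSt_S X \hookrightarrow \scr{F}$ followed by a fibration; then $\scr{F}$ is fibrant-cofibrant, and the derived unit $X \to \UN_S\scr{F}$ is a weak equivalence between fibrant-cofibrant objects. In any $\on{Set}_\Delta^+$-enriched model category such a map is automatically an equivalence in the enriched category of fibrant-cofibrant objects.

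For $(ii) \Rightarrow (i)$, it suffices to check that the derived unit and counit of $\SSt_S \dashv \UN_S$ are weak equivalences. For the derived counit, I reduce using $2$-out-of-$3$ — together with the facts that $\UN_S$ preserves weak equivalences between fibrant objects and $\SSt_S$ preserves weak equivalences between cofibrant objects — to the case where $\scr{G}$ is fibrant-cofibrant. Then the derived counit is simply $\SSt_S\UN_S\scr{G} \to \scr{G}$, which I test against arbitrary fibrant-cofibrant $\scr{H}$: under the adjunction, the induced map $\on{Nat}(\scr{G},\scr{H}) \to \on{Nat}(\SSt_S\UN_S\scr{G},\scr{H})$ is identified with $\on{Nat}(\scr{G},\scr{H}) \to \on{Map}_S(\UN_S\scr{G},\UN_S\scr{H})$, which is a weak equivalence by the assumed enriched equivalence. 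The derived unit is handled symmetrically, exploiting essential surjectivity of $\UN_S$ on enriched fibrant-cofibrant categories.

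The main obstacle is the enriched Whitehead-type detection criterion used in the last step: a morphism between fibrant-cofibrant objects is a weak equivalence precisely when it induces weak equivalences on the $\on{Set}_\Delta^+$-enriched Hom objects against all fibrant-cofibrant test objects. This is standard for enriched model categories, but in the present $\on{Set}_\Delta^+$-enriched setting one needs the mapping marked simplicial sets between fibrant-cofibrant objects to be fibrant in the Cartesian model structure on $\on{Set}_\Delta^+$, so that $\pi_0$ of the enriched Hom correctly recovers morphisms in the homotopy category and the enriched Yoneda argument goes through.
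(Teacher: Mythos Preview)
Your argument rests on the claim that $\SSt_S \dashv \UN_S$ is a $\on{Set}_\Delta^+$-enriched adjunction, so that one has an \emph{isomorphism} of marked simplicial sets $\on{Map}_S(X,\UN_S\scr{G}) \cong \on{Nat}(\SSt_S X,\scr{G})$. This is not established anywhere in the paper, and in fact it is not how the enriched structure on $\UN_S$ is produced. For an enriched adjunction one would need $\SSt_S(K \otimes X) \cong K \otimes \SSt_S(X)$ on the nose, i.e.\ that the straightening functor commutes strictly with products by $I(K)$; as with Lurie's original straightening, this fails. The paper instead builds the $\on{Set}_\Delta^+$-enrichment of $\UN_S$ by hand, via the composite
\[
K \times \UN_S(\scr{F}) \xlongrightarrow{} \UN_*(K) \times \UN_S(\scr{F}) \xlongrightarrow{\cong} \UN_S(K \otimes \scr{F}) \xlongrightarrow{} \UN_S(\scr{G}),
\]
which uses the map $K \to \UN_*(K)$ and the compatibility of $\UN$ with products (an isomorphism on the \emph{right} adjoint side, not the left). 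Since $K \to \UN_*(K)$ is not an isomorphism, this does not give you the enriched adjunction isomorphism you invoke, and both of your implications use that isomorphism at their core step.

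The paper's argument circumvents this. For $(ii)\Rightarrow(i)$ it simply appeals to \cite[Proposition~3.1.10]{HTT}. For $(i)\Rightarrow(ii)$ it works at the level of $\operatorname{Ho}(\on{Set}_\Delta^+)$ rather than with strict mapping objects: one shows $[K,\on{Nat}(\scr{F},\scr{G})] \cong [K,\on{Map}_S(\UN_S\scr{F},\UN_S\scr{G})]$ for every $K$ via the chain
\[
[K,\on{Nat}(\scr{F},\scr{G})] \cong [K\otimes\scr{F},\scr{G}]^{\on{Fun}} \cong [\UN_*(K)\times\UN_S(\scr{F}),\UN_S(\scr{G})]^{\on{Fib}} \cong [K,\on{Map}_S(\UN_S\scr{F},\UN_S\scr{G})],
\]
where the middle isomorphism is precisely what the Quillen equivalence $(i)$ buys you. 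The enriched Yoneda lemma then gives the desired weak equivalence of mapping objects. Your overall strategy (derived unit/counit, testing against fibrant-cofibrant objects) is reasonable, but to make it go through you must replace the nonexistent strict enriched adjunction by this homotopy-level comparison, and for that you need to trace carefully how the enriched functor structure on $\UN_S$ was actually defined.
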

 \begin{proof}
  It follows from Proposition 3.1.10 in \cite{HTT} and our previous discussion that $ii) \implies i)$. To show that $i) \implies ii)$ we show that given a marked simplicial set $K$ then  $\UN_S$ induces an isomorphism in the homotopy category of $\on{Set}_\Delta^{+}$ between $[K,\on{Nat}(\scr{F},\scr{G})]\isom [K,\on{Map}_S(\UN_S(\scr{F},\UN_S(\scr{G}))]$. This follows from the chain of isomorphisms below
  \[
    [K,\on{Nat}(\scr{F},\scr{G})] \isom [K \tensor \scr{F},\scr{G}]^{\on{Fun}} \isom [\UN_*(K)\times \UN_S(\scr{F}),\UN_S(\scr{G})]^{\on{Fib}} \isom [K,\on{Map}_S(\UN_S(\scr{F}),\UN_S(\scr{G}))] 
  \]
  where the second isomorphism is a consequence of $i)$.
 \end{proof}

 \begin{remark}\label{rem:phiidsuffices}
   Let $S$ be an scaled simplicial set and $\phi:\mathfrak{C}^{\mathbf{sc}}[S] \to \scr{C}$ an equivalence of $\on{Set}_\Delta^+$-enriched categories. Observe that it follows from \autoref{prop:Stbasechange} that $\UN_\phi$ is a right Quillen equivalence if and only if $\UN_S$ is a right Quillen equivalence. Therefore for the rest of the section we will let $\phi=\on{id}$.
 \end{remark}

 \begin{corollary}\label{cor:maxscaled2simp}
  Let $\Delta^2_\sharp=(\Delta^2,\sharp)$ denote a maximally scaled 2-simplex. Then the straightening-unstraightening adjunction 
  \[
    \SSt_{\Delta^2_\sharp}:\left(\on{Set}_\Delta^{\mathbf{mb}}\right)_{/\Delta^2_\sharp} \llra \on{Fun}(\mathfrak{C}^{\mathbf{sc}}[\Delta^2_\sharp],\on{Set}_\Delta^{\mathbf{ms}}): \UN_{\Delta^2_\sharp}
  \]  
  is a Quillen equivalence.
\end{corollary}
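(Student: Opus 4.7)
The plan is to adapt the argument of \autoref{prop:stsimplex} to accommodate the thin triangle of $\Delta^2_\sharp$. By \autoref{prop:quillenvsenriched} together with \autoref{rem:phiidsuffices}, it suffices to show that $\UN_{\Delta^2_\sharp}$ is a right Quillen equivalence. Mirroring the opening moves of the proof of \autoref{prop:stsimplex}, this reduces via \autoref{prop:fibrewisecriterion} to verifying that for any fibrant $p:X \to \Delta^2_\sharp$ and any weak equivalence $\SSt_{\Delta^2_\sharp}(X) \to \scr{F}$ with $\scr{F}$ projectively fibrant, the adjoint $X \to \UN_{\Delta^2_\sharp}(\scr{F})$ induces bicategorical equivalences on all three fibres. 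Restricting along the inclusion $\Delta^{\{0,1\}}_\flat \hookrightarrow \Delta^2_\sharp$ and invoking the case $n=1$ of \autoref{prop:stsimplex} already handles the fibres over $0$ and $1$, so the crux is the fibre over $2$.

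First I would extend the mapping simplex construction of \autoref{def:mappingsimplex} to functors out of $\mathfrak{C}^{\mathbf{sc}}[\Delta^2_\sharp]$. The only difference with the minimally scaled case is that the mapping marked simplicial set $\mathfrak{C}^{\mathbf{sc}}[\Delta^2_\sharp](0,2)$ carries an additional marked edge coming from the thin triangle, so the resulting $\bS$ simplicial set $\scr{M}(\scr{F})$ has its unique non-degenerate $2$-simplex declared thin; this automatically ensures condition \ref{mb:coCartoverThin}. The analog of \autoref{prop:reducetoMF} then produces, for any fibrant $p:X\to \Delta^2_\sharp$, a fibrewise weak equivalence $\scr{M}(\scr{F}) \to X$ with $\scr{F}$ fibrant-cofibrant: starting from a fibrant functor $\overline{\scr{F}}$ on $\mathfrak{C}^{\mathbf{sc}}[\Delta^{\{0,1\}}_\flat]$ supplied by \autoref{prop:stsimplex}, one extends along the anodyne inclusion of \autoref{lem:01laxgray}, and the thin scaling of $\Delta^2$ in $\Delta^2_\sharp$ forces the newly generated $2$-simplices of $X$ to be thin, which is exactly the condition that the extension define a marked-scaled enriched functor sending the marked edge of $\mathfrak{C}^{\mathbf{sc}}[\Delta^2_\sharp](0,2)$ to a marked edge of the corresponding mapping object.

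The final step is to verify the analog of the concluding proposition of the previous subsection, namely that for every cofibrant $\scr{G}:\mathfrak{C}^{\mathbf{sc}}[\Delta^2_\sharp] \to \on{Set}^{\mathbf{ms}}_\Delta$ the natural map $\SSt_{\Delta^2_\sharp}(\scr{G}(2))(2) \to \SSt_{\Delta^2_\sharp}(\scr{M}(\scr{G}))(2)$ is a weak equivalence. The filtration by representables reduces this to the case $\scr{G}=\underline{K}$ for $K \in \{\Delta^k_\flat,\Delta^2_\sharp,(\Delta^1)^\sharp\}$; the decorated posets $K_n$, $\mathcal{P}_n$ and $\mathcal{O}^n_{0\upslash}$ acquire additional markings reflecting the new marked edge in $\OO^2(0,2)$, but the sections and marked homotopies used in the previous proof extend verbatim to this enriched setting, producing the required weak equivalence. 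The main obstacle is the combinatorial bookkeeping of these additional decorations throughout the mapping simplex and straightening poset; I do not expect any essentially new $\bS$- or $\MS$-anodyne moves to be required beyond those established earlier.
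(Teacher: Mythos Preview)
Your approach is plausible in outline but takes a much longer route than the paper. You propose to rerun the entire machinery of \autoref{prop:stsimplex}---the mapping simplex, \autoref{prop:reducetoMF}, and the filtration by representables---with extra decorations tracking the new marked edge in $\OO^2(0,2)$. You yourself flag the main risk: the combinatorial bookkeeping is delicate, and your claim that ``no essentially new anodyne moves'' are required is asserted rather than checked. (Incidentally, the phrase ``its unique non-degenerate $2$-simplex'' is imprecise: $\scr{M}(\scr{F})$ has many $2$-simplices; presumably you mean those lying over the nondegenerate $2$-simplex of the base.)

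The paper bypasses all of this. The key observation is that the obvious map $\Delta^2_\flat \to \Delta^2_\sharp$ induces a commutative square
\[
\begin{tikzcd}
\on{Fun}^{\on{o}}(\mathfrak{C}^{\mathbf{sc}}[\Delta^2_\sharp],\on{Set}_\Delta^{\mathbf{ms}}) \arrow[r,"\UN_{\Delta^2_\sharp}"] \arrow[d] & \left(\on{Set}_\Delta^{\mathbf{mb}}\right)^{\on{o}}_{/\Delta^2_\sharp} \arrow[d] \\
\on{Fun}^{\on{o}}(\mathfrak{C}^{\mathbf{sc}}[\Delta^2_\flat],\on{Set}_\Delta^{\mathbf{ms}}) \arrow[r,"\UN_{\Delta^2}"] & \left(\on{Set}_\Delta^{\mathbf{mb}}\right)^{\on{o}}_{/\Delta^2_\flat}
\end{tikzcd}
\]
in which both vertical maps are fully faithful inclusions. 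Since $\UN_{\Delta^2}$ is already known to be an equivalence by \autoref{prop:stsimplex}, it follows immediately that $\UN_{\Delta^2_\sharp}$ is fully faithful. Essential surjectivity is then a one-line check: given a fibrant $p:X\to\Delta^2_\sharp$, choose $\scr{F}$ on $\mathfrak{C}^{\mathbf{sc}}[\Delta^2_\flat]$ with $\UN_{\Delta^2}(\scr{F})\simeq X$, and observe that because local $(0,1)$-Cartesian edges in $X$ compose over the thin triangle, $\scr{F}$ must send the marked edge of $\OO^2(0,2)$ to an equivalence, hence factors through $\mathfrak{C}^{\mathbf{sc}}[\Delta^2_\sharp]$.

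What the paper's approach buys is that the entire combinatorial burden is carried by the already-established \autoref{prop:stsimplex}; one never has to touch the straightening posets again. Your approach would in principle work, but it re-proves strictly more than is needed and leaves the hardest verifications as unexamined expectations.
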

\begin{proof}
Observe that we have a commutative diagram
\[
  \begin{tikzcd}
    \on{Fun}^{\on{o}}(\mathfrak{C}^{\mathbf{sc}}[\Delta^2_\sharp],\on{Set}_\Delta^{\mathbf{ms}}) \arrow[r,"\UN_{\Delta^2_\sharp}"] \arrow[d] & \left(\on{Set}_\Delta^{\mathbf{mb}}\right)^{\on{o}}_{/\Delta^2_\sharp}  \arrow[d] \\
     \on{Fun}^{\on{o}}(\mathfrak{C}^{\mathbf{sc}}[\Delta^2_\flat],\on{Set}_\Delta^{\mathbf{ms}}) \arrow[r,"\UN_{\Delta^2}"] & \left(\on{Set}_\Delta^{\mathbf{mb}}\right)^{\on{o}}_{/\Delta^2_\flat}
  \end{tikzcd}
\]
where the vertical maps are fully faithful functors. We conclude that $\UN_{\Delta^2_\sharp}$ is fully faithful. It follows from \autoref{prop:quillenvsenriched} that it will be enough to show that $\UN_{\Delta^2_\sharp}$ is essentially surjective. 

Let $p:X \to \Delta^2_\sharp$ be a fibrant object object and pick a fibrant-cofibrant functor $\scr{F}:\mathfrak{C}^{\mathbf{sc}}[\Delta^2_\flat] \to \on{Set}_\Delta^{\mathbf{ms}}$ such that $\UN_{\Delta^2}(\scr{F}) \isom X$. To finish the proof we need to show that $\scr{F}$ factors through $\mathfrak{C}^{\mathbf{sc}}[\Delta^2_\sharp]$.  Since $\UN_{\Delta^2}(\scr{F})$ is equivalent to $X$ it follows that the composition of local $(0,1)$-Cartesian edges in this fibration remains $(0,1)$-Cartesian. Direct inspection reveals that our functor must factor through $\mathfrak{C}^{\mathbf{sc}}[\Delta^2_\sharp]$ and thus the claim holds.
\end{proof}

\begin{remark}\label{rem:appendixHTT}
   It follows from Lemma A.3.6.17 and  Corollary A.3.6.18 in \cite{HTT} that $F_1$ sends homotopy colimits of scaled simplicial sets to homotopy limits of $\on{Set}_\Delta^+$-enriched categories. 
\end{remark}

\begin{lemma}\label{lem:f2coftofib}
  Let $f:S_0 \to S$ be a cofibration of scaled simplicial sets, then the functor
  \[
    f^*: \left(\on{Set}_\Delta^{\mathbf{mb}}\right)^{\on{o}}_{S} \xlongrightarrow{} \left(\on{Set}_\Delta^{\mathbf{mb}}\right)^{\on{o}}_{S_0}
  \]
  is a fibration of $\on{Set}_\Delta^{+}$-enriched categories.
\end{lemma}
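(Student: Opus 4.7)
My plan is to verify the two standard conditions defining a fibration of fibrant $\on{Set}_\Delta^+$-enriched categories in the Bergner model structure: the local fibration condition on mapping marked simplicial sets, and the lifting of equivalences from the target.

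For the local fibration property, consider fibrant-cofibrant objects $X,Y$ in $\left(\on{Set}_\Delta^{\mathbf{mb}}\right)_{/S}$. The adjunction $f_! \dashv f^*$, with $f_!$ left Quillen by \autoref{prop:mbbasechange}, provides a natural identification $\on{Map}_{S_0}(f^*X, f^*Y) \cong \on{Map}_S(f_!f^*X, Y)$ under which the map on mapping objects induced by $f^*$ becomes restriction along the counit $f_!f^*X \hookrightarrow X$. This counit is a monomorphism, being the pullback of $f:S_0\to S$ along $X\to S$, and hence a cofibration in $\left(\on{Set}_\Delta^{\mathbf{mb}}\right)_{/S}$. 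Since the model structure from \autoref{thm:model} is $\on{Set}_\Delta^+$-enriched and $Y$ is fibrant, the pushout-product axiom forces the induced map on mapping marked simplicial sets to be a fibration.

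For the equivalence-lifting property, suppose we are given $X$ fibrant-cofibrant over $S$ together with an equivalence $u:f^*X \xrightarrow{\simeq} Y'$ between fibrant-cofibrant objects over $S_0$. I will construct a fibrant-cofibrant $\tilde{Y}$ over $S$ with an equivalence $\tilde{u}:X\to\tilde{Y}$ whose image under $f^*$ is equivalent to $u$ in the homotopy category. First form the pushout $\tilde{Y}_0 := X \sqcup_{f^*X} Y'$ in $\mbsSet$ over $S$. Left properness of the model structure, combined with the fact that $u$ is a weak equivalence and $f^*X \hookrightarrow X$ is a cofibration, shows that $X\to\tilde{Y}_0$ is a weak equivalence. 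Since $f^*$ commutes with pushouts along monomorphisms, we obtain $f^*\tilde{Y}_0 \cong Y'$ with $f^*(X\to\tilde{Y}_0) = u$. Finally, factor $\tilde{Y}_0\to S$ as a trivial cofibration $\tilde{Y}_0\to\tilde{Y}$ followed by a fibration and take $\tilde{u}$ to be the composite.

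The main obstacle is to ensure that $f^*(\tilde{Y}_0\to\tilde{Y})$ remains a weak equivalence, so that $f^*\tilde{u}\simeq u$ in the homotopy category. My plan for this is to run the small object argument for fibrant replacement only against generating $\bS$-anodyne maps whose base does not factor through $f:S_0\to S$. The justification is that $Y'=f^*\tilde{Y}_0$ is already fibrant over $S_0$, so any lifting problem for the full set of generators whose base factors through $S_0$ is automatically solvable through $Y'$ and requires no new cells. The technical heart of the argument is then to verify that this restricted small object argument still produces a fibrant object over $S$, and that cells attached for generators with base outside $S_0$ have pullback to $S_0$ contributing only weakly equivalent data; this is what guarantees $f^*\tilde{Y}\simeq Y'$ and completes the construction of $\tilde{u}$.
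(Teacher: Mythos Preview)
Your verification of the local-fibration condition on hom objects is correct and is essentially what the paper does: the paper likewise identifies the relevant lifting problem as a pushout-product of the cofibration $f^*X \hookrightarrow X$ against an $\bS$-anodyne map and invokes \autoref{prop:pushoutproduct}. The paper, however, stops there: it cites \cite[Theorem~A.3.2.24]{HTT} to reduce the fibration claim (between fibrant $\on{Set}_\Delta^+$-enriched categories) to this single check on mapping objects, and does not treat the equivalence-lifting condition separately.

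Your attempt to verify the isofibration condition directly has a genuine gap. The ``restricted small object argument'' does not behave as you claim: attaching a generating $\bS$-anodyne cell $A \to B$ whose base $B \to S$ does not factor through $S_0$ can still change the pullback to $S_0$ in a non-acyclic way. For a concrete instance, take $B = (\Delta^n,\flat,\flat\subset\{\Delta^{\{i-1,i,i+1\}}\})$ mapping to $S$ so that every proper face lands in $S_0$ but the top simplex does not; then $f^*B \cong \partial\Delta^n$ and the restriction of the inner-horn anodyne $\Lambda^n_i \to \Delta^n$ becomes $\Lambda^n_i \to \partial\Delta^n$, which is not a weak equivalence. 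Since pullback in simplicial sets preserves pushouts, attaching this cell replaces $f^*\tilde{Y}_\alpha$ by $f^*\tilde{Y}_\alpha \cup_{\Lambda^n_i}\partial\Delta^n$, i.e.\ glues in an $(n{-}1)$-cell along its boundary. This simultaneously destroys the identification $f^*\tilde{Y}\simeq Y'$ and the inductive hypothesis that the current stage is still fibrant over $S_0$; so neither of the two verifications you flag as ``the technical heart'' can be completed as written. Note also that the isofibration condition asks for $f^*\tilde{Y} = Y'$ as objects, not merely an equivalence, so even a successful weak-equivalence argument would require an additional strictification step.
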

\begin{proof}
  Since both $\on{Set}_\Delta^{+}$-enriched categories are fibrant it follows from Theorem A.3.2.24 in \cite{HTT} that it will enough to show the following:
  \begin{itemize}
    \item[*)] Given a pair of fibrant objects $X,Y \in \left(\on{Set}_\Delta^{\mathbf{mb}}\right)^{\on{o}}_{S}$ then the induced morphism on mapping $\infty$-categories
    \[
       \on{Map}_S^{\on{th}}(X,Y) \xlongrightarrow{} \on{Map}^{\on{th}}_{S_0}(f^*X,f^*Y)
     \] 
     is a fibration of marked simplicial sets.
  \end{itemize}
  More generally we consider a pair of adjoint lifting problems
  \[\begin{tikzcd}
  A & \on{Map}_{S}(X,Y) & {} & {} & {A \times X \coprod\limits_{A \times f^*X}B \times f^*X} && Y \\
  B & \on{Map}_{S_0}(f^*X,f^*Y) &&& {B \times X} && S
  \arrow[from=1-1, to=2-1]
  \arrow[from=1-1, to=1-2]
  \arrow[from=2-1, to=2-2]
  \arrow[from=1-2, to=2-2]
  \arrow[from=1-5, to=1-7]
  \arrow[from=1-5, to=2-5]
  \arrow[from=2-5, to=2-7]
  \arrow[from=1-7, to=2-7]
  \arrow[dashed, from=2-5, to=1-7]
  \arrow[dotted, from=2-1, to=1-2]
  \arrow[squiggly, tail reversed, from=1-3, to=1-4]
\end{tikzcd}\]
where $A \to B$ is \bS-anodyne. Since $f:S_0 \to S$ is a cofibration it follows that the canonical map $f^* X \to X$ is a cofibration so we can use \autoref{prop:pushoutproduct} to conclude that we can produce the desired solution to the lifting problem.
\end{proof}

\begin{theorem}\label{thm:mainun}
  Let $S$ be a scaled simplicial set. Then the functor $\UN_S$ induces an equivalence of $\on{Set}_\Delta^+$-enriched categories
  \[
    \UN_S: \on{Fun}^{\operatorname{o}}(\mathfrak{C}^{\mathbf{sc}}[S],\on{Set}_\Delta^{\mathbf{ms}}) \xlongrightarrow{} \left(\on{Set}_\Delta^{\mathbf{mb}}\right)^{\on{o}}_{/S}
  \]
\end{theorem}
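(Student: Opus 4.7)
The plan is to reduce the general statement to the known cases $S = \Delta^n_\flat$ (established in \autoref{prop:stsimplex}) and $S = \Delta^2_\sharp$ (established in \autoref{cor:maxscaled2simp}) via a cellular induction along the skeletal filtration of a general scaled simplicial set. By \autoref{prop:quillenvsenriched} together with \autoref{rem:phiidsuffices}, it suffices to show that $\UN_S$ is a right Quillen equivalence for every $S$; equivalently, one can show that the natural transformation $\UN_{(\mathblank)} : F_1 \Rightarrow F_2$ induces an equivalence of fibrant $\on{Set}^+_\Delta$-enriched categories at every $S \in \on{Set}_\Delta^{\mathbf{sc}}$.

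I would then proceed by expressing any scaled simplicial set $S$ as a transfinite composition of pushouts along the generating cofibrations of the Lurie model structure on $\on{Set}^{\mathbf{sc}}_\Delta$, namely $(\partial\Delta^n,\flat) \hookrightarrow (\Delta^n,\flat)$ for $n \geq 0$ and the scaling map $(\Delta^2,\flat) \hookrightarrow (\Delta^2,\sharp)$. The class $\mathcal{W}$ of scaled simplicial sets for which $\UN_S$ is an equivalence of enriched categories contains $\Delta^n_\flat$ and $\Delta^2_\sharp$ by the established cases, and trivially contains $\emptyset$. The main work is to prove that $\mathcal{W}$ is closed under homotopy pushouts. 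The two ingredients are:
\emph{(i)} $F_1$ sends homotopy pushouts of scaled simplicial sets to homotopy pullbacks of $\on{Set}^+_\Delta$-enriched categories, which is precisely \autoref{rem:appendixHTT};
\emph{(ii)} $F_2$ enjoys the analogous descent property, which one deduces from \autoref{lem:f2coftofib} (the pullback $f^*$ is a fibration of enriched categories when $f$ is a cofibration), the base change adjunction of \autoref{prop:mbbasechange}, and the left-properness of the model structure of \autoref{thm:model}. Since $\UN_{(\mathblank)}$ is a natural transformation between functors sending homotopy pushouts to homotopy pullbacks of fibrant enriched categories, the equivalence at the pieces of a pushout yields the equivalence at the pushout, so $\mathcal{W}$ is closed under homotopy colimits and therefore equals all of $\on{Set}^{\mathbf{sc}}_\Delta$.

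The main obstacle will be step \emph{(ii)}: establishing that given a cofibration $f : S_0 \hookrightarrow S$ and any map $S_0 \to T$, the commutative square
\[
\begin{tikzcd}
\left(\on{Set}_\Delta^{\mathbf{mb}}\right)^{\on{o}}_{/S \cup_{S_0} T} \arrow[r] \arrow[d] & \left(\on{Set}_\Delta^{\mathbf{mb}}\right)^{\on{o}}_{/S} \arrow[d] \\
\left(\on{Set}_\Delta^{\mathbf{mb}}\right)^{\on{o}}_{/T} \arrow[r] & \left(\on{Set}_\Delta^{\mathbf{mb}}\right)^{\on{o}}_{/S_0}
\end{tikzcd}
\]
is a homotopy pullback of $\on{Set}^+_\Delta$-enriched categories. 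The essential-surjectivity direction amounts to a gluing statement: given $(0,1)$-fibrations $X \to S$ and $Y \to T$ together with an equivalence of their pullbacks to $S_0$, one produces a $(0,1)$-fibration over $S \cup_{S_0} T$ by a mapping-cylinder construction, using that $f^*$ is a fibration to rectify the given equivalence to a strict isomorphism. The fully-faithful direction reduces, via the model-categorical machinery and \autoref{prop:fibrewisecriterion}, to a fibrewise assertion already handled by the base cases. With this descent result in hand, the remaining bookkeeping uses only \autoref{prop:Stbasechange} and \autoref{rem:unbasechange}, which have been prepared precisely so that this cellular induction runs without friction.
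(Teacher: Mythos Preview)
Your proposal is correct and follows essentially the same route as the paper: a cellular induction along the skeletal filtration of $S$, using \autoref{prop:stsimplex} and \autoref{cor:maxscaled2simp} as base cases, \autoref{rem:appendixHTT} for the descent of $F_1$, and \autoref{lem:f2coftofib} for the descent of $F_2$. The only difference is one of exposition: where you outline an explicit gluing/mapping-cylinder argument for the homotopy-pullback property of $F_2$, the paper observes more tersely that, since the pullback functors along cofibrations are fibrations of enriched categories, it suffices to check that $F_2$ takes the relevant pushouts and filtered colimits in $\on{Set}_\Delta^{\mathbf{sc}}$ to \emph{strict} limits of enriched categories, which is immediate from the definitions (a map of $\bS$ simplicial sets over a pushout $S\cup_{S_0}T$ is exactly a compatible pair of maps over $S$ and $T$, and the $\bS$-anodyne lifting problems are local in the base).
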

\begin{proof}
  We say that a scaled simplicial set $S$ is good if the conclusion of the theorem holds. In virtue of \autoref{prop:quillenvsenriched} we know that \autoref{prop:stsimplex} shows that the scaled simplicial sets $(\Delta^n,\flat)$ are good for $n \geq 0$. Moreover, it follows from \autoref{cor:maxscaled2simp} that $(\Delta^2,\sharp)$ is also good.

  Recall that every scaled simplicial set $S$ can be expressed as a filtered colimit over the natural numbers
  \[
    S_0 \to S_1 \to  \cdots S_k \to \cdots
  \]
  such that each map $S_i \to S_{i+1}$ is a cofibration and such that $S_0$ is a disjoint union of points. Moreover, a simplex by simplex argument shows that each step in this filtration can be obtain via pushouts along cofibrations:
  \begin{itemize}
    \item $(\partial \Delta^n, \flat ) \to (\Delta^n,\flat)$ for $n\geq 0$.
    \item $(\Delta^2,\flat) \to (\Delta^2,\sharp)$. 
  \end{itemize}
  We saw in \autoref{rem:appendixHTT} that $F_1$ maps homotopy colimits to homotopy limits. We see that in order to finish the proof it will be enough to show that $F_2$ maps the colimits appearing in our filtration to homotopy limits. Using \autoref{lem:f2coftofib} we reduce the problem to verifying that $F_2$ maps those colimits to ordinary limits which follows from direct inspection. This finishes the proof.
\end{proof}

\begin{corollary}
  Let $S$ be a scaled simplicial set and let $\phi:\mathfrak{C}^{\mathbf{sc}}[S] \to \scr{C}$ be an equivalence of $\on{Set}_\Delta^{+}$-enriched categories. Then the straightening-unstraightening adjunction,
   \[
    \SSt_{\phi}:\left(\on{Set}_\Delta^{\mathbf{mb}}\right)_{/S} \llra \on{Fun}(\scr{C},\on{Set}_\Delta^{\mathbf{ms}}): \UN_{\phi}
  \]  
is a Quillen equivalence.
\end{corollary}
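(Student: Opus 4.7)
The plan is to reduce the general statement to the special case $\phi = \mathrm{id}_{\mathfrak{C}^{\mathbf{sc}}[S]}$, which has already been handled by \autoref{thm:mainun} together with \autoref{prop:quillenvsenriched}. Concretely, I would first apply \autoref{prop:Stbasechange} to the commutative diagram
\[
  \begin{tikzcd}
    \mathfrak{C}^{\mathbf{sc}}[S] \arrow[r,equal] \arrow[d,equal] & \mathfrak{C}^{\mathbf{sc}}[S] \arrow[d,"\phi"] \\
    \mathfrak{C}^{\mathbf{sc}}[S] \arrow[r,"\phi"] & \scr{C}
  \end{tikzcd}
\]
(with $f = \mathrm{id}_S$) to obtain a natural isomorphism $\SSt_\phi \isom \phi_! \circ \SSt_S$, and therefore, by passage to right adjoints, $\UN_\phi \isom \UN_S \circ \phi^*$. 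This is precisely the observation recorded in \autoref{rem:phiidsuffices}, but reinterpreted as a factorisation of our adjunction as a composite of two adjunctions: $(\SSt_S, \UN_S)$ followed by $(\phi_!, \phi^*)$.

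Next, I would invoke the standard fact that, given any combinatorial simplicial model category $\mathcal{M}$ enriched over $\on{Set}_\Delta^{+}$ and any Dwyer–Kan equivalence $\phi: \mathscr{D} \to \scr{C}$ of fibrant $\on{Set}_\Delta^{+}$-enriched categories, the induced adjunction
\[
  \phi_!: \on{Fun}(\mathscr{D},\mathcal{M}) \llra \on{Fun}(\scr{C},\mathcal{M}) : \phi^*
\]
between projective model structures is a Quillen equivalence (see, e.g., HTT Proposition A.3.3.8). The hypotheses apply in our situation with $\mathcal{M} = \on{Set}_\Delta^{\mathbf{ms}}$, equipped with the combinatorial simplicial model structure established earlier in the paper, since $\phi$ is an equivalence of $\on{Set}_\Delta^{+}$-enriched categories by assumption and both sides are fibrant.

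Finally, since the composition of two Quillen equivalences is a Quillen equivalence, combining the Quillen equivalence $(\SSt_S, \UN_S)$ — given by \autoref{thm:mainun} and \autoref{prop:quillenvsenriched} — with the Quillen equivalence $(\phi_!, \phi^*)$ yields that $(\SSt_\phi, \UN_\phi) \isom (\phi_! \circ \SSt_S, \UN_S \circ \phi^*)$ is itself a Quillen equivalence, as desired. The only real obstacle is the invocation of the general fact that equivalences of enriching categories induce Quillen equivalences on projective functor categories; this is a purely formal verification provided the simplicial enrichment of $\on{Set}_\Delta^{\mathbf{ms}}$ is well-behaved, which was established in the preceding sections.
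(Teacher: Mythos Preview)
Your proposal is correct and follows the same approach as the paper: the corollary is an immediate consequence of \autoref{thm:mainun} via the observation recorded in \autoref{rem:phiidsuffices}, namely that $\SSt_\phi \isom \phi_! \circ \SSt_S$ by \autoref{prop:Stbasechange}, so the adjunction $(\SSt_\phi, \UN_\phi)$ factors as the composite of $(\SSt_S, \UN_S)$ with $(\phi_!, \phi^*)$, the latter being a Quillen equivalence by HTT Proposition A.3.3.8. One small remark: the fibrancy of $\mathfrak{C}^{\mathbf{sc}}[S]$ as a $\on{Set}_\Delta^+$-enriched category is not guaranteed and is not needed for the cited result, so you can drop that clause.
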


\begin{remark}\label{rem:rehash}
  Let $\on{N}^{\mathbf{sc}}$ be the right adjoint to $\mathfrak{C}^{\mathbf{sc}}$. Given a scaled simplicial set $S$  we define $\bcat{F}\!\on{ib}^{01}(S)=\on{N}^{\mathbf{sc}}(F_2(S))$. We also define $\bcat{B}\!\on{icat}_\infty=\on{N}^{\mathbf{sc}}((\on{Set}_\Delta^{\mathbf{ms}})^{\on{o}})$ and observe that an analogous discussion to that of \cite{AGS_CartII} shows that \autoref{thm:mainun} shows that we have an equivalence of $\infty$-bicategories
  \[
    \UN_S: \on{Fun}(S,\bcat{B}\!\on{icat}_\infty) \xlongrightarrow{} \bcat{F}\!\on{ib}^{01}(S).
  \]
\end{remark}

\begin{definition}
  Let $\bcat{S}=(S,T_S)$ be an $\infty$-bicategory and let $S_{M_S}=(S,M_S)$  (see \autoref{def:mstri}). We denote by $\bcat{L}\!\on{Fib}(\bcat{S})=\bcat{F}\!\on{ib}^{01}(S_{M_S})$, the $\infty$-bicategory of local $(0,1)$-fibrations over $\bcat{S}$. Similarly, given another $\infty$-bicategory $\bcat{D}$ we define $\on{Fun}^{\on{oplax}}(\bcat{S},\bcat{D})=\on{Fun}(S_{M_S},\bcat{D})$.
\end{definition}

\begin{remark}
  In \cite{GHL_Gray}, the definition $\on{Fun}^{\on{oplax}}(\bcat{S},\bcat{D})$ is proposed as a model for oplax unital functors. It is expected that this definition will agree to that of \cite{GaitsgoryRozenblyum} but this claim remains to this day, unproven.
\end{remark}

\begin{corollary}\label{cor:laxunitl}
   Let $\bcat{S}$ be an $\infty$-bicategory. Then the straightening-unstraightening adjuction associated to the scaled simplicial set $(S,M_S)$
   \[
    \SSt^{\on{oplax}}_{\bcat{S}}: \bcat{L}\!\on{Fib}(\bcat{S}) \llra \on{Fun}^{\on{oplax}}(\bcat{S},\bcat{B}\!\on{icat}_\infty): \UN^{\on{oplax}}_{\bcat{S}}
  \]
  yields an equivalence of $\infty$-bicategories between the $\infty$-bicategory of local $(0,1)$-fibrations over $\bcat{S}$ and the $\infty$-bicategory of oplax unital functors with values in $\infty$-bicategories.
\end{corollary}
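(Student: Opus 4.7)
The plan is to obtain this corollary as an immediate specialization of \autoref{thm:mainun}, repackaged for $\infty$-bicategories in \autoref{rem:rehash}, to the scaled simplicial set $S_{M_S} := (S, M_S)$. Crucially, \autoref{thm:mainun} is stated and proved for an \emph{arbitrary} scaled simplicial set, not merely for a fibrant one, and therefore applies directly to $S_{M_S}$ even though the latter is rarely itself an $\infty$-bicategory (in general $M_S \subsetneq T_S$). Under \autoref{rem:rehash} this yields an equivalence of $\infty$-bicategories
\[
  \UN_{S_{M_S}} : \on{Fun}(S_{M_S}, \bcat{B}\!\on{icat}_\infty) \xlongrightarrow{\simeq} \bcat{F}\!\on{ib}^{01}(S_{M_S}).
\]

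First I would simply unwind the notation on both sides. By the defining conventions introduced just before the statement, $\bcat{L}\!\on{Fib}(\bcat{S}) = \bcat{F}\!\on{ib}^{01}(S_{M_S})$ and $\on{Fun}^{\on{oplax}}(\bcat{S}, \bcat{B}\!\on{icat}_\infty) = \on{Fun}(S_{M_S}, \bcat{B}\!\on{icat}_\infty)$. Hence the adjunction $(\SSt^{\on{oplax}}_{\bcat{S}}, \UN^{\on{oplax}}_{\bcat{S}})$ appearing in the corollary is, by construction, the specialization $(\SSt_{S_{M_S}}, \UN_{S_{M_S}})$ of the adjunction of \autoref{thm:mainun}, so the claimed equivalence is precisely that produced by the main theorem after passing to the scaled nerve $\on{N}^{\mathbf{sc}}$.

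The one remaining verification, and the only place where the scaling $M_S$ plays any role, is the interpretation of $\bcat{F}\!\on{ib}^{01}(S_{M_S})$ as the $\infty$-bicategory of local $(0,1)$-fibrations over $\bcat{S}$ in the model-independent sense of the Introduction. This is exactly the content of \autoref{thm:comparison}: the fibrant objects of $\left(\on{Set}_\Delta^{\mathbf{mb}}\right)_{/S_{M_S}}$ are in bijection, via the construction of \autoref{prop:piscatfib}, with local $(0,1)$-fibrations $p : \bcat{X} \to \bcat{S}$, and moreover the mapping $\infty$-categories computed in the model structure agree with those computed invariantly (so the full $\infty$-bicategorical structures match, not merely the objects). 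With this identification in hand there is no substantive remaining obstacle; the mildly delicate point is merely bookkeeping that the base-change compatibilities of \autoref{prop:Stbasechange} and \autoref{rem:unbasechange} make $\UN_{S_{M_S}}$ coincide with $\UN^{\on{oplax}}_{\bcat{S}}$ under the definitional renaming, after which the corollary reduces to the tautology $S_{M_S} = S_{M_S}$ applied to \autoref{thm:mainun}.
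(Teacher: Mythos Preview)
Your proposal is correct and takes essentially the same approach as the paper, which simply states that the corollary follows immediately from \autoref{thm:mainun} and \autoref{rem:rehash}. Your additional invocation of \autoref{thm:comparison} to justify the model-independent interpretation of $\bcat{L}\!\on{Fib}(\bcat{S})$ is a reasonable elaboration, though strictly speaking the paper treats $\bcat{L}\!\on{Fib}(\bcat{S}) = \bcat{F}\!\on{ib}^{01}(S_{M_S})$ as a definition, so the corollary reduces to pure notation-unwinding once \autoref{thm:mainun} is in hand.
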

\begin{proof}
  This follows immediately from \autoref{thm:mainun} and \autoref{rem:rehash}.
\end{proof}

\section{The \texorpdfstring{$\infty$-}-bicategorical Yoneda embedding}
In this section we will give a proof of the Yoneda lemma for $(\infty,2)$-categories. We would like to point out that this result has already appeared in the work of Hinich \cite{Hin} in the context of enriched $\infty$-category theory. Throughout this section we fix an $\infty$-bicategory $\bcat{C}$.

\begin{definition}
  Let $\bcat{C}$ be an $\infty$-bicategory and let $\mathbb{F}(\bcat{C})=\on{Fun}^{\on{gr}}(\Delta^1, \bcat{C})$. We observe that evaluation at $0$ and evaluation at $1$ induce maps $\on{ev}_i: \mathbb{F}(\bcat{C}) \to \bcat{C}$ for $i=0,1$. It follows from Theorem 2.2.6 in \cite{GHL_Cart} that evaluation at $0$ yields a $(1,0)$-fibration. One can similarly show that the map $\on{ev}_1$ defines a $(0,1)$-fibration.
\end{definition}

\begin{remark}
  Let us recall that morphism in $\mathbb{F}(\bcat{C})$ is $(0,1)$-Cartesian  if and only if it is map to an equivalence under $\on{ev}_0$. Similarly, a 2-morphism is Cartesian in $\mathbb{F}(\bcat{C})(x,y)$ if and only if its image in $\bcat{C}$ under $\on{ev}_0$ is an invertible.
\end{remark}

\begin{remark}\label{rem:freefib}
  Let $f:\bcat{A} \to \bcat{C}$ be a functor of $\infty$-bicategories and consider a pulblack diagram
  \[
    \begin{tikzcd}
      \mathbb{F}(\bcat{C})\times_{\bcat{C}}\bcat{A} \arrow[r] \arrow[d] & \mathbb{F}(\bcat{C}) \arrow[d,"\on{ev}_1"] \\
      \bcat{A}\arrow[r,"f"] & \bcat{C}
    \end{tikzcd}
  \]
  it follows that evaluation at $0$ induces a map $\mathbb{F}(\bcat{C})\times_{\bcat{C}}\bcat{A} \to \bcat{C}$ which is a $(1,0)$-fibration by Proposition 3.8 in \cite{AScof}. We observe that we have a commutative diagram
  \[
    \begin{tikzcd}
      \bcat{A} \arrow[rr] \arrow[dr,swap,"f"] &  &\mathbb{F}(\bcat{C})\times_{\bcat{C}}\bcat{A} \arrow[dl,"\on{ev}_0"] \\
      & \bcat{C} &
    \end{tikzcd}
  \]
where the horizontal morphism is induced by the map $\Delta^1 \tensor \bcat{A} \to \Delta^0 \tensor \bcat{A}\isom \bcat{A}$. Moreover, we see as a consequence of Theorem 3.17 in \cite{AScof} that for every $(1,0)$-fibration $\bcat{X} \to \bcat{C}$ we have a trivial fibration of $\infty$-bicategories
  \[
    \on{Map}_{\bcat{C}}(\mathbb{F}(\bcat{C})\times_{\bcat{C}}\bcat{A}, \bcat{X}) \xlongrightarrow{} \on{Map}_{\bcat{C}}(\bcat{A},\bcat{X})
  \]
  where we are commiting a slight abuse of notation by denoting $\on{Map}_{\bcat{C}}(-,-)$ the mapping $\infty$-bicategory with respect to $(1,0)$-fibrations.
\end{remark}

\begin{remark}\label{rem:nonfibrant}
 Let $p:(A,E_A,T_A \subset C_A) \to (\bcat{C},\sharp,T_{\bcat{C}}\subset \sharp)$ be a map of $\bS$ simplicial sets. Then regardless of the fibrancy of $A$ we can always consider $\mathbb{F}(\bcat{C})\times_{\bcat{C}} A$ (which might not be a $(1,0)$-fibration) whose decorations are given as follows:
 \begin{itemize}
    \item An edge is marked if is associated map $\Delta^1 \tensor \Delta^1 \to \bcat{C}$ factors through $\Delta^1 \times \Delta^1$ and the restriction to $\Delta^{1}\times \Delta^{\{1\}}$ is marked in $A$.
    \item A triangle is lean if its restriction $\Delta^{\{1\}} \tensor \Delta^2  $ is lean in $A$.
    \item A triangle is thin if it is lean and its image in $\bcat{C}$ is thin.
  \end{itemize} 
  However, the proofs in \cite[Theorem 3.17, Corollary 3.20]{AScof} are of entirely combinatural nature and under close inspection one sees that we always have an anodyne (with respect to the given decorations) morphism $A \to \mathbb{F}(\bcat{C})\times_{\bcat{C}} A$.
\end{remark}

Let us consider the $(0,1)$-fibration $\on{ev}_1:\mathbb{F}(\bcat{C}) \to \bcat{C}$ and observe that we have a commutative diagram over $\bcat{C}$,
 \[
    \begin{tikzcd}
     \mathbb{F}(\bcat{C}) \arrow[rr,"\on{ev}_0 \times \on{ev}_1"] \arrow[dr,swap,"\on{ev}_1"] &  & \bcat{C}\times \bcat{C} \arrow[dl,"\pi_2"] \\
      & \bcat{C} &
    \end{tikzcd}
  \]
  where $\pi_2$ is the projection onto the second factor. It follows from our definitions that the map $\on{ev}_0 \times \on{ev}_1$ can be seen as a map of $(0,1)$-fibrations where $\bcat{C}\times \bcat{C}$  is classified by the constant functor with value $\bcat{C}$. Let $\mathcal{F}$ be the functor classified by $\mathbb{F}(\bcat{C}) \to \bcat{C}$. We make the following observations:
  \begin{enumerate}
     \item For every $c \in \bcat{C}$ we have a functor $p_c:\mathcal{F}(c) \to \bcat{C}$ and for every morphism $u: c \to c'$ we have a commutative diagram $p_{c'}\circ \mathcal{F}(u)=p_c$.
     \item For every $c \in \bcat{C}$ it follows that we have a morphism
  \[
    \mathbb{F}(\bcat{C})\times_{\bcat{C}}* = \bcat{C}_{\upslash c} \xlongrightarrow{} \bcat{C}
  \]
  which is a $(1,0)$-fibration by \autoref{rem:freefib} and Furthermore, we note that for ever $c \in \bcat{C}$ it follows that we have a morphism
  \[
    \mathbb{F}(\bcat{C})\times_{\bcat{C}}* = \bcat{C}_{\upslash c} \xlongrightarrow{} \bcat{C}
  \]
  which is a $(1,0)$-fibration by \autoref{rem:freefib}. Careful inspection reveals that for every $u:c \to c'$ the induced morphism $u_*: \bcat{C}_{\upslash c} \to \bcat{C}_{\upslash c'}$ preserves $(1,0)$-Cartesian edges whose fibres are $\infty$-categories.
  \item Combining 1 and 2 we see that $p_c:\mathcal{F}(c) \to \bcat{C}$ is $(1,0)$ fibration and that $\mathcal{F}(u)$ is a functor of $(1,0)$-fibrations.
   \end{enumerate}  
   We conclude that $\mathcal{F}$ can be expressed as a composite
   \[
     \mathcal{F}: \bcat{C} \xlongrightarrow{} \bcat{F}\!\on{ib}^{10}(\bcat{C})^{\leq 1} \xlongrightarrow{} \bcat{B}\!\on{icat}_\infty
   \]
   where the second functor is the obvious projection and the superscript “$\leq 1$“, denotes the $\infty$-bicategory spanned by $(1,0)$-fibrations whose fibres are $\infty$-categories. Using a dual version of our main result or equivalently Corollary 3.90 in \cite{AGS_CartII}  we obtain a functor
  \[
    \mathcal{Y}: \bcat{C} \xlongrightarrow{} \on{Fun}(\bcat{C}^{\op},\bcat{C}\!\on{at}_\infty)
  \]
  which we call the bicategorical Yoneda embedding.

  \begin{proposition}\label{prop:I}
   There exists afunctor $\bcat{I}:  \bcat{F}\!\on{ib}^{10}(\bcat{C})^{\leq 1}  \to  \bcat{F}\!\on{ib}^{10}(\bcat{C})^{\leq 1}$ which sends a $(1,0)$-fibration $p:\scr{G} \to \bcat{C}$ (with $\infty$-categorical fibres) to the $(1,0)$-fibration $\bcat{I}(\scr{G})$ which is defined as the $\bS$ simplicial set characterised uniquely by the universal property
   \[
     \on{Map}_{\bcat{C}}(A,\bcat{I}(\scr{G})) \isom \on{Map}_{\bcat{C}}(\mathbb{F}(\bcat{C})\times_{\bcat{C}}A, \scr{G})
   \]
   where $A$ is an $\bS$ simplicial set and $\mathbb{F}(\bcat{C})\times_{\bcat{C}}A$ is defined as in \autoref{rem:nonfibrant}.
  \end{proposition}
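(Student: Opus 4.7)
The plan is to construct $\bcat{I}(\scr{G})$ as the value of the right adjoint to the functor $A \mapsto \mathbb{F}(\bcat{C})\times_{\bcat{C}}A$, viewed as an endofunctor on $\bS$-simplicial sets over $\bcat{C}$ (with the decorations described in \autoref{rem:nonfibrant}). First I would check that this functor preserves colimits: the underlying simplicial set is a pullback along a fixed map, and the decorations in \autoref{rem:nonfibrant} are defined cellularly on the factor $A$, so colimits in $A$ are preserved. By the adjoint functor theorem (or by explicit description, setting the $n$-simplices of $\bcat{I}(\scr{G})$ over $\sigma:\Delta^n\to\bcat{C}$ to be $\Hom_{\bcat{C}}(\mathbb{F}(\bcat{C})\times_{\bcat{C}}(\Delta^n_\flat,\flat,\flat),\scr{G})$ and defining decorations by the universal property applied to the generating cofibrations $(\Delta^1,\sharp,\flat)$, $(\Delta^2,\flat,\flat\subset\sharp)$ and $(\Delta^2,\flat,\sharp)$) one obtains a well-defined $\bS$-simplicial set satisfying the stated universal property.

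Next I would verify that $\bcat{I}(\scr{G}) \to \bcat{C}$ is a $(1,0)$-fibration. Given an $\bS$-anodyne morphism $i: A \to B$, I need to show that $\mathbb{F}(\bcat{C})\times_{\bcat{C}} i$ is again $\bS$-anodyne (in the $(1,0)$-variant); this reduces by the weakly saturated closure to checking the generators. For each generator the conclusion follows by combining \autoref{rem:nonfibrant} (which produces an anodyne inclusion $A \to \mathbb{F}(\bcat{C}) \times_{\bcat{C}} A$ for any $A$) with the pushout-product property analogous to \autoref{prop:pushoutproduct} established in \cite{AScof}. Dualising the lifting adjunction, this shows that $\bcat{I}(\scr{G}) \to \bcat{C}$ has the required right lifting property and hence is a $(1,0)$-fibration.

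To see that $\bcat{I}(\scr{G})$ has $\infty$-categorical fibres I would pull back along $c:\Delta^0 \to \bcat{C}$ and identify the fibre via the universal property: a 2-simplex in the fibre $\bcat{I}(\scr{G})_c$ corresponds to a map from $\mathbb{F}(\bcat{C})\times_{\bcat{C}}\Delta^0 = \bcat{C}_{\upslash c}$ tensored with the appropriate 2-simplex into $\scr{G}$ lying over the constant map. Since the fibre $\scr{G}_{c'}$ is an $\infty$-category for each $c'\in\bcat{C}$ and since every triangle in the base is automatically thin for a map with $\infty$-categorical fibre, a routine check shows that every triangle of $\bcat{I}(\scr{G})_c$ is lean-and-thin, so the fibre is itself an $\infty$-category.

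Finally, functoriality of $\bcat{I}$ follows formally from the adjunction: a morphism $\scr{G} \to \scr{G}'$ of $(1,0)$-fibrations induces, by postcomposition, a natural transformation $\on{Map}_{\bcat{C}}(\mathbb{F}(\bcat{C})\times_{\bcat{C}}(-),\scr{G}) \Rightarrow \on{Map}_{\bcat{C}}(\mathbb{F}(\bcat{C})\times_{\bcat{C}}(-),\scr{G}')$, hence a morphism $\bcat{I}(\scr{G}) \to \bcat{I}(\scr{G}')$; compatibility with simplicial enrichment upgrades this to an $\infty$-bicategorical functor between the corresponding subcategories. I expect the main obstacle to be the verification that $\mathbb{F}(\bcat{C})\times_{\bcat{C}}(-)$ sends $\bS$-anodyne maps to $\bS$-anodyne maps in the $(1,0)$-variant when combined with the unusual marking/scaling of \autoref{rem:nonfibrant}; the combinatorial bookkeeping for generators of type \ref{mb:2coCartesianmorphs} and \ref{mb:wonky4} is delicate and will require an explicit filtration argument paralleling the one used in \cite{AScof}.
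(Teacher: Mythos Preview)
Your approach is correct and tracks the paper's proof closely in its overall structure (adjoint functor construction, fibre identification, functoriality). The one substantive difference is exactly where you locate the ``main obstacle.'' You propose to show that $\mathbb{F}(\bcat{C})\times_{\bcat{C}}(-)$ sends $\bS$-anodyne maps to $\bS$-anodyne maps by a generator-by-generator filtration argument; you correctly anticipate this would be delicate. The paper sidesteps this entirely: given an anodyne map $i:A\to B$, it uses \autoref{rem:nonfibrant} to produce the commutative square
\[
\begin{tikzcd}
A \arrow[r,"\simeq"] \arrow[d,"\simeq"] & \mathbb{F}(\bcat{C})\times_{\bcat{C}}A \arrow[d] \\
B \arrow[r,"\simeq"] & \mathbb{F}(\bcat{C})\times_{\bcat{C}}B
\end{tikzcd}
\]
where the horizontal maps are weak equivalences (again by \autoref{rem:nonfibrant}) and the left vertical is anodyne, hence a weak equivalence. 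By 2-out-of-3 the right vertical is a weak equivalence, and since it is visibly a cofibration it is a trivial cofibration. The fibrant object $\scr{G}$ then has the right lifting property against it, which by adjunction gives the required lift into $\bcat{I}(\scr{G})$. So no filtration is needed: the anodyne-to-anodyne statement is replaced by the weaker anodyne-to-trivial-cofibration statement, which follows formally from what you already cite. Your approach would work but does strictly more than necessary.

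For the fibre computation the paper simply records the isomorphism $\bcat{I}(\scr{G})_c \isom \on{Map}_{\bcat{C}}(\bcat{C}_{\upslash c},\scr{G})$ and invokes that this mapping $\infty$-bicategory is an $\infty$-category because $\scr{G}$ lies in the $\leq 1$ part; this is cleaner than your triangle-by-triangle check. For functoriality the paper gives the explicit $\on{Set}_\Delta^+$-enriched action on hom-objects rather than leaving it as ``compatibility with simplicial enrichment,'' but your outline is adequate.
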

  \begin{proof}
    There are two main things to prove: We need to show that $\bcat{I}(\scr{G}) \to \bcat{C}$ is a $(1,0)$-fibration whose fibres are $\infty$-categories and that the construction $\bcat{I}(-)$ is functorial. We will start by first proving the second assertion.

    Let $K \in \on{Set}_\Delta^{+}$ (which we view as having the maximal scaling) and consider a map $K \to \on{Map}_{\bcat{C}}(\scr{G},\scr{H})$. We will construct a morphism $K \to \on{Map}_{\bcat{C}}(\bcat{I}(\scr{G}),\bcat{I}(\scr{H}))$ as follows:

    Given a simplex $\Delta^n \xlongrightarrow{\sigma \times \overline{\varphi}}  \bcat{I}(\scr{G}) \times K$ we consider a morphism 
    \[
      \mathbb{F}(\bcat{C})\times_{\bcat{C}}\Delta^n \xrightarrow{f_{\overline{\sigma}}\times \varphi} \scr{G}\times K \xrightarrow{} \scr{H}
    \]
    where $\varphi$ is given by the composite $\mathbb{F}(\bcat{C})\times_{\bcat{C}}\Delta^n \to \Delta^{n} \xrightarrow{\overline{\varphi}} K$. This map is clearly compatible with the projection and functorial a thus yields a map $\bcat{I}(\scr{G})\times K \to \bcat{I}(\scr{H})$.
     
     To finish the proof we will show that $\bcat{I}(\scr{G}) \in \bcat{F}\!\on{ib}^{10}(\bcat{C})^{\leq 1}$. First let us observe that given $c \in \bcat{C}$ we have a canonical isomorphism,
     \[
       \bcat{I}(\scr{G})\times_{\bcat{C}}\{c\} \isom \on{Map}_{\bcat{C}}(\bcat{C}_{\upslash c}, \scr{G})
     \]
     which shows that the fibres of $\bcat{I}(\scr{G})$ are in fact $\infty$-categories. Let $i:A \to B$ be an anodyne morphism in the model structure for $(1,0)$-fibrations developed in \cite{AGS_CartI}. It follows from \autoref{rem:nonfibrant} that we have a commutative diagram
     \[
       \begin{tikzcd}
         A \arrow[r,"\simeq"] \arrow[d,"\simeq"] & \mathbb{F}(\bcat{C})\times_{\bcat{C}}A \arrow[d] \\
         B \arrow[r,"\simeq"] & \mathbb{F}(\bcat{C})\times_{\bcat{C}}B
       \end{tikzcd}
     \]
     where the horizontal morphisms are weak equivalences. It then follows by 2-out-of-3 that the right-most vertical morphism is also a weak equivalence. We conclude that $\bcat{I}(\scr{G})$ is a $(1,0)$-fibration.
  \end{proof}
  
  \begin{remark}
    By the previous proposition the fibration $\bcat{I}(\scr{G})$ corresponds under the straightening equivalence to a functor $\bcat{C}^\op \to \bcat{C}\!\on{at}_\infty$ mapping an object $c$ to
    \[
      \on{Map}_{\bcat{C}}(\bcat{C}_{\upslash c},\scr{G})\isom \on{Nat}_{\bcat{C}^\op}(\bcat{C}(\mathblank,c),\SSt_{\bcat{C}}(\scr{G}))
    \]
    where $\on{Nat}_{\bcat{C}^\op}(\mathblank,\mathblank)$ is the mapping $\infty$-category in  $\on{Fun}(\bcat{C}^{\op},\bcat{C}\!\on{at}_\infty)$. We will omit the explicit verification of the fact that there exists an equivalence of contravariant functors 
    \[
        \SSt_{\bcat{C}}(\bcat{I}(\scr{G})) \isom \on{Nat}_{\bcat{C}^\op}(\mathcal{Y}(\mathblank),\SSt_{\bcat{C}}(\scr{G})).
      \]  

  \end{remark}

  \begin{theorem}\label{thm:yoneda}
    For every $\infty$-bicategory $\bcat{C}$ the Yoneda embedding
    \[
      \mathcal{Y}:\bcat{C} \xlongrightarrow{} \on{Fun}(\bcat{C}^{\op},\bcat{C}\!\on{at}_\infty), \enspace c \mapsto \bcat{C}(\mathblank,c)
    \]
   is fully-faithful. Moreveover, given a functor $\scr{F}: \bcat{C}^\op \to \bcat{C}\!\on{at}_\infty$ there is a equivalence of $\bcat{C}\!\on{at}_\infty$-valued functors
    \[
      \on{Nat}_{\bcat{C}^{\op}}(\mathcal{Y}(\mathblank),\scr{F}) \xRightarrow{\simeq} \scr{F}
    \]
    which is natural in $\scr{F}$.
  \end{theorem}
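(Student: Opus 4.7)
The plan is to establish the natural equivalence $\on{Nat}_{\bcat{C}^{\op}}(\mathcal{Y}(\mathblank),\scr{F}) \simeq \scr{F}$ first, and then deduce fully-faithfulness of $\mathcal{Y}$ as a formal consequence. Passing through the Grothendieck construction, set $\scr{G}=\UN_{\bcat{C}}(\scr{F})$, a $(1,0)$-fibration over $\bcat{C}$ with $\infty$-categorical fibres. By the remark preceding the theorem, the functor $\on{Nat}_{\bcat{C}^{\op}}(\mathcal{Y}(\mathblank),\scr{F})$ is classified by $\bcat{I}(\scr{G})$ from \autoref{prop:I}, while $\scr{F}$ itself is classified by $\scr{G}$. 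The required equivalence therefore reduces to constructing a natural equivalence $\eta_{\scr{G}}:\scr{G} \to \bcat{I}(\scr{G})$ of $(1,0)$-fibrations over $\bcat{C}$, varying naturally in $\scr{G}$.

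To produce $\eta_{\scr{G}}$ I would exploit the universal property of $\bcat{I}$ from \autoref{prop:I}: maps $\scr{G} \to \bcat{I}(\scr{G})$ over $\bcat{C}$ correspond to maps $\mu:\mathbb{F}(\bcat{C}) \times_{\bcat{C}} \scr{G} \to \scr{G}$ over $\bcat{C}$. By \autoref{rem:nonfibrant} the inclusion $\scr{G} \to \mathbb{F}(\bcat{C}) \times_{\bcat{C}} \scr{G}$ is anodyne, so because $\scr{G} \to \bcat{C}$ is a $(1,0)$-fibration the identity on $\scr{G}$ extends along this inclusion to a retraction $\mu$. The adjoint map $\eta_{\scr{G}}$ is visibly functorial in $\scr{G}$ and hence assembles into a natural transformation $\on{id} \Rightarrow \bcat{I}$ on the $(\infty,2)$-category of $(1,0)$-fibrations with $\infty$-categorical fibres.

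To see that $\eta_{\scr{G}}$ is an equivalence, I would check it fibrewise. Specializing the trivial fibration of \autoref{rem:freefib} to $\bcat{A}=\{c\}$ yields a trivial fibration
\[
\bcat{I}(\scr{G})_c \;=\; \on{Map}_{\bcat{C}}(\bcat{C}_{\upslash c},\scr{G}) \;\xlongrightarrow{\simeq}\; \on{Map}_{\bcat{C}}(\{c\},\scr{G}) \;=\; \scr{G}_c,
\]
and unwinding the definition of $\mu$ shows that its composite with $(\eta_{\scr{G}})_c$ is homotopic to $\on{id}_{\scr{G}_c}$, so $(\eta_{\scr{G}})_c$ is a bicategorical equivalence. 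Applying the $(1,0)$-analogue of \autoref{prop:fibrewisecriterion} established in \cite{AGS_CartII} then upgrades this to the weak equivalence $\eta_{\scr{G}}$ in the model structure for $(1,0)$-fibrations, and naturality in $\scr{F}$ is inherited from naturality in $\scr{G}$.

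For fully-faithfulness, I would instantiate the natural equivalence at $\scr{F}=\mathcal{Y}(c')$ to get $\on{Nat}_{\bcat{C}^{\op}}(\mathcal{Y}(c),\mathcal{Y}(c')) \simeq \mathcal{Y}(c')(c) = \bcat{C}(c,c')$, and then trace through $\mu$ to check that the composite with the canonical map $\bcat{C}(c,c') \to \on{Nat}_{\bcat{C}^{\op}}(\mathcal{Y}(c),\mathcal{Y}(c'))$ induced by $\mathcal{Y}$ is homotopic to the identity, via the universal property of Cartesian lifts in $\bcat{C}_{\upslash c'}$. The main technical obstacle is promoting the pointwise retraction $\mu$ to a genuinely simplicially enriched natural transformation (so that $\eta$ is a map of functors of $(\infty,2)$-categories rather than merely a pointwise equivalence), and reconciling the two mapping-space conventions $\on{Map}_{\bcat{C}}(\mathblank,\mathblank)$ used for $(0,1)$- versus $(1,0)$-fibrations when transporting the argument through the dual Grothendieck construction.
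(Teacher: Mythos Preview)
Your overall strategy---translate the statement to fibrations via the Grothendieck construction, produce a comparison map between $\scr{G}$ and $\bcat{I}(\scr{G})$, and check it fibrewise using the anodyne map $\{c\}\to\bcat{C}_{\upslash c}$---is exactly the paper's strategy, and your fibrewise verification is essentially identical to theirs. The difference lies in the \emph{direction} of the comparison map, and this is precisely where the obstacle you flag at the end arises.

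You build $\eta_{\scr{G}}:\scr{G}\to\bcat{I}(\scr{G})$ by choosing an extension $\mu$ of $\on{id}_{\scr{G}}$ along the anodyne inclusion $\scr{G}\hookrightarrow\mathbb{F}(\bcat{C})\times_{\bcat{C}}\scr{G}$. Such a $\mu$ exists but is not canonical, and so the claim that $\eta_{\scr{G}}$ is ``visibly functorial in $\scr{G}$'' is not justified: different choices of lift for different $\scr{G}$ need not assemble into a strict (or even coherent) natural transformation. You correctly identify this as the main technical obstacle, and you do not resolve it.

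The paper sidesteps this entirely by going in the \emph{opposite} direction. It builds a map $\bcat{I}(\scr{G})\to\scr{G}$ as follows: an $n$-simplex of $\bcat{I}(\scr{G})$ is by definition a map $\mathbb{F}(\bcat{C})\times_{\bcat{C}}\Delta^n\to\scr{G}$, and one simply precomposes with the \emph{canonical} inclusion $\Delta^n\hookrightarrow\mathbb{F}(\bcat{C})\times_{\bcat{C}}\Delta^n$ to obtain an $n$-simplex of $\scr{G}$. No choices are involved, so this is strictly natural in $\scr{G}$ and yields a genuine natural transformation $\bcat{I}\Rightarrow\mathbb{1}$ of $\on{Set}_\Delta^+$-enriched functors. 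The fibrewise check that this is an equivalence is then the same restriction-along-$\{c\}\to\bcat{C}_{\upslash c}$ argument you give.

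For fully-faithfulness, you propose to specialise the natural equivalence at $\scr{F}=\mathcal{Y}(c')$ and then verify that the resulting composite with $\mathcal{Y}$ on hom-categories is the identity. The paper instead proves fully-faithfulness directly and separately: it writes down the composite
\[
\bcat{C}(c,c')\xrightarrow{\ \mathcal{Y}\ }\on{Map}_{\bcat{C}}(\bcat{C}_{\upslash c},\bcat{C}_{\upslash c'})\xrightarrow{\ \simeq\ }\on{Map}_{\bcat{C}}(\{c\},\bcat{C}_{\upslash c'})\cong\bcat{C}(c,c'),
\]
models $\mathcal{Y}$ via the straightening $\SSt_{\bcat{C}}(\bcat{C})$ (using that $\bcat{C}\to\mathbb{F}(\bcat{C})$ is an equivalence of $(0,1)$-fibrations), and inspects the resulting map $\Psi$ explicitly to see it is homotopic to the identity. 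Your route would also work once the natural transformation is available, but again depends on having resolved the naturality issue above.
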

  \begin{proof}
    For the proof it will be more convenient to work with (note the slight abuse of notation) the equivalent functor,
    \[
      \mathcal{Y}: \bcat{C} \to \bcat{F}\!\on{ib}^{10}(\bcat{C})^{\leq 1}. 
    \]
    By construction we have $\mathcal{Y}(c): \bcat{C}_{\upslash c} \to \bcat{C}$. It follows from  \cite[Theorem 3.17]{AScof}  applied to the functor $c:\Delta^0 \to \bcat{C}$ and Corollary 3.90  in \cite{AGS_CartII}  that $\mathcal{Y}(c)$ is classified under the Grothendieck construction by the representable functor on $c$.

    We observe that we have functors of $\infty$-categories
    \[
      \bcat{C}(c,c') \xlongrightarrow{} \on{Map}_{\bcat{C}}(\bcat{C}_{\upslash c},\bcat{C}_{\upslash c'}) \xlongrightarrow{\simeq}\on{Map}_{\bcat{C}}(c,\bcat{C}_{\upslash c'})\isom \bcat{C}(c,c')
    \]
    where the first map is induced by $\mathcal{Y}$ and the second is given by restriction along the map the selects the identity morphism in $\bcat{C}_{\upslash c}$. Note the later map is a weak equivalence as noted in \autoref{rem:freefib}.

    A dual argument to that \cite[Theorem 3.17]{AScof} shows that  the map $\bcat{C} \to \mathbb{F}(\bcat{C})$ is an equivalence in the model structure for $(0,1)$-fibrations. We can therefore, model the functor $\mathcal{Y}$ using $\SSt_{\bcat{C}}(\bcat{C})$.

    Let $\varphi_c:\Delta^0  \to \SSt_{\bcat{C}}(\bcat{C})(c)$ be the functor that selects the object $(0,c) \to (1,c)$ which is degenerate on the second component and consider the composite
    \[
     \Psi: \mathfrak{C}^{\mathbf{sc}}[\bcat{C}](c,c')\times \Delta^{0} \xlongrightarrow{\on{id}\times \varphi_c}  \mathfrak{C}^{\mathbf{sc}}[\bcat{C}](c,c') \times \SSt_{\bcat{C}}(\bcat{C})(c) \xlongrightarrow{} \SSt_{\bcat{C}}(\bcat{C})(c').
    \]
   Inspection reveals that $\Psi$ sends a morphism $f:c \to c'$ to the object of $\SSt_{\bcat{C}}(\bcat{C})(c')$ represented by $(0,c) \to (1,c) \to (1,c')$ and thus corresponds under the pertinent identifications to the object $f \in \bcat{C}_{\upslash c'}$.  Careful, analysis of $\Psi$ reveals that the map 
   \[
     \Phi:\bcat{C}(c,c') \xlongrightarrow{} \on{Map}_{\bcat{C}}(c,\bcat{C}_{\upslash c'})\isom \bcat{C}(c,c')
   \]
   is equivalent to the identity and thus a weak equivalence. We conclude that $\mathcal{Y}$ is fully faithful. 

   To prove the final claim we show that the functor $\bcat{I}$ in \autoref{prop:I} is naturally equivalent to the identity. We construct a natural transformation $\bcat{I} \xRightarrow{} \mathbb{1}$ to the identity functor which is induced by the canonical map
   $A \to \mathbb{F}(\bcat{C}) \times_{\bcat{C}} A$. We observe that the induced map of fibres
   \[
     \on{Map}_{\bcat{C}}(\bcat{C}_{\upslash c}, \scr{F}) \xlongrightarrow{\simeq} \scr{F}(c)
   \]
is given by restriction along the anodyne map $\Delta^0 \to \bcat{C}_{\upslash c}$ selecting the identity morphism and thus is a weak equivalence. 
  \end{proof}
\newpage
{}

\end{document}